		\tikzset{double line with arrow/.style args={#1,#2}{decorate,decoration={markings,%
			mark=at position 0 with {\coordinate (ta-base-1) at (0,1pt);
				\coordinate (ta-base-2) at (0,-1pt);},
			mark=at position 1 with {\draw[#1] (ta-base-1) -- (0,1pt);
				\draw[#2] (ta-base-2) -- (0,-1pt);
}}}}
\newcommand{\p}{\mathfrak{P}}
	\newcommand{\bean}{\begin{eqnarray*}}
	\newcommand{\eean}{\end{eqnarray*}}
\DeclareMathSymbol{\onto}{\mathrel}{AMSa}{"10}
\renewcommand{\hbar}{{\mathchar'26\mkern-9muh}}
\DeclareFontFamily{T1}{pzc}{}
\DeclareFontShape{T1}{pzc}{m}{it}{1.8 <-> pzcmi8t}{}
\DeclareMathAlphabet{\mathpzc}{T1}{pzc}{m}{it}
\theoremstyle{plain}
\newtheorem{prop}{Proposition}[section]
\newtheorem{prdf}[prop]{Proposition and Definition}
\newtheorem{lem}[prop]{Lemma}%[section]
\newtheorem{cor}[prop]{Corollary}%[section]
\newtheorem{thm}[prop]{Theorem}%[section]
\newtheorem{theorem}[prop]{Theorem}
\newtheorem{lemma}[prop]{Lemma}
\newtheorem{proposition}[prop]{Proposition}
\newtheorem{corollary}[prop]{Corollary}
\theoremstyle{definition}
\newtheorem{defn}[prop]{Definition}%[section]
\newtheorem{empt}[prop]{}%[section]
\newtheorem{rem}[prop]{Remark}%[section]
\theoremstyle{definition}
\newtheorem{notation}[prop]{Notation}
\newtheorem{definition}[prop]{Definition}
\newtheorem{example}[prop]{Example}
\newtheorem{exercise}[prop]{Exercise}
\newtheorem{remark}[prop]{Remark}
\numberwithin{equation}{section}
\newcommand{\vertiii}[1]{{\left\vert\kern-0.25ex\left\vert\kern-0.25ex\left\vert #1
		\right\vert\kern-0.25ex\right\vert\kern-0.25ex\right\vert}}
\newcommand{\Ga}{\Gamma}  
\newcommand{\coker}{\mathrm{coker}}                   %% short for  \Gamma
\newcommand{\Coo}{C^\infty}                  %% smooth functions
\newbox\ncintdbox \newbox\ncinttbox %% noncommutative integral symbols
\newcommand{\rep}{\mathfrak{rep}}
\newcommand{\lift}{\mathfrak{lift}}
\newcommand{\desc}{\mathfrak{desc}}
\newcommand{\Id}{\mathrm{Id}}                %% identity map
\newcommand{\A}{\mathcal{A}}                 %%\newcommand{\unitsv}[1]{#1^{(0)}}
\renewcommand{\a}{\alpha}                    %% short for  \alphapha
\newcommand{\E}{\mathcal{E}}                 %% space of distributions
\newcommand{\C}{\mathbb{C}}                  %% complex numbers
\newcommand{\F}{\mathcal{F}}                 %% space of test functions
\newcommand{\G}{\mathcal{G}}                 %% 
\newcommand{\D}{\mathcal{D}}                 %% Moyal L^2-filtration
\renewcommand{\H}{\mathcal{H}}               %% Hilbert space
\newcommand{\hookto}{\hookrightarrow}        %% abbreviation
\newcommand{\K}{\mathcal{K}}                 %% compact operators
\renewcommand{\L}{\mathcal{L}}               %% operator algebra
\newcommand{\La}{\Lambda}                    %% short for \Lambda
\newcommand{\la}{\lambda}                    %% short for \lambda
	\newcommand{\N}{\mathbb{N}}                  %%  integers
	\newcommand{\om}{\omega}                     %% short for \omega
	\newcommand{\ox}{\otimes}                    %% tensor product
	\newcommand{\eps}{\varepsilon}                    %% tensor product
	\newcommand{\Q}{\mathbb{Q}}                  %% rational numbers
	\newcommand{\R}{\mathbb{R}}                  %% real numbers
	\renewcommand{\SS}{\mathcal{S}}              %% Schwartz space
	\DeclareMathOperator{\supp}{\mathfrak{supp}}            %% support
	\newcommand{\T}{\mathbb{T}}                  %% circle as a group
	\renewcommand{\th}{\theta}                   %% short for \theta
	\newcommand{\thalf}{\tfrac{1}{2}}            %% small* fraction 1/2
	\DeclareMathOperator{\tr}{tr}                %% trace of matrix
	\newcommand{\Z}{\mathbb{Z}}                  %% integers
	\newcommand{\8}{\bullet}                     %% anonymous degree
	\newcommand{\sC}{\mathcal{C}}       %%
	\newcommand{\sF}{\mathcal{F}}       %%
	\newcommand{\sH}{\mathcal{H}}       %%
	\newcommand{\sR}{\mathcal{R}}       %%
	\newcommand{\sS}{\mathcal{S}}       %%
	\newcommand{\sU}{\mathcal{U}}       %%
	\newcommand{\sV}{\mathcal{V}}       %%
	\newcommand{\sX}{\mathcal{X}}       %%
	\newcommand{\sY}{\mathcal{Y}}       %%
	\newcommand{\sZ}{\mathcal{Z}}       %%
	\newcommand{\Om}{\Omega}       %%
	\newcommand{\bydef}{\stackrel{\mathrm{def}}{=}}          %% 
	\newcommand{\al}{\alpha}          %% short for  \alpha
	\newcommand{\bt}{\beta}           %% short for  \beta
	\newcommand{\dl}{\delta}          %% short for  \delta
	\newcommand{\ga}{\gamma}          %% short for  \gamma
	\newcommand{\Th}{\Theta}          %% short for  \Theta
	\renewcommand{\th}{\theta}        %% short for  \theta
	\DeclareMathOperator{\rank}{rank}   %% rank of a vector bundle
	\DeclareMathOperator{\Ind}{Ind}
	\newcommand{\blank}{-}
	\def\<#1|#2>{\langle#1\stroke#2\rangle} %% \braket (Dirac notation)
	\def\?#1|#2?{\{#1\stroke#2\}}        %% left-linear pairing
	\renewcommand{\Bar}[1]{\overline{#1}} %% closure operator
	\def\<#1,#2>{\langle#1,#2\rangle}            %% bilinear pairing
	\def\ee_#1{e_{{\scriptscriptstyle#1}}}       %% basis projector
	\def\wick:#1:{\mathopen:#1\mathclose:}       %% Wick-ordered operator
	\newbox\ncintdbox \newbox\ncinttbox %% noncommutative integral symbols
	\newcommand{\stroke}{\mathbin|}   %% (for `\bbraket' and such)
	\newcommand{\End}{\mathrm{End}}       %%
	\newcommand{\Hom}{\mathrm{Hom}}       %%
	\newcommand{\Aut}{\mathrm{Aut}}       %%
	\newcommand{\im}{\mathrm{im}}       %%
	\newcommand{\bs}{\begin{split}}
		\newcommand{\es}{\end{split}}
	\newcommand{\be}{\begin{equation}}
		\renewcommand{\ee}{\end{equation}}
	\newcommand{\bea}{\begin{eqnarray}}
		\newcommand{\eea}{\end{eqnarray}}
	\newcommand{\brray}{\begin{array}}
		\newcommand{\erray}{\end{array}}
	\tikzset{
		basic/.style  = {draw, text width=2cm, drop shadow, font=\sffamily,     rectangle},
		root/.style   = {basic, rounded corners=2pt, thin, align=center,
			fill=green!30},
		level 2/.style = {basic, rounded corners=6pt, thin,align=center,     fill=green!60,
			text width=8em},
		level 3/.style = {basic, thin, align=left, fill=pink!60, text width=6.5em}
	}
	\title{Algebraic topology of $C^*$-algebras}
	\author
	{\textbf{Petr R. Ivankov*}\\
		e-mail: * monster.ivankov@gmail.com \\
	}
\begin{document}
\maketitle  %\setlength{\parindent}{0pt}
\pagestyle{plain}

\begin{abstract}
	\noindent
\paragraph{}	Any $C^*$-algebra can be regarded as a generalization of locally compact, Hausdorff  topological space $\mathcal X$. From the commutative commutative Gelfand-Na\u{\i}mark theorem it follows that the spectrum of any commutative $C^*$-algebra is a locally  compact, Hausdorff  space which have the exact information of the $C^*$-algebra.  Here we consider a Gelfand spaces of $C^*$-algebras which can be regarded as a generalization of the spectrum. In case of commutative $C^*$-algebras the Gelfand space coincides with the spectrum. Generally Gelfand spaces are not Hausdorff and provide more detailed information of noncommutative  $C^*$-algebras. Usage of Gelfand spaces of $C^*$-algebra enables us to define some $C^*$-algebraic analogs of several notions of the classical algebraic topology, e.g. fundamental group, cohomology,  multiplicative structure of $K$-theory and Adams operations.
\end{abstract}

\tableofcontents

\section{Gelfand category and Gelfand functor}
\paragraph{} The commutative  Gelfand-Na\u{\i}mark \ref{gelfand-naimark_thm} theorem yields the equivalence between  following categories:
\begin{itemize}
	\item  unital commutative $C^*$-algebras and $*$-homomorphisms,
	\item compact Hausdorff  spaces and continuous maps.
\end{itemize} 
Here we would like construct a category such that objects of it are all $C^*$-algebras and a functor to the category of topological  spaces  and continuous maps.
\subsection{Basic construction}
\paragraph{} It is known that there are ultrafilters on locally compact space which are not convergent (cf. Proposition \ref{top_ultra_prop}). However from the Proposition \ref{top_ultra_prop} it follows that if an ultrafilter contains an open set with compact closure then it is convergent.
\begin{definition}\label{good_ultrafilter_defn}
	An ultrafilter of open subsets sets  on locally compact Hausdorff  space  is a {\it finite point} if contains an open set with compact closure.
\end{definition}
\begin{remark}\label{good_ultrafilter_rem}
	From the Proposition \ref{top_ultra_prop} it follows that any locally compact Hausdorff  space is a set of its good ultrafilters.
\end{remark}
Any $*$-homomorphisms $C\left( \sX\right) \to C\left( \sY\right)$ of unital yields a continuous  map $\sY\to \sX$. Any $*$-homomorphisms $C_0\left( \sX\right) \to C_b\left( \sY\right)$ can be uniquely extended up to a $*$-homomorphism $C_b\left( \sX\right) \to C_b\left( \sY\right)$ and yields a continuous map $f:\bt\sY \to \bt \sX$ where both $\bt\sX$ and $\bt \sY$ are {Stone-\v{C}ech compactifications} (cf. Definition \ref{top_stone_cech_defn}.
\begin{definition}\label{good_chom_defn}
	A $*$-homomorphism $C_0\left( \sX\right) \to C_b\left( \sY\right)$ is {\it good} if it yields a  continuous map $f:\bt\sY \to \bt \sX$ such that $f\left(\sY \right)\subset \sX$. 
\end{definition}
\begin{lemma}
	A homomorphism $\phi: C_0\left( \sX\right) \to C_b\left( \sY\right)$ is good  if and only if  for any $b \in C_c\left(\sY \right)$ there is  $a \in C_c\left(\sX \right)$ such that $b \in  C_0\left( \sY\right) \phi\left( a\right)$.
\end{lemma}
\begin{proof}
	$\Rightarrow$ If $y\in \sY$ then there is   $b \in C_c\left(\sY \right)$ with $b\left( y\right) \neq 0$. The support $\supp b$ is compact, so the set $f\left(\supp b \right) \in \sY$ is compact.  From the Tietze theorem  is $a\in C_c \left(\sX\right)$ with $b\left( f\left(\supp b \right)\right)  = \{1\}$ and $b =  b\phi\left( a\right)$.
	\\$\Leftarrow$ If $b \in C_c\left( \sY\right)$ support $\supp b$ is compact, so the set $f\left(\supp b \right) \in \sY$ is compact.  From the Tietze theorem  is $a\in C_c \left(\sX\right)$ with $a\left( f\left(\supp b\right)\right)  = \{1\}$ and $b = b\phi\left( a\right) $.
\end{proof}
\begin{definition}\label{good_hom_defn}
	If both $A$ and $B$ are $C^*$-algebras  then a $*$-homomorphism $\varphi : M\left(  A\right)  \to M\left( B\right) $ is {\it good} if for any element $b \in K\left(B\right)$ of the Pedersen's ideal (cf. Definition \ref{pedersen_ideal_defn})  there is $a \in K\left(A\right)$ such that $b 
	\in B\phi\left( a\right)$.
\end{definition}
\begin{definition}\label{gelfand_category_defn}
	The {\it Gelfand category} $C^*$-$\mathfrak{Gelfand}$ is such that:
	\begin{itemize}
		\item objects of $C^*$-$\mathfrak{Gelfand}$ are $C^*$-algebras,
		\item for any two objects $A$ and $B$ of $C^*$-$\mathfrak{Gelfand}$ the set of arrows from $A$ to $B$ is a set of good $*$-homomorphisms from $M\left(A \right)\to M\left( B\right)$, 
	\end{itemize}
	
\end{definition}
\begin{definition}\label{lattice_a_defn}
	If $A$ is a $C^*$-algebra then $A$-\textit{semi}-\textit{lattice} is the meet-semilattice (cf. Definition \ref{lattice_defn}) of closed left ideals of $A$ such that 
	\be\label{meet_eqn}
	\begin{split}
		L' \wedge L''\bydef L' \cap L'',\\
		L' \le L'' \quad \Leftrightarrow\quad L' \subset L'',\\
		0 \bydef \{0\}\subset A.
	\end{split}
	\ee
	A filter (resp, an ultrafilter) of $A$-lattice is an $A$-\textit{filter} (resp. an $A$-\textit{ultrafilter}) (cf. Definitions \ref{filter_defn} and \ref{ultra_filter_defn}).
	We denote this semilattice by $\mathfrak{Lattice}\left( A\right)$. Denote by $\mathfrak{Filters}\left( A\right)$  (resp.  $\mathfrak{Ultrafilters}\left( A\right)$) the  set of $A$-filters  (resp. $A$-ultrafilters).
\end{definition}
\begin{lemma}\label{top_lattices_iso_lem}
	For any locally compact Hausdorff space $\sX$ (cf. Definition \ref{top_locally_compact_defn}) there is an isomorphism of  meet-semilattices
	\be
	\begin{split}
		\mathfrak{Lattice}_\sX : \mathfrak{Lattice}\left( C_0\left(  \sX\right)\right)\cong \mathfrak{Lattice}\left(\sX \right).
		%	L \mapsto\sX \setminus \left\{x \in \sX | \forall a \in L \quad a(x) = 0\right\}.
	\end{split}
	\ee	
	where  $\mathfrak{Lattice}\left(\sX \right)$ is the lattice of open subsets of $\sX$.
\end{lemma}
\begin{proof}
	Any left ideal of  $C_0\left(  \sX\right)$ is a two-sided ideal. This lemma can be deduced from the proof of the Theorem \ref{gelfand-naimark_thm}.
\end{proof}
\begin{empt}\label{topology_empt}
	If $\mathfrak{Ultrafilters}\left(A \right)$ 
	is a set of $A$-ultrafilters (cf. Definition \ref{lattice_a_defn}) of the meet-semilattice $\mathfrak{Lattice}\left( A\right)$  then there is a topology on $\mathfrak{Ultrafilters}\left(A \right)$ is the minimal among topologies such that for any closed left ideal $L \subset A$ the set
	\be\label{topology_eqn}
	\mathfrak{Ultrafilters}\left(A \right)_L	\bydef \left\{ \xi \in \mathfrak{Ultrafilters}\left(A \right) \left| \exists L' \in \xi\quad L'\subset L\right.\right\}
	\ee
	set is open.
\end{empt}
\begin{remark}
	Using the direct check one can prove that the set of the  given by the equation \eqref{topology_eqn} sets is a basis of topology (cf. Definition \ref{top_base_defn}).
\end{remark}
\begin{definition}\label{gelfand_space_defn}
	If $A$ is a $C^*$-algebra then	an ultrafilter  $\xi \in \mathfrak{Ultrafilters}\left(A \right)$ is an $A$-\textit{finite point} if there is a nontrivial element $a\in K\left( A\right)\setminus\{0\}$ in Pedersen's ideal (cf. Definition \ref{pedersen_ideal_defn}) such that 
	$$
	Aa \in \xi.
	$$
	The \textit{Gelfand space} $\mathfrak{Gelfand}\left(A \right)$ of $C^*$-algebra $A$ is a space  of finite points with the final topology (cf. Definition \ref{top_final_defn}) given by the natural inclusion $\mathfrak{Gelfand}\left(A \right) \hookto \mathfrak{Ultrafilters}\left(A \right)$, where the topology of $\mathfrak{Ultrafilters}\left(A \right)$ is explained in \ref{topology_empt}.
\end{definition}
%\begin{empt}
%Let $L_1, L_2 \in A$ be two closed left ideals of $C^*$-algebra $A$ such that $L_1 \cap L^*_2 \neq \{0\}$. If $a\in L_1 \cap L^*_2 \setminus \{0\}$ then $a^*\in L_2 \cap L^*_1 \setminus \{0\}$. One has $aa^* > 0$ and $aa^*=\left( aa^*\right)^*\in L_2\cap L^*_1$
%		\be\label{aa_eqn} 
%\begin{split}
%	aa^* \in L_1\cap L^*_2\setminus\{0\},\\
%	\left( aa^*\right)^* =  aa^* \in L_2\cap L^*_1\setminus\{0\},\\
%		aa^* \in L_1\setminus\{0\},\\
%		aa^* \in L_2\setminus\{0\},\\
%				aa^* \in L_1\cap L_2\setminus\{0\}.
%\end{split}
%\ee
%From the equations \eqref{aa_eqn} it turns out that 
%		\be\label{intersec_eqn} 
%\begin{split}
%	L_1\cap L^*_2\neq \{0\}\quad \Leftrightarrow \quad 	L_1\cap L_2\neq \{0\}.
%\end{split}
%\ee
%\end{empt}
\begin{empt}\label{topology_base_herd_empt}
	Using the Lemma \ref{hered_ideal_lem} one can replace the closed left ideals in the Definition \ref{gelfand_space_defn} with right ones or hereditary $C^*$-subalgebras. For any filter $\xi$ we use a following notation
	\be\label{hered_ideal_eqn}
	L \in \xi \quad \Leftrightarrow \quad B \bydef L \cap L^* \in \xi\quad \Leftrightarrow \quad L^* \in \xi.
	\ee
	So the Gelfand space can be be regarded as a subset of ultrafilters of right ideals and/or a set of ultrafilters of hereditary $C^*$ subalgebras of $B \subset A$ such that there is $a \in K\left( A\right)_+$ with
	\be
	B \subset a A a
	\ee
\end{empt}

The following Lemma is a generalization of the Theorem \ref{gelfand-naimark_thm}.
\begin{lemma}\label{lem_g_lem} (Generalized commutative Gelfand theorem).
	If $\sX$ is a locally compact (cf. Definition \ref{top_locally_compact_defn}), Hausdorff  space then  is a natural  homeomorphism $ \mathfrak{Gelfand}_\sX: \mathfrak{Gelfand}\left(C_0\left( \sX\right) \right)\cong \sX$, where  $\mathfrak{Gelfand}\left(C_0\left( \sX\right)\right)$ is the Gelfand space.
\end{lemma}
\begin{proof}
	
	From the Lemma \ref{top_lattices_iso_lem}  it turns out that for any locally compact space $\sX$ (cf. Definition \ref{top_locally_compact_defn}) there is an isomorphism of  meet-semilattices
	
	\bean
	\begin{split}
		\mathfrak{Lattice}_\sX : \mathfrak{Lattice}\left( C_0\left(  \sX\right)\right)\cong \mathfrak{Lattice}\left(\sX \right).
		%	I \mapsto\sX \setminus \left\{x \in \sX | \forall a \in I \quad a(x) = 0\right\}.
	\end{split}
	\eean  
	It follows that there is a homeomorphism 
	$$
	\mathfrak{Ultrafilters}_\sX : \mathfrak{Ultrafilters}\left( C_0\left(  \sX\right)\right)\cong \mathfrak{Ultrafilters}\left(\sX \right)
	$$ 	
	of spaces of ultrafilters. If $\xi \in \mathfrak{Ultrafilters}\left( C_0\left(  \sX\right)\right)$ then  denote by 
	$$
	\sV_\xi\bydef 	\bigcap_{\sU \in \xi }\sU
	$$
	
	If $x \in \mathfrak{Gelfand}\left( C_0\left(  \sX\right)\right)$ then there if $a \in K\left( C_0\left( \sX\right) \right) \cong C_c\left( \sX\right)$ such that
	$$
	aA \in \xi
	$$
	or equivalently $\sV_\xi\subset  \supp a$ and the set $\supp a$ is compact (cf. Definition \ref{top_compact_defn}).
	The set 
	$$
	\xi_a \bydef 	\left\{\left.\sU \cap \supp a \right| \sU \in \xi \right\}\in \mathfrak{Ultrafilters}\left( {\supp a}\right) 
	$$
	is an ultrafilter on the compact set $\supp a$.
	From the Proposition \ref{top_ultra_prop}  it turns out that $\xi_a$ has the unique limit. This limit corresponds to $\mathfrak{Gelfand}_\sX\left( \xi\right)= \mathfrak{Gelfand}_{\supp a}\left( \xi_a\right)$. So we have the map is the set theoretic bijection. From the equation \ref{topology_eqn} one can deduce that the any open subset of $\mathfrak{Gelfand}\left(C_0\left( \sX\right)\right)$ with  corresponds to an open subset of $\sX$ and vice versa. 
\end{proof}

	\subsection{Hausdorff quotient space}	

\begin{definition}\label{hausdorff_defn}
	Let $\sX$ be a topological space. The $\sX$-{\it Hausdorff category} (cf. Definition \ref{category_defn}) $\mathfrak{Hausdorff}\left( \sX\right)$ is such that 
	
	\begin{itemize}
		\item objects of $\mathfrak{Hausdorff}\left( \sX\right)$ are continuous surjective maps $\sX \onto \sY$ onto Hausdorff locally compact (cf. Definition \ref{top_locally_compact_defn}) space $\sY$, 
		\item a morphism form $\sX \onto \sY'$ to $\sX \onto \sY''$ is a continuous surjective map  $\sY' \onto \sY''$ such that the diagram 
		
		\bean
		\begin{tikzcd}
			&\sX \arrow[ld] \arrow[rd] & \\
			\sY' \arrow[rr]	&& \sY''
		\end{tikzcd}
		\eean
		is commutative.
	\end{itemize}
	
\end{definition}

\begin{lemma}
	For any topological space $\sX$ there is the initial object (cf. Definition \ref{initial_ob_defn}) of the  $\sX$-{Hausdorff category}.
\end{lemma}
\begin{proof}
	Let $A$ be a commutative $C^*$-algebra of all bonded continuous maps from $\sX$ to $\C$ and let $\sY'$ be the spectrum of $A$ (cf. Definition \ref{spectrum_prime_primtive_defn}).  Any $x \in \sX$ yields the maximal ideal 
	$$
	I_x \bydef \left\{ f \in A | f\left( x\right) = 0\right\}
	$$
	From the Theorem \ref{gelfand-naimark_thm} the ideal $I_x$ yield a point $y \in \sY'$. So there is the natural  map $\phi' :\sX \to \sY'$. If $\sY \bydef \phi'\left( \sX\right)$ then $\phi'$ yields  the surjective map  $\phi :\sX \onto \sY$. One can prove that the map is continuous. From the Theorem \ref{gelfand-naimark_thm} it turns out that the spaces $\sY'$ and $\sY$ are locally compact (cf. Definition \ref{top_locally_compact_defn}) and Hausdorff.  If there is and an object $\sX \onto \sY''$ with nonidentical morphism $\phi'' : \sY''\onto \sY$ then there are $y_1, y_2 \in \sY''$ with $y_1 \neq y_2$ and $\phi''\left( y_1\right) = \phi''\left( y_2\right)$. There is $f \in C_0\left( \sY''\right)$ such that $f\left( y_1\right)  \neq f\left( y_2\right)$. Clearly $f \in A$ but $f \notin C_0\left(
	\sY \right)$ i.e. $C_0\left(\sY \right)  \subsetneqq A$. It is a contradiction which proves that the existence jf $\sY''$ is impossible.  
	
\end{proof}
\begin{definition}\label{blowing_uni_h_defn}
	The initial object of the category $\sX$-{Hausdorff category} is the	$\sX$ - \textit{universal Hausdorff quotient space} denoted by $\mathfrak{Hausdorff}\left(\sX \right)$, 
\end{definition}
\begin{definition}\label{hausdorff_funct_defn}
	The map $\sX \mapsto \mathfrak{Hausdorff}\left(\sX \right)$ yields the {\it Hausdorff functor}
	\be\label{hausdorff_funct_eqn}
	\mathfrak{Top} \xrightarrow{\mathfrak{Hausdorff}} \mathfrak{Hausdorff}
	\ee
from the category $\mathfrak{Top}$ of topological spaces and continuous maps to the category of Hausdorff locally compact spaces and continuous maps. 
\end{definition}
\begin{remark}\label{hausdorff_sim_rem}
One has an alternative construction of the the space $\mathfrak{Hausdorff}\left(\sX \right)$. We define the minimal  equivalence relation $\sim_{\mathfrak{Hausdorff}}$ on $\sX$ such  that for all $x', x''\in \sX$ the condition    $x'\sim_{\mathfrak{Hausdorff}}x''$ holds if and only if for any neighborhoods $\sU'$ and $\sU''$ of $x'$ and $x''$ one has $\sU'\cap \sU'' \neq \emptyset$. In this case
\be\label{hausdorff_sim_eqn}
\mathfrak{Hausdorff}\left(\sX \right) \cong \sX / \sim_{\mathfrak{Hausdorff}}
\ee
We say that $\sim_{\mathfrak{Hausdorff}}$ is the {\it Hausdorff equivalence relation}.
\end{remark}

\begin{definition}\label{gelfand_r_defn}
If $A$ is a $C^*$-algebra then the universal Hausdorff quotient space $$\mathfrak{Hausdorff}\left(  \mathfrak{Gelfand}\left(A \right)\right)$$ of the Gelfand space $\mathfrak{Gelfand}\left(A \right)$ denoted by  $\mathfrak{Gelfand}_r\left(A \right)$ is {\it the reduced Gelfand space of} $A$, i.e. $\mathfrak{Gelfand}_r\left(A \right)\bydef \mathfrak{Hausdorff}\left(  \mathfrak{Gelfand}\left(A \right)\right)$,
\end{definition}

\subsection{Good $*$-homomorphisms and Gelfand functor}
\paragraph*{}
If both  $A$ and $\widetilde{A}$ are $C^*$-algebras and $\varphi :M\left(  A\right) \to M\left( \widetilde{A}\right)$ is a good $*$-homomorphism (cf. Definition \ref{good_hom_defn})
%\be\label{unital_notation_eqn}
%A^\sim\bydef\begin{cases}
	%	A & \text{if }A \text{ is unital}\\
	%	A^+ & \text{if }A \text{ is not unital}\\
	%\end{cases}
	%\ee
	%where $A^+$ is given by the equation \eqref{a_plus_eqn}.
	%If $A$ is nonunital $C^*$-algebra then 	$A^{\sim}$
	%is the minimal unitization of $A$ (cf. Definition \ref{multiplier_min_defn}).
	%$If both $A$ and $\widetilde{A}$ are $C^*$-algebras then any injective  $*$-homomorphism 	
	then there is the natural homomorphism of lattices
	\be\label{lat_mor_eqn}
	\begin{split}
		\mathfrak{Lattice}\left(  A\right) \xrightarrow{\mathfrak{Lattice}\left(\varphi \right)}\mathfrak{Lattice} \left( \widetilde A\right),\\
		L \mapsto \text{ the $C^*$-norm closure of } \widetilde A \varphi\left(L \right).
	\end{split}
	\ee
	On the other hand the homomorphism $\mathfrak{Lattice}\left(\varphi \right)$ yields a mapping of filters
	\be\label{fil_mor_eqn}
	\begin{split}
		\mathfrak{Filters}\left(  A\right) \xrightarrow{\mathfrak{Filters}\left(\varphi \right)}\mathfrak{Filters} \left( \widetilde A\right),\\
		\left\{L_\la\right\}_{\la \in \La} \mapsto \text{ a filter generated by } \bigcup_{\la \in \La} \left\{\mathfrak{Lattice}\left(\varphi \right)\left( L_\la\right) \right\}_{\la \in \La}
	\end{split}
	\ee
	If $\xi \in \mathfrak{Ultrafilters}\left(  A\right) $ is an ultrafilter then we define a set
	\be\label{uphi-inv_eqn}
	\mathfrak{Ultrafilters}\left(\varphi \right)^{-1}\left(\xi \right) \bydef\left\{\left.\widetilde \xi \in \mathfrak{Ultrafilters}\left(\widetilde  A\right)\right|\forall \widetilde{L}'\in \mathfrak{Filters}\left(\varphi\right)\left( \xi\right)\subset \widetilde \xi \right\}
	\ee
	For all $\xi', \xi''\in \mathfrak{Ultrafilters}\left(A \right)$ with $\xi' \neq \xi''$ there are $L' \in \xi'$ and $L'' \in \xi''$ such that $L'\cap L'' = \{0\}$. It turns out that
	\bean
	\xi', \xi''\in \mathfrak{Ultrafilters}\left(A \right)\quad \xi' \neq \xi'' \quad \Rightarrow\\
	\Rightarrow \quad \mathfrak{Ultrafilters}\left(\varphi \right)^{-1}\left(\xi' \right)\cap \mathfrak{Ultrafilters}\left(\varphi \right)^{-1}\left(\xi ''\right)= \emptyset
	\eean 
	Using the above map one can construct the natural map
	\be\label{ultra_mor_eqn}
	\begin{split}
		\mathfrak{Ultrafilters}\left(\varphi \right)\left( \widetilde \xi\right) = \xi \quad \Leftrightarrow \quad \widetilde \xi \in \mathfrak{Ultrafilters}\left(\varphi_M \right)^{-1}\left(\xi \right),\\
		\mathfrak{Ultrafilters}\left(\widetilde  A\right) \xrightarrow{\mathfrak{Ultrafilters}\left(\varphi \right)}\mathfrak{Ultrafilters} \left( A\right).
	\end{split}
	\ee
	Since $\varphi$ is good the map \eqref{ultra_mor_eqn} naturally yields the continuous map 
	\be\label{gelfand_map_eqn}
	\mathfrak{Gelfand}\left(\widetilde A \right)\xrightarrow{\mathfrak{Gelfand}\left(\varphi \right)}\mathfrak{Gelfand}\left(A \right).
	\ee
	\begin{theorem}\label{good_thm}
	One has
			\begin{enumerate}
				\item [(i)] 	If the  homomorphism $\varphi : M\left( A\right) \to M\left( \widetilde{A}\right)$ good 	  then the map \eqref{gelfand_map_eqn} is continuous (cf. Definition \ref{top_continuous_defn}).
				\item[(ii)] If $\varphi$ is injective then the map  \eqref{gelfand_map_eqn} is surjective.
			\end{enumerate}
			
	\end{theorem}
	\begin{proof}(i) 		If $L \subset A$ is a closed left ideal and 
		\bean
		\widetilde L\bydef \text{ the $C^*$-norm closure of } \widetilde A \varphi\left(L \right).
		\eean	
		then $\widetilde L$ is a closed left ideal and	
		\bean
		\mathfrak{Gelfand}\left(\varphi \right)^{-1}\left( \mathfrak{Gelfand}\left(A \right)_L\right) \subset \mathfrak{Gelfand}\left(\widetilde A \right)_{\widetilde L}
		\eean 
		So any  set of basis (cf. Definition \ref{top_base_defn}) of the topology of  $\mathfrak{Gelfand}\left(A\right)$  corresponds to a set of subbasis of the topology of $\mathfrak{Gelfand}\left(\widetilde A\right)$. From (c) of the Theorem \ref{top_continuous_thm} it turns out turns out that the map \ref{gelfand_map_eqn} is continuous. 
		
		(ii) If $\varphi$ is injective   then for any $\xi \in \mathfrak{Gelfand}\left(A\right)$   the given by \eqref{uphi-inv_eqn} set $\mathfrak{Ultrafilters}\left(\varphi \right)^{-1}\left(\xi \right)$ is not empty. From \eqref{ultra_mor_eqn} it follows that $\xi \in  \mathfrak{Gelfand}\left(\varphi\right)\left( \mathfrak{Gelfand}\left(A\right) \right)$.
		
	\end{proof}
	\begin{corollary}\label{good_cor}
	Any good $*$-homomorphism $\varphi : M\left( A\right) \to M\left( \widetilde{A}\right)$ yields the natural continuous map
		\be\label{gelfandr_map_eqn}
	\mathfrak{Gelfand}_r\left(\widetilde A \right)\xrightarrow{\mathfrak{Gelfand}_r\left(\varphi \right)}\mathfrak{Gelfand}_r\left(A \right).
	\ee
	of reduced Gelfand spaces (cf. Definition \ref{gelfand_r_defn}). Moreover if $\varphi$ is injective then $\mathfrak{Gelfand}_r\left(\varphi \right)$ is surjective.
	\end{corollary}
	\begin{proof}
	Follows from the Theorem \ref{good_thm} and the Definition \ref{gelfand_r_defn}.
	\end{proof}
	\begin{definition}\label{gelfand_functor_defn}
		The Theorem \ref{good_thm} yields  a contravariant {\it Gelfand functors} from 
		\bea\label{gelfand_functor_eqn}
		C^*\text{-}\mathfrak{Gelfand}\xrightarrow{\mathfrak{Gelfand}}\mathfrak{Top},\\\label{gelfand_h_functor_eqn}
			C^*\text{-}\mathfrak{Gelfand}\xrightarrow{\mathfrak{Hausdorff}~\circ~\mathfrak{Gelfand}}\mathfrak{Hausdorff}
		\eea
	the Gelfand category (cf. Definition \ref{gelfand_category_defn}) to the categories   $\mathfrak{Top}$ (resp. $\mathfrak{Hausdorff}$ of topological (resp. Hausdorff locally compact) spaces and continuous maps.  
		Any functor  $\mathfrak{Top}\to \mathscr C$ and/or $\mathfrak{Hausdorff}\to \mathscr C$ naturally yields the functor $ C^*\text{-}\mathfrak{Gelfand}
		\to \mathscr C$. This circumstance is a motivation of the term "algebraic topology of $C^*$-algebras".
	\end{definition}
	\begin{lemma}\label{product_lem}
		For any $C^*$-algebra $A$ and any locally compact Hausdorff space $\sX$ there is the natural homeomorphism
		\be\label{suspension_g_eqn}
		\begin{split}
			\mathfrak{Gelfand}_r\left(C_0\left(\sX \right)\otimes  A \cong C_0\left(\sX, A \right) \right) \cong \sX \times \mathfrak{Gelfand}_r\left(A \right).
		\end{split}
		\ee
	\end{lemma}
	
	\begin{proof}
		There are natural injective $*$-homomorphisms 
		\bean
		\phi_A : A \hookto M\left(C_0\left(\sX, A \right)  \right),\\ 
		\phi_\sX : C_0\left( \sX\right)  \hookto M\left(C_0\left(\sX, A \right)  \right) 
		\eean
		so the Theorem \ref{good_thm}  yields surjective continuous maps
		\bean
		\varphi_A:		\mathfrak{Gelfand}_r\left(C_0\left(\sX, A \right) \right)\onto \mathfrak{Gelfand}_r\left(A  \right),\\
		\varphi_\sX:			\mathfrak{Gelfand}_r\left(C_0\left(\sX, A \right) \right)\onto  \mathfrak{Gelfand}_r\left(C_0\left(\sX \right)   \right)\cong \sX.
		\eean
		These maps naturally yield a continuous map  
		\be\label{dir_map_eqn}
		\begin{split}
			\mathfrak{Gelfand}_r\left(C_0\left(\sX, A \right)\right)   \to 	\mathfrak{Gelfand}_r\left(A  \right)\times \sX.
		\end{split}
		\ee
		
		If $\left(x, r \right)\in \sX \times \mathfrak{Gelfand}_r\left(A  \right)$, both $\left\{\sV_\la\right\}_{\la\in \La}$ and $\left\{\sU_\la\right\}_{\la\in \La}$ ares bases on neighborhoods of $x$ and $r$ then $\left\{\sV_\la\times \sU_\la\right\}_{\la\in \La}$ is a  basis of neighborhoods of $\left(x, r \right)$. If $\sU_\la$ (resp. $\sV_\la$) corresponds to the closed left ideal $L^A_\la \subset A$ (resp. $L^\sX_\la \subset C_0\left(\sX \right)$) and
		\bean
		\begin{split}
			\widetilde L^\sX_\la \bydef \left\{ \left.f \in C_0\left(\sX, A \right) \right|\supp f \subset \sV_\la\right\},\\
			\widetilde L^A_\la \bydef \left\{ \left.f \in C_0\left(\sX, A \right) \right| f\left( \sX\right)  \subset \sU_\la\right\}
		\end{split}
		\eean
		then 
		\bean
		\begin{split}
			L_\la \bydef\widetilde L^\sX_\la \cap \widetilde L^A_\la.
		\end{split}
		\eean
		is a closed left ideal of $ C_0\left(\sX,A \right) $. There is a filter $\left\{L_\la\right\}$. If the filter $\left\{L_\la\right\}$ is not an ultrafilter then there are $y', y'' \in \mathfrak{Gelfand}_r\left(C_0\left(\sX,A \right)  \right)$ such that $y'\neq y''$ and corresponding to $y'$ and $y''$ ultrafilters  $\left\{L'_\la\right\}$ and $\left\{L''_\la\right\}$ exceed $\left\{L_\la\right\}$. There is $\la_0\in \La$ such that
		\be\label{intsec_eqn}
		\begin{split}
			\forall \la \in \La \quad \la \ge \la_0 \quad L'_\la \cap L''_\la = \{0\}\quad \Rightarrow\\\Rightarrow \quad \left(  L'_\la\cap \widetilde L^\sX_\la \cap \widetilde L^A_\la \right) \bigcap \left( L''_\la\cap \widetilde L^\sX_\la \cap \widetilde L^A_\la\right)  = \{0\}
		\end{split}
		\ee
		If $ \left( L'_\la\cap \widetilde L^A_\la\right)
		\bigcap  \left( L''_\la\cap \widetilde L^A_\la\right)  = \{0\}$ for any $\la \ge \la_0$ then $\varphi_A(y')\neq  \varphi_A(y'')$ but it is impossible since $\varphi_A(y')= x$ and $\varphi_A(y'')=x$. Now the condition \eqref{intsec_eqn} holds if 
		$ \left( L'_\la\cap \widetilde L^\sX_\la\right)
		\bigcap  \left( L''_\la\cap \widetilde L^\sX_\la\right)  = \{0\}$ for any $\la \ge \la_0$. Then one has $\varphi_\sX(y')\neq  \varphi_\sY(y'')$ but it is impossible since $\varphi_\sX(y')= r$ and $\varphi_\E(y'')=r$. So the filter $\left\{L_\la\right\}$ is an ultrafilter which yields the unique $y \in  \mathfrak{Gelfand}_r\left(SA  \right)$. In result one has a  continuous map 
		\bean
		\begin{split}
			\mathfrak{Gelfand}_r\left(A  \right)\times \sX\to 	\mathfrak{Gelfand}_r\left(C_0\left(\sX,A \right)\right) ,\\
			\left(x, r \right)\mapsto y 
		\end{split}
		\eean
		which is inverse to the map \eqref{dir_map_eqn}.
		
	\end{proof}
	\begin{definition}\label{image_defn}
		If  $f \in C_b\left( \mathfrak{Gelfand}_r\left(A \right) \right)$ an element $a \in K\left( A\right)$ is the \textit{image} of $f$ if for any $x \in  \mathfrak{Gelfand}_r\left(A \right)$ with $f\left(x \right)\neq 0$  and any $\eps > 0$ there is an ideal $L \subset A$ such that 
		\begin{enumerate}
			\item[(a)] $\quad x \in \mathfrak{Gelfand}_r\left(A \right)_L$ (cf. equation \eqref{topology_eqn}), i.e. $L$ represents a neighborhood of $x$.
			\item[(b) ]	$\quad \forall b \in L \setminus \{0\}\quad   \left\| f\left( x\right) b - ab\right\| < \eps \left\| b\right\|$
			
		\end{enumerate}
		We write $a = \mathfrak{Image}\left( f\right)$.
	\end{definition}
	\begin{exercise}
		Using the Spectral Theorem (cf. \ref{spectral_thm}) prove that if the image exists then it is unique.
	\end{exercise}
	\begin{lemma}\label{image_lem}
		If $A$ is a $C^*$-algebra then 
		any normal element $a$ (cf. Definition \ref{normal_defn}) in  $K\left(A \right) $ is the image (cf. Definition \ref{image_defn}) of an element in $C_b\left( \mathfrak{Gelfand}_r\left(A \right)\right)$
	\end{lemma}
	\begin{proof}
		Let  $a \in K\left(A \right) $ is a normal element (cf. Definition \ref{normal_defn}). If $C_0\left(\sX \right)\subset K\left(A \right) $ is the generated by $a$ commutative $C^*$-subalgebra then the Theorem \ref{good_thm} yields a continuous map $\phi: \mathfrak{Gelfand}_r\left(A  \right)\onto \mathfrak{Gelfand}_r\left(A  \right)\onto \sX$. If $f\in C_0\left( \sX\right)$ corresponds to $a$ then $a$ is the image of $f\circ \varphi$ where $\varphi: \mathfrak{Gelfand}_r\left(A  \right)\onto \sX$ comes from $\phi$.  
	\end{proof}
	\begin{definition}\label{covering_defn}
		Let   $A$ and  $\widetilde{A}$ be  $C^*$-algebras.  A good     $*$-homomorphism $\lift: M(A) \hookto M\left( \widetilde{A}\right)$ (cf. Definition 
		\ref{good_hom_defn}) is a \textit{noncommutative covering} if
		the given by \eqref{gelfand_map_eqn}	\bean
		\mathfrak{Gelfand}_r\left(\widetilde A \right)\xrightarrow{\mathfrak{Gelfand}_r\left(\lift \right)}\mathfrak{Gelfand}_r\left(A \right)
		\eean 
		continuous map is a (topological) covering (cf. Definition \ref{top_covering_defn}).
		The  \textit{covering group} is given by  
		\be\label{cov_group_eqn}
		G\left(\left.\widetilde{A} \right| A\right) \bydef \left\{\left.g \in \Aut\left(\widetilde A \right)\right| \forall a \in A \quad M\left( g\right)\lift\left(   a \right) = \lift\left(   a \right) \right\}
		\ee
		where $\Aut\left(\widetilde A \right)$ is the group of $*$-automorphisms, and $M\left( g\right) \in \Aut\left(M\left( \widetilde A \right) \right)$ is the natural extension of $g$. 
		
	\end{definition}

	\begin{definition}\label{fund_den}
		If $A$ is a $C^*$ -algebra then a noncommutative covering $\lift: M\left( A\right)   \hookto M\left( \widetilde{A}\right) $ is \textit{universal}, if for any noncommutative covering   $\lift': M\left( A\right)   \hookto M\left( \widetilde{A}'\right) $ there is natural noncommutative covering $\lift'': M\left( \widetilde{A}' \right) \hookto M\left( \widetilde{A}\right)$ with commutative diagram
		\bean
		\begin{tikzcd}
			M\left( \widetilde{A}'\right)  \arrow[rr, "M\left( \lift''\right) "] && M\left( \widetilde{A}\right) \\
			& \arrow[lu, "\lift'"] M\left(A \right)  \arrow[ru, "\lift"] &
		\end{tikzcd}
		\eean
		where $M\left( \lift''\right)$ is the unique extension of $\lift''$.
		If the universal covering exists then  the \textit{fundamental group} of $A$ is given by
		\be\label{fund_eqn}
		\pi_1\left(A \right) \bydef G\left(\left.\widetilde{A} \right| A\right)
		\ee
		where $G\left(\left.\widetilde{A} \right| A\right)$ is the covering group (cf. Definition \ref{covering_defn}).
	\end{definition}
	
	\begin{remark}
		From the Lemma \ref{lem_g_lem} it follows that in case of commutative $C^*$-algebras the theory of coverings of $C^*$-algebras coincides with the theory of coverings of topological spaces.
	\end{remark}
	\subsection{Maps onto spaces of maximal ideals}
	\paragraph{}
	If $A$ is a unital $C^*$-algebra and $\mathfrak{Gelfand}_r\left(A \right)$ is the reduced Gelfand space (cf. Definition \ref{red_defn}) then from the Lemma \ref{image_lem} it follows that the $C^*$ algebra $C\left( \mathfrak{Gelfand}_r\left(A \right)\right)$ is unital.
	\begin{lemma}\label{gelfand_compact_lem}
		If $A$ is an unital $C^*$-algebra then the space  $\mathfrak{Gelfand}\left(A \right)$ is compact.
	\end{lemma}
	\begin{proof}
		If $\mathfrak{Gelfand}\left(A \right)$ is not compact then there is an infinite set $\left\{\sU_\a\right\}_{\a \in \mathscr A}$ of open subsets of $\mathfrak{Gelfand}\left(A \right)$ such that
		\begin{itemize}
			\item 
			\bean
			\bigcup_{\a \in \mathscr A}\sU_\a = \mathfrak{Gelfand}\left(A \right)
			\eean
			\item
			\bean
			\forall \mathscr A_0 \subset \mathscr A \quad \mathscr A_0\quad \text{ is finite}\quad \Rightarrow \quad \bigcup_{\a \in \mathscr A_0}\sU_\a\subsetneqq \mathfrak{Gelfand}\left(A \right)
			\eean
		\end{itemize}
		Without loss of generality we suppose that for every $\a \in \mathscr A$ there is a closed left ideal $L_\a \subset A$ with $\sU_\a = \mathfrak{Gelfand}\left(A \right)_{L_\a}$ (cf. equation \eqref{topology_eqn}), If  $\left\{L_\la\right\}_{\la\in\La}$ is the family of all finite sums of $L_\a$, i.e.
		\bean
		\forall \la \in \La\quad  L_\la = \sum_{j = 1}^m L_j, \quad L_1, ..., L_m \in \left\{L_\a\right\}_{\a \in \mathscr A}
		\eean 
		then $\left\{L_\la\right\}$ is a net. If $L$ is the $C^*$-norm closure of the union
		\bean
		\bigcup_{\la \in \La} L_\la
		\eean 
		then one has
		\bean
		\bigcup_{\a \in \mathscr A}\sU_\a  \subset \mathfrak{Gelfand}\left(A \right)_L \quad \Rightarrow \quad  \mathfrak{Gelfand}\left(A \right)_L =  \mathfrak{Gelfand}\left(A \right).
		\eean 
		It turns out that $1_A \in L$, so there is the $C^*$-norm limit
		\bean
		1_A =\lim_{\la \in \La} a_\la, \quad a_\la \in L_\la.
		\eean 
		On the other hand the above limit is impossible since one has
		\bean
		\forall a_\la \in L_\la \quad \left\|a_\la - 1_A \right\|\ge 1. 
		\eean 
		This contradiction proves this lemma.
		
	\end{proof}
	\begin{empt}
		Let $F\subset \mathfrak{Gelfand}\left(A \right)$ be a closed subset. For any $\xi \in \mathfrak{Gelfand}\left(A \right)\setminus F$ we define a set $\left\{L_\la\right\}_{\la \in \La_\xi}$ such that for any $\la \in \La_\xi$ one has 
		\begin{itemize}
			\item $  \mathfrak{Gelfand}\left(A \right)_{L_\la}\subset \mathfrak{Gelfand}\left(A \right)\setminus F$
			\item $\mathfrak{Gelfand}\left(A \right)_{L_\la}$ yields an open neighborhood of $\xi$
		\end{itemize}
		where the notation \eqref{topology_eqn} is used. We define $\ker F$ as the $C^*$-norm closure of the union 
		\be\label{aun_eqn} 
		\begin{split}
			\bigcup_{\substack{\xi \in \mathfrak{Gelfand}\left(A \right)\setminus F\\\la \in \La_\xi}}~~  L_\la
		\end{split}
		\ee
		For each subset $I$ of $A$ define a set
		\be\label{hull_g_eqn}
		\mathrm{hull}\left( I\right)\bydef \bigcap_{I \subset L}\mathfrak{Gelfand}\left(A \right)_L
		\ee 
		where $L$ is a closed ideal and $\mathfrak{Gelfand}\left(A \right)_L$ is given by \eqref{topology_eqn}.		
	\end{empt}
	\begin{remark}
		The equations \eqref{aun_eqn} and \eqref{hull_g_eqn} can be regarded as analogs of \eqref{ker_eqn} and \eqref{hull_eqn} ones.
	\end{remark}
	\begin{empt} Let $A$ be an unital $C^*$-algebra.
		If  $\xi \in \mathfrak{Gelfand}\left(A \right)$ then $\ker \{\xi\}$ is the closed left ideal such that
		\be 
		\mathrm{hull}\left( \ker \xi\right) = \mathfrak{Gelfand}\left(A \right)\setminus \{\xi\}.
		\ee
		If $L \subset A$ is a closed left ideal then one has
		\be
		\begin{split}
			L \not\subset \ker \{\xi\} \quad \Leftrightarrow \quad \xi \in \mathfrak{Gelfand}\left(A \right)_L,
		\end{split}
		\ee
		or
		\be\label{max_y_eqn}
		\begin{split}
			L \not\subset \ker \{\xi\} \quad \Leftrightarrow \quad 	\mathrm{hull}\left( \ker \{\xi\} \cup L\right) = \mathfrak{Gelfand}\left(A \right)\quad \Rightarrow \quad \ker \{\xi\} \cup L = A.
		\end{split}
		\ee
		From \eqref{max_y_eqn} it turns out that $\ker{\xi}$ is the maximal proper ideal. In result one has the following theorem. 
	\end{empt}
	\begin{theorem}\label{gelfand_max_thm}
		For any unital $C^*$-algebra there is the natural continuous  map
		\be\label{gelfand_max_eqn}
		\begin{split}
			\mathfrak{Max}_A: \mathfrak{Gelfand}\left(A \right)\to  \mathfrak{LeftMax}\left(A \right),\\
			\xi \mapsto \ker\{\xi\}
		\end{split}
		\ee
	\end{theorem}	
	\begin{remark}
		The equation \eqref{gelfand_max_eqn} is an analog of the Jacobson topology on the prime spectrum (cf. \eqref{hull_eqn}), i.e. the map $\mathfrak{Max}_A$ can be regarded as a homeomorphism.
	\end{remark}
	\begin{empt}
		If $A$ is unital $C^*$-algebra and a left ideal $L$ contains an invertible element then $L = A$. So for any $t \in \mathfrak{LeftMax}\left(A \right)$ does not contain invertible element. So the generated by $t$ two sided ideal $\mathfrak{Ideal}\left( t\right)$ does not contain  an invertible element, so the ideal $\mathfrak{Ideal}\left( t\right)$ is proper. If $a \notin \mathfrak{Ideal}\left( t\right)$ then $a \notin t$. But $Aa = t = A$ since the ideal $t$ is maximal. So  $Aa = \mathfrak{Ideal}\left( t\right) = A$, i.e. the ideal is maximal.
	\end{empt}
	\begin{lemma}\label{gelfand_sp_thm}
		If $A$ is an unital $C^*$-algebra then there is the natural surjective continuous maps
		\be\label{gelfand_sp_eqn}
		\begin{split}
			\hat f :\mathfrak{LeftMax}\left(A \right)\onto \hat A,\\
			\check f :\mathfrak{LeftMax}\left(A \right)\onto \check A
		\end{split}
		\ee
		from the space of maximal left ideals onto the primitive and prime spectra.
	\end{lemma}
	\begin{proof}
		If $t \in \mathfrak{LeftMax}\left(A \right)$ then $B \bydef t\cap t^*$ is the maximal hereditary $C^*$-algebra (cf. \ref{hered_ideal_lem}). Let  $\hat x', \hat x'' \in \hat A$ with $\hat x_1 \neq \hat x_2$ and $\rep_{\hat x_1}\left(B \right) = \rep_{\hat x_2}\left(B \right) = \{0\}$ where $\rep$ means the irreducible representation. Then there are following inclusions of hereditary subalgebras
		\be
		\begin{split}
			B \subsetneqq B'  \subsetneqq A.
		\end{split}
		\ee
		where $B'$ is the generated by $B \cup \ker \rep_{\hat x'}$ hereditary $C^*$-subalgebra. It turns out that $B$ is not a maximal hereditary $C^*$-subalgebra. From this contradiction we conclude that there is the unique $\hat x \in \hat A$ with $\rep_{\hat x}\left( B\right) = \{0\}$. Similarly using  the Proposition  \ref{hered_spectrum_prop} one can prove that there is the unique $\check{x}\in \check A$ with $B \subset I_{\check x}$ where  $I_{\check x}$ is the corresponding to $\check{x}$ ideal. The maps \eqref{gelfand_sp_eqn} are given by
		\bean
		\hat f \left( t\right) = \hat x \quad \Leftrightarrow \quad \rep_{\hat x}\left( t \cap t^*\right)= \{0\},\\ 
		\check f \left( t\right) = \check x \quad \Leftrightarrow \quad  t \cap t^*\subset I_{\check{x}}.
		\eean 
		We leave to the reader the proof of that these maps are continuous.
	\end{proof}
	\begin{lemma}\label{one_point_lem}
		If $A$ is a nonunital $C^*$-algebra and $A^+\bydef A \oplus \C$ is its minimal unitization then 
		\bean 
		\begin{split}
			\mathfrak{Gelfand}_r\left(A^+ \right)\cong \mathfrak{Gelfand}_r\left(A \right)\cup\{x_0\}
		\end{split}
		\eean
		where $\{x_0\}$ is the singleton. 
	\end{lemma}
	\begin{proof}
		The set   $\left\{L_\la\right\}_{\la\in \La}$   all  closed left ideals  of $L_\la\subset A^+$ with $ \forall \La \quad L_\la \not\subset A$ is a filter,
	 If $L\notin \left\{L_\la\right\}$ is a closed left ideal such that 
		\bean
		L \subsetneqq A,\\
		\eean  
		then
		\bean 
		\begin{split}
			L' \bydef \left\{\left.a \in A^+\right| A a \cap L = \{0\} \right\}\quad \Rightarrow \quad L' \in \left\{L_\la\right\},\\
			L \cap L' = \{0\}.
		\end{split}
		\eean
		The above equation means that the filter $\left\{L_\la\right\}$ could not be extended, i.e. it is an ultrafilter.	
		It corresponds to a point $\{+\}\in \mathfrak{Gelfand}\left(A^+ \right)\setminus \mathfrak{Gelfand}\left(A \right)$ such that $\mathfrak{Gelfand}\left(A^+ \right)\cong \mathfrak{Gelfand}\left(A \right)\cup \{+\}$. We leave other details of the proof to the reader.
	\end{proof}
	
	\begin{corollary}\label{one_point_cor}
		If $A$ is a nonunital $C^*$-algebra and $A^+\bydef A \oplus \C$ is its minimal unitization then  the reduced Gelfand space	$\mathfrak{Gelfand}_r\left(A^+ \right)$ is the one point compactification (cf. Definition \ref{one_point_compactification_defn}) of the reduced Gelfand space $\mathfrak{Gelfand}_r\left(A \right)$, i.e.
		\be 
		\begin{split}\label{one_point_cor_eqn}
			\mathfrak{Gelfand}_r\left(A^+ \right)\cong \mathfrak{Gelfand}_r\left(A \right)\cup\{x_0\}
		\end{split}
		\ee
		where $\{x_0\}$ is the singleton. 
	\end{corollary}
	\begin{proof}
		Follows from the Lemma \ref{one_point_lem} and the Theorem \ref{locally_compact_hausdorff_thm}.
	\end{proof}

	\section{Gelfand space and $K$-theory}
	\subsection{Basic constructions}
	\paragraph{}
	Any element of $A\otimes \K$ corresponds to a matrix
	\be\label{inf_m_eqn}
	\begin{split}
		\begin{pmatrix}
			a_{1,1}& \ldots & a_{j,1} &  \dots \\
			\vdots& \ddots & \vdots& \ldots\\
			a_{j,1}& \ldots &a_{j,j} &  \ldots \\
			\vdots& \vdots & \vdots & \ddots \\
		\end{pmatrix}\in \K\left( \ell^2\left( A\right) \right)\in \K\left( L^2\left( \N \right)\right),\\
		\text{or}\\
		\begin{pmatrix}
			a_{1,1}& \ldots & a_{n,1}  \\
			\vdots& \ddots & \vdots\\
			a_{n,1}& \ldots &a_{n,n} \\
		\end{pmatrix}\in \K\left( L^2\left( \{1,...,n\} \right)\right) = \mathbb{M}_n\left(A\right). 
	\end{split}
	\ee

	\begin{definition}\label{elementary_defn}
		If $A$ is an unital  $C^*$-algebra then a given by \eqref{inf_m_eqn} element $a\in 	 \mathbb{M}_n\left(A \right)$ is  $j,k$-\textit{elementary} if 
		\be\label{elementary_eqn}
		a_{p,q} = \begin{cases}
			1_A & p = j \quad q = k \\
			0 & \text{otherwise}.
		\end{cases}
		\ee
		The  $j,k$-{elementary} element is denoted by
				\be\label{elementary_jk_eqn}
\mathfrak{e}^{j,k}_A\in \mathbb{M}_n\left(A^+ \right)
		\ee
		
	\end{definition}
	\begin{empt}
	If $A$ is an unital $C^*$ algebra then the diagonal map 
		\be\label{diag_A}
	\begin{split}
	d_A : A \hookto \mathbb{M}_n\left(A \right) \cong \K\left(A^n \right) ,\\
	a \mapsto 		\begin{pmatrix}
		a& \ldots & 0  \\
		\vdots& \ddots & \vdots\\
		0& \ldots &a \\
	\end{pmatrix}
	\end{split}
	\ee
	is a good $*$-homomorphism (cf. Definition \ref{good_hom_defn}). From the Theorem \ref{good_thm} it follows that there is a surjective continuous map 
		\be\label{ge_fc_eqn}
	\begin{split}
		\mathfrak{Gelfand}_r\left(\mathbb{M}_n\left(A \right)\right) \onto \mathfrak{Gelfand}_r\left(A \right)
	\end{split}
	\ee
	of compact Hausdorff  spaces, which yields the natural injective $*$-homomorphism
	\be\label{fin_g_eqn}
\begin{split}
	C\left( \mathfrak{Gelfand}\left(A \right)\right)  \hookto C\left( \mathfrak{Gelfand}\left(\mathbb{M}_n\left(A \right)\right) \right).
\end{split}
\ee

	\end{empt}
	\begin{lemma}
If we consider the equation \eqref{fin_g_eqn} then $C\left( \mathfrak{Gelfand}_r\left(\mathbb{M}_n\left(A \right)\right) \right)$ is a finitely generated projective 	$C\left( \mathfrak{Gelfand}_r\left(A \right)\right)$-module
	\end{lemma}
	\begin{proof}
		If $S_n$ is a group of transpositions of $n$ then there is the natural action $S_n \times \mathbb{M}_n\left(A \right)\to \mathbb{M}_n\left(A \right)$ which  transposes rows and columns of matrix. This action yields the action $S_n \times  \mathfrak{Gelfand}_r\left(\mathbb{M}_n\left(A \right)\right)\to  \mathfrak{Gelfand}_r\left(\mathbb{M}_n\left(A \right)\right)$. From 
		\be\label{finU_g_eqn}
	\begin{split}
	\mathbb{M}_n\left(A \right)^{S_n}\bydef \left\{\left.\tilde a \in \mathbb{M}_n\left(A \right)\right| \forall s \in S_n \quad s \tilde a =  \tilde a \right\}= d_A\left(A \right) \cong A
		\end{split}
	\ee
it turns out that 	$\mathfrak{Gelfand}_r\left(A \right) \cong 	 \mathfrak{Gelfand}_r\left(\mathbb{M}_n\left(A \right)\right) /S_n$. Since for any nontrivial $s \in S_n$ one has
$$
s \left(\mathbb{M}_n\left(A \right) \mathfrak{e}^{j,j}_A\right) \cap \mathbb{M}_n\left(A \right) \mathfrak{e}^{j,j}_A = \{0\}
$$
one can deduce then the action $S_n \times  \mathfrak{Gelfand}_r\left(\mathbb{M}_n\left(A \right)\right)\to  \mathfrak{Gelfand}_r\left(\mathbb{M}_n\left(A \right)\right)$ is properly disconnected. If follows that the map \eqref{ge_fc_eqn} is a finite-fold covering, and $C\left( \mathfrak{Gelfand}_r\left(\mathbb{M}_n\left(A \right)\right) \right)$ is a finitely generated projective 	$C\left( \mathfrak{Gelfand}_r\left(A \right)\right)$-module.
\end{proof}
If  
 $p \in \mathbb{M}_n\left(A \right)$ is a projector then one has the natural disjoint union of open sets
	\be\label{matrppbu_eqn}
	\begin{split}
		\mathfrak{Gelfand}_r\left(\mathbb{M}_n\left(A \right)\right)= \mathfrak{Gelfand}_r\left(\mathbb{M}_n\left(A \right)\right)_{\mathbb{M}_n\left(A \right)p}\bigsqcup \mathfrak{Gelfand}_r\left(\mathbb{M}_n\left(A \right)\right)_{\mathbb{M}_n\left(A \right)\left(1_{\mathbb{M}_n\left(A \right)} -p\right) }
	\end{split}
	\ee
	(cf. notation \ref{topology_eqn}). Taking into account the equation  \eqref{matrppbu_eqn} one has a direct sum 
	\bean
	\begin{split}
	C\left( \mathfrak{Gelfand}_r\left(\mathbb{M}_n\left(A \right)\right)_{\mathbb{M}_n\left(A \right)p} \right)\bigoplus  C\left(  \mathfrak{Gelfand}_r\left(\mathbb{M}_n\left(A \right)\right)_{\mathbb{M}_n\left(A \right)\left(1_{\mathbb{M}_n\left(A \right)} -p\right) }\right)= \\=	C\left( \mathfrak{Gelfand}_r\left(\mathbb{M}_n\left(A \right)\right)\right),  
	\end{split}
	\eean
	Both $C\left( \mathfrak{Gelfand}_r\left(\mathbb{M}_n\left(A \right)\right)_{\mathbb{M}_n\left(A \right)p} \right)$ and $C\left(  \mathfrak{Gelfand}_r\left(\mathbb{M}_n\left(A \right)\right)_{\mathbb{M}_n\left(A \right)\left(1_{\mathbb{M}_n\left(A \right)} -p\right) }\right)$ are left $C\left(\mathfrak{Gelfand}_r\left(A \right) \right)$-modules.
	i.e. 	$C\left( \mathfrak{Gelfand}_r\left(\mathbb{M}_n\left(A \right)\right)_{\mathbb{M}_n\left(A \right)p}\right) $ is a finitely generated $C\left(\mathfrak{Gelfand}_r\left(A \right) \right)$-module. In result one has the following Theorem 
	\begin{theorem}\label{k_0_alg_top_thm}
	If $A$ is an unital $C^*$-algebra then one has.
	\begin{enumerate}
		\item [(i)] There is the natural one-to-one correspondence between finitely generated projective $A$-modules and complex vector bundles on the Gelfand space  $\mathfrak{Gelfand}_r\left(A \right)$.
		\item[(ii)] There is the natural isomorphism
		\be\label{k_0_alg_top_eqn} 
		K_0\left( A\right) \cong K^0\left( \mathfrak{Gelfand}_r\left(A \right)\right). 
		\ee 
	\end{enumerate}
	\end{theorem}
\begin{proof}
		(i) We already know that is the natural one-to-one correspondence between finitely generated projective $A$-modules and finitely generated projective $C\left(\mathfrak{Gelfand}_r\left(A \right) \right)$-modules. Our statement follows from the Serre Swan Theorem 	\ref{serre_swan_thm}.
		
		(ii) The Theorem \ref{rosen_ak1_thm} states that the comparison map $c_* : K_0(A) \to  K_0^{\mathrm{top}}(A)$ between the functor $K_0$ of algebraic $K$-theory and functor  $K_0^{\mathrm{top}}(A)$ of $K$-theory of $C^*$-algebras is identity. Now this statement follows from (i).
\end{proof}
	
\begin{remark}
The Theorem \ref{k_0_alg_top_thm} yields a generalization of characteristic classes, multiplicative structures and Adams operations.
\end{remark}
	\paragraph{} Here we prove that $K$-theory of $C^*$-algebras can be formulated in terms of the sheaf cohomology.	
	For any $n \in \N$ there is an $*$-isomorphisms $\left( A\otimes \K \right)^+ \cong  \mathbb{M}_n\left( A\otimes \K\right)^+$ which yields a homeomorphism $$\mathfrak{Gelfand}_r\left(\mathbb{M}_n\left(A\otimes \K \right)^+\right)\cong \mathfrak{Gelfand}_r\left(\left(A \otimes \K \right)^+  \right).$$ 
	So any projector in $\mathbb{M}_n\left(A \otimes \K \right)^+$  yield a projector in $\left( A\otimes \K\right)^+$ and vice versa.  On the other hand any projector in $\left(A \otimes \K \right)^+$ yields a projector in $C\left(\mathfrak{Gelfand}_r\left(\left(A \otimes \K \right)^+\right) \right)$ (cf. Lemma \ref{image_lem}).
	According to the  Shilov Idempotent Theorem any projector in\\ $C\left(\mathfrak{Gelfand}_r\left(A\otimes \K \right)^+ \right)$ corresponds to an element of the group of cohomology \\$H^0\left(\mathfrak{Gelfand}_r\left(\left( A\otimes \K \right)^+\right); \Z \right)\cong C\left(\mathfrak{Gelfand}_r\left(A\otimes \K \right); \Z \right)$ with compact support (cf. \ref{commutative_k_empt}).  There are natural actions

	\bea\label{g_act_eqn}
	U\left(\left(A\otimes \K \right) ^+ \right)\times \mathfrak{Gelfand}_r\left(A\otimes \K \right)\to\mathfrak{Gelfand}_r\left(A\otimes \K \right),\\
	\label{pr_act_eqn}
	U\left(\left(A\otimes \K \right) ^+ \right)\times \mathrm{Pr}\left( A\otimes \K\right)\to \mathrm{Pr}(A\otimes \K)
	\eea
	 Using the Definitions \ref{proj_v_defn} and \ref{k00_defn} one can deduce that 
	\be\label{k_00_h_eqn}
	\begin{split}
		K_{00}\left(\left( A \otimes \K\right)^+ \right) \cong H^0\left(\mathfrak{Gelfand}_r\left(A\otimes \K \right); \Z \right)/U\left(\left(A\otimes \K \right) ^+ \right). 
	\end{split}
	\ee 
	From the Definition \ref{k0_defn} one can prove that 	$K_{0}\left(A \otimes \K\right)$ correspond to the set
	\be\label{k_0set}
	\left\{\left.f \in  C\left(\mathfrak{Gelfand}_r\left(\left(A \otimes \K \right)^+\right), \Z \right)\right| f\left(x_0 \right) = 0\right\}\subset C\left(\mathfrak{Gelfand}_r\left(\left(A \otimes \K \right)^+\right), \Z \right)
	\ee 
 where $\{x_0\}\bydef \mathfrak{Gelfand}_r\left(\left(A \otimes \K \right)^+\right)\setminus \mathfrak{Gelfand}_r\left(A \otimes \K\right)$ (cf. Corollary \ref{one_point_cor}). Since $\Z$ is the principal ideal domain the set \eqref{k_0set} is isomorphic to the reduced cohomology $\tilde H^0\left(\mathfrak{Gelfand}_r\left(\left(A \otimes \K \right)^+\right), \Z \right)$ (cf. Definition \ref{red_defn}). On the other hand from  \eqref{red_eqn} it turns out that 
 \bean
 \tilde H^0\left(\mathfrak{Gelfand}_r\left(\left(A \otimes \K \right)^+\right), \Z \right)\cong H^0_c\left(\mathfrak{Gelfand}_r\left(\left(A \otimes \K \right)^+\right), \Z \right)
 \eean 
In result one has
	\be\label{k_0_h_eqn}
\begin{split}
	K_{0}\left(A \otimes \K\right) \cong H^0_c\left(\mathfrak{Gelfand}_r\left(A\otimes \K \right); \Z \right)/U\left(\left(A\otimes \K \right) ^+ \right). 
\end{split}
\ee 
\begin{lemma}\label{ring_auto_lem}
	Any $g \in U\left(\left(A\otimes \K \right) ^+ \right)$ yields a ring automorphism of $H^0_c\left(\mathfrak{Gelfand}_r\left(A\otimes \K \right)\right) $ with respect to the cup-product (cf. Definition \ref{cup_sheaf_thm}).
\end{lemma}	

\begin{proof}
The group $H^0_c\left(\mathfrak{Gelfand}_r\left(A\otimes \K \right); \Z \right)$ is an Abelian group generated by open-closed subsets of $\mathfrak{Gelfand}_r\left(A\otimes \K \right)$ with the relations
\be\label{plus_u_eqn}
\begin{split}
\sU' \cap \sU''= \emptyset \quad \Rightarrow \quad \left[\sU'\right]+ \left[\sU''\right]= \left[\sU'\cup \sU''\right].
\end{split}
\ee
The cup product satisfies to the following equation
\be\label{cup_u_eqn}
\left[\sU'\right]\smile \left[\sU''\right]= \left[\sU'\cap \sU''\right]
\ee
Element $g$ yields a homeomorphism of $\mathfrak{Gelfand}_r\left(A\otimes \K \right)$, so from 
\bean
g\left( \sU'\cap \sU''\right)  = g\sU'\cap g\sU''
\eean 
it turns that $g$ yields a ring automorphisms.
\end{proof}

	Suppose that $A$ is a \textbf{stable} $C^*$-algebra. 
 Any homotopy $\left[0, 1\right]\to U_1\left( A^+ \right)$ corresponds to the normal operator  $u_{\left[0, 1\right]}\in C\left(  \left[0, 1\right]\right) \otimes U_1\left( A ^+ \right)$. So from the Lemma \ref{image_lem} there is an unitary 
	$$
	u_{\left[0, 1\right]\times\mathfrak{Gelfand}_r\left(A \right)}\in U_1\left(C\left(  \left[0, 1\right]\right)\otimes  A ^+\right)
	$$
	such that 
	\bean
	\begin{split}
		u_{\left[0, 1\right]} =  \mathfrak{Image}\left( u_{\left[0, 1\right]\times\mathfrak{Gelfand}_r\left(A^+ \right)} \right) .
	\end{split}
	\eean
	Using this circumstance one concludes that there are following isomorphisms of groups 
	\bean
	\begin{split}
		U_1\left( A ^+ \right)\cong U_1\left(C\left(  \mathfrak{Gelfand}_r\left(A^+ \right)\right) \right),\\
		U_1\left( A ^+ \right)_0\cong U_1\left(C\left(  \mathfrak{Gelfand}_r\left(A^+ \right)\right) \right)_0,\\
		U_1\left( A ^+ \right)/  U_1\left( A ^+ \right)_0\cong   U_1\left(C\left(  \mathfrak{Gelfand}_r\left(A^+ \right)\right) \right)/  U_1\left(C\left(  \mathfrak{Gelfand}_r\left(A^+ \right)\right) \right)_0
	\end{split}
	\eean
	Any element of $U_1\left(C\left(  \mathfrak{Gelfand}_r\left(A^+ \right)\right) \right)$ is represented by the map $f_{U\left( 1\right) }: \mathfrak{Gelfand}_r\left(A^+ \right) \to U\left( 1\right)$ such that the set
$
\left\{x \in \sX | f\left(x \right) \neq 1\right\}
$ is compact. Any element of $U_1\left(C\left(  \mathfrak{Gelfand}_r\left(A^+ \right)\right) \right)_0$ corresponds to a map
\bean
\sX \to U(1),\\
x \mapsto e^{2\pi i f_\R\left( x\right) }
\eean  
where $f_\R : \sX \to \R$ is continuous compactly supported map.
From  \eqref{stab_k1_eqn} it follows that 
	\bean
	\begin{split}
		K_1\left(A \right) \cong U_1\left( A ^+ \right) / U_1\left(  A ^+ \right)_0\cong U_1\left(C\left(  \mathfrak{Gelfand}_r\left(A^+ \right)\right) \right)/  U_1\left(C\left(  \mathfrak{Gelfand}_r\left(A^+ \right)\right) \right)_0.
	\end{split}
	\eean	
	Taking into account the equations \eqref{sxx_shh_eqn}, \eqref{dl_sh_eqn} one has	\be\label{k_1_h_eqn}
	\begin{split}
		K_{1}\left(A \right) \cong H^1_c\left(\mathfrak{Gelfand}_r\left(A \right); \Z \right). 
	\end{split}
	\ee
	
	\begin{theorem}\label{k_h_thm}
	If $A$ is an unital $C^*$-algebra then one has
	\bea\label{k_00_eqn}
K_0\left( A\right) =H^0_c\left(\mathfrak{Gelfand}_r\left(A\otimes \K \right); \Z \right)/U\left(\left(A\otimes \K \right) ^+ \right)
	\eea
There is the natural cup product on $H^0_c\left(\mathfrak{Gelfand}_r\left(A\otimes \K \right); \Z \right)/U\left(\left(A\otimes \K \right) ^+ \right)$ which comes from the cup-product on  $H^0_c\left(\mathfrak{Gelfand}_r\left(A\otimes \K \right); \Z \right)$.	If $A$ is any $C^*$-algebra then one has
	\bea
	\label{k_1_eqn}
		K_{1}\left(A \right) \cong H^1_c\left(\mathfrak{Gelfand}_r\left(A \otimes \K\right); \Z \right). 
	\eea
	\end{theorem}
	\begin{proof}
		The $C^*$-algebra $A$ is strongly Morita equivalent to $ A\otimes \K$. So this lemma is the direct consequence of the equations \eqref{k_0_h_eqn} \eqref{k_1_h_eqn} and the Lemma \ref{ring_auto_lem}.
	\end{proof}
	\begin{rem}\label{inv_h1_rem}
		The given by \eqref{g_act_eqn} action
		$$
		U\left(\left(A\otimes \K \right) ^+ \right)\times \mathfrak{Gelfand}_r\left(A\otimes \K \right)\to\mathfrak{Gelfand}_r\left(A\otimes \K \right)$$ yields the trivial action
		$$
	U\left(\left(A\otimes \K \right) ^+ \right)\times H^1_c\left(\mathfrak{Gelfand}_r\left(A \otimes \K\right); \Z \right)\to H^1_c\left(\mathfrak{Gelfand}_r\left(A \otimes \K\right); \Z \right)$$ 	
		\end{rem}
				\begin{theorem}\label{full_k_thm}
	For any $k \ge 0$ one has
			\bea\label{k0_s_eqn}
			K_{0}\left(A \right) \cong H^1_c\left(S^{2k +1}\mathfrak{Gelfand}_r\left(A \otimes \K\right); \Z \right)\cong H^1_c\left(\mathfrak{Gelfand}_r\left(A \otimes \K\right)\times \R^{2k + 1}; \Z \right),\\\label{k1_s_eqn}
			K_{1}\left(A \right)  \cong H^1_c\left(S^{2k}\mathfrak{Gelfand}_r\left(A \otimes \K\right); \Z \right)\cong H^1_c\left(\mathfrak{Gelfand}_r\left(A \otimes \K\right)\times \R^{2k}; \Z \right)
			\eea
			or equivalently  
			\be\label{k01_s_eqn} 
			K_{r+1~ \mathrm{mod}~2 }\left(A \right) \cong H^1_c\left(S^{r}\mathfrak{Gelfand}_r\left(A \otimes \K\right); \Z \right)\cong H^1_c\left(\mathfrak{Gelfand}_r\left(A \otimes \K\right)\times \R^{r}; \Z \right)
			\ee
			for each $r \ge 0$.
		\end{theorem}
		\begin{proof}
	From \eqref{k_1_eqn} it follows that 
	\bean
K_{1}\left(A \right) \cong H^1_c\left(\mathfrak{Gelfand}_r\left(A \otimes \K\right); \Z \right). 
\eean
From the Theorems \ref{suspension_thm} and \ref{bott_map_thm} and the Lemma \ref{product_lem} it follows that 
	\bean
\forall k \in \N \quad K_{1}\left(A \right) \cong K_{1}\left(S^{2k} A \right) \cong K_{1}\left(A \otimes C_0\left( \R^{2k} \right) \right) \cong H^1_c\left(\mathfrak{Gelfand}_r\left(A \otimes \K\right)\times \R^{2k}; \Z \right). 
\eean
The equation \eqref{k0_s_eqn} can be proven by similar way.
	\end{proof}
\subsection{Multiplicative structures}
\paragraph{} The Appendix \ref{mult_k_sec} describes the cup  product on the topological $K$-theory given by 
\be\label{k_top_p_eqn}
\begin{split}
\smile : K^p\left( \sX \right)\times  K^q\left( \sY \right) \to K^{p+q~ \mathrm{mod}~2}\left( \sX \times \sY\right)
\end{split}
\ee
(cf \eqref{kfn_prod_eqn}). Here we find a generalization 

\be\label{cup_prodcc_eqn}
\smile : K_p\left(A \right) \otimes K_q\left(A \right) \to K_{p+q~ \mathrm{mod}~2}\left(A \right) \quad \forall p, q \in \{0, 1\}.
\ee
of the product \eqref{k_top_p_eqn}
According to \cite{bredon:sheaf} one has
\be\label{kunnet_sheafrr_eqn}
\begin{split}
	H^p_c\left(\R^r; \Z \right)= \begin{cases}
		0 & p \neq r,\\
		1 & p = r
	\end{cases}
\end{split}
\ee
If $\sX$ is a locally compact Hausdorff space then Theorem \ref{kunnet_sheaf_thm} it follows that
\be\label{kunnet_sheafr_eqn}
\begin{split}
	\bigoplus_{p+q = n} H^p_c\left(\sX ; \Z \right) \otimes H^q_c\left(\R^r; \Z \right)\hookto H^{n}_c\left(\sX \times \R^{r}; \Z \otimes \Z \right)\onto\\ \onto  \bigoplus_{p+q = n+1} \mathrm{Tor}\left( H^p_c\left(\sX ; \Z \right),  H^p_c\left(\R^r; \Z \right) \right). 
\end{split}
\ee
and taking into account \eqref{psshz_eqn} we have
\be\label{shift_hom_eqn}
H^p_c\left( \sX \times \R^k\right) = H^{p + k}_c\left( \sX\right).
\ee

If $A$ be a {\bf stable} $C^*$-algebra then from the Lemma \ref{product_lem} and the Theorem \ref{full_k_thm} it turns out that
\bean
K_1\left(A \otimes C_0\left(\R^r \right) \right)\cong  H^1_c\left(\mathfrak{Gelfand}_r\left(A\right) \times \R^{r'}; \Z \right),
\eean 
For any $r', r'' > 0$ there are surjective continuous projections 
\bean
p_1 : \mathfrak{Gelfand}_r\left(A\right) \times \R^{r'+r''}\onto \mathfrak{Gelfand}_r\left(A\right) \times \R^{r'},\\
p_2 : \mathfrak{Gelfand}_r\left(A\right) \times \R^{r'+r''}\onto \mathfrak{Gelfand}_r\left(A\right) \times \R^{r''}
\eean 
If both $\mathscr A'$ and  $\mathscr A''$ are $\Z$-locally constant sheaves (cf. Definition \ref{constant_presheaf_defn}) on $\mathfrak{Gelfand}_r\left(A\right) \times \R^{r'}$ and $\mathfrak{Gelfand}_r\left(A\right) \times \R^{r''}$ then inverse images $p_1^*\mathscr A'$ and $p_2^*\mathscr A'$ respectively (cf. Definition \ref{sheaf_inv_im_defn})  are $\Z$-locally constant sheaves on $\mathfrak{Gelfand}_r\left(A\right) \times \R^{r'+r''}$. There are given by the equation \eqref{cohom_mor_eqn} group homomorphisms
\bean
H^1_c\left(p_1 \right) : H^1_c\left(\mathfrak{Gelfand}_r\left(A\right) \times \R^{r'}; \mathscr A'\right)\to  H^1_c\left(\mathfrak{Gelfand}_r\left(A\right) \times \R^{r' + r''}; p_1^*\mathscr A'\right),\\
H^1_c\left(p_2 \right) : H^1_c\left(\mathfrak{Gelfand}_r\left(A\right) \times \R^{r''}; \mathscr A''\right)\to  H^1_c\left(\mathfrak{Gelfand}_r\left(A\right) \times \R^{r' + r''}; p_2^*\mathscr A''\right)
\eean 
There is a homomorphism 
\be\label{pssh_eqn}
\begin{split}
  H^1_c\left(\mathfrak{Gelfand}_r\left(A\right) \times \R^{r'}; \mathscr A'\right)\otimes  H^1_c\left(\mathfrak{Gelfand}_r\left(A\right) \times \R^{r''}; \mathscr A''\right)\to \\ \to 
H^2_c\left(\mathfrak{Gelfand}_r\left(A\right) \times \R^{r' + r''}; p_1^*\mathscr A'\otimes p^*_2\mathscr A''\right),\\
u_1 \otimes u_2\mapsto H^1_c\left(p_1 \right)\left(u_1 \right) \smile H^1_c\left(p_2 \right)\left(u_2 \right).
\end{split}
\ee
the the cup product $\smile$ is given by the Theorem \ref{cup_sheaf_thm}. Taking into account that   $\mathscr A'$, $\mathscr A''$, $p^*_1\mathscr A'$, $p^*_2\mathscr A''$ are  $\Z$-locally constant sheaves the map \eqref{pssh_eqn} naturally yields a pairing
\be\label{psshz_eqn}
\begin{split}
	\smile : H^1_c\left(\mathfrak{Gelfand}_r\left(A\right) \times \R^{r'}; \Z\right)\otimes  H^1_c\left(\mathfrak{Gelfand}_r\left(A\right) \times \R^{r''}; \Z\right)\to \\ \to 
	H^2_c\left(\mathfrak{Gelfand}_r\left(A\right) \times \R^{r' + r''}; \Z\otimes\Z\right)\cong H^2_c\left(\mathfrak{Gelfand}_r\left(A\right) \times \R^{r' + r''}; \Z\right).\\
\end{split}
\ee
If $r' + r'' \ge 1$ then the equation  \eqref{shift_hom_eqn} yield the following group isomorphism $$\phi^2_1: H^2_c\left(\mathfrak{Gelfand}_r\left(A\right) \times \R^{r'+ r''}; \Z \right)\cong H^1_c\left(\mathfrak{Gelfand}_r\left(A\right) \times \R^{r'+ r''-1}; \Z \right).$$
From \eqref{k01_s_eqn} it follows that  $\left[v'\right]$, $\left[v''\right]$ and $\left[\phi^2_1\left(	\left[u'\right]\smile \left[u''\right] \right) \right]$ can be regarded as elements of $K$-groups, i.e.
\be\label{h1isk_m_eqn}
\begin{split}
	u'_K \in K_{r'+ 1~ \mathrm{mod}~2 }\left(A \right),\\ 
	u''_K \in K_{r''+1~ \mathrm{mod}~2 }\left(A \right),\\ 
		\left[\phi^2_1\left(u'\smile u'' \right) \right]_K  \in K_{\left( r' + 1\right) +\left(  r'' +1\right)  - 1~ \mathrm{mod}~2 }\left(A \right)= K_{r'+  r'' +1 ~ \mathrm{mod}~2 }\left(A \right)
\end{split}
\ee
where the subscript "$_K$ means that the elements of cohomology are regarded as elements of $K$-groups. We define 
\be\label{smile_d_eqn}
u'_K \smile	u''_K\bydef 	\left[\phi^2_1\left(	u'\smile u'' \right) \right]_K
\ee
	
\begin{theorem}\label{cup_prod_thm}
For any $C^*$-algebra $A$ the equation \eqref{smile_d_eqn} yields a multiplicative structure on the group $K_*\left( A\right)$, i.e. there is the natural cup product
\be\label{cup_product_eqn}
\smile : K_p\left(A \right) \otimes K_q\left(A \right) \to K_{p+q~ \mathrm{mod}~2}\left(A \right) \quad \forall p, q \in \{0, 1\}.
\ee 
\end{theorem}
\begin{proof}
	To prove this theorem one should state the associativity of the cup product. It follows from the Proposition \ref{cup_ass_prop}.
\end{proof}
\begin{remark}\label{cup_prod_rem}
If $A$ be an unital $C^*$-algebra then any element of $x' \in K_0\left( A\right)$ can be represented by $y' \in H^0_c\left( \mathfrak{Gelfand}_r\left(A \otimes \K\right)\right)$. If $z \in H^1_c\left( \mathfrak{Gelfand}_r\left(A \otimes \K\right)\right)$ then from the Remark \ref{inv_h1_rem}  it follows that
\be
\forall g \in U_1\left(\left(A \otimes \K \right)^+  \right)  \quad \left(gx' \right) \smile y' = x' \smile y'
\ee
where the action \eqref{pr_act_eqn} is implied. So for any $x \in K_0\left(A \right)$ and $y \in K_1\left(A \right)$  the given by the Theorem  \ref{cup_prod_thm} product $x \smile y$ corresponds to $x' \smile y'\in H^1_c\left( \mathfrak{Gelfand}_r\left(A \otimes \K\right)\right)$ where $x' \in H^0_c\left( \mathfrak{Gelfand}_r\left(A \otimes \K\right)\right)$ (resp. $y' \in H^1_c\left( \mathfrak{Gelfand}_r\left(A \otimes \K\right)\right)$) is a representable of $x$ (resp. $y$), i.e. the product does not depend on choice of representable of $x$.

\end{remark}

\begin{lemma}
If $\sX$ is compact Hausdorff  space and $A \bydef C\left(\sX \right)$  is a compact space then the given by the Theorem \ref{cup_prod_thm} cup product coincides with given by the Appendix \ref{mult_k_sec} one.
\end{lemma}
\begin{proof}
Consider the equation \eqref{kfn_prod_eqn}
  \be\label{top_cup_eqn}
\begin{split}
	K^n\left( \sX \right)\times  K^p\left( \sX \right) \to K^{n+p~\mathrm{mod} ~2}\left( \sX \right)
\end{split}
\ee
In $n = p = 0$ then any element of $K^0\left( \sX\right)$ is given by $\left[E\right]- \left[F\right]$ where $F$ and $F$ are vector bundles. The cup product is given by
\bean
\left( \left[E'\right]- \left[F'\right]\right) \smile \left( \left[E''\right]- \left[F'\right]\right) = \left[E'\boxtimes E''\right]-\left[E'\boxtimes F''\right]-\left[F'\boxtimes E''\right]+\left[F'\boxtimes F''\right].
\eean
Using the Theorem \eqref{serre_swan_thm} one can replace vector bundles with projectors in $\mathbb{M}_\infty\left( C\left(\sX \right) \right)$. For any $m, n\in \N$ there is the natural isomorphism $$
\iota: \mathbb{M}_m\left( C\left(\sX \right) \right) \otimes \mathbb{M}_n\left( C\left(\sX \right) \right) \cong \mathbb{M}_{mn}\left( C\left(\sX \right) \right).
$$
If $p\in \mathbb{M}_m\left( C\left(\sX \right) \right)$ and $q \in\mathbb{M}_q\left( C\left(\sX \right) \right)$ are representatives of bundles $E$ and $F$ then $p \otimes q$ is the representative of $E\boxtimes F$. There are $*$-homomorphisms $\iota': \mathbb{M}_m\left( C\left(\sX \right) \right)\hookto \mathbb{M}_{mn}\left( C\left(\sX \right) \right)$ and $\iota'': \mathbb{M}_n\left( C\left(\sX \right) \right) \hookto \mathbb{M}_{mn}\left( C\left(\sX \right) \right)$ such that 
  \bean
\begin{split}
\iota\left(p \otimes q \right) = \iota'\left(p \right) \iota''\left(q \right).
\end{split}
\eean
There is an  inclusion $\mathbb{M}_{mn}\left( C\left(\sX \right) \right)\subset C\left( \sX\right) \otimes \K$. If $P \in C\left( \sX\right) \otimes \K$ is the projector onto $\left( A \otimes\K\right) 1_{\mathbb{M}_{mn}\left( C\left(\sX \right) \right)}$  then  the image  $\mathfrak{Image}\left( P\right)\in C_c\left( \mathfrak{Gelfand}_r\left(C_0\left(\sX \right)\otimes \K \right)\right)$ (cf. Definition \ref{image_defn}) of $P$ is a projector. From our construction it turns out that 
\bean
\mathfrak{Image}\left( \iota\left( p\otimes q\right) \right)\le \mathfrak{Image}\left( P\right),\\
\mathfrak{Image}\left( \iota'\left(p\right) \right)\le \mathfrak{Image}\left( P\right),\\
\mathfrak{Image}\left( \iota''\left(q\right) \right)\le \mathfrak{Image}\left( P\right),\\
\mathfrak{Image}\left( \iota\left( p\otimes q\right) \right)= \mathfrak{Image}\left( \iota'\left( p\right) \right)\mathfrak{Image}\left( \iota''\left( q\right) \right).
\eean
The elements $\mathfrak{Image}\left( \iota\left( p\otimes q\right) \right)$,  $\mathfrak{Image}\left( \iota'\left( p\right) \right)$, $\mathfrak{Image}\left( \iota''\left(  q\right) \right)$ in $H^0_c\left( \mathfrak{Gelfand}_r\left(C_0\left(\sX \right)\otimes \K \right); \Z\right)$  are representatives of $\left[p\otimes q \right]$, $\left[p\right]$ and $\left[q \right]$ in $K_0\left( C\left(\sX \right) \right)$  (cf. \eqref{k0_s_eqn}).
Otherwise $\left[p\otimes q \right]$ is the representative of $\left[p \right]\smile\left[ q \right]$ and $\left[\mathfrak{Image}\left( \iota'\left( p\right) \right)\mathfrak{Image}\left( \iota''\left( q\right) \right)\right] \in H^0_c\left( \mathfrak{Gelfand}_r\left(C_0\left(\sX \right)\otimes \K \right); \Z\right)$ is a representative of $\left[\mathfrak{Image}\left( \iota'\left( p\right) \right)\right]\smile  \left[\mathfrak{Image}\left( \iota''\left( q\right) \right)\right] $. So this lemma is proven for the restriction 
  \bean
\begin{split}
	K^0\left( \sX \right)\times  K^0\left( \sX \right) \to K^{0}\left( \sX \right)
\end{split}
\eean
of the product \eqref{top_cup_eqn}. The completion of the proof is left to the reader.
\end{proof}

\subsection{Graded $K$-theoretic ring}
\paragraph{} It is known (cf. Appendix \ref{mult_k_sec}) that the topological  $K$-theory yields a contravariant  functor from the category  locally compact spaces  to the category of graded rings. Here we extend the domain of this functor.
If $A$ is c $C^*$-algebra then from the Exercise \ref{cs1_exer} one has following homomorphisms 
\bea\label{cs1_11_eqn} 
	0 \hookto SA \xrightarrow{\iota} A \otimes C\left(S^1 \right) \xrightarrow{p} A \onto 0,\\
\label{cs1_21_eqn} 
K_0\left( A \otimes C\left(S^1 \right) \right) \cong K_1\left( A \otimes C\left(S^1 \right) \right)\cong K_0\left( A\right) \oplus K_1\left(A \right).
\eea
On the other hand if $\sX \bydef \mathfrak{Gelfand}_r\left(A \otimes \K\right)$ then the equation \eqref{long_pair_eqn} yields the sequence

	\be\label{coh_suspernsion_eqn}
\begin{split}
{0} \hookto  H^0_c\left(\sX\times S^1, \sX\times {x_0};\Z  \right) \xrightarrow{\iota_*} H^0_c\left(\sX\times S^1;\Z  \right) \xrightarrow{p_*} H^0_{c\cap \sX\times {x_0}}\left(\sX\times {x_0};\Z \right)\xrightarrow{\dl}\\ \xrightarrow{\dl}  H^1_c\left(\sX\times S^1, \sX\times {x_0};\Z  \right) \xrightarrow{\iota_*} H^1_c\left(\sX\times S^1;\Z  \right) \xrightarrow{p_*} H^1_{c\cap \sX\times {x_0}}\left(\sX\times {x_0};\Z \right)\xrightarrow{\dl}...
\end{split}
\ee
Taking into account \eqref{setmunus_h_eqn} one has 
	\bean
\begin{split}
	{0} \hookto  H^0_c\left(\sX\times S^1\setminus \sX\times {x_0};\Z  \right) \xrightarrow{\iota_*} H^0_c\left(\sX\times S^1;\Z  \right) \xrightarrow{p_*} H^0_{c\cap \sX\times {x_0}}\left(\sX\times {x_0};\Z \right)\xrightarrow{\dl}\\ \xrightarrow{\dl}  H^1_c\left(\sX\times S^1\setminus \sX\times {x_0};\Z  \right) \xrightarrow{\iota_*} H^1_c\left(\sX\times S^1;\Z  \right) \xrightarrow{p_*} H^1_{c\cap \sX\times {x_0}}\left(\sX\times {x_0};\Z \right)\xrightarrow{\dl}...
\end{split}
\eean
or, equivalently

\be\label{coh_suspernsione_eqn}
\begin{split}
	{0} \hookto  H^0_c\left(\sX\times \R;\Z  \right) \xrightarrow{\iota_*} H^0_c\left(\sX\times S^1;\Z  \right) \xrightarrow{p_*} H^0_{c\cap \sX\times {x_0}}\left(\sX;\Z \right)\xrightarrow{\dl}\\ \xrightarrow{\dl}  H^1_c\left(\sX\times \R;\Z  \right) \xrightarrow{\iota_*} H^1_c\left(\sX\times S^1;\Z  \right) \xrightarrow{p_*} H^1_{c\cap \sX\times {x_0}}\left(\sX;\Z \right)\xrightarrow{\dl}...
\end{split}
\ee
There is the natural surjective map  $\sX\times S^1\onto \sX$ and injective map  $\sX\cong  \sX\times {x_0}\hookto \sX\times S^1$ such that the composition $\sX \hookto \sX \times S^1 \onto \sX$ is the identical map. It turns out that the sequence \ref{coh_suspernsione_eqn} naturally splits and there is the natural direct sums
\bea 
 H^0_c\left(\sX\times S^1;\Z  \right)\cong H^0_c\left(\sX;\Z  \right)\oplus H^0_c\left(\sX\times \R;\Z  \right),\\\label{h1_d_eqn}
  H^1_c\left(\sX\times S^1;\Z  \right)\cong H^1_c\left(\sX;\Z  \right)\oplus H^1_c\left(\sX\times \R;\Z  \right).
\eea 
From the equation \eqref{k_1_eqn} it follows that
\bean
K_1\left(A \right) \cong H^1_c\left(\sX;\Z  \right)\bydef  H^1_c\left( \mathfrak{Gelfand}_r\left(A\otimes \K \right);\Z\right) 
\eean 
Taking into account the Theorems  \ref{suspension_thm}, \ref{bott_map_thm} and one has
\bean 
K_0\left(A \right) \cong   H^1_c\left( \mathfrak{Gelfand}_r\left(A\otimes \K \right)\times \R;\Z\right), 
\eean 
so from \eqref{cs1_21_eqn} and \eqref{h1_d_eqn} one has the group isomorphism 
\be
 H^1_c\left( \mathfrak{Gelfand}_r\left(A\otimes \K \right)\times S^1;\Z\right)\cong K_1\left( A \otimes C\left(S^1 \right) \right)\cong K_*\left( A\right) 
\ee
Let us define the multiplication on $K_*\left( A\right)$. Let $$x, y \in K_0\left( A\right) .$$  If both $x' \in  H^0_c\left(\sX;\Z  \right)$ and $y' \in  H^1_c\left(\sX\times \R;\Z  \right)$ are given by the equations \eqref{k_00_eqn} and \eqref{k0_s_eqn} representatives of $x$ and $y$ then one can assume that $x', y' \in H^*_c\left(\mathfrak{Gelfand}_r\left(A\otimes \K \right)\times S^1 \right)$. It turns out that
\be
x' \smile y' \in H^*_c\left(\mathfrak{Gelfand}_r\left(A\otimes \K \right)\times S^1 \right)
\ee
where the cup product is given by \ref{cup_sheaf_thm}. 

\begin{definition}\label{deg_1_defn}
If both $A$ and $B$ are $C^*$-algebras then a homomorphism $\phi: K_*\left(A \right) \to K_*\left(B \right)$ of Abelian groups \textit{has degree 1} if $\phi\left(K_j\left( A\right)  \right) \subset K_{1=j}\left( B\right)$, $j = 0,1$. We say that $\phi$ is {\it of degree 1}.
\end{definition}
\begin{example}\label{degr_1_exanple}
The Theorems \ref{suspension_thm} and \ref{bott_map_thm} yield isomorphisms  $K_*\left(A \right) \cong K_*\left(SA \right)$  and  $K_*\left(SA \right) \cong K_*\left(A \right)$ of degree 1.
\end{example}
\begin{definition}\label{degr_1_defn}
Suppose that a homomorphism  $\phi: K_*\left(A \right) \to K_*\left(B \right)$ of Abelian groups {has degree 1} and both   $K_*\left(B \right)\cong  K_*\left(SB \right)$ and $K_*\left(B \right)\cong  K_*\left(SB \right)$ are given by the Example \ref{degr_1_exanple}.. We say that $\phi$ is a \textit{degree 1 ring homomorphism} if both compositions
\be
\begin{split}
K_*\left(A \right) \xrightarrow{\phi} K_*\left(B \right)\cong  K_*\left(SB \right),\\
K_*\left(SA \right)\cong K_*\left(A \right) \xrightarrow{\phi} K_*\left(B \right)
\end{split}
\ee
are the ring homomorphisms.  Both $K_*\left(B \right)\cong  K_*\left(SB \right)$ and $K_*\left(B \right)\cong  K_*\left(SB \right)$ are degree 1 ring homomorphisms.
\end{definition}

\subsection{Standard Exact Sequence}\label{standard_exact_sequence_sec}

\paragraph{}
The Theorem \ref{standard_exact_sequence_thm} yields the following
	six-term cyclic exact sequence 
	\bean
	\begin{tikzcd}
		K_0\left( J\right)  \arrow[r, "\iota_*"] & K_0\left( A\right) \arrow[r, "\pi_*"]& K_0\left( A/J\right) \arrow[d, "\partial"]  \\
		K_1\left(A/J \right) \arrow[u, "\partial"] & \arrow[l, "\pi_*"]K_1\left( A\right) &\arrow[l, "\iota_*"] K_1\left(J \right) 
	\end{tikzcd}
	\eean
	One can construct the following sequence
		\be\label{sis_ext_eqn}
	\begin{tikzcd}
		K_0\left( J\right) \oplus 	K_1\left( J\right)\arrow[r, "\iota_*"] & K_0\left( A\right) \oplus 	K_1\left( A\right)\arrow[r, "\pi_*"]& K_0\left( A/J\right)\oplus 	K_1\left(A/ J\right) \arrow[d, "\partial"]  \\
		K_1\left(A/J \right)\oplus	K_0\left( A/J\right) \arrow[u, "\partial"] & \arrow[l, "\pi_*"] K_1\left( A\right)\oplus 	K_0\left( A\right) &\arrow[l, "\iota_*"] K_1\left(J \right) \oplus	K_0\left( J\right)
	\end{tikzcd}
	\ee
	or equivalently 
	\be\label{sisf_ext_eqn}
\begin{tikzcd}
	K_*\left( J\right)  \arrow[r, "\iota_*"] & K_*\left( A\right) \arrow[r, "\pi_*"]& K_*\left( A/J\right) \arrow[d, "\partial"]  \\
	K_*\left(A/J \right) \arrow[u, "\partial"] & \arrow[l, "\pi_*"]K_*\left( A\right) &\arrow[l, "\iota_*"] K_*\left(J \right) 
\end{tikzcd}
\ee
	
Using  the Corollary \ref{good_cor}, the Theorem \ref{full_k_thm}, the equation \eqref{cohom_mor_eqn} and the Remark \ref{pres_cup_rem} one can deduce that both $\iota_*$ and $\pi_*$ in \eqref{sisf_ext_eqn} are grading preserving  ring homomorphisms. Let us prove that $\partial$ is a degree 1 ring homomorphism. One can suppose that $A$, $J$ and $A/J$ are stable $C^*$-algebras. Since $J$ is a closed left ideal of $A$ the set $\mathfrak{Gelfand}_r\left(J\right)$ is an open subset of $\mathfrak{Gelfand}_r\left(A\right)$. There are continuous maps
\bea\label{map_j_eqn}
\mathfrak{Gelfand}_r\left(J\right)\hookto \mathfrak{Gelfand}_r\left(A\right),\\\label{map_aj_eqn} \mathfrak{Gelfand}_r\left(A/J\right)\to \mathfrak{Gelfand}_r\left(A\right)
\eea
where the map \eqref{map_aj_eqn} is given by the equation \eqref{gelfand_map_eqn}. From our construction it turns out that
\bean
\mathfrak{Gelfand}_r\left(A\right) = \mathfrak{Gelfand}_r\left(J\right)\cup \mathfrak{Gelfand}_r\left(A/J\right),\\
\mathfrak{Gelfand}_r\left(J\right)\cap \mathfrak{Gelfand}_r\left(A/J\right)= \emptyset,\\
\mathfrak{Gelfand}_r\left(J\right)=\mathfrak{Gelfand}_r\left(A/J\right)\setminus \mathfrak{Gelfand}_r\left(A/J\right).
\eean 
The set $\mathfrak{Gelfand}_r\left(A/J\right)$ is closed in $\mathfrak{Gelfand}_r\left(A\right)$ since $\mathfrak{Gelfand}_r\left(J\right)$ is open. Both equations \eqref{cohom_seq_eqn} and \eqref{setmunus_h_eqn} yield the following exact sequence 
\be\label{coh_x_eqn}
\begin{split}
	{0} \hookto  H^0_c\left(\mathfrak{Gelfand}_r\left(J\right);\Z  \right) \xrightarrow{\iota_*} H^0_c\left(\mathfrak{Gelfand}_r\left(A\right);\Z  \right) \xrightarrow{\pi_*} \\ \xrightarrow{\pi_*} H^0_{c}\left(\mathfrak{Gelfand}_r\left(A/J\right);\Z \right)\xrightarrow{\dl}H^1_{c}\left(\mathfrak{Gelfand}_r\left(J\right);\Z \right)\to ...
\end{split}
\ee
Let us prove that $\dl$ in \eqref{coh_x_eqn} yields the homomorphism $\partial K_0\left( A/J\right) \to K_1\left( J\right)$. Any $\left[c\right] \in H^0_{c}\left(\mathfrak{Gelfand}_r\left(A/J\right);\Z \right)$ can represented by  $c \in C_{c}\left(\mathfrak{Gelfand}_r\left(A/J\right);\Z \right)$. The support of $c$ is compact  there is  valued $b \in C_c\left( \mathfrak{Gelfand}_r\left(A\right); \Z\right)$ such that 
\be
\forall \xi \in \mathfrak{Gelfand}_r\left(A/J\right)\quad c\left( \xi\right) = y\left( \xi\right) 
\ee 
If  
\be\label{u_x_eqn}
u_c\bydef e^{2\pi i b}
\ee
then $u_c$ can be regarded  an element of both  $U\left(C_0\left( \mathfrak{Gelfand}_r\left(A/J\right)^+\right)  \right)$ and \\ $H^0_{c}\left(\mathfrak{Gelfand}_r\left(J\right);U\left( 1\right)\right)$, so $\left[u_c\right]=\partial c \in K_1\left( J\right)$ (cf. equations \eqref{u_x_eqn} and \eqref{exp_map_eqn}).
If 
\be\label{sxxh_shh_eqn}
\begin{split}
%	\{0\}  \to  H^0_c\left(\mathfrak{Gelfand}_r\left(A/J\right); \Z\right)  \to  H^0_c\left(\mathfrak{Gelfand}_r\left(A/J\right); \sR \right)\to \\ H^0_c\left(\mathfrak{Gelfand}_r\left(A/J\right) \sU\left( 1\right)  \right)\xrightarrow{\dl_\SS }  H^1_c\left(\mathfrak{Gelfand}_r\left(A/J\right); \Z \right)\to... ;
		\{0\}  \to  H^0_c\left(\mathfrak{Gelfand}_r\left(J\right); \Z\right)  \to  H^0_c\left(\mathfrak{Gelfand}_r\left(J\right); \sR \right)\to \\ H^0_c\left(\mathfrak{Gelfand}_r\left(J\right); U\left( 1\right)  \right)\xrightarrow{\dl_\SS }  H^1_c\left(\mathfrak{Gelfand}_r\left(J\right); \Z \right)\to ...
\end{split}
\ee
is  a specializations of \eqref{sxx_shh_eqn}  then there is
\be\label{a_p_eqn}
a' \bydef \dl_\SS u_c \in H^1_c\left(\mathfrak{Gelfand}_r\left(J\right); \Z \right)
\ee
such that $a'$ is a representative of $\partial c \in K_1\left( J\right)$ (cf. equation \eqref{k_1_eqn})
\begin{lemma}\label{six_term_lem}
	The exponential map $\partial : K_0\left(A/ J \right)\to K_1\left( J\right)$ in \eqref{sisf_ext_eqn} corresponds to the homomorphism $\dl$ of \eqref{coh_x_eqn} where the equations of the Theorem \ref{k_h_thm} are implied.
\end{lemma}
\begin{proof}
	There is a specialization of exact sequence of complexes \eqref{longx_pair_eqn} given by
\be\label{longcx_pair_eqn} 
0 \hookto C^*_c\left(\mathfrak{Gelfand}_r\left(J\right);\Z \right) \xrightarrow{\iota} C^*_c\left(\mathfrak{Gelfand}_r\left(A\right);\Z \right)\xrightarrow{\pi} C^*_c\left(\mathfrak{Gelfand}_r\left(A/J\right);\Z \right)\onto 0
\ee
Let $\left[c\right] \in H^0_{c}\left(\mathfrak{Gelfand}_r\left(A/J\right);\Z \right)$ and $c \in C^0_c\left(\mathfrak{Gelfand}_r\left(A/J\right);\Z  \right)\cong C_c\left(\mathfrak{Gelfand}_r\left(A/J\right);\Z  \right)$ is a representative of $\left[c\right]$. There is $b \in C^0_c\left(\mathfrak{Gelfand}_r\left(A\right);\Z  \right)$ such that $c = \pi\left( b\right)$. From $\partial c = 0$ it follows that  $ \pi\circ \partial \left( b\right) = 0$. It turns out that there is $a\in  C^1_c\left(\mathfrak{Gelfand}_r\left(J\right);\Z \right)$ such that $\partial \left( b\right)= \iota\left(  a\right)$. Since $\iota$ is injective $ \partial^2 \left( b\right)=0$. It follows that $\partial a = 0$, so $a$ is a cycle  According to homological algebra one has 
\be\label{dl_eqn}
\dl \left[c\right] = \left[a\right].
\ee
From $\dl b = \dl_\SS e^{2\pi i b}$ it follows that  $\left[a\right]= a'$ where $a'$ is given by \eqref{a_p_eqn}.
\end{proof}

\begin{theorem}\label{six_term_thm}
The Abelian group homomorphisms $\iota_*$ and $\pi_*$  \eqref{sisf_ext_eqn} are ring homomorphisms, The Abelian group homomorphisms $\partial$ in \eqref{sisf_ext_eqn} yield a  degree 1 ring homomorphisms $K_*\left( A/J\right) \to K_*\left( J\right)$.
\end{theorem}
\begin{proof}
If $\left[p_1\right], \left[p_2\right]\in K_0\left(S\left( A/ J \right) \right)$ then there are representatives \\ $p_1, p_2 \in H^0_c\left( \mathfrak{Gelfand}_r\left(S\left( A/ J \right)\right); \Z \right) \cong C_c\left( \mathfrak{Gelfand}_r\left(S\left( A/ J \right)\right); \Z \right)$ of these element. Moreover $\left[p_1\right]\cup \left[p_2\right]= \left[p_1p_2\right]$. If both $p'_1, p'_2 \in \in  C_c\left( \mathfrak{Gelfand}_r\left(SA\right); \Z\right)$ are such that  
\be
\forall \xi \in \mathfrak{Gelfand}_r\left(S\left( A/ J \right)\right)\quad p'_1\left( \xi\right) = p_1\left( \xi\right), \quad  p'_2\left( \xi\right) = p_2\left( \xi\right)
\ee
then 
\be
\forall \xi \in \mathfrak{Gelfand}_r\left(S\left( A/ J \right)\right)\quad \left( p'_1\cdot p_2'\right)\left(\xi \right)  \left( \xi\right) = \left( p_1\cdot p_2\right)\left(\xi \right) 
\ee
where $\cdot$ means the product of integer valued functions. It follows that $\partial \left( \left[p_1\right]\cup \left[p_2\right]\right)$ is given by $e^{2\pi i \left( p'_1\cdot p'_2\right) }$. Let  $p''_1, p''_2 \in H^0_c\left( \mathfrak{Gelfand}_r\left( J \right); \Z \right)$ representatives of $\left[p''_1\right], \left[p''_2\right]\in K_0\left(J \right)$ such that
\bean
\bt_J\left(  \left[p''_1\right]\right) = \partial \left[p_1\right],\\
\bt_J\left(  \left[p''_2\right]\right) = \partial \left[p_2\right]
\eean 
where $\bt_J$ is the Bott map (cf. Definition \ref{bott_map_defn}). One has 
	\be\label{cup_pu_eqn}
\begin{split}
	\left[p''_1\right]\cup \left[p''_2\right]= \left[\left( p''_1\cdot p''_2\right) \right].\\
	\bt_J\left(  \left[\left( p''_1\cdot p''_2\right)\right]\right)  = \partial \left( \left[p_1\right]\cup \left[p_2\right]\right).
\end{split}
\ee
This theorem is proven for the homomorphism $\partial : K_0\left(A/ J \right)\to K_1\left( J\right)$. The completion of the proof is left to the reader.
\end{proof}

\subsection{Connes' Thom isomorphism and Pimsner-Voiculescu Exact Sequence}
	\paragraph{} If $A$ is a $C^*$-algebra and $\a : \R \to \Aut\left( A\right)$ then   similarly to \eqref{sisf_ext_eqn} the  Theorem  \ref{c_t_theorem} yields  the natural Abelian group isomorphism
	\be\label{c_t_eqn}
	K_*\left(A \right) \cong K_*\left(A \rtimes_\a \R \right) 
	\ee 
We would like to prove that \eqref{c_t_eqn} is a ring isomorphism.
\begin{theorem}\label{connes_thom_thm}
The isomorphism \eqref{c_t_eqn} is a degree one ring isomorphism.
\end{theorem}	
\begin{proof}
	The  six-term sequence \ref{tho_xex_eqn} yields the diagram 
\be\label{thoh_xex_eqn}
\begin{tikzcd}
	K_*\left( A\right) \arrow[r] &K_*\left( \R\hookto CA \rtimes_\ga \R\right)\arrow[r] & K_*\left( A \rtimes_\a \R\right) \arrow[d, "\partial"] \\
	K_*\left(  A \rtimes_\a \R\right) \arrow[u, "\partial"] &K_*\left( \R\hookto CA \rtimes_\ga \R\right)\arrow[l] & K_*\left( A \right)\arrow[l]
\end{tikzcd}
\ee 
From the theorem \ref{six_term_thm} it follows that  The Abelian group homomorphisms $\partial$ in \eqref{thoh_xex_eqn} yield the degree one ring  isomorphism 
\bean
	K_*\left(A \right) \cong K_*\left(A \rtimes_\a \R \right). 
\eean
This theorem is a consequence of  \eqref{six_term_thm} one.
\end{proof}	
\begin{theorem}\label{pimsner_voiculesky_thm}
If the exact sequence
\be\label{pimsner_voiculesky_eqn}
\begin{tikzcd}
	K_*\left( A\right)\arrow[r, "1-\a_*"] & K_*\left(A \right)\arrow[r, "\iota_*"] & K_*\left(A \rtimes_\a \Z \right) \arrow[d, "\partial"]\\
	K_*\left( A\rtimes \Z\right)\arrow[u , "\partial"] & \arrow[l, "\iota_*"] K_*\left(A \right) & \arrow[l, "1-\a_*"]K_*\left(A \right)
\end{tikzcd}
\ee
comes from  \eqref{p_v_eqn} then  Abelian group homomorphism $\iota_*$ and $1-\a_*$ are ring homomorphism. and $\partial$  is a degree 1 ring homomorphism.
\end{theorem}	
\begin{proof}
	 $\a_*$ is a graded ring homomorphism since it comes from continuous map (cf. \eqref{cohom_mor_eqn}), so $1-\a_*$  is also a graded ring homomorphism.
	Below the notation of \ref{p_v_empt} will be used.
	The exact sequence \eqref{p_v_sec_eqn} yields the diagram 
	\be\label{thoh_xexs_eqn}
	\begin{tikzcd}
		K_*\left( SA\right) \arrow[r] &K_*\left( B \rtimes_\bt \R\right)\arrow[r] & K_*\left( A\right) \arrow[d] \\
		K_*\left(  A\right) \arrow[u] &K_*\left(B \rtimes_\bt\R\right)\arrow[l] & K_*\left( SA \right)\arrow[l]
	\end{tikzcd}
	\ee
	such that horizontal  arrows are grading preserved ring homomorphisms. Using the construction \ref{standard_exact_sequence_sec}. the diagram \eqref{thoh_xexs_eqn} can be replaced with the following one
	\be\label{thohh_xexs_eqn}
	\begin{tikzcd}
		K_*\left( A\right) \arrow[r] &K_*\left( B \rtimes_\bt \R\right)\arrow[r] & K_*\left( A\right) \arrow[d] \\
		K_*\left(  A\right) \arrow[u] &K_*\left(B \rtimes_\bt\R\right)\arrow[l] & K_*\left( A \right)\arrow[l]
	\end{tikzcd}
	\ee
where the ring homomorphism 	$K_*\left( A\right)	\to K_*\left( B \rtimes_\bt \R\right)$ (resp. $K_*\left( B \rtimes_\bt \R\right)\to K_*\left( A\right)$ has degree 1 (resp. 0). The Theorem \ref{connes_thom_thm} yields the ring ring homomorphism 	$K_*\left( A\right)	\to K_*\left( B \right)$ (resp. $K_*\left( B  \right)\to K_*\left( A\right)$ having  degree 0 (resp. 1). Taking into account $B \bydef A\rtimes_\a \Z$ and the Theorem \ref{six_term_thm} one completes the proof. 
\end{proof}

\section{Hausdorff  blowing-up}
\subsection{Basic construction}	
\begin{definition}\label{blowing_defn}
	If $A$ is a $C^*$-algebra then an inclusion $\mathfrak {Blowing}_{\sX-A }:C_0\left( \sX\right) \hookto M\left(A \right)$  (or equivalently $C_0\left( \sX\right) \subset M\left(A \right)$ ) is \textit{Hausdorff  blowing-up} of $A$ if  both sets
	\be\label{blowing_eqn}
	\begin{split}
		\mathfrak {Blowing}_{\sX-A }\left( 	C_c\left( \sX\right)\right)\cdot A \bydef \left\{fa| f \in 	\mathfrak {Blowing}_{\sX-A }\left( 	C_c\left( \sX\right)\right)\quad a \in A \right\},\\
		A\cdot	\mathfrak {Blowing}_{\sX-A }\left( 	C_c\left( \sX\right)\right) \bydef \left\{af| f \in 	\mathfrak {Blowing}_{\sX-A }\left( 	C_c\left( \sX\right)\right)\quad a \in A \right\}
	\end{split}
	\ee
	are dense in $A$.
\end{definition}

\begin{remark}\label{blowing_rem}
	$	\mathfrak {Blowing}_{\sX-A }\left( 	C_c\left( \sX\right)\right)\cdot A$ is dense in $A$ if and only if $A\cdot 	\mathfrak {Blowing}_{\sX-A }\left( 	C_c\left( \sX\right)\right)$ is dense in $A$ (cf. equations \eqref{blowing_eqn}), i.e. both equations iuclusions  \eqref{blowing_eqn} are equivalent.
\end{remark}
\begin{remark}
	The inclusion $
	\mathfrak{Blowing}_{\sX-A }:C_0\left( \sX\right) \hookto M\left(A \right)$ can be uniquely extended up to
	\be\label{blowing_p_eqn}
	\mathfrak{Blowing}^+_{\sX-A }:C\left( \sX^+\right) \hookto M\left(A \right)
	\ee
	such that $\sX^+$ is the one-point compactification  of $\sX$ (cf. Definition \ref{top_compactification_defn}) and $$\mathfrak{Blowing}^+_{\sX-A }\left( 1_{C\left( \sX^+\right)}\right) \bydef 1_{M\left(A \right) }.$$
\end{remark}

\begin{definition}\label{blowing_ideals_au_ua_defn}
	Let  $ C_0\left( \sX\right)\subset  M\left( A\right) $ be  Hausdorff blowing-up of $A$ (cf. Definition \ref{blowing_defn}), and let $\sU \subset \sX$ be an open subset. Both   left and right  closed ideals $A_\sU$  and $_\sU A$ of $A$ generated by sets 	$AC_0\left( \sU\right)$ and $C_0\left( \sU\right)A$ are the \textit{left} $\sU$-\textit{ideal} and the \textit{right} $\sU$-\textit{ideal} respectively. A hereditary $C^*$-subalgebra of $A$
	\be\label{blowing_hereditary_u_eqn} 
	\begin{split}
		_\sU A_\sU \bydef	~	_\sU A\cap  A_\sU = A^*	_\sU \cap  A_\sU\\ %(\text{cf. Definition \ref{hered_defn} and the Lemma \ref{hered_bab_lem}}).
	\end{split}
	\ee	
	is the $ \sU$-\textit{subalgebra}.
	
\end{definition}

\begin{lemma}\label{blowing_compact_lem}
	If $C\cong C_0\left( \sY\right) \subset M\left(A \right)$ is    {Hausdorff blowing-up} of $A$  (cf. Definition \ref{blowing_defn}	for any $a \in A$ and $\eps > 0$ following conditions hold:
	\begin{enumerate}
		\item[(i)] there is a positive $f \in C_c\left( \sY\right)_+$ with 
		\be\label{blowing_compact_eqn}
		\begin{split}
			\left\| f  \right\| \le 1,\\
			\left\| a - af  \right\|< \eps,\\
			\left\| a - fa \right\|< \eps,\\
			\left\| a - faf  \right\|< \eps,
		\end{split}
		\ee
		\item[(ii)] there are an open subset $\sU \subset \sY$ with compact closure and $b \in ~_\sU A_\sU$ such that
		\be\label{blowing_compact_b_eqn}
		\begin{split}
			b \in  ~_\sU A_\sU,\\
			\left\| a - b \right\|< \eps.
		\end{split}
		\ee
	\end{enumerate}
\end{lemma}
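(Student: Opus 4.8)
The plan is to prove (i) by an approximate-identity argument for the (non-degenerate) action of $C_0(\sY)$ on $A$, and then to deduce (ii) by simply setting $b=faf$ for the function $f$ produced in (i) and checking that this element already lies in the hereditary subalgebra ${}_\sU A_\sU$.

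For (i), I would first record that $\sY$ is locally compact Hausdorff, being the Gelfand spectrum of the commutative $C^*$-algebra $C\cong C_0(\sY)$ (cf. Theorem \ref{gelfand-naimark_thm}); hence $C_0(\sY)$ carries an approximate identity $\{e_\lambda\}\subset C_c(\sY)_+$ consisting of $[0,1]$-valued compactly supported functions, so $\|e_\lambda\|\le 1$. The blowing-up hypothesis (Definition \ref{blowing_defn}) says $C_c(\sY)A$ is dense in $A$, equivalently (Remark \ref{blowing_rem}) that $AC_c(\sY)$ is dense in $A$, so both the left and right actions of $C_0(\sY)$ on $A$ are non-degenerate. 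A routine $\eps/3$ argument — verify $e_\lambda(ga')=(e_\lambda g)a'\to ga'$ on the dense set of products $C_c(\sY)A$, then transfer to all of $A$ using $\|e_\lambda\|\le 1$ — gives $\|e_\lambda a-a\|\to 0$ and $\|ae_\lambda-a\|\to 0$ for every $a\in A$. Since the index set is directed, I can fix a single $f:=e_\lambda$ with $\|a-fa\|<\eps/2$ and $\|a-af\|<\eps/2$; this $f$ satisfies the first three lines of \eqref{blowing_compact_eqn}, and the fourth follows from
\[
\|a-faf\|\le\|a-fa\|+\|f(a-af)\|\le\|a-fa\|+\|a-af\|<\eps,
\]
using $\|f\|\le 1$.

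For (ii), I would take this same $f\in C_c(\sY)_+$ and set $b:=faf$, so $\|a-b\|<\eps$ by (i). As $\supp f$ is compact and $\sY$ is locally compact Hausdorff, there is an open $\sU\supset\supp f$ with compact closure, and then $f\in C_c(\sU)\subset C_0(\sU)$. The remaining task is to show $b\in{}_\sU A_\sU$, and here the key observation is that $f$ is a multiplier, so $fa\in A$ and $af\in A$. Writing $b=(fa)f$ exhibits $b\in A\,C_0(\sU)$, which is contained in the left ideal $A_\sU$ by definition (it is generated by $A\,C_0(\sU)$); symmetrically $b=f(af)\in C_0(\sU)\,A\subseteq{}_\sU A=A_\sU^{*}$. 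Thus $b\in A_\sU\cap A_\sU^{*}={}_\sU A_\sU$ by \eqref{blowing_hereditary_u_eqn}, and this intersection is a hereditary $C^*$-subalgebra by Lemma \ref{hered_ideal_lem}, establishing \eqref{blowing_compact_b_eqn}.

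The only genuinely nontrivial step is the non-degeneracy claim underlying (i): that density of $C_c(\sY)A$ in $A$ forces the approximate identity of $C_0(\sY)$ to act approximately as a unit on all of $A$, from both sides. Everything after that is a triangle-inequality estimate or an unwinding of Definition \ref{blowing_ideals_au_ua_defn}; in particular the identity ${}_\sU A=A_\sU^{*}$ is already built into \eqref{blowing_hereditary_u_eqn} (and holds because adjunction carries $A\,C_0(\sU)$ onto $C_0(\sU)\,A$, as both $A$ and $C_0(\sU)$ are $*$-closed). I would also note that no separate construction of $b$ is needed: the function $f$ from part (i) does double duty, since the same estimate $\|a-faf\|<\eps$ both certifies $f$ in (i) and measures the distance from $a$ to $b=faf$ in (ii).
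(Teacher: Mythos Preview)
Your proof is correct and follows essentially the same route as the paper's: produce an $f\in C_c(\sY)_+$ with $\|f\|\le 1$ acting approximately as a two-sided unit on $a$, then set $b=faf$ and read off $\sU$ from the support of $f$. The only cosmetic difference is that the paper constructs $f$ by hand---picking approximants $f'c'$ and $c''f''$ from the density hypothesis, choosing $f_1,f_2$ equal to $1$ on $\supp f'$, $\supp f''$, and setting $f=\max(f_1,f_2)$---whereas you invoke an approximate identity for $C_0(\sY)$ and the non-degeneracy of its action; the triangle-inequality estimates and the conclusion for (ii) are identical.
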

\begin{proof}(i)
	If $g \in C_c\left( \sY\right)$ then $g = fg = gf$ for any positive $f \in C_c\left( \sY\right)_+$ such that
	\bean
	\left\| f \right\|= 1,\\
	f\left(\supp g \right) = 1.
	\eean 
	If $f' \in C_c\left( \sY\right)$ and $c' \in A$ such that
	$\left\|a-f'c' \right\| < \eps/4$ (cf. equation \eqref{blowing_eqn}) then there for any positive $f_1$ such that $\left\| f_1 \right\|= 1$, $~f_1\left(\supp f' \right) = 1$ one has
	\bean
	\left\|f_1\left( a-f'c \right) \right\|\le \left\|f_1 \right\|\left\|a-f'c \right\|\le \frac{\eps}{4},\\
	\left\|a-f_1a \right\| < \left\|f_1a - f_1f'c\right\|+ \left\|a - f_1f'c\right\|\le \frac{\eps}{2}
	\eean
	Similarly 	If $f'' \in C_c\left( \sY\right)$ and $c''/ \in A$ such that
	$\left\|a-f'c'' \right\| < \eps/4$ then for any positive for any positive $f_2$ such that $\left\| f_2 \right\|= 1$, $~f_1\left(\supp f' \right) = 1$ one has
	\bean
	\left\|a-a f_2 \right\| <  \frac{\eps}{2}
	\eean
	If $f = \max\left(f_1, f_2 \right)$ then $f\left(\supp f' \cup \supp f'' \right)= \{1\}$ 
	and
	\bean
	\begin{split}
		\left\| f  \right\| \le 1,\\
		\left\| a - af  \right\|< \frac{\eps}{2},\\
		\left\| a - fa \right\|< \frac{\eps}{2},\\
	\end{split}
	\eean
	On the other hand
	\bean
	\left\| a - faf \right\|\le \left\| a - fa \right\|+ \left\| fa - faf \right\|< \frac{\eps}{2}+ 	\left\| f\right\|\left\|a - fa\right\|< \eps.
	\eean
	(ii) The set $\sU \bydef \left\{y \in \sY | f\left(y\right)\neq 0\right\}\subset \sY$ is open and closure of $\sU$ is the compact set $\supp f$ (cf. Definition \ref{top_compact_defn}). Moreover  $f a f \in ~_\sU A_\sU$ and  $\left\|a - faf\right\|< \eps$.
	
\end{proof}
%\begin{empt}\label{blowing_compact_supp_empt}
%	If $\left\{\sU_\la\right\}_{\la\in\La}$ is a given by the Corollary	\ref{top_connected_union_cor} family of open subsets of $\sY$ with compact closures then $\sY = \cup_{\la \in \La} \sU_\la$. Since the family is directed the union $A_c \bydef \cup_{\la \in \La} \sU_\la$ is a $*$-subalgebra of $A$. From the Lemma \ref{blowing_compact_lem} it follows that $A_c$ is dense in $A$.
%\end{empt}
%\begin{definition}\label{blowing_compact_supp_defn}
%	The given by \ref{blowing_compact_supp_empt} dense $*$-subalgebra $A_c \subset A$ is said to be the \textit{algebra of compactly supported elements}.
%\end{definition}

\begin{empt} 
	
	We leave to the reader a proof of following equations:
	\be\label{blowing_su_eqn}
	\begin{split}
		_\sU A \bydef \left\{ a \in A~ |~ \forall f \in C_0\left( \sY\right)\quad f\left(\sU \right)= \{0\}  \quad \Rightarrow \quad fa = 0\right\} ,\\
		A_\sU \bydef \left\{ a \in A~ |~ \forall f \in C_0\left( \sY\right)\quad  f\left(\sU \right)= \{0\} \quad \Rightarrow \quad af = 0\right\}
	\end{split}
	\ee

	\bea\label{blowing_sue1_eqn}
	\sU'\cap \sU'' =\emptyset\quad \Rightarrow\quad A_{\sU'}~_{\sU''}A= \{0\}.
	\eea
	
	From \eqref{blowing_su_eqn} it follows that
	\be\label{blowing_su_inc_eqn}
	\sU'\subset \sU'' \quad \Rightarrow\quad _{\sU'}A \subset~  _{\sU''}A ~\text{ AND } ~A_{\sU'}\subset A_{\sU''}~\text{ AND }~  _{\sU'}A _{\sU'}\subset _{\sU''}A _{\sU''}.
	\ee
\end{empt}

\begin{definition}\label{blowing_support_defn}
	If $C_0\left( \sX\right) \subset M\left(A \right)$ is    {Hausdorff blowing-up} of $A$  (cf. Definition \ref{blowing_defn}),  $a \in A$ and
	$
	\sU_a \bydef\bigcap 
	\left\{\left.{\sU} \subset \sX\right| a\in~_\sU A_{\sU} \right\}
	$
	then the  closure $\sV_a$  of $\sU_a$ is said to be the \textit{support} of $a$. We write $\supp a \bydef \sV_a$.
\end{definition}

\begin{lemma}\label{pedersen_eps_lem}
	Let $\eps > 0$, and let 	 $f_\eps: \R \to \R$ be a continuous function given by 
	\begin{equation}\label{f_eps_eqn}
		f_\eps\left( x\right)  \bydef\left\{
		\begin{array}{c l}
			0 &x \le \eps \\
			x - \eps & x > \eps
		\end{array}\right.
	\end{equation}
	If $A$ is a $C^*$-algebra 
	then one has
	\bea\label{k0_ped_e_eqn}
	K\left( A \right)_0 = \left\{f_\eps \left(a\right) \left|~a \in A_+, \quad \eps > 0 \right.\right\},~~~\\
	\label{kp_ped_e_eqn}
	K\left( A \right)_+ = \left\{a \in A_+ \left|a \le \sum_{j = 1}^n f_{\eps_j}\left(  a_j\right) \quad a_j \in  	K\left( A \right)_0\quad \eps_j > 0\quad j=1,...,n\right.\right\}~~
	\eea
	where both $	K\left( A \right)_0$ and 	$K\left( A \right)_+$ are given by equations \eqref{pedersen_k0_eqn} and \eqref{pedersen_k_plus_eqn} respectively
\end{lemma}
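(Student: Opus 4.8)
The plan is to reduce both identities to the behaviour of the functions $f_\eps$ under continuous functional calculus, together with the elementary composition rule $f_\eps\circ f_\delta=f_{\eps+\delta}$. I write $(t)_+\bydef\max(t,0)$, so that $f_\delta(x)=(x-\delta)_+$, and I use the standard convention that the functions in \eqref{pedersen_k0_eqn} are taken positive, so that $K(A)_0\subseteq A_+$; this is forced by the statement, since each $f_\eps(a)$ is positive.

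First I would prove \eqref{k0_ped_e_eqn}. For $\supseteq$, fix $a\in A_+$ and $\eps>0$. As $\spec(a)\subseteq[0,\|a\|]$ is compact and $f_\eps$ vanishes on $[0,\eps]$, choose $\psi\in K(]0,\infty[)$ with $0\le\psi\le 1$, $\psi\equiv 1$ on $[\eps,\|a\|]$ and $\supp\psi\subseteq(\eps/2,\|a\|+1)$. Then $g\bydef f_\eps\psi\in K(]0,\infty[)$ agrees with $f_\eps$ on $\spec(a)$, so $f_\eps(a)=g(a)\in K(A)_0$ by \eqref{pedersen_k0_eqn}. For $\subseteq$, take $f(a)\in K(A)_0$ with $f\in K(]0,\infty[)$, $f\ge 0$, $a\in A_+$, and fix any $\eps>0$. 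Pick $\phi\in K(]0,\infty[)$ with $0\le\phi\le 1$, $\phi\equiv 1$ on $\supp f$ and $\phi(0)=0$ (possible since $\supp f$ is compact in $(0,\infty)$, hence bounded away from $0$), and set $h\bydef f+\eps\phi$. Then $h\in K(]0,\infty[)$ is positive with $h(0)=0$, and checking separately on $\{f>0\}$ (there $\phi\equiv 1$, so $h=f+\eps$) and on $\{f=0\}$ (there $h=\eps\phi\le\eps$) gives $(h(x)-\eps)_+=f(x)$ for all $x\ge 0$. Hence $b\bydef h(a)\in A_+$ satisfies $f_\eps(b)=(h(a)-\eps)_+=f(a)$ by functional calculus.

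Next I would record the scalar identity $f_\eps(f_\delta(x))=\big((x-\delta)_+-\eps\big)_+=(x-\delta-\eps)_+=f_{\eps+\delta}(x)$ for $x\ge 0$, $\eps,\delta>0$. Via functional calculus this gives $f_{\eps_j}(a_j)=f_{\eps_j+\delta_j}(b_j)\in K(A)_0$ whenever $a_j=f_{\delta_j}(b_j)\in K(A)_0$; conversely every $f_\gamma(b)\in K(A)_0$ equals $f_{\gamma/2}\big(f_{\gamma/2}(b)\big)$ with $f_{\gamma/2}(b)\in K(A)_0$. Combined with \eqref{k0_ped_e_eqn}, this shows that the set $\{\,f_{\eps_j}(a_j):a_j\in K(A)_0,\ \eps_j>0\,\}$ equals $K(A)_0$. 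Substituting this into the defining relation \eqref{pedersen_k_plus_eqn}, the requirement $a\le\sum_j x_j$ with $x_j\in K(A)_0$ turns into $a\le\sum_j f_{\eps_j}(a_j)$ with $a_j\in K(A)_0$, which is exactly \eqref{kp_ped_e_eqn}.

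I expect the only delicate point to be the construction of $h$ in the reverse inclusion of \eqref{k0_ped_e_eqn}: one must simultaneously force $(h-\eps)_+=f$ on all of $[0,\infty)$, keep $h$ continuous across $\partial(\supp f)$, retain compact support in $(0,\infty)$, and secure $h(0)=0$ so that $b=h(a)$ lands in $A$ rather than merely in its unitisation when $A$ is non-unital. The single cutoff $\phi$ above meets all four demands at once, and everything else is routine functional calculus.
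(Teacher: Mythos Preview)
Your argument is correct and follows essentially the same strategy as the paper: both directions of \eqref{k0_ped_e_eqn} are handled by manufacturing an auxiliary function in $K(]0,\infty[)$ so that functional calculus does the work (the paper uses explicit piecewise-linear tents where you use a smooth cutoff times $f_\eps$, respectively $f+\eps\phi$), and \eqref{kp_ped_e_eqn} is then immediate from \eqref{k0_ped_e_eqn} and the definition \eqref{pedersen_k_plus_eqn}. Your version has the minor bonus that in the reverse inclusion the $\eps$ is arbitrary rather than tied to $\supp\varphi$, and your derivation of \eqref{kp_ped_e_eqn} via $f_\eps\circ f_\delta=f_{\eps+\delta}$ spells out what the paper leaves as ``a direct consequence''.
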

\begin{proof}
	If  $a\in A_+$ and $\eps > 0$ then $f_\eps \left(a\right)= \phi_\eps\left(a \right)$ where  $\phi_\eps \in K(]0, \infty [$ is given by 
	\bean
	\phi_\eps\left( x\right)  =\left\{
	\begin{array}{c l}
		0 &x \le \eps \\
		x - \eps &  \eps \le x \le \left\| a\right\|\\
		2	\left\| a\right\| - \eps - x & \left\| a\right\| \le x \le 2\left\| a\right\|-\eps\\
		0 & x \ge 2 \left\| a\right\| -\eps\\
	\end{array}\right.
	\eean
	It follows that $f_\eps\left(a\right)	\in K\left( A \right)_0$. Conversely  if $a \in	K\left( A \right)_0$ then from \eqref{pedersen_k0_eqn} it turns out that there is $b \in A_+$ and $\varphi \in   K(]0, \infty [$ such that $a = \varphi\left(b\right)$. If  $\supp \varphi \subset \left[\eps, c\right]$ and  $\psi \in C_c\left( \R\right)_+$ is given by
	\bean
	\psi\left( x\right)  =\left\{
	\begin{array}{c l}
		0 &x \le 0 \\
		x &0 \le x \le \eps \\
		\varphi\left(x\right) + \eps &\eps \le x \le c \\
		\eps + c - x&  c \le x \le c+ \eps\\
		0 & x \ge c + \eps
	\end{array}\right.
	\eean
	then $ \varphi = f_\eps  \circ\psi$. It follows that $a = f_\eps\left(b' \right)$ where $b' \bydef \psi\left(b\right)$. So the equation \eqref{k0_ped_e_eqn} is proven. The equation \eqref{kp_ped_e_eqn} is a direct consequence of \eqref{k0_ped_e_eqn} and \eqref{pedersen_k_plus_eqn} ones.
	
\end{proof}

\begin{theorem}\label{blowing_pedersen_compact_thm}
	If  $C_0\left( \sX\right)\hookto M\left( A\right)$ is Hausdorff  blowing-up and $a\in A$ belongs to the Pedersen's ideal $K\left(A \right)$ (cf. Definition \ref{pedersen_ideal_defn}) then the support of $a$ (cf. Definition \ref{blowing_support_defn}) is compact (cf. Definition \ref{top_compact_defn}).
\end{theorem}
\begin{proof}
	If $a \in K\left(A\right)_0$ (cf \eqref{pedersen_k0_eqn}) then from the Lemma  \eqref{pedersen_eps_lem} it follows that there is $\eps > 0$ and $b \in A_+$ such that $a = f_\eps \left( b\right)$ where $f_\eps$ is given by \eqref{f_eps_eqn}. On the other hand  there is a positive element   $c\in  A_+$  such that $\left\|c - b \right\|  < \eps/2$ and $\supp c$ is compact (cf. Definition  \ref{blowing_defn}).
	If $a \le c$ does not hold and $\rho: A \hookto B\left(\H \right)$ is a faithful  nondegenerate representation  then there is $\xi \in \H$ such that
	\bean\label{blowing_ac_eqn}
	\forall \xi \in \H \quad \left( \xi, \rho\left( a \right) \xi\right) > \left( \xi, \rho\left( c \right) \xi\right)
	\eean
	From $\left\|c - b \right\|  < \eps/2$ if follows that
	\be\label{blowing_acc_eqn}
	\forall \xi \in \H  \quad \left\| \xi \right\| = 1\quad \Rightarrow\quad  \left|\left( \xi, \rho\left( b \right) \xi\right)-\left( \xi, \rho\left( c \right) \xi\right) \right| < \eps/2
	\ee
	On the other hand from $a =f_\eps \left( b\right)> 0$ it follows that there is $\xi \in \H$ and $\la \in \R_+$ such that
	\bean
	\left\| \xi \right\| = 1,\\
	\rho\left(a \right) \xi = \la \xi,\\
	\rho\left(b \right)\xi = \left( \la+\eps \right) \xi,\\
	\rho\left(a \right)\xi = \la  \xi,\\
	\left( \xi, \rho\left( b \right) \xi\right) - \left( \xi, \rho\left( a \right) \xi\right)= \eps
	\eean
	and taking into account \eqref{blowing_acc_eqn} one has
	$$
	\left( \xi, \rho\left( c \right) \xi\right)- \left( \xi, \rho\left( a \right) \xi\right)> \frac{\eps}{2}.
	$$
	Above condition contradicts with \eqref{blowing_ac_eqn} so $a \le c$.
	If $\supp a \subsetneqq \supp c$ then there is a nonempty  open set  $\sU \subset \supp a\setminus \supp c$. For any $f \in C_0\left(\sU \right)\setminus \{0\}$ one has
	\bean
	faf^* > 0,\\
	fcf^* = 0.
	\eean
	However it is impossible since $a \le c$, so $\supp a \subsetneqq \supp c$ is not true and $\supp a \subset\supp c$. Thus the set $\supp a$ is a closed subset of the compact set $\supp c$ therefore $\supp a$ is compact. Using this fact and the Definition \ref{pedersen_ideal_defn} we conclude that $\supp a$ is compact for any $a \in K\left( A\right)$. 
\end{proof}
\begin{remark}\label{blowing_gelfand_rem}
The Lemma means that any Hausdorff blowing-up is a good $*$-homomorphism  $\mathfrak {Blowing}_{\sX-A }:C_0\left( \sX\right)\hookto M\left( A\right)$ (cf. Definition \ref{good_hom_defn}), so there is the natural surjective continuous map
\be\label{blowing_gelfand_eqn}
	\mathfrak{Gelfand}_r\left(A \right) \onto \sX
\ee
(cf. Theorem \ref{good_thm}).
\end{remark}

\begin{remark}
	The Theorem \ref{blowing_pedersen_compact_thm} can be regarded as a generalization of the equation  \eqref{peder_c0_eqn}.
\end{remark}

\begin{remark}
	The "blowing-up" word is inspired by following reasons:
	\begin{itemize}
		\item the existence  of the  surjective  map $\mathfrak{Blowing}$ from the  Hausdorff  space  to the spectrum, %(cf. Definition \ref{blowing_spectral_defn}),
		\item  in the algebraic geometry   "blowing-up" means  from a map $\sX \to \sY$ such that the variety $\sX$ has no singular points (cf. \cite{hartshorne:ag}).
	\end{itemize}
\end{remark}

\subsection{Noncommutative coverings}
\paragraph{}
Here we suppose that we have Hausdorff blowing-up and investigate conditions when they enable us to construct given by the Definition \ref{covering_defn} noncommutative coverings.

\begin{empt}\label{desc__empt}
	Let  $\pi: \widetilde{\sX}\to \sX$ is a covering (cf. Definition \ref{top_covering_defn}) where both $\sX$ and $\widetilde{\sX}$ are locally compact Hausdorff spaces. If $\widetilde f \in C_c\left(\widetilde{\sX} \right)$ then for each $x \in \sX$ the set $$\left\{\left.\widetilde x \in \widetilde \sX\right| \widetilde f\left( \widetilde x\right)= x, \quad \widetilde f \left( \widetilde x\right) \neq 0 \right\}$$ finite.  
	We leave to the reader proof of that  if
	\be\label{cov_comp_eqn}
	\begin{split}
		f : \sX \to \C,\\
		f \bydef \sum_{\substack{\widetilde x\in \widetilde \sX\\ \widetilde f\left( \widetilde x\right)= x}} \widetilde f\left( \widetilde x\right) 
	\end{split}
	\ee 
	then $f\in C_c\left(\sX \right)$. 
\end{empt}
\begin{defn}\label{top_compactly_supported_descent_defn}
	Under the hypothesis \ref{desc__empt} the $\C$-linear homomorphism 	\be\label{desc_eqn}
	\begin{split}
		\desc_\pi : C_c\left(\widetilde\sX \right)\to C_c\left(\sX \right),\\
		\widetilde f \mapsto f
	\end{split}
	\ee 
	is the \textit{compactly supported descent}.
\end{defn}
\begin{lemma}\label{top_a_u_lem}
	If $\mathcal X$  is a locally compact, 
	Hausdorff space then for any $x_0 \in  \mathcal X$ and any open neighbourhood $\mathcal U\subset\mathcal X$ of $x_0$ with compact closure   there is a continuous function $a: \mathcal X \to \left[0,1 \right]$  such that following conditions hold:
	\begin{itemize}
		\item $a\left(\sX\right) = \left[0,1\right]$,
		\item 	$a\left( x_0\right) = 1$,
		\item $\supp a \subset \mathcal U$,
		\item	there is an open neighbourhood $\mathcal V\subset\mathcal U$ of $x_0$ which satisfies to the following condition
		\be\label{com_a_u_eqn}
		a\left(\mathcal V \right)= \{1\}.
		\ee 
	\end{itemize}
	If $\sX$ is locally connected (cf. Definition \ref{top_locally_connected_defn}) then one $a$ can be selected such that the support of $a$ (cf. Definition \ref{top_support_defn}) is connected.
\end{lemma}
\begin{proof}
	From the Exercise \ref{top_completely_regular_exer} it turns out that that $\sX$ is completely regular (cf. Definition \ref{top_completely_regular_defn}), i.e. there is a  continuous function $b: \mathcal X \to \left[0,1 \right]$  such that $b\left( x_0\right) = 1$ and $b\left(\mathcal X \setminus \mathcal U \right)= \{0\}$. A set 
	$\sU' \bydef \left\{x \in \sX | b\left(x \right) \neq 0\right\}$ is open. 
	The set $\mathcal V = \left\{ x \in \mathcal X~|~b\left(x \right)> \frac{2}{3}  \right\}$ is open.
	If $f: \R \to \R$ is a continuous function given by
	\be\nonumber%\label{top_23_func_eqn}
	f\left(t\right)=	\left\{\begin{array}{c l}
		0 & t \le \frac{1}{3}\\
		3t - 1 & \frac{1}{3} < t \le \frac{2}{3}\\
		1 & t > \frac{2}{3}
	\end{array}
	\right.
	\ee
	then $a \bydef f\left(b\right): \mathcal X \to \left[0,1 \right]$ satisfies to all conditions of this lemma. If $\sX$ is locally connected and $\sU''$ is a component of $x_0$ is $\sU'$ (cf. Definition \ref{top_connected_component_defn}) then   from the Exercise \ref{top_loc_conn_exer} it follows that $\sU''$ is a quasi-component of $\sU'$ (cf. Definition \ref{top_quasi_component_defn}). If we define $a' \in C_c\left(\sX \right)$ by following way
	$$
	a'\left(x \right) \bydef \begin{cases}
		a\left( x\right)& x \in \sU''\\
		0& x \notin \sU''	\end{cases}
	$$ 
\end{proof}
then the support $\supp a'$ is the closure of $\sU''$. From the Theorem \ref{top_connected_closure_thm} it follows that the set $\supp a'$ is connected.
\begin{definition}\label{top_stump_defn}
	Let $\mathcal X$ be a locally compact,  
	Hausdorff space, $x_0\in \sX$ and $f$ is given by the Lemma \ref{top_a_u_lem} then we denote by
	\be\label{top_fx_eqn}
	{f}_{{x}_0} \stackrel{\mathrm{def}}{=} 	{f}.
	\ee
	We say that ${f}_{{x}_0}$ is an $x_0$-\textit{stump}. Denote by $\mathfrak{Stumps}_{x_0}$ the set of $x_0$-stumps. 
\end{definition}
\begin{remark}\label{top_a_u_rem}
	If ${f}_{{x}_0}$ is an $x_0$-{stump} then $\supp f_{x_0}$ is compact, i.e. ${f}_{{x}_0}\in C_c\left(\sX\right)$, since it was assumed that the closure of $\sU$ is compact (cf. Lemma \ref{top_a_u_lem}).
\end{remark}
\begin{corollary}\label{com_a_u_cor}
	Let $\mathcal X$ be a locally compact,   
	Hausdorff space. For any $x_0 \in  \mathcal X$, and any open neighbourhood $\mathcal U$ of $x_0$ there is an open neighbourhood  $\mathcal V$ of $x_0$ such that the closure of $\mathcal V$ is a subset of $\mathcal U$.
\end{corollary}
\begin{proof}
	If $a$ satisfies to the Lemma \ref{top_a_u_lem} then a set $\mathcal V = \left\{x \in \mathcal X~|~ a\left(x \right)>0 \right\}$ is open and the closure of $\mathcal V$ is a subset of $\mathcal U$.
\end{proof}

\begin{definition}\label{top_stump_cov_defn}
	Let $p: \widetilde{\sX} \to {\sX}$ be a covering. Let $\widetilde{x}_0\in   \widetilde{\sX}$, and let $\widetilde{\sU}$ be an open neighbourhood of $\widetilde{x}_0$ such that the restriction $\left.p\right|_{\widetilde{\sU}}:\widetilde{\sU}\xrightarrow{\approx}\sU \bydef p\left(\widetilde{\sU} \right)$ is a homeomorphism. Since $\widetilde{\sX}$ is locally compact and Hausdorff there is $	\widetilde{f} \in C_c\left(\widetilde{\sX} \right)$ and open subset  $\widetilde{\sV}$ such that $\widetilde{x}_0\in  \widetilde{\sV} \subset {\widetilde{\sU}}$, $~\widetilde{f}\left(\widetilde{\sV} \right)= 1$,  $~\widetilde{f}\left(\widetilde{\sX} \right)= \left[0,1\right]$ and $\supp 	\widetilde{f} \subset \widetilde{\sU}$. We write 
	\be\label{top_tfx_eqn}
	\widetilde{f}_{\widetilde{x}_0} \stackrel{\mathrm{def}}{=} 	\widetilde{f},
	\ee
	and we say that $\widetilde{f}_{\widetilde{x}_0}$ is a $p$-$\widetilde x_0$-\textit{stump}.
\end{definition}
\begin{empt}\label{top_fx_empt} 	
	Let $\sX$ be a locally compact, Hausdorff space, and let $\sY \subset\sX$ be a compact subset. For any $x \in \sY$ we select an $x$-stump $f_x\in C_c\left(\sX\right)$ (cf. Definition \ref{top_stump_defn}) such that
	\begin{itemize}
		\item a set $\supp f_x$ is compact for all $x \in \sY$,
		\item there is an open neighbourhood $\sV_x$ of $x$ such that $f_x\left(\sV_x\right)=\{1\}$.
	\end{itemize}
	A family 
	\be\label{top_vx_eqn}
	\left\{\sV_x\right\}_{x\in\sX}.
	\ee
	is such that $\sY \subset \cup_{x \in \sY} \sV_x$.
	The set $
	\sU_x \bydef \left\{\left. x\in\sX\right| f_x\left( x\right)> 0 \right\}
	$ is dense in $\supp f_x$ so $\supp f_x$ is connected (cf. Theorem \ref{top_connected_closure_thm}).  
	Moreover  $\sV_x \subset \sU_x$ and there is a finite set $\left\{x_1,..., x_n \right\}\subset \sX$ such that $\sY \subset \bigcup_{j=1}^n \sV_{x_j}$, because the set $\sY$ is compact.
	If $f \bydef f_{x_1} + ... +  f_{x_n}$ then $f\left(x \right) \ge 1$ for all $x \in \sY$. If 
	$$
	f_j \bydef \frac{f_{x_j}}{\max\left(1, f\right)}\in C_c\left(\sX \right)_+ 
	$$
	then
	\be\label{top_cfs_eqn}
	\sum_{j = 1}^n f_j\left( x\right) = 1\quad\forall x \in \sY.
	\ee
\end{empt}
\begin{defn}\label{top_covering_sum_defn}
	Let $\sY\subset\sX$ is a compact subset of a locally compact,  Hausdorff space $\sX$. The  $f \bydef \sum_{j = 1}^n f_j$.
	is said to be a \textit{covering sum for} $\sY$. We also say that the covering sum \eqref{top_cfs_eqn} is \textit{dominated} by the family 
	$\left\{\sV_x\right\}_{x\in\sX}$ (cf. equation \ref{top_vx_eqn}). 
\end{defn}
\begin{exercise}\label{top_u_net_exer}
	Let $\sX$ be a locally compact, Hausdorff space.\\ Denote by $\Xi\left( \sX\right)\bydef \left\{u_\la\right\}_{\la \in \Xi\left( \sX\right)}\subset C_c\left( \sX\right)_+$ a net of all positive continuous maps such that 
	\bean
	\forall	\la \in \Xi\left( \sX\right)\quad u_\la\left(\sX\right)\subset \left[0,1\right],\\
	\forall	\mu, \nu \in \Xi\left( \sX\right)\quad	\mu \le \nu \quad \Leftrightarrow\quad u_\mu \le u_\nu.
	\eean
	Prove that the net $\Xi\left( \sX\right)$ is an approximate unit of $C_c\left( \sX\right)$. 
\end{exercise}

\begin{lemma}\label{lift_mult_lem}
	Let $\pi: A \hookto M\left( \widetilde A\right)$ be an injective $*$-homomorphism of $C^*$-algebras, and let $\left\{u_\a\right\}_{\a \in \mathscr A}\subset A$ be an approximate unit for $A$ (cf. Definition \ref{approximate_unit_defn}). If
	\be\label{lift_mult_norm_eqn}
	\forall \widetilde a \in \widetilde A \quad \lim_{\a \in \mathscr A}\left\| \widetilde a - \pi\left( u_\a\right)  \widetilde a \right\| = \lim_{\a \in \mathscr A}\left\| \widetilde a -  \widetilde a \pi\left( u_\a\right)\right\| = 0
	\ee
	i.e. $\bt\text{-}\lim_{\a \in \mathscr A} \pi\left( u_\a\right)= 1_{M\left( \widetilde A\right)}$,
	then $\pi$ can be naturally  extended up to an injective $*$-homomorphism 
	\be\label{lift_mult_eqn}
	\begin{split}
		M\left(\pi \right) : M\left( A\right)  \hookto M\left( \widetilde A\right),\\
		a \mapsto  \bt\text{-}\lim_{\a \in \mathscr A} \pi\left( u_\a  a \right)= \bt\text{-}\lim_{\a \in \mathscr A} \pi\left( a u_\a \right)= \bt\text{-}\lim_{\a \in \mathscr A} \pi\left( u_\a a u_\a \right)
	\end{split}
	\ee
	where $\bt\text{-}\lim_{\a \in \mathscr A}$ means the limit with respect to the strict topology  of $M\left( \widetilde A\right)$ (cf. Definition \ref{strict_topology_defn}).
\end{lemma}
\begin{proof}
	For any $a \in M\left( A\right)$ we define both  maps:
	\bean 
	L_a : \widetilde A \to \widetilde A,\\
	\widetilde a \mapsto  \lim_{\a \in \mathscr A }\widetilde a~\pi \left( u_\a a \right) = \lim_{\a \in \mathscr A }\widetilde a~\pi \left( u_\a a u_\a\right);\\
	R_a : \widetilde A \to \widetilde A,\\
	\widetilde a \mapsto  \lim_{\a \in \mathscr A }\pi \left( a u_\a\right)  \widetilde a = \lim_{\a \in \mathscr A }\pi \left(u_\a a u_\a\right)  \widetilde a 
	\eean
	where the convergence with respect to $C^*$-norm topology is is implied.
	From 
	$$
	\forall \widetilde{a}, \widetilde{b} \in \widetilde{A}\quad \left( \widetilde a \pi \left( u_\a a \right)\right)  \widetilde b=\widetilde a \left( \pi \left( u_\a a \right) \widetilde b\right) 
	$$
	one can deduce that a pair $\left( L_a, R_a\right)$ satisfies to  the equation \eqref{double_centralizer_eqn}, i.e.  $\left( L_a, R_a\right)$ is  a {double centralizer} (cf. Definition \ref{double_centralizer_defn}). From the Remark \ref{double_centralizer_rem} it follows that $\left( L_a, R_a\right)$ yields an element of $M\left( \widetilde{A}\right)$, so one has a natural map $	M\left(\widetilde \pi \right) :	M\left(A \right)  \to M\left( \widetilde A\right)$. 
	We leave to the reader the proof of that $M\left(\pi \right)$ is an injective $*$-homomorphisms  and $M\left(\pi \right)$ is an extension of  $\pi$.
\end{proof}

\begin{empt}\label{blowing_lift_empt}
	Let $A$ be a $C^*$-algebra, and let $C_0\left(\sY \right)\subset  M\left(A \right)$ be  Hausdorff blowing-up (cf. Definition \ref{blowing_defn}), and let $q: \widetilde \sY\to \sY$ be a topological covering (cf. Definition \ref{top_covering_defn}). There is $C_0\left(\sY \right)$-valued product
	\bean
	\left\langle\cdot, \cdot  \right\rangle_{C_0\left(\sY \right)}: C_c \left(\widetilde \sY \right) \times C_c \left(\widetilde \sY \right)\to C_0\left(\sY \right),\\
	\left(\widetilde a, \widetilde b \right) \mapsto \desc^c_q\left(\widetilde a^* \widetilde b \right) 
	\eean
	where $\desc^c_q$ is  a {compactly supported} $q$-{descent} \ref{top_compactly_supported_descent_defn}. So $ C_c \left(\widetilde \sY\right)$ is a pre-Hilbert $C_0\left(\sY \right)$-module (cf. Definition \ref{hilbert_module_defn}). If $\mathscr L^2\left(C_0 \left(\widetilde \sY \right) \right)$ is a completion of $C_c \left(\widetilde \sY \right)$ with respect to given by \eqref{hilbert_module_norm_eqn} norm then $\mathscr L^2\left(C_0 \left(\widetilde \sY \right) \right)$ is a $C^*$-Hilbert $C_0\left(\sY \right)$-module. If $\mathscr L^2\left(C_0 \left(\widetilde \sY \right) \right)\otimes_{C_0\left(\sY \right)} A$ is an algebraic tensor product then there is an $A$-valued product.
	\bean
	\left\langle\cdot, \cdot  \right\rangle: \left( \mathscr L^2\left(C_c \left(\widetilde \sY \right) \right)\otimes_{C_0\left(\sY \right)} A\right)\times \left( \mathscr L^2\left(C_c \left(\widetilde \sY \right) \right)\otimes_{C_0\left(\sY \right)} A\right) \to A,\\
	\left(\left(\widetilde f \otimes a \right) , \left(\widetilde g  \otimes b \right)  \right) \mapsto a^*\left\langle\widetilde f, \widetilde g \right\rangle_{C_0\left(\sY \right)} b.
	\eean
	so $\mathscr L^2\left(C_0 \left(\widetilde \sY \right) \right)\otimes_{C_0\left(\sY \right)} A$ becomes a pre-Hilbert $A$-module. We denote by $\mathscr L^2\left(\widetilde \sY \right) _A$ the $C^*$-Hilbert $A$-module which is a completion  of  $\mathscr L^2\left(C_0 \left(\widetilde \sY \right) \right)\otimes_{C_0\left(\sY \right)} A$ with respect to given by \eqref{hilbert_module_norm_eqn} norm. There are both a left  action 	$C_0 \left(\widetilde \sY \right)\times \mathscr L^2\left(\widetilde \sY \right) _A\to \mathscr L^2\left(\widetilde \sY \right) _A$ and the right action $\mathscr L^2\left(\widetilde \sY \right) _A\times A \to \mathscr L^2\left(\widetilde \sY \right) _A$ such that $\mathscr L^2\left(\sY\right)_A$ is $C_0 \left(\widetilde \sY \right)$-$A$-bimodule. These actions induce injective homomorphisms of $C^*$-algebras
	\bea\label{blowing_a_h_eqn}
	\varphi_A : A \hookto \End^*_A\left( \mathscr L^2\left(\widetilde \sY \right)_A\right), \\
	\label{blowing_c_h_eqn}
	\varphi_{\widetilde \sY} : C_0 \left(\widetilde \sY \right) \hookto \End^*_A\left( \mathscr L^2\left(\widetilde \sY \right)_A\right)
	\eea
	where $\End^*_A\left( \mathscr L^2\left(\widetilde \sY \right)_A\right)$ is the $C^*$-algebra of adjointable  endomorphisms of  the $C^*$-Hilbert $A$-module $\mathscr L^2\left(\widetilde \sY \right)_A$ (cf. Definition \ref{adjointable_operator_defn}).
	There is a homomorphism 
	\be\label{blowing_tensor_eqn} 
	\begin{split}
		\varphi^{\widetilde \sY}_A: C_0\left( \widetilde \sY \right)\otimes_{C_0\left(\sY\right) } K\left( A\right) \to \End^*_A\left( \mathscr L^2\left(\widetilde \sY \right)_A\right),\\
		\sum_{j=1}^n 	\widetilde f_j \otimes a_j \mapsto 	\sum_{j=1}^n 	\varphi_{\widetilde \sY}\left( \widetilde f_j\right) \varphi_A\left(a_j \right)
	\end{split}
	\ee
	of right $C_0\left( \widetilde\sY\right)$-$A$-bimodules, where 	$\varphi_A, ~	\varphi_{\widetilde \sY}$ are given by  \eqref{blowing_a_h_eqn}, \eqref{blowing_c_h_eqn} and the algebraic tensor product is implied. Since both $C_c\left(\widetilde \sY \right)$ and   $K\left(A \right)$ are dense in $C_0\left(\widetilde \sY \right)$ and  $A$ respectively the $*$-homomorphism \eqref{blowing_tensor_eqn} can be uniquely extended up to a  $C_0\left( \widetilde\sX\right)$-$A$-bimodule homomorphism 
	\be\label{blowing_tensoar_eqn} 
	\begin{split}
		\psi^{\widetilde \sY}_A: C_0\left( \widetilde \sY \right)\otimes_{C_0\left(\sY\right) } A \to \End^*_A\left( \mathscr L^2\left(\widetilde \sY \right)_A\right)
	\end{split}
	\ee
	such that the $C^*$-closure of $\varphi\left(  C_c\left( \widetilde \sY \right)\otimes_{C_0\left(\sY\right) } K\left( A\right)\right)$ coincides with\\  	$\varphi'\left(  C_c\left( \widetilde \sY \right)\otimes_{C_0\left(\sY\right) } A\right) $ one.
\end{empt}

\begin{definition}\label{blowing_lift_hm_defn}
	Under the hypotheses \ref{blowing_lift_empt}  we say that the $C^*$-Hilbert $A$-module  $\mathscr L^2\left(\widetilde \sY \right)_A$ is the $\widetilde \sY$-$A$-\textit{module}.
\end{definition}
\begin{definition}\label{blowing_a_regular_defn} Let $A$ be a $C^*$-algebra, and let $C_0\left(\sY \right)\subset  M\left(A \right)$ be  Hausdorff blowing-up (cf. Definition \ref{blowing_defn}).
	If 	$C_0\left( \widetilde\sY\right)$-$A$-bimodule
	\be\label{blowing_prods_eqn}
	\varphi^{\widetilde \sY}_A\left(C_c\left( \widetilde \sY \right)\otimes_{C_0\left(\sY\right) } K\left( A\right) \right)\subset  \End^*_A\left( \mathscr L^2\left(\widetilde \sY \right)_A\right)
	\ee	
	(cf. equation \eqref{blowing_tensor_eqn}) is a $*$-subalgebra of $\End^*_A\left( \mathscr L^2\left(\widetilde \sY \right)_A\right)$
	then  we say that the covering $q: \widetilde{\sY}\to \sY$ is $A$-\textit{regular}.  
\end{definition}

\begin{lemma}\label{blowing_lift_eq_lem}
	If $C_0\left(\sY \right)\subset  M\left(A \right)$ is  Hausdorff blowing-up (cf. Definition \ref{blowing_defn}) then a covering $q: \widetilde{\sY}\to \sY$ is $A$-{regular} (cf. Definition \ref{blowing_a_regular_defn}) if and only if for any $\widetilde a \in \varphi\left(C_c\left( \widetilde \sY \right)\otimes_{C_0\left(\sY\right) } K\left( A\right) \right)$ there is $\widetilde f \in C_c\left(\widetilde\sY\right)$ such that
	\be\label{blowing_lift_eq_eqn}
	\forall \widetilde a \in \varphi\left(C_c\left( \widetilde \sY \right)\otimes_{C_0\left(\sY\right) } K\left( A\right) \right) \quad \exists  \widetilde f \in C_c\left(\widetilde\sY\right) \quad \varphi_A\left(  \widetilde a\right)  =\varphi_A\left(  \widetilde a\right) \varphi_{\widetilde \sY}\left(\widetilde f \right). 
	\ee 
\end{lemma}
\begin{proof}
	If $\widetilde a \in \varphi\left(C_c\left( \widetilde \sY \right)\otimes_{C_0\left(\sY\right) } K\left( A\right) \right)$ then under the hypotheses of the Definition \ref{blowing_a_regular_defn} one has $\widetilde a^* \in \varphi\left(C_c\left( \widetilde \sY \right)\otimes_{C_0\left(\sY\right) } K\left( A\right) \right)$ and
	\bean
	\widetilde a^* = \varphi\left(\sum_{j = 1}^n \widetilde f_j \otimes a_j \right),\\
	\widetilde a = \sum_{j = 1}^n \varphi_A \left(a^*_j\right)\varphi_{\widetilde \sY}\left(\widetilde f_j^* \right)   
	\eean
	The finite union $\widetilde \sV \bydef \bigcup_{j = 1}^n \supp \widetilde f_j$ is compact. If $\widetilde f\in C_c\left(\widetilde\sY \right) $ is a covering sum for $\widetilde \sV$ (cf. Definition \ref{top_covering_sum_defn}) then $\varphi_A\left(  \widetilde a\right) = \varphi_A\left(  \widetilde a\right) \varphi_{\widetilde \sY}\left(f \right)$.
\end{proof}
\begin{definition}\label{blowing_lift_defn}
	If 	$C_0\left( \sY\right)\hookto M\left( A\right) $ is  Hausdorff blowing-up (cf. Definition \ref{blowing_defn}), and   $q: \widetilde{\sY}\to \sY$ is an   $A$-{regular} covering (cf. Definition \ref{blowing_a_regular_defn}), then the $C^*$-norm completion $A_0\left(\widetilde\sY\right)$ of $\varphi\left(C_c\left( \widetilde \sY \right)\otimes_{C_0\left(\sY\right) } K\left( A\right) \right)$ (cf. equation \eqref{blowing_prods_eqn}) is the $q$-\textit{lift of} $A$.
\end{definition}
\begin{lemma}\label{blowing_lift_constr_lem}
	Let 	$C_0\left( \sY\right)\hookto M\left( A\right) $ be  Hausdorff blowing-up (cf. Definition \ref{blowing_eqn}), and let   $q: \widetilde{\sY}\to \sY$ be a  covering , $A$-{regular} covering (cf. Definition \ref{blowing_a_regular_defn}). If $A_0\left(\widetilde\sY\right)$ is the  $q$-\textit{lift of} $A$ (cf. Definition \ref{blowing_lift_defn}) then there is a natural injective $*$-homomorphism
	\be\label{blowing_lift_eqn}
	\begin{split}
		A_b\left(q \right) : A \hookto M\left( A_0\left(\widetilde\sY\right)\right).
	\end{split}
	\ee
\end{lemma}
\begin{proof}
	If  $a \in A$ and $\widetilde{a} \in A_0\left(\widetilde\sY\right)$ then for any $\eps > 0$ there is $\widetilde{a}' \in \varphi\left(C_c\left( \widetilde \sY \right)\otimes_{C_0\left(\sY\right) } K\left( A\right) \right)$ (cf. equation \eqref{blowing_prods_eqn}) such that
	\be\label{blowing_le_eqn}
	\left\| \widetilde{a} - \widetilde{a}'\right\| < \frac{\eps}{\left\|a \right\| }.
	\ee
	From the Lemma \ref{blowing_lift_eq_lem} it follows that there is a positive function $\widetilde f \in C_c\left(\widetilde{\sY} \right)_+$ such that 
	$$
	\widetilde{a}' = \varphi_{\widetilde \sY}\left(\widetilde f \right)\widetilde{a}'\varphi_{\widetilde \sY}\left(\widetilde f \right)
	$$
	If $ \left\{\widetilde u_\la\right\}_{\la \in \Xi\left( \sY\right)}\subset C_c\left( \sY\right)_+$ is a given by the Exercise \ref{top_u_net_exer} approximate unit of $C_0\left(\widetilde{\sY} \right)$ then there is $\la' \in \La$ such that 
	$$\forall \la \in \La	\quad \la \ge \la' \quad \Rightarrow \quad\widetilde u_\la\left(\supp \widetilde f \right)= \{1\}.$$ 
	It turns out that  
	\bean
	\forall \la' \in \La	\quad \la \ge \la' \quad \Rightarrow \quad   \varphi_{\widetilde \sY}\left(  \widetilde{u}_\la\right)   \widetilde a' =   \widetilde a' \quad \Rightarrow \\
	\Rightarrow\quad \varphi_A\left( a\right)  \varphi_{\widetilde \sY}\left(  \widetilde{u}_\la\right) \widetilde{a}'= \varphi_A\left( a\right)  \varphi_{\widetilde \sY}\left(  \widetilde{u}_{\la'}\right) \widetilde{a}'\in  \varphi\left(C_c\left( \widetilde \sY \right)\otimes_{C_0\left(\sY\right) } K\left( A\right) \right).
	\eean
	Taking into account the Lemma \ref{blowing_lift_eq_lem}  one has a positive function $\widetilde f'' \in C_c\left(\widetilde{\sY} \right)_+$ such that 
	$$
	\forall \la' \in \La	\quad \varphi_A\left( a\right)  \varphi_{\widetilde \sY}\left(  \widetilde{u}_{\la'}\right) \widetilde{a}'= \varphi_{\widetilde \sY}\left(  \widetilde{f}''\right) \varphi_A\left( a\right)  \varphi_{\widetilde \sY}\left(  \widetilde{u}_{\la'}\right) \widetilde{a}'
	$$. 
	There is $\la'' \in \La$ such that $$\forall \la \in \La	\quad \la \ge \la' \quad \Rightarrow \quad\widetilde u_\la\left(\supp \widetilde f'' \right)= \{1\}.$$
	So if $\la_0\in \La$ is such that $\la_0 \ge \la'$ and $\la_0\ge \la''$ then
	\bean
	\forall \la \in \La	\quad \la \ge \la_0 \quad \Rightarrow\quad \varphi_A\left( a\right)  \varphi_{\widetilde \sY}\left(  \widetilde{u}_\la\right) \widetilde{a}'= \varphi_{\widetilde \sY}\left(  \widetilde{f}''\right)\varphi_A\left( a\right)  \varphi_{\widetilde \sY}\left(  \widetilde{u}_{\la}\right) \widetilde{a}'=\\=  \varphi_{\widetilde \sY}\left(  \widetilde u_\la\right)\varphi_A\left( a\right)  \varphi_{\widetilde \sY}\left(  \widetilde{u}_{\la}\right) \widetilde{a}'= \varphi_{\widetilde \sY}\left(  \widetilde u_{\la_0}\right)\varphi_A\left( a\right)  \varphi_{\widetilde \sY}\left(  \widetilde{u}_{\la_0}\right) \widetilde{a}'.
	\eean
	Taking into account $\left\|\widetilde u_{\la_0} \right\|= 1$ and the inequality \eqref{blowing_le_eqn} on has
	$$
	\forall \la \in \La	\quad \la \ge \la_0 \quad \Rightarrow\quad \left\| \varphi_{\widetilde \sY}\left(  \widetilde u_{\la}\right)\varphi_A\left( a\right)  \varphi_{\widetilde \sY}\left(  \widetilde{u}_{\la}\right) \widetilde{a}- \varphi_{\widetilde \sY}\left(  \widetilde u_{\la_0}\right)\varphi_A\left( a\right)  \varphi_{\widetilde \sY}\left(  \widetilde{u}_{\la_0}\right) \widetilde{a} \right\|< \eps
	$$
	Since the number $\eps$ can be arbitrary small we conclude that the net
	$$\left\{\varphi_{\widetilde \sY}\left(  \widetilde u_{\la}\right)\varphi_A\left( a\right)  \varphi_{\widetilde \sY}\left(  \widetilde{u}_{\la}\right) \widetilde{a}\right\}_{\la\in \La} $$ is $C^*$-norm convergent. We define a map
	\bean
	L_a :  A_0\left(\widetilde\sY\right)\to A_0\left(\widetilde\sY\right),\\
	\widetilde{a}\mapsto \lim_{\la\in \La} \varphi_{\widetilde \sY}\left(  \widetilde u_{\la}\right)\varphi_A\left( a\right)  \varphi_{\widetilde \sY}\left(  \widetilde{u}_{\la}\right) \widetilde{a}
	\eean
	Similarly there is a map
	\bean
	R_a :  A_0\left(\widetilde\sY\right)\to A_0\left(\widetilde\sY\right),\\
	\widetilde{a}\mapsto \lim_{\la\in \La}\widetilde{a} \varphi_{\widetilde \sY}\left(  \widetilde u_{\la}\right)\varphi_A\left( a\right)  \varphi_{\widetilde \sY}\left(  \widetilde{u}_{\la}\right). 
	\eean
	From the equality
	$$
	\forall \la \in \La\quad \forall \widetilde b',  \widetilde b'' \quad \widetilde b'\left( \varphi_{\widetilde \sY}\left(  \widetilde u_{\la}\right) \varphi_A\left( a\right)  \varphi_{\widetilde \sY}\left(  \widetilde{u}_{\la}\right) \widetilde b''\right) = \left( \widetilde b'\varphi_{\widetilde \sY}\left(  \widetilde u_{\la}\right)\varphi_A\left( a\right)  \varphi_{\widetilde \sY}\left(  \widetilde{u}_{\la}\right)\right)  \widetilde b''
	$$	
	we conclude that a pair  $\left(L_a, R_a\right)$ of maps  is a {double centralizer} (cf. Definition \ref{double_centralizer_defn}).From the Remark \ref{double_centralizer_rem} it follows that there is a map 
	\bean
	A \to M\left( A_0\left(\widetilde\sY\right)\right),\\
	a \mapsto \text{ the corresponding to }\left(L_a, R_a\right)\text{ multiplier}.
	\eean 
	We leave to the reader an elementary proof of that the map is an injective $*$-homomorphism.	
\end{proof}

\begin{definition}\label{blowing_lift_hom_defn}  If	$C_0\left( \sY\right)\hookto M\left( A\right) $ is  Hausdorff blowing-up (cf. Definition \ref{blowing_defn}), and    $q: \widetilde{\sY}\to \sY$ is an   $A$-{regular} covering (cf. Definition \ref{blowing_a_regular_defn}) then the given by the Lemma \eqref{blowing_lift_constr_lem} $*$-homomorphism $	A_b\left(q \right) : A \hookto M\left( A_0\left(\widetilde\sY\right)\right)$
	is the  $q$-\textit{lift} of $A$. The difference between both  Definitions  \ref{blowing_lift_defn} and \ref{blowing_lift_hom_defn} of $q$-lift will be explained as we go along.
\end{definition}
\begin{lemma}\label{blowing_mult_iclusion_lem}
	If	$C_0\left( \sY\right)\hookto M\left( A\right) $ is  Hausdorff blowing-up (cf. Definition \ref{blowing_defn}), and    $q: \widetilde{\sY}\to \sY$ is an   $A$-{regular} covering (cf. Definition \ref{blowing_a_regular_defn}) then the  $q$-{lift} $	A_b\left(q \right) : A \hookto M\left( A_0\left(\widetilde\sY\right)\right)$ of $A$ (cf. Definition \ref{blowing_lift_hom_defn}) can be uniquely extended up to an injective $*$-homomorphism
	\be\label{blowing_lift_m_eqn}
	M\left( A_b\left(q \right)\right)  : M\left( A\right)  \hookto M\left( A_0\left(\widetilde\sY\right)\right).
	\ee 
\end{lemma}
\begin{proof}
	Let $\left\{u_\la\right\}_{\la \in \La}$ be an approximate unit of $A$ (cf. Definition \ref{approximate_unit_defn}), and let $\widetilde a \in A_0\left(\widetilde\sY\right)$. Form the Definition \ref{blowing_lift_defn} it follows that for any $\eps > 0$ there is $\widetilde a' \in  \varphi\left(C_c\left( \widetilde \sY \right)\otimes_{C_0\left(\sY\right) } K\left( A\right) \right)$  such that
	$$
	\left\| \widetilde a - \widetilde a'\right\| 	< \frac{\eps}{3}.
	$$
	On the other hand there are two  families  $\left\{\widetilde f_1, ..., \widetilde f_n\right\}\subset C_c\left(\widetilde \sY \right)$ and $\left\{a_1, ..., a_n\right\}$ such that 
	$$
	\widetilde a' = \varphi\left( \sum_{j=1}^n \widetilde f_j \otimes a_j\right) \sum_{j=1}^n \phi_C\left( \widetilde f_j\right) \phi_A\left(a_j \right). 	
	$$
	On the other hand for any $j=1,...,n$ there is $\la_j \in \la$ such that
	$$
	\forall \la\in \La \quad \la \ge \la_j \quad \Rightarrow \quad 	\left\|a_j - a_ju_\la \right\| < \frac{\eps}{3n \left\|\widetilde f_j\right\|}
	$$
	If $\la_0\in \La$ is such that $\la_O \ge \la_j$ for every $j=1,...,n$ then from the triangle identity it follows that
	\bean
	\forall \la\in \La \quad \la \ge \la_0 \quad  \left\|\widetilde a - \widetilde a\varphi_A\left( u_\la\right) \right\|\le \\\le  \left\| \widetilde a - \widetilde a'\right\|+\left\|\left( \widetilde a - \widetilde a'\right) \varphi_A\left( u_\la\right)\right\|+\left\|\widetilde a' - \widetilde a'\varphi_A\left( u_\la\right)\right\|<\\< \frac{2\eps}{3} + \sum_{j = 1}^n \left\|\widetilde f_j\right\|\left\|a_j - a_ju_\la \right\|< \eps,
	\eean 
	so one has
	$
	\lim_{{\la \in \La}}\left\|\widetilde a - \widetilde a\varphi_A\left( u_\la\right) \right\|= 0
	$. Similarly one can prove that \\ $\lim_{{\la \in \La}}\left\|\widetilde a - \varphi_A\left( u_\la\right)\widetilde a \right\|= 0$. This Lemma becomes  a consequence of the \ref{lift_mult_lem} one.
\end{proof}

\begin{lemma}\label{blowing_blowing_lem}
	Let	$C_0\left( \sY\right)\hookto M\left( A\right) $ be  Hausdorff blowing-up (cf. Definition \ref{blowing_defn}), and let  $q: \widetilde{\sY}\to \sY$ be an   $A$-{regular} covering (cf. Definition \ref{blowing_a_regular_defn}).
	If   $A_0\left(\widetilde\sY\right)$  is the $q$-{lift of} $A$ then the natural inclusion $C_0\left( \widetilde\sY\right)\hookto M\left( A_0\left(\widetilde\sY\right)\right)$  is Hausdorff  blowing-up (cf. Definition \ref{blowing_defn}). 
\end{lemma}
\begin{proof}
	Form the Definition \ref{blowing_lift_defn} it follows that for any $\eps > 0$ there is\\ $\widetilde a' \in  \varphi\left(C_c\left( \widetilde \sY \right)\otimes_{C_0\left(\sY\right) } K\left( A\right) \right)$  such that
	$$
	\left\| \widetilde a - \widetilde a'\right\| 	< \eps .
	$$
	On the other hand there are two  families  $\left\{\widetilde f_1, ..., \widetilde f_n\right\}\subset C_c\left(\widetilde \sY \right)$ and $\left\{a_1, ..., a_n\right\}$ such that 
	$$
	\widetilde a' = \varphi\left( \sum_{j=1}^n \widetilde f_j \otimes a_j\right) \sum_{j=1}^n \phi_C\left( \widetilde f_j\right) \phi_A\left(a_j \right). 	
	$$
	A finite union $\widetilde \sV \bigcup_{j=1}^n  \supp \widetilde f_j$ of compact sets is compact. If $\widetilde f \in C_c\left( \widetilde \sY\right)$ is a covering sum for  $\widetilde \sV$ (cf. Definition \ref{top_covering_sum_defn}) then $\widetilde a' = \widetilde f \widetilde a'$ and 
	\bean
	\left\| \widetilde a -\widetilde f \widetilde a'\right\| 	< \eps, 
	\eean
	i.e. a set $C_c\left( \widetilde\sY\right)A_0\left(\widetilde\sY\right) \bydef \left\{\widetilde f\widetilde a \left| \widetilde f \in C_c\left( \widetilde\sY\right)\quad\widetilde a \in A_0\left(\widetilde\sY\right)\right. \right\}$ is dense in $A_0\left(\widetilde\sY\right)$. Taking into account the Remark \ref{blowing_rem} we conclude that $C_0\left( \widetilde\sY\right)\hookto M\left( A_0\left(\widetilde\sY\right)\right)$  is Hausdorff  blowing-up.
\end{proof}

\begin{theorem}
	The given by the equation \eqref{blowing_lift_m_eqn} $*$-homomorphism 
	\bean
	M\left( A_b\left(q \right)\right)  : M\left( A\right)  \hookto M\left( A_0\left(\widetilde\sY\right)\right).
	\eean
	is a noncommutative covering (cf. Definition \ref{covering_defn})
\end{theorem}
\begin{proof}
	The Theorem \ref{good_thm} yields the natural continuous map $\mathfrak{Gelfand}_r\left(A \right)\onto \sY$. Taking into account the Lemma \ref{blowing_blowing_lem} one has the natural continuous map $\mathfrak{Gelfand}_r\left(A_0\left(\widetilde\sY\right) \right)\onto \widetilde \sY$. Applying the  Theorem \ref{good_thm} once again one has the following diagram
	\bean
	\begin{tikzcd}
	\mathfrak{Gelfand}_r\left(A_0\left(\widetilde\sY\right) \right) \arrow[r, "\widetilde\phi"] \arrow[d, "\widetilde\varphi"] & \widetilde \sY \arrow[d, "\varphi"] \\
		\mathfrak{Gelfand}_r\left(A \right) \arrow[r, "\phi"] &  \sY
		\end{tikzcd}
	\eean
	For ant $x \in 	\mathfrak{Gelfand}_r\left(A \right)$, there is an open neighborhood $\sU \subset \sY$ of $\phi\left(x \right)$ such that
\bean
	\varphi^{-1}\left( \sU\right) = \bigsqcup_{\la\in ]La}\sU_\la,\\
	\forall \la \in \La \quad \sU_\la \cong \sU.
\eean
It turns out that 
\be\label{h_c_eqn}
\begin{split}
\widetilde	\varphi^{-1}\left( \sU\right) = \bigsqcup_{\la\in ]La}\phi^{-1}\left( \sU_\la\right) ,\\
\end{split}
\ee	
The equation \eqref{h_c_eqn} means that the nap $\varphi$ is the topological covering.
\end{proof}

\section{Algebraic topology of the noncommutative torus}
\paragraph*{}
From  the Theorem \ref{k_0_tor_thm} (resp \ref{k_1_tor_thm})  element of $K_0\left(C\left(\T^2_\th \right)  \right)$  (resp. $K_1\left(C\left(\T^2_\th \right)  \right)$) is represented by an idempotent (resp. invertible element) of $C\left(\T^2_\th \right)$. From this circumstance the given by the Theorem \ref{k_h_thm} equations 
\bean
K_0\left(C\left(\T^2_\th \right) \right) =H^0_c\left(\mathfrak{Gelfand}_r\left(C\left(\T^2_\th \right)\otimes \K \right); \Z \right)/U\left(\left(C\left(\T^2_\th \right) \otimes \K \right) ^+ \right),\\
K_{1}\left(C\left(\T^2_\th \right) \right) \cong H^1_c\left(\mathfrak{Gelfand}_r\left(C\left(\T^2_\th \right) \otimes \K\right); \Z \right) 
\eean
can be replaced with following ones,
\bea\label{k_0_nt_eqn}
K_0\left( C\left(\T^2_\th \right) \right) =H^0\left(\mathfrak{Gelfand}_r\left(C\left(\T^2_\th \right) \right); \Z \right)/U\left(C\left(\T^2_\th \right)  \right),\\ \label{k_1_nt_eqn}
K_{1}\left(C\left(\T^2_\th \right) \right) \cong H^1\left(\mathfrak{Gelfand}_r\left(C\left(\T^2_\th \right)\right); \Z \right). 
\eea

It is known that $C\left(\T^2_\th \right)$ is an universal $C^*$-algebra generated by two unitaries $u, v \in C\left(\T^2_\th \right)$ such that $uv = e^{2\pi i \th}vu$. It is also known 
 that $C\left(\T^2_\th \right) \cong C\left(u_{S^1} \right)\rtimes_{\a_u} \Z$, where $u_{S^1} \in C\left( u_{S^1}\right) \cong C(S^1)$ is unitary,  $\a^{u_{S^1}}\in \Aut\left(C\left( S^1\right)  \right) \cong \Aut\left(C\left( u_{S^1}\right)  \right)$ is a $*$- automorphism given by $u_{S^1} \mapsto e^{2\pi i \th} u_{S^1}$.
The Theorem \ref{p_v_theorem} yields  a cyclic six-term exact sequence
	\bean
	\begin{tikzcd}
		K_0\left( C\left(u_{S^1} \right)\right)\arrow[r, "1-\a^{u_{S^1}}_*"] & K_0\left( C\left(u \right)\right)\arrow[r, "\iota^{u_{S^1}}_*"] & K_0\left( C\left({u_{S^1}} \right) \rtimes_\a \Z \right) \arrow[d, "\partial"]\\
		K_1\left(  C\left(u \right)\rtimes \Z\right)\arrow[u, "\partial"] & \arrow[l, "\iota^{u_{S^1}}_*"] K_1\left( C\left(u \right) \right) & \arrow[l, "1-\a^{u_{S^1}}_*"]K_1\left( C\left(u \right)  \right)
	\end{tikzcd}
	\eean
	where the map $\iota^{u_{S^1}}_*$ corresponds to the injective $*$-homomorphism
	\be\label{iota_torus}
	\begin{split}
\iota:	C\left(u_{S^1} \right) \hookto C\left(u \right) \rtimes_\a \Z,\\
u_{S^1} \mapsto \left( u , 1_C\left(S^1 \right)\right)= \left( u ,  \a^0_u\right)\in C\left(u \right) \rtimes_\a \Z,\\
\text{equivalently} \quad u_{S^1} \mapsto u \in C\left(\T^2_\th \right).
	\end{split}
	\ee
	From the isomorphisms 	
$$
	K_0\left( C\left(u \right)\right)\cong 	K_1\left( C\left(u \right)\right)\cong \Z
$$	
	 it follow that $\a^{u_{S^1}}_*$ is identical, $1=\a^{u_{S^1}}_*= 0$, so there are the exact sequences
		\bea\label{kk_0_nt_eqn}
\{0\}\hookto	K_0\left( C\left({u_{S^1}} \right)\right) \xrightarrow{\iota^u_*} K_0\left( C\left(u \right)  \rtimes_\a \Z \right) \xrightarrow{\partial} K_1\left( C\left({u_{S^1}} \right)\right) \onto \{0\}, \\ \label{kk_1_nt_eqn}
\{0\}\hookto	K_1\left( C\left(u \right)\right) \xrightarrow{\iota^{u_{S^1}}_*} K_1\left( C\left({u_{S^1}} \right)  \rtimes_\a \Z \right) \xrightarrow{\partial} K_0\left( C\left({u_{S^1}} \right)\right) \onto \{0\}.	
\eea
Moreover one has
\be\label{tor_gen_eqn}
\begin{split}
	K_0\left( C\left({u_{S^1}} \right)\right) = \Z \left[1_{u_{S^1}}\right],\\
		K_1\left( C\left({u_{S^1}} \right)\right) = \Z \left[{u_{S^1}}\right],\\
		H^0\left( \mathfrak{Gelfand}_r\left( C\left({u_{S^1}} \right)\right); \Z\right)\cong 	H^0\left( S^1; \Z\right)  = \Z \left[1_{u_{S^1}}\right],\\
		H^1\left(\mathfrak{Gelfand}_r\left(  C\left({u_{S^1}} \right)\right);\Z\right)  \cong  H^1\left( S^1; \Z\right)= \Z \left[{u_{S^1}}\right],\\
	\end{split}
\ee	
If we consider the cup product (cf. Theorem \ref{cup_sheaf_thm}) then one has 
\be\label{prod_u_tor_eqn}
\left[1_{u_{S^1}}\right]\smile \left[{u_{S^1}}\right]=\left[{u_{S^1}}\right]\in 		H^1\left(\mathfrak{Gelfand}_r\left(  C\left({u_{S^1}} \right)\right);\Z\right). 
\ee
From  the Appendix \ref{rieffel_sec} it turns out  that
\be\label{iprod_u_tor_eqn}
\iota_*\left( \left[1_{u_{S^1}}\right]\right) = \left[p_\th\right]\in K_0\left( C\left({u_{S^1}} \right)  \rtimes_\a \Z \right)
\ee
where $p_\th$ is  the Powers - Rieffel projector. If $\mathfrak{Image}\left( p_\th\right)\in C\left( \mathfrak{Gelfand}_r\left(  C\left(u \right)\rtimes_\a \Z\right)\right) $ is the image of $p_\th$ (cf. Definition \ref{image_defn}) then $\mathfrak{Image}\left( p_\th\right)$ is a projector represents 
\bean
\left[\mathfrak{Image}\left( p_\th\right)\right]=\in H^0\left(\mathfrak{Gelfand}_r\left(  C\left(u \right)\rtimes_\a \Z\right);\Z \right). 
\eean
From \eqref{prod_u_tor_eqn} and  \eqref{iprod_u_tor_eqn} it turns out that
\be
\left[\mathfrak{Image}\left( p_\th\right)\right]\smile \iota^{{u_{S^1}}}_* \left[{u_{S^1}}\right] \in  H^1\left(\mathfrak{Gelfand}_r\left(  C\left({u_{S^1}} \right)\rtimes_\a \Z\right);\Z \right).
\ee
Using the Theorem \ref{cup_prod_thm} and the Remark \ref{cup_prod_rem} we have 
\be
\left[ p_\th\right]\smile \left[{u}\right] = \left[{u}\right]\in  K_1\left( C\left({u_{S^1}} \right)  \rtimes_\a \Z \right)=  K_1\left( C\left(\T^2_\th \right)\right)
\ee
where $\left[ p_\th\right]\in K_0\left( C\left(\T^2_\th \right)\right)$ and $\left[ {u}\right]\in K_1\left( C\left(\T^2_\th \right)\right)$. From Lemma \ref{nt_gen_lem} it follows that $p_\th \in   C\left(\T^2_\th \right)$ so one has the disconnected union
\be\label{tor_prodw_eqn}
\begin{split}
\mathfrak{Gelfand}_r\left(C\left(\T^2_\th \right)\right) = \mathfrak{Gelfand}_r\left(C\left(\T^2_\th \right)\right) _{C\left(\T^2_\th \right)p_\th}\bigsqcup	\mathfrak{Gelfand}_r\left(C\left(\T^2_\th \right)\right) _{C\left(\T^2_\th \right)\left(1- p_\th\right) }
\end{split}
\ee
where the notation \eqref{topology_eqn} is used. Ir turns out that there is the following direct sum of rings.
\be\label{torh_prodw_eqn}
\begin{split}
H^*_c\left( 	\mathfrak{Gelfand}_r\left(C\left(\T^2_\th \right)\right);\Z\right)=\\= H^*_c\left( \mathfrak{Gelfand}_r\left(C\left(\T^2_\th \right)\right) _{C\left(\T^2_\th \right)p_\th}; \Z\right) \oplus H^*_c\left( 	\mathfrak{Gelfand}_r\left(C\left(\T^2_\th \right)\right) _{C\left(\T^2_\th \right)\left(1- p_\th\right) }\right) 
\end{split}
\ee
Taking into account the Theorem \ref{pimsner_voiculesky_thm} one has the following direct sum of rings
\be\label{torhk_prodw_eqn}
\begin{split}
	H^*_c\left( \mathfrak{Gelfand}_r\left(C\left(\T^2_\th \right)\right) _{C\left(\T^2_\th \right)p_\th}; \Z\right) \cong H^*_c\left( C\left(S^1\right);Z\right) ,\\
	H^*_c\left( \mathfrak{Gelfand}_r\left(C\left(\T^2_\th \right)\right) _{C\left(\T^2_\th \right)\left( 1-p_\th\right) }; \Z\right) \cong H^*_c\left( SC\left(S^1\right);Z\right) ,\\
K_*\left( C\left(\T^2_\th \right)\right) =K_*\left( C\left(S^1\right)\right) \oplus 
K_*\left( SC\left(S^1 \right)\right) 
\end{split}
\ee
From the equation \eqref{torhk_prodw_eqn} one has 
\be\label{torhk_hprodw_eqn}
\begin{split}
	K_0\left(C\left(\T^2_\th \right) \right) \cong 	H^0_c\left( 	\mathfrak{Gelfand}_r\left( C\left(\T^2_\th \right) \right) ; \Z \right) \cong \Z\left[p_\th\right]\oplus \Z\left[1- p_\th\right],\\
	K_1\left(C\left(\T^2_\th \right) \right) \cong 	H^1_c\left( 	\mathfrak{Gelfand}_r\left(C\left(\T^2_\th \right) \right) ; \Z \right) \cong \Z\left[u\right]\oplus \Z\left[v\right]
\end{split}
\ee
and  following conditions hold.

\be\label{tor_prod_eqn}
\begin{split}
	\left[ p_\th\right]\smile \left[1 - p_\th\right] = 0,\\
	\left[ p_\th\right]\smile \left[ p_\th\right] = \left[ p_\th\right],\\
	\left[1 - p_\th\right] \smile \left[1 - p_\th\right]= \left[1 - p_\th\right],\\
	\left[ p_\th\right]\smile \left[u\right]= \left[u\right],\\
	\left[1- p_\th\right]\smile \left[u\right]=0,\\
	\left[1- p_\th\right]\smile \left[v\right]=\left[v\right],\\
	\left[ p_\th\right]\smile \left[v\right]= 0,\\
	\left[u\right]\smile \left[u\right]= 	\left[v\right]\smile \left[v\right]= \left[u\right]\smile \left[v\right]=0.
\end{split}
\ee

\begin{definition}\label{good_torus_defn}
	Let  $n = 2k \in \N$ be ab even natural number. We say that the noncommutative torus $C\left(\T^n_\th \right)$ is \textit{good} if one has 
	\be\label{tor_prodg_eqn}
	\begin{split}
K_0\left( C\left(\T^n_\th\right)\right)  \cong	H^0\left(\mathfrak{Gelfand}_r\left(C\left(\T^n_\th \right) \right); \Z \right)\cong   \bigoplus_{j = 1}^{2^{n-1}}\Z s_j,\\
K_1\left( C\left(\T^n_\th\right)\right)  \cong	H^1\left(\mathfrak{Gelfand}_r\left(C\left(\T^n_\th \right) \right); \Z \right)\cong   \bigoplus_{j = 1}^{2^{n-1}}\Z t_j
	\end{split}
	\ee
	where for each $j, k \in \left\{1,..., 2^{n-1}\right\}$ the cup product satisfies to the following equations	
	\be 
\begin{split}
s_j \smile s_k = \begin{cases}
s_j & j = k\\
0 & j \neq k
\end{cases},\\
s_j \smile t_k = t_k \smile s_j =\begin{cases}
t_k & j = k\\
	0 & j \neq k
\end{cases},\\
t_j \smile t_k = 0.
\end{split}	
\ee
		
\end{definition}
Suppose that the noncommutative torus $C\left(\T^n_\th \right)$ is {good}. From the Lemma  \ref{product_lem} it follows that 
		\be 
\begin{split}
\mathfrak{Gelfand}_r\left(C_0\left(S^1 \right)\otimes  C\left(\T^n_\th \right)\right)   \cong S^1 \times \mathfrak{Gelfand}_r\left(C\left(\T^n_\th \right) \right)
\end{split}
\ee
From \eqref{long_pair_eqn} it follows that 

\be\label{tor_long_pair_eqn} 
\begin{split}
0 \hookto  H^0_c\left(S^1 \times \mathfrak{Gelfand}_r\left(C\left(\T^n_\th \right) \right), \{x_0\}\times \mathfrak{Gelfand}_r\left(C\left(\T^n_\th \right)\right) ;\Z \right)\to \\ \to H^0_c\left(S^1 \times \mathfrak{Gelfand}_r\left(C\left(\T^n_\th \right) \right);\Z \right) \to\\ \to  H^0_{c\cap \{x_0\}\times \mathfrak{Gelfand}_r\left(C\left(\T^n_\th \right)\right)}\left(\{x_0\}\times \mathfrak{Gelfand}_r\left(C\left(\T^n_\th \right)\right);\Z \right)\to \\ \to 
H^1_c\left(S^1 \times \mathfrak{Gelfand}_r\left(C\left(\T^n_\th \right) \right), \{x_0\}\times \mathfrak{Gelfand}_r\left(C\left(\T^n_\th \right)\right) ;\Z \right)\to \\ \to H^1_c\left(S^1 \times \mathfrak{Gelfand}_r\left(C\left(\T^n_\th \right) \right);\Z \right) \to\\ \to  H^1_{c\cap \{x_0\}\times \mathfrak{Gelfand}_r\left(C\left(\T^n_\th \right)\right)}\left(\{x_0\}\times \mathfrak{Gelfand}_r\left(C\left(\T^n_\th \right)\right);\Z \right)
\end{split}
\ee
and taking into account \eqref{setmunus_h_eqn} one has
\be\label{t_exact_eqn}
\begin{split}
	\{0\} \hookto  H^0_c\left(\R \times \mathfrak{Gelfand}_r\left(C\left(\T^n_\th \right) \right) ;\Z \right)\hookto  H^0_c\left(S^1 \times \mathfrak{Gelfand}_r\left(C\left(\T^n_\th \right) \right);\Z \right) \xrightarrow{p^0_*}\\   H^0_c\left( \mathfrak{Gelfand}_r\left(C\left(\T^n_\th \right)\right);\Z \right)\xrightarrow{\dl}\\ 
H^1_c\left(\R \times \mathfrak{Gelfand}_r\left(C\left(\T^n_\th \right) \right) ;\Z \right)\xrightarrow{\iota^1_*} H^1_c\left(S^1 \times \mathfrak{Gelfand}_r\left(C\left(\T^n_\th \right) \right);\Z \right)\xrightarrow{p^1_*}\\ \to  H^1_c\left( \mathfrak{Gelfand}_r\left(C\left(\T^n_\th \right)\right);\Z \right)\to...
\end{split}
\ee
From the equations \eqref{k0_s_eqn} and \eqref{k1_s_eqn} one has 
\be\label{t1_eqn}
\begin{split}
H^1_c\left(\R \times \mathfrak{Gelfand}_r\left(C\left(\T^n_\th \right) \right) ;\Z \right)\cong K_0\left( C\left(\T^n_\th \right)\right)\cong   \Z^{2n-1},\\
H^1_c\left( \mathfrak{Gelfand}_r\left(C\left(\T^n_\th \right) \right) ;\Z \right)\cong K_1\left( C\left(\T^n_\th \right)\right)\cong   \Z^{2n-1}
\end{split}
\ee
From the Exercise \ref{cs1_exer} it follows that
\be\label{t2_eqn}
\begin{split}
	K_0\left( C\left(\T^n_\th \right) \otimes C\left(S^1 \right) \right) \cong K_1\left(  C\left(\T^n_\th \right) \otimes C\left(S^1 \right) \right)\cong\\ K_0\left(  C\left(\T^n_\th \right)\right) \oplus K_1\left( C\left(\T^n_\th \right) \right)\cong \Z^{2n}
\end{split}
\ee
From the exact sequence \eqref{t_exact_eqn} it turns out that 
both equations \eqref{t1_eqn} and \eqref{t2_eqn} are compatible if and only if the homomorphism $\iota^1_*$ is injective and the homomorphism $\p^1_*$ is surjective. It follows that $\dl$ is trivial, $p^0_*$ is a surjective homomorphism and
\be\label{t22_eqn}
\begin{split}
H^0_c\left(S^1 \times \mathfrak{Gelfand}_r\left(C\left(\T^n_\th \right) \right);\Z \right)\cong \\ H^0_c\left(\R \times \mathfrak{Gelfand}_r\left(C\left(\T^n_\th \right) \right) ;\Z \right)\oplus H^0_c\left( \mathfrak{Gelfand}_r\left(C\left(\T^n_\th \right)\right);\Z \right) \cong  \Z^{2n}
\end{split}
\ee
From the isomorphism $H^0_c\left(S^1 \times \mathfrak{Gelfand}_r\left(C\left(\T^n_\th \right) \right);\Z \right) \cong  \Z^{2n}\cong 	K_0\left( C\left(\T^n_\th \right) \otimes C\left(S^1 \right) \right)$ and the equation \eqref{k_00_eqn} it follows that the natural action
$$
U\left(C\left(\T^n_\th \right)\otimes C\left(S^1 \right) \right)\times K_0\left( C\left(\T^n_\th \right) \otimes C\left(S^1 \right) \right)\to K_0\left( C\left(\T^n_\th \right) \otimes C\left(S^1 \right) \right)
$$
is trivial. If $\a_{u_{S^1}}\in \Aut\left(C\left( S^1\right)  \right) \cong \Aut\left(C\left( u_{S^1}\right)  \right)$ is a $*$- automorphism given by $u_{S^1} \mapsto e^{2\pi i \th} u_{S^1}$ then one construct the natural  $*$- automorphism $\a \in \Aut\left( C\left(\T^n_\th \right) \otimes C\left(S^1 \right)\right)$ such that 
\bean
 C\left(\T^{n + 1}_\th \right)\cong  C\left(\T^n_\th \right)\rtimes_\a \Z
\eean .
The Theorem \ref{p_v_theorem} yields  a cyclic six-term exact sequence
\bean
\begin{tikzcd}
	K_0\left(C\left(\T^n_\th \right)\right)\arrow[r, "1-\a_*"] & K_0\left(C\left(\T^n_\th \right)\right)\arrow[r, "\iota^u_*"] & K_0\left( C\left(\T^n_\th \right) \rtimes_\a \Z \right) \arrow[d]\\
	K_1\left(  C\left(\T^n_\th \right)\rtimes \Z\right)\arrow[u] & \arrow[l, "\iota^u_*"] K_1\left( C\left(\T^n_\th \right) \right) & \arrow[l, "1-\a_*"]K_1\left(C\left(\T^n_\th \right) \right)
\end{tikzcd}
\eean
Taking into account $1-\a_*$ we have the following short exact sequences
		\bea\label{kkt_0_nt_eqn}
\{0\}\hookto	K_0\left( C\left(\T^n_\th \right)\right) \xrightarrow{\iota_*} K_0\left( C\left(\T^n_\th \right)  \rtimes_\a \Z \right) \xrightarrow{p_*} K_1\left( C\left(\T^n_\th \right)\right) \onto \{0\}, \\ \label{kkt_1_nt_eqn}
\{0\}\hookto	K_1\left( C\left(\T^n_\th \right)\right) \xrightarrow{\iota_*} K_1\left( C\left(\T^n_\th \right)  \rtimes_\a \Z \right) \xrightarrow{p_*} K_0\left( C\left(\T^n_\th \right)\right) \onto \{0\}
	\eea
	From the Theorem \ref{nt_gen_lem} it follows that the image $\iota_*\left( 	K_0\left( C\left(\T^n_\th \right)\right)\right)\subset  K_0\left( C\left(\T^n_\th \right)  \rtimes_\a \Z \right)$ yields an open and closed subset  $\sX' \subset  \mathfrak{Gelfand}_r\left(C\left(\T^{n+1}_\th \right)\right)$, such that there is 
	the disconnected union
	\be\label{tor_prodwx_eqn}
	\begin{split}
		\mathfrak{Gelfand}_r\left(C\left(\T^{n + 1}_\th \right)\right) =\sX'\bigsqcup	\sX''
	\end{split}
	\ee
$\sX'' \bydef \mathfrak{Gelfand}_r\left(C\left(\T^{n + 1}_\th \right)\right)\setminus\sX'$ Ir turns out that there is the following direct sum of rings.
	\be\label{torh_prodwx_eqn}
	\begin{split}
		H^*_c\left( 	\mathfrak{Gelfand}_r\left(C\left(\T^{n+1}_\th \right)\right);\Z\right)=\\= H^*_c\left(\sX'; \Z\right) \oplus H^*_c\left(\sX''; \Z\right)
	\end{split}
	\ee
	Taking into account the Theorem \ref{pimsner_voiculesky_thm} one has the following direct sum of rings
	\be\label{torhkx_prodw_eqn}
	\begin{split}
		H^*_c\left( \sX'; \Z\right) \cong H^*_c\left( 	\mathfrak{Gelfand}_r\left(C\left(\T^{n}_\th \right)\right)\times S^1;\Z\right) ,\\
	H^*_c\left( \sX''; \Z\right) \cong H^*_c\left( 	\mathfrak{Gelfand}_r\left(C\left(\T^{n}_\th \right)\right)\times S^1\times \R;\Z\right)  ,\\
		K_*\left( C\left(\T^{n + 1}_\th \right)\right) =	K_*\left( C\left(\T^{n}_\th \right)\otimes C\left(S^1\right)\right) \oplus 
		K_*\left( C\left(\T^{n}_\th \right)\otimes C\left(S^1\right)\otimes C_0\left(\R \right) \right)
	\end{split}
	\ee
	From the equation \eqref{torhkx_prodw_eqn} one can deduce is good (cf. Definition \ref{good_torus_defn}. Using the induction one has the following theorem
	\begin{theorem}
	For any $n \in \N$ the noncommutative torus $C\left(\T^n_\th \right)$ is good.
	\end{theorem}

\section{Gelfand spaces of continuous trace $C^*$-algebras}
\paragraph{}

Let   $A$  be a {continuous-trace} $C^*$-{algebra} with the spectrum $\sX$, and let
\be\label{ctr_comm_eqn}
\mathfrak{Comm}\left( A\right)  \text{ is a set of all commutative } C^*\text{-subalgebras of } A.
\ee
Then there is a presheaf  $\mathscr P_{\sX\text{-}A}$ (cf. Definition \ref{presheaf_defn}) of sets such that for any open $\sU \subset \sX$  one has
\be\label{presh_eqn}
\begin{split}
	\mathscr P_{\sX\text{-}A}\left(\sU \right) \left\{ C \in	\mathfrak{Comm}\left( A\right) \left|~ C \subset ~_\sU A_\sU \text{ cf. \eqref{blowing_hereditary_u_eqn})}\right. \right\};\\
	\rho_{\sU\sV} : \mathscr P_{\sX\text{-}A}\left(\sU \right)\to \mathscr P_{\sX\text{-}A}\left(\sV \right),\\
	C \mapsto C \cap ~_\sV A _\sV.
\end{split}
\ee
and let $\mathrm{Sp\acute{e}}\left(\mathscr P_{\sX\text{-}A} \right)$ be the \'espace etal\'e of the presheaf $\mathrm{Sp\acute{e}}\left(\mathscr P_{\sX\text{-}A} \right)$ (cf. Exercise \ref{sheaf_etale_exer}) with the natural local homeomorphism $p_{\mathrm{Sp\acute{e}}\left(\mathscr P_{\sX\text{-}A} \right)} : \mathrm{Sp\acute{e}}\left(\mathscr P_{\sX\text{-}A} \right) \onto \sX$.
For any hereditary $C^*$-subalgebra $B \subset A$ we define
\be\label{u_b_eqn} 
\mathscr U_B \bydef \left\{\left.C_x \in \mathrm{Sp\acute{e}}\left(\mathscr P_{\sX\text{-}A} \right)\right|\exists C \in \mathfrak{Comm}\left( A\right)\quad C \subset B \right\}\subset \mathrm{Sp\acute{e}}\left(\mathscr P_{\sX\text{-}A} \right)
\ee
where $C_x$ is a stalk of $C$ (cf. Definition \ref{sheaf_stalk_defn}). For any open subset $\mathscr U\subset \mathrm{Sp\acute{e}}\left(\mathscr P_{\sX\text{-}A} \right)$ and any $\eta \in \mathscr U$ we say that $C \in \mathfrak{Comm}\left( A\right)$ is an $\mathscr U$-{\it representative of} $\eta$ if one has
\bea
\exists x \in \sX \quad \eta = C_x,\\
\forall y \in \sX \quad \rep_y\left(C \right) \neq \{0\}\quad \Rightarrow \quad C_y \in \mathscr U.
\eea
Any $\eta \in \mathscr U$ has an an $\mathscr U$-{ representative of} $\eta$ since the set an $\mathscr U$ is open. We define a hereditary $C^*$-subalgebra $B_{\mathscr U } \subset A$ which contains the union
\be
\bigcup \left\{C \in \mathfrak{Comm}\left( A\right)\left|\exists  \eta \in  \mathscr U\quad C \quad \text{ is an} \mathscr U \text{-representative of }\quad \eta \right.\right\}
\ee
\begin{lemma}
	One has.
	\begin{enumerate}
		\item [(i)] For any hereditary $C^*$-subalgebra $B \subset A$  the given by \eqref{u_b_eqn}  set $\mathscr U_B$ is open.
		\item [(ii)] The maps
		\bea \label{u_t_b_eqn}
		\mathscr U \mapsto B_{\mathscr U},\\\label{b_t_u_eqn}
		B \mapsto \mathscr U_B
		\eea	
		yield the  1-1 correspondence between open subsets of $\mathrm{Sp\acute{e}}\left(\mathscr P_{\sX\text{-}A}\right) $ and hereditary $C^*$-subalgebras $B \subset A$ 
	\end{enumerate}
\end{lemma}
\begin{proof}
	(i)
	If $C \in \mathfrak{Comm}\left( A\right)$ and $C \subset B$ then from the Exercise \ref{sheaf_etale_exer}) it turns out that the union 
	$$
	\bigcup_{\substack{x \in\sX\\ \rep_x\left(C \right)\neq 0 }}C_x \subset \mathrm{Sp\acute{e}}\left(\mathscr P_{\sX\text{-}A}\right)
	$$
	is open. So $\mathscr U_B$ is a union of open subsets.
	
	(ii) If both  $\mathscr U', \mathscr U'' \subset \mathrm{Sp\acute{e}}\left(\mathscr P_{\sX\text{-}A}\right)$ are open subsets with $\eta \in \mathscr U'\setminus \mathscr U''\neq \emptyset$ then one has
	\bean
	C \subset B_{\mathscr U'},\\
	C \not\subset B_{\mathscr U''}	
	\eean 
	where $C$ is an $	\mathscr U'$-representative of $\eta$. If follows that
	$$
	\mathscr U' = \mathscr U'' \quad \Rightarrow \quad B_{\mathscr U'}= B_{\mathscr U''}.
	$$
	From the Proposition \ref{continuous_trace_c_a_proposition} it turns out that $B'$ is a $C^*$-algebra of type $I_0$ i.e. It is generated by its Abelian elements. So if $B'\setminus B'' \neq \{0\}$ then there is an Abelian element $a$ in $B'\setminus B''$. The generated by $a$ $C^*$-algebra $C$  is commutative, i.e. $C \in \mathfrak{Comm}\left( A\right)$.  From 
	\bean
	C \subset B',\\
	C \not\subset B''.
	\eean 
	one has $x \in \sX$ such that
	\bean
	C_x \in \mathscr U_{B'},\\
	C_x \notin \mathscr U_{B''}.
	\eean 
	It turns out that
	$$
	B' = B'' \quad \Rightarrow \quad \mathscr U_{B'} = \mathscr U_{B''}.
	$$
\end{proof}

\begin{lemma}\label{ctr_gelfand_exist_lem}
	If $A$ is a {continuous-trace} $C^*$-{algebra}  with the spectrum $\sX$. For any element $\xi \in 	\mathfrak{Gelfand}\left( A\right)$ there is a commutative hereditary $C^*$-subalgebra $C$ such that $C \in \xi$ (cf. Notation \eqref{hered_ideal_eqn}).
\end{lemma}
\begin{proof}
	Let   $x\bydef \hat \phi \left( \xi\right)  \in \sX$ 	where $\hat \phi$ is a specialization of the continuous map \eqref{gelfand_sp_eqn}. According to the Definition \ref{gelfand_space_defn} there is  positive n element $a \in K\left( A\right)_+$ of the Pedersen's ideal such that  $aAa \in \xi$ (cf. notation \eqref{hered_ideal_eqn}). If $\rep_y : A \to B\left( \sH_y\right)$ is  the corresponding to $y \in \sX$  irreducible representation of $A$ (cf. \ref{irred_defn}) then from the equation  \eqref{peder_k_eqn} it follows that there is $N \in \N$ such that $\forall y \in \sX \quad \rep_y \left(aAa \right) \subset  \mathbb{M}_N\left(\C\right)$. Let $\left\{\sU\right\}$ be a basis of neighborhoods of $x$. Denote by  $S \subset \N$ the subset be such that for any $l\in S$ there are $B \in \xi$ and $\sU \in \left\{\sU\right\}$ with
	\bean
	B \subset aAa,\\
	\forall y \in \sU \quad \rep_y\left(B\right) \subset \mathbb{M}_l\left(\C\right)
	\eean 
	From the our construction it follows that $N \in S$. Suppose that   $d \bydef \min_{l \in S}l > 1$, and let $\sU\in  \left\{\sU\right\}$, $~B \in \xi$ be such that
	\bean
	B \subset aAa,\\
	\forall y \in \sU  \quad \rep_y\left(B \right)\subset \mathbb{M}_d\left(\C\right),\\
	\exists y \in \sU  \quad \rep_y\left(B \right)\cong \mathbb{M}_d\left(\C\right)
	\eean 
	From the Proposition \ref{less_n_pi_prop} it follows that
	the set
	$$
	\sV \bydef \left\{y \in \sU \left|\rep_y\left(B\right) \cong  \mathbb{M}_d\left(\C\right) \right.\right\}\subset \sU
	$$
	is open. Moreover there is $\sU' \in \left\{\sU\right\}$ such that there is the the natural inclusion 
	$$
	C_0\left(\sU' \cap \sV \right) \otimes \mathbb{M}_d\left(\C\right)\subset B
	$$
	For any inclusion $\mathbb{M}_{d-1}\left(\C\right)\subset \mathbb{M}_{d}\left(\C\right)$ there is a  $C^*$-subalgebra $B' \subset  B$ which is the image of $C_0\left(\sU' \cap \sV \right) \otimes \mathbb{M}_{d-1}\left(\C\right)$ where the following composition of inclusions
	$$
	C_0\left(\sU' \cap \sV \right) \otimes \mathbb{M}_{d-1}\left(\C\right)\subset C_0\left(\sU' \cap \sV \right) \otimes \mathbb{M}_d\left(\C\right)\subset   B
	$$
	is implied.
	If   
	$$
	B'' \bydef \left\{b \in B \left| \forall y \in \sU' \cap \sV  \quad \rep_y\left(b \right) \in \rep_x\left(B' \right) \right.\right\}
	$$ 
	then 
	\bean
	\forall y \in \sU' \quad  \rep_y\left(B' \right)\subsetneqq \mathbb{M}_d\left(\C\right),\\
	\forall B \in \xi \quad B \cap B'' \neq \{0\}.
	\eean
	If a filter $\xi'$ is generated by the set
	$$
	\left\{B \cap B'' | B \in \xi \right\}
	$$
	then $\xi \subsetneqq \xi'$. It is impossible since $\xi$ is an ultrafilter. From this contradiction it follows that $d= 1$. So there is $B'' \in \xi$ and $\sU'' \in \left\{\sU\right\}$ 
	such that 
	$$
	\forall x \in \sU' \quad \rep_x\left(B'' \right) \cong \C
	$$
	and $C''' \bydef B'' \cap ~_{\sU''} A_{\sU''}\subset A$ is a commutative $C^*$-algebra such that $C''' \in \xi$.
\end{proof}
\begin{theorem}\label{ctr_gelfand_thm}
	If $A$ is a {continuous-trace} $C^*$-{algebra}  with the spectrum $\sX$ then there is the natural bijective set theoretic map $\phi_{\mathscr P}:\mathfrak{Gelfand}\left( A\right) \cong \mathrm{Sp\acute{e}}\left(\mathscr P_{\sX\text{-}A} \right)$  with the following commutative diagram
	\bean
	\begin{tikzcd}
		\mathfrak{Gelfand}\left( A\right)		 \arrow[rr, "\phi_{\mathscr P}"]\arrow[rd, "\hat \phi"]  & ~ & \mathrm{Sp\acute{e}}\left(\mathscr P_{\sX\text{-}A} \right) \arrow[ld, "p_{\mathrm{Sp\acute{e}}\left(\mathscr P_{\sX\text{-}A} \right)}" ]\\
		& \sX &
	\end{tikzcd}
	\eean
	where $\hat \phi$ is a specialization of the continuous map given by the Theorem \ref{gelfand_sp_thm}.
\end{theorem}	
\begin{proof}
	Let $\xi \in \mathfrak{Gelfand}\left( A\right)$ and let   $x\bydef \hat \phi \left( \xi\right)  \in \sX$. From the Lemma \ref{ctr_gelfand_exist_lem} it turns out that there is a commutative $C^*$-algebra $C$ such that $C \in \xi$. Consider a set of hereditary $C^*$-subalgebras
	\be\label{ctr_xx_eqn}
	\xi' \bydef \left\{B \subset A \left| C_x \in\mathscr U_B\right.\right\}
	\ee
	where $\mathscr U_B \subset \mathrm{Sp\acute{e}}\left(\mathscr P_{\sX\text{-}A} \right)$ is given by \eqref{u_b_eqn}. If $B',B''\in \xi'$ then there are commutative $C'\subset B'$ and $C'' \subset B''$ such that $C'_x = C''_x = C_x$. From $C'\cap C'' \neq \{0\}$ it follows that $B'\cap B'' \neq \{0\}$, i.e. $\xi'$ is a filter. If there is a filter $\xi''$ with $\xi'\subsetneqq \xi''$ then there is $B''' \in \xi'' \setminus \xi'$, i.e. $C_x \notin \mathscr U_{B'''}$. So there is a commutative $C^*$-algebra $C'''$ with $C'''_x = C_x$ and $C'''\cap B''' = \{0\}$. However it is impossible since $C''' \in \xi' \subset \xi''$. So is no a filter $\xi''$ with $\xi'\subsetneqq \xi''$, i.e. $\xi'$ is an ultrafilter. From \eqref{ctr_xx_eqn} it turns out that $\xi'$ is the unique ultrafilter having following properties:
	\bean
	\hat \phi\left(\xi' \right) = \hat \phi\left(\xi \right),\\
	C \in \xi'
	\eean 
	It follows $\xi = \xi'$ since $\xi$ satisfies to the above properties. From the equation \eqref{ctr_xx_eqn} it follows that both $\xi$ and $\xi'$ uniquely depend on $C_x$. The bijective map $\phi_{\mathscr P}$ is given by $\xi \mapsto C_x$.
\end{proof}
\begin{remark}
	Denote by $\mathrm{Sp\acute{e}}\left(\mathscr P_{\sX\text{-}A} \right)_{\mathfrak{Etale}}$ the set $\mathrm{Sp\acute{e}}\left(\mathscr P_{\sX\text{-}A} \right)$ supplied with the smallest topology such that the map $\mathrm{Sp\acute{e}}\left(\mathscr P_{\sX\text{-}A} \right)_{\mathfrak{Etale}}\to \sX$ is continuous. If $\mathrm{Sp\acute{e}}\left(\mathscr P_{\sX\text{-}A} \right)_{\mathfrak{Gelfand}}$ is the topological space such that the bijective map $\mathrm{Sp\acute{e}}\left(\mathscr P_{\sX\text{-}A} \right)_{\mathfrak{Gelfand}}\cong \mathfrak{Gelfand}\left( A\right)$ is  homeomorphism then since the map $\mathfrak{Gelfand}\left( A\right)\to \sX$ is continuous the topology of $\mathrm{Sp\acute{e}}\left(\mathscr P_{\sX\text{-}A} \right)_{\mathfrak{Gelfand}}$ is finer than $\mathrm{Sp\acute{e}}\left(\mathscr P_{\sX\text{-}A} \right)_{\mathfrak{Etale}}$ one. 
\end{remark}

\section{Gelfand space of $C^*$-algebras of groupoids}

\subsection{Algebraic construction}
\paragraph*{}
If $\G$ is locally compact groupoid with a continuous 2-cocycle in $Z^2\left(\G, \T\right)$ (cf. Definition \ref{groupoid_cocycle_defn}) and a left  Haar system $\left\{\la^u\left| u \in \G^0\right.\right\}$  (cf. Definition \ref{groupoid_haar_defn}) then there is a $*$-algebra $C_c\left(\G, \sigma \right)$ with given by \eqref{groupoid_*_c_eqn} operations.
If  $\rho: 	C_c\left(\G , \sigma \right)\to B\left(\H \right)$ is a representation in the sense of the Definition \ref{groupoid_representation_defn} then there is a $C^*$-seminorm 
\be\label{groupoid_semin_eqn}
\begin{split}
	\left\|\cdot  \right\|_\rho :   	C_c\left(\G , \sigma \right)\to \R,\\
	a \mapsto \left\|\rho\left( a \right)  \right\|
\end{split}
\ee
We suppose that $\rho$ is \textit{faithful}, i.e. $a \neq 0\quad \Rightarrow \quad \rho_c\left(a \right) \neq 0$. The completion of  $C_c\left(\G , \sigma \right)$ with respect to 	$\left\|\cdot  \right\|_\rho$ is a $C^*$-algebra denoted by $C^*_\rho\left(\G , \sigma \right)$

\begin{definition}\label{groupoid_rho_defn}
	Under the above hypotheses  we say that the $C^*$-algebra $C^*_\rho\left(\G , \sigma \right)$ is the $\rho$-\textit{completion} of  $C_c\left(\G , \sigma \right)$.
\end{definition}
\begin{theorem}\label{groupoid_blowing_thm}
If  $C^*_\rho\left(\G , \sigma \right)$ is the $\rho$-\textit{completion} of  $C_c\left(\G , \sigma \right)$  there is the natural Hausdorff blowing-up (cf. Definition \ref{blowing_defn})
\be\label{groupoid_blowing_eqn}
C_0\left(\G^0 \right)\hookto M\left( C^*_\rho\left(\G , \sigma \right)\right)  
\ee
\end{theorem}
\begin{proof}
	The  Lemma \ref{groupoid_mult_repr_lem} yields the left and right actions of $C_c\left(\G^0 \right)$ on $C_c\left(\G , \sigma \right)$, i.e.
\be\label{groupoid_seminc_eqn}
\begin{split}
C_c\left(\G^0 \right)\times C_c\left(\G , \sigma \right)\to C_c\left(\G , \sigma \right) ,\\
C_c\left(\G , \sigma \right) \times C_c\left(\G^0 \right)\to C_c\left(\G , \sigma \right)
\end{split}
\ee	
Using the $C^*$-norm completion of \eqref{groupoid_seminc_eqn} one can find the  natural inclusion \eqref{groupoid_blowing_eqn}. To complete the proof one should prove that both sets
	\be\label{blowingg_eqn}
\begin{split}
	\mathfrak {Blowing}_{\sX-C^*_\rho\left(\G , \sigma \right) }\left( 	C_c\left( \G^0\right)\right)\cdot C^*_\rho\left(\G , \sigma \right) \bydef\\\bydef  \left\{fa\left| f \in 	\mathfrak {Blowing}_{\sX-C^*_\rho\left(\G , \sigma \right) }\left( 	C_c\left( \G^0\right)\right)\quad a \in C^*_\rho\left(\G , \sigma \right) \right.\right\},\\
C^*_\rho\left(\G , \sigma \right)\cdot		\mathfrak {Blowing}_{\sX-C^*_\rho\left(\G , \sigma \right) }\left( 	C_c\left( \G^0\right)\right) \bydef\\\bydef  \left\{af\left| f \in 	\mathfrak {Blowing}_{\sX-C^*_\rho\left(\G , \sigma \right) }\left( 	C_c\left( \G^0\right)\right)\quad a \in C^*_\rho\left(\G , \sigma \right) \right.\right\}
\end{split}
\ee 
are dense in   $C^*_\rho\left(\G , \sigma \right)$.
For any $a \in  C^*_\rho\left(\G , \sigma \right)$ there is $a' \in C_c\left(\G , \sigma \right)$ such that $\left\|a -a'  \right\|_\rho \le \eps$. The support $\supp a' \in \G^0$ is compact so there is $f \in C_c\left( G^0\right)$ such that $f\left( \supp a'\right)  = \{1\}$. From $fa' = a'f = a'$ it follows that $\left\|a -fa'  \right\|_\rho \le \eps$ and $\left\|a -a' f \right\|_\rho \le \eps$. It means that both sets \eqref{blowingg_eqn} are dense in   $C^*_\rho\left(\G , \sigma \right)$.
\end{proof}
\begin{remark}
From the Theorem \ref{groupoid_blowing_thm}	it follows that there is the natural surjective continuous map
	\be\label{groupoid_blowing_gelfand_eqn}
\begin{split}
	\mathfrak{Gelfand}\left(  C^*_\rho\left(\G , \sigma \right)\right) \onto \G^0.
\end{split}
\ee 
	(cf. equation \eqref{blowing_gelfand_eqn}).
\end{remark}

\subsection{Gelfand spaces of $C^*$-algebras of foliations}
\paragraph*{} Let $\left(M, \F\right)$  be a foliated manifold (cf. Definition 
\ref{foli_defn}) such that any leaf correspond to a irreducible representation of the reduced $C^*$-algebra of foliation  $C_r^*\left( M, \F\right)$ (cf. Definition \ref{foli_groupoid_red_defn}). The necessary and sufficient conditions of this hypothesis are explained in   \cite{candel:foliII}. In particular if any leaf of  $\left(M, \F\right)$ is simply connected then the hypothesis hold. One has
$$
M = \bigsqcup_{\la \in \La} \L_\la
$$
where $\La$ is a set of leaves.  There is an equivalence relation on $M$ such that
$$
x' \sim x'' \quad \exists \la\in \La \quad x', x'' \in \L_\la
$$ 
It is proven that the spectrum of the explained in \cite{candel:foliII} reduced $C^*$-algebra of 	$C^*_r\left(M, \F \right)$ naturally homeomorphic to $M/ \sim$. Otherwise there is Hausdorff blowing-up $C_0\left( M\right) \to M\left(C^*_r\left(M, \F \right) \right)$ (cf. the equation \eqref{groupoid_blowing_eqn}). In result from the equation \eqref{gelfand_sp_eqn} 
one has the following commutative diagram
\begin{equation}\label{f_dia_eqn}
	\begin{tikzcd}
		\mathfrak{Gelfand}\left(C^*_r\left(M, \F \right)\right)	 \arrow[rr, "\phi_{(C^*_r\left(M, \F \right)}"]\arrow[rd, "\hat \phi"]  & ~ & M \arrow[ld, , "\mathfrak{Blowing}"]\\
		& M/\sim &
	\end{tikzcd}
\end{equation}

\begin{appendices}
			\section{Category theory}
	\subsection{Categories}
	\begin{thm}\label{zorn_thm}\cite{spanier:at} (Zorn's lemma). A partially ordered set in which every  simply ordered set has an upper bound contains maximal elements.
	\end{thm}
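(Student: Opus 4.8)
The plan is to derive Zorn's lemma from the Axiom of Choice by a transfinite recursion that manufactures a contradiction out of the assumption that no maximal element exists. So suppose, for contradiction, that $(A, \le)$ is a partially ordered set in which every simply ordered subset has an upper bound, yet $A$ has no maximal element. First I would record that the empty subset is (vacuously) simply ordered, so it too has an upper bound; in particular $A \ne \emptyset$. Because no element is maximal, for every $x \in A$ the set $\{y \in A : y > x\}$ is nonempty, and the Axiom of Choice supplies a function $g \colon A \to A$ with $g(x) > x$ for all $x$.

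Next I would build, by transfinite recursion, a strictly increasing sequence $(a_\alpha)$ indexed by the ordinals. Fix $a_0$ to be any element of $A$. At a successor ordinal set $a_{\alpha+1} = g(a_\alpha)$, which lies strictly above $a_\alpha$. At a limit ordinal $\lambda$ the previously constructed values $\{a_\beta : \beta < \lambda\}$ form a simply ordered subset of $A$, so by hypothesis they possess an upper bound $u_\lambda$; I then set $a_\lambda = g(u_\lambda)$. Strict monotonicity is then immediate at every stage: at a limit $a_\lambda = g(u_\lambda) > u_\lambda \ge a_\beta$ for every $\beta < \lambda$, while at a successor $a_{\alpha+1} = g(a_\alpha) > a_\alpha$. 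Hence by induction $\alpha \mapsto a_\alpha$ is strictly increasing, and in particular injective.

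This injectivity is the source of the contradiction: a strictly increasing (hence injective) map from the whole class of ordinals into the fixed set $A$ cannot exist. Concretely I would invoke Hartogs' theorem to produce an ordinal $\kappa$ that does not inject into $A$, and then observe that the restriction of $\alpha \mapsto a_\alpha$ to $\kappa$ is precisely such an injection, which is absurd. Therefore the assumption that $A$ has no maximal element fails, and $A$ must contain a maximal element, as claimed.

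I expect the main obstacle to be making the limit-stage construction and the ``too many ordinals'' contradiction fully rigorous: the recursion relies on the Replacement schema to certify that the class function $\alpha \mapsto a_\alpha$ is legitimate, and on the choice function $g$ to pin down a definite successor at each step. An alternative that avoids explicit ordinal indexing is the Bourbaki--Witt tower argument: one fixes $g$ as above, calls a subset $T \subseteq A$ a \emph{tower} if it is well ordered by $\le$, contains the chosen base point, and is closed under applying $g$ to upper bounds of its initial segments; one shows that any two towers are comparable (one is an initial segment of the other), takes the union $T_\infty$ of all towers, checks that $T_\infty$ is itself a tower, lets $u$ be an upper bound of $T_\infty$, and extracts the contradiction that $g(u) \in T_\infty$ while $g(u) > u$. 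I would present the ordinal version as the main line, since it is the most transparent, and flag the tower version as the choice-free-recursion substitute.
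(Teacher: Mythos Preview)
Your proof is correct and is one of the standard derivations of Zorn's lemma from the Axiom of Choice. However, the paper does not actually prove this theorem: it is stated with a citation to Spanier's \emph{Algebraic Topology} and is used as a black box (specifically, in Remark~\ref{ultra_filter_rem} to assert that every filter extends to an ultrafilter). There is therefore no proof in the paper to compare against; your argument supplies what the paper deliberately omits as a well-known foundational result.
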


	\begin{definition}\label{category_defn}\cite{goldblatt:topoi}
		%	Axiomatic Definition of a Category. 
		A \textit{category} $\mathscr C$ comprises: 
		\begin{itemize}
			\item[(1)] 	 a collection of things called $\mathscr C$-\textit{objects}; 
			\item[(2)] a collection of things called $\mathscr C$-\textit{arrows}  or $\mathscr C$-\textit{morphisms}; 
			\item[(3)] operations assigning to each  $\mathscr C$-arrow $f$  $\mathscr C$-object $\mathrm{dom}f$ (the 
			"domain" of $f$) and a $\mathscr C$-object $\mathrm{cod}f$ (the "codomain" of $f$). If $a =\mathrm{dom} f$  
			and $b =\mathrm{cod} f$ we display this as 
			\bean
			f: a \to b\quad \text{or}\quad  a \xrightarrow{f} b;
			\eean
			\item[(4)] an operation assigning to each pair $\left(g, f\right)$ of $\mathscr C$-arrows with $\mathrm{dom}g = \mathrm{cod} f$.
			A $\mathscr{C}$-arrow  $g\circ f$, the \textit{composite of f and g}, having $\mathrm{dom}g\circ f= \mathrm{dom}f$ and  $\mathrm{cod }g\circ f= \mathrm{cod}g$, i.e. $g\circ f:  \mathrm{dom}f \to \mathrm{cod}g$  such that the following condition obtains:
			
			\textit{	Associative Law}: Given the configuration 
			$$
			a \xrightarrow{f}	b \xrightarrow{g}	c \xrightarrow{h}d
			$$ 
			of  $\mathscr{C}$-objects and $\mathscr{C}$-arrows  then $h \circ \left(g \circ f\right)= \left(h \circ g\right)\circ f$;
			\item[(5)] an assignment to each $\mathscr{C}$-object $b$ of a $\mathscr{C}$-arrow $\mathbb{1}_b : b \to b$, called the \textit{identity arrow} on $b$, such that 
			
			\textit{Identity Law}: For any $\mathscr{C}$-arrows $f: a \to b$ and $g : b \to c$ one has
			\bean
			\mathbb{1}_b \circ f = f \quad \text{and}\quad g\circ \mathbb{1}_b= g.
			\eean
		\end{itemize}
		
	\end{definition}
	
	\begin{notation}\label{category_not}
		If $a, b$ are  $\mathscr{C}$-objects then we denote by  $\mathscr{C}\left(a, b \right)$  a family of all arrows from $a$ to $b$.
	\end{notation}

	\begin{definition}\label{initial_ob_defn}\cite{goldblatt:topoi}
		%		PAGE 43
			An object $0_{\mathscr C}$ is \textit{initial} in category $\mathscr C$ if for every $\mathscr C$-object $a$ 
		there is one and only one arrow from $0_{\mathscr C}$ to $a$ in ${\mathscr C}$. 
	\end{definition}
	\begin{definition}\label{terminal_ob_defn}\cite{goldblatt:topoi}
		%			PAGE 44
		An object $1_{\mathscr C}$ is \textit{terminal} or \textit{final} in category $\mathscr C$ if for every $\mathscr C$-object $a$ 
		there is one and only one arrow to $0_{\mathscr C}$ to $a$ in ${\mathscr C}$. 
	\end{definition}
	
	\subsection{Limits and colimits}\label{limit_sec}
	\paragraph{} Here I follow to \cite{goldblatt:topoi}. The notion of \textit{commutative diagram}, is a 
	very important aid to understanding used in category theory.
	By a 
	diagram we simply mean a display of some objects, together with some 
	arrows (here representing functions) linking the objects. The "triangle" of 
	arrows $f$, $g$, $h$ as shown is another diagram. 
	\newline
	\begin{tikzpicture}
		\matrix (m) [matrix of math nodes,row sep=3em,column sep=4em,minimum width=2em]
		{
			A & B  \\ 
			& C\\};
		\path[-stealth]
		(m-1-1) edge node [above] {$f$} (m-1-2)
		(m-1-1) edge node [left]  {$h~~$} (m-2-2)
		(m-1-2) edge node [right] {$~~g$} (m-2-2);
	\end{tikzpicture}
	\\ 	
	It will be said to \textit{commute} if $h = g\circ f$. The point is that the diagram offers 
	two paths from $A$ to $C$, either by composing to follow $f$ and then $g$, or by 
	following h directly. Commutativity means that the two paths amount to 
	the same thing. A more complex diagram, like the previous one, is said to 
	be commutative when all possible triangles that are parts of the diagram 
	are themselves commutative. This means that any two paths of arrows in 
	the diagram that start at the same object and end at the same object 
	compose to give the same overall arrow. By a \textit{diagram} $D$ in a 
	category $\mathscr C$ we simply mean a collection of $\mathscr C$-objects $d_j, d_k,...$ together 
	with some $\mathscr C$-arrows $g: d_j \to d_k$ between certain of the objects in the 
	diagram. (Possibly more than one arrow between a given pair of objects, 
	possibly none).
	\begin{definition}\label{lim_defn}
		A \textit{cone} for diagram $D$ consists of a $\mathscr C$-object $c$ together with a $\mathscr C$-arrow 
		$c\to d_j$ for each object $f_j$ in $D$, such that
		\newline
		\begin{tikzpicture}
			\matrix (m) [matrix of math nodes,row sep=3em,column sep=4em,minimum width=2em]
			{
				d_j  & & d_k \\ 
				& c  & \\};
			\path[-stealth]
			(m-1-1) edge node [above] {$g$} (m-1-3)
			(m-2-2) edge node [left]  {$f_j~~$} (m-1-1)
			(m-2-2) edge node [right] {$~~f_k$} (m-1-3);
		\end{tikzpicture}
		\\ 	
		commutes, whenever g is an arrow in the diagram $D$. We use the 
		symbolism $\left\{f_j: c\to d_j\right\}$ to denote a cone for $D$.

		A \textit{limit} for a diagram $D$ is a $D$-cone $\left\{f_j: c\to d_j\right\}$ with the property that for 
		any other $D$-cone $\left\{f'_j: c'\to d_j\right\}$ there is exactly one arrow $f:c' \to c$ such 
		\newline
		\begin{tikzpicture}
			\matrix (m) [matrix of math nodes,row sep=3em,column sep=4em,minimum width=2em]
			{
				& d_j &  \\ 
				c'	&  & c \\};
			\path[-stealth]
			(m-2-1) edge node [above] {$g$} (m-2-3)
			(m-2-1) edge node [left]  {$f'_j~~$} (m-1-2)
			(m-2-3) edge node [right] {$~~f_j$} (m-1-2);
		\end{tikzpicture}
		\\ 	
		commutes for every object $d_j$ in $D$. 
		This limiting cone, when it exists, is said to have the \textit{universal property} 
		with respect to $D$ -cones.
		A limit for 
		diagram $D$ is unique up to isomorphism.%:- if {ft: —> a\} and {f-:c'—> d;} 			are both limits for D, then the unique commuting arrow f: c'— above is 			iso (its inverse is the unique commuting arrow c-—*c' whose existence 			follows from the fact that {/,': c' —> dj is a limit). 
				% It is universal amongst such cones-any other 		D-cone factors uniquely through it as in the last diagram.		

			\end{definition}

\subsection{Functors}
\begin{definition}\label{functor_defn}\cite{goldblatt:topoi}
	A \textit{functor} $F$ from category $\mathscr{C}$ to category $\mathscr{D}$ is a function that assigns 
	\begin{enumerate}
		\item [(i)]
		to each $\mathscr{C}$-object $a$, a $\mathscr{D}$-object $F(a)$; 
		\item[(ii)] to each $\mathscr{C}$-arrow $f:a \to b$ a $\mathscr{D}$-arrow $F(f): F(a) \to F(b)$, 
		such that 
		\begin{enumerate}
			\item[(a)]  $F\left(\mathbb 1_a\right) = \mathbb 1_{F\left(a\right)}$ for all  $\mathscr{C}$-objects $a$, i.e. the identity arrow on $a$ is assigned 
			the identity on $F\left(a\right)$,
			\item[(b)]  $F\left(g\circ f\right)=F\left(g\right)\circ F\left( f\right) $, whenever $g \circ f$ is defined. 
			This last condition states that the $F$-image of a composite of two arrows 
			is the composite of their $F$-images.
		\end{enumerate}

	\end{enumerate}
	We write $F:\mathscr{C}\to \mathscr{D}$ or  $\mathscr{C}\xrightarrow{F} \mathscr{D}$ to indicate that $F$ is a 
	functor from $
	\mathscr{C}$ to $\mathscr{D}$. Briefly then a functor is a transformation that 
	"preserves" dom's, cod's, identities and composites. 
\end{definition}
If $a$ and $b$ are  $\mathscr{C}$-objects then a functor $\mathscr{C}\xrightarrow{F} \mathscr{D}$ yields a map
\be\label{f_ab_funct_eqn}
F_{a,b}:\mathscr{C}\left(a, b \right)  \to \mathscr{D}\left( F\left(a\right), F\left(b\right)\right)  
\ee
(cf. Notation \ref{category_not}).

\subsection{Equivalence relations}

\begin{definition}\label{equivalence_relation_defn}\cite{spanier:at}
	An \textit{equivalence relation} in a set $A$ is a relation $\sim$ between elements of $A$ 
	which is \textit{reflexive} (that is, $a\sim a$ for all $a \in A$), \textit{symmetric} (that is, $a\sim a'$ 
	implies $a'\sim a$ for $a, a'\in A$), and \textit{transitive} (that is, $a\sim a'$ and $a'\sim a''$
	imply $a \sim a''$ for $a, a', a'' \in A$). The \textit{equivalence class} of $a \in A$ with 
	respect to $\sim$ is the subset $\left\{\left. a'\in A\right| a'\sim a \right\}$. The set of all equivalence classes 
	of elements of $A$ with respect to ~ is denoted by $A/\sim$ and is called a 	\textit{quotient set} of $A$. There is a \textit{projection map} $A \to A/\sim$ which sends $a\in A$ to
	its equivalence class. 
\end{definition}

\subsection{Directed sets}

\begin{definition}\label{directed_set_defn}\cite{engelking:general_topology}
	Let $\La$ be a set and $\le$ is relation on $\La$. We say that $\le$ \textit{directs} $\La$ or $\La$ is \textit{directed} by $\le$, if $\le$ has following properties:
	\begin{enumerate}
		\item [(a)] If $\la \le \mu$ and $\mu \le \nu$, then $\la \le \nu$,
		\item[(b)] For every $\la \in \La$, $\la \le \la$,
		\item[(c)] For any $\mu,\nu \in \La$ there exists a $\la \in \La$ such that $\mu \le \la$ and $\nu \le \la$.
	\end{enumerate}
\end{definition}
\begin{definition}\label{cofinal_defn}\cite{engelking:general_topology}
	A subset $\Xi \subset \La$ is said to be \textit{cofinal} in $\La$ if for every $\la \in \La$ there is $\chi \in \Xi$ such that $\la \le \chi$. 
\end{definition}

\subsection{Natural transformations}\label{natural_transformation_sec}
\paragraph{}
Here I follow to \cite{goldblatt:topoi}.
Given two categories $\mathscr C$ and $\mathscr D$ we are going to construct a category, 
denoted $\mathrm{Funct}\left(\mathscr C, \mathscr D\right)$, or $\mathscr D^{\mathscr C}$, whose objects are the functors from $\mathscr C$ to $\mathscr D$. 
We need a definition of arrow from one functor to another. Let 
$F: \mathscr C\to \mathscr D$ and $G: \mathscr C\to \mathscr D$ be two functors. For any $\mathscr C$-object $a$ we define a $\mathscr D$-arrow $\tau_a : F\left(a\right)\to  G\left(a\right)$. We require that each  $\mathscr C$-arrow $f: a \to b$  gives rise to a diagram 
\begin{tikzpicture}
	\matrix (m) [matrix of math nodes,row sep=3em,column sep=4em,minimum width=2em]
	{
		a	& F\left(a \right)  &   G\left(a \right)\\ 
		b	& F\left(b \right)  &   G\left(b \right) \\};
	\path[-stealth]
	(m-1-1) edge node [left]  {$f$} (m-2-1)
	(m-1-2) edge node [above] {$\tau_a$} (m-1-3)
	(m-2-2) edge node [above]  {$\tau_b$} (m-2-3)
	(m-1-2) edge node [left]  {$F\left( f \right)$} (m-2-2)
	(m-1-3) edge node [left]  {$G\left( f \right)$} (m-2-3);
\end{tikzpicture}
\\ 	
that commutes.
%a F(a) Ta > G(a) Ftf)  F(b) T"  G(b) that commutes. Thus  and provide a categorical way of turning the F-picture of f:a-*b into its G-picture. 
In summary then, a \textit{natural transformation} from functor $F: \mathscr C\to \mathscr D$ and $G: \mathscr C\to \mathscr D$  to functor $F: \mathscr C\to \mathscr D$ and $G: \mathscr C\to \mathscr D$  is an assignment $\tau$ that provides, for each $\mathscr C$-object $\mathscr D$-arrow $\tau_a :F(a) \to G(a)$, such that for any $\mathscr C$-arrow $f:a\to b$, the above diagram commutes in $\mathscr D$, i.e. $\tau_b \circ F(f)= G(f)\circ \tau_a$. We use the symbolism $\tau: F\to G$, or $F \xrightarrow{\tau}G$, to denote that $\tau$ is a natural transformation from $F$ to $G$. The arrows $\tau_a$ are called the \textit{components} of $a$. Now if each component $\tau_a$ of $a$ is an iso arrow in $\mathscr D$ then  case we call $\tau$ a \textit{natural isomorphism}. Each $\tau_a: F(a)\to G(a)$ then has an inverse $\tau_a^{-1}: G(a) \to F(a)$, and these $\tau^{-1}_a$'s form the components of a natural isomorphism $\tau^{-1}: G \to F$. We denote natural isomorphism by $\tau: F \cong G$. 
\begin{example}
	The identity natural transformation $\mathbb{1}_F :F\to F$ assigns to each object $a$, the identity arrow $\mathbb{1}_{F(a)}:F(a)\to F(a)$. This is clearly a natural isomorphism. 
\end{example}

\begin{definition}\label{category_equivalence_definition}\cite{goldblatt:topoi}
	A functor $F: \mathscr C\to  \mathscr D$ is called an \textit{equivalence of categories} if there 
	is a functor $G: \mathscr D\to  \mathscr C$ such that there are natural isomorphisms $\tau : 1_{\mathscr C} \cong G \circ F$, and $\sigma : 1_{\mathscr D} \cong F \circ G$, from the identity functor on ${\mathscr C}$ to $ G \circ F$, and 
	from the identity functor on ${\mathscr D}$ to $ F \circ G$.
	
	Categories $\mathscr C$ and $\mathscr D$ are \textit{equivalent},  $\mathscr C$ and $\mathscr D$ when there exists an equivalence $F: \mathscr C\to  \mathscr D$ .
\end{definition}
\subsection{Algebraic $K$-theory}\label{algebraic_k_theory}
\paragraph{} Let $R$ be a ring with unit. To define $K_0\left(R \right)$  we consider the following equivalence
relation. We say that two finitely projective $R$-modules $P$ and $Q$ are equivalent
if and only if they are isomorphic, i.e. if there is an isomorphism of
$A$-modules $P$ and $Q$ This is clearly an equivalence relation.
We note $\overline{P}$ for the equivalence class of the projective $A$-module $P$ and
$Proj(A)$ for the set of all the equivalence classes.
\begin{definition}\cite{bass,isely:akt}
%Denition 2.1 
(Milnor) The projective module group $K_0\left(R \right)$ is the group
defined by generators and relations as follows. For each elements $P$ of
$Proj(A)$ we take a generator $\left[P\right]$ and for each pair $\left[P\right]$, $\left[Q\right]$ of generators
we dene the relation
\bean
\left[P\right]+\left[Q\right]\bydef \left[P \oplus Q\right]
\eean
\end{definition}
\begin{proposition}\cite{bass,isely:akt}
%Proposition 2.2 
Every element of $K_0\left(R\right)$  can be expressed by the formal
difference $\left[P\right]-\left[Q\right]$ of two generators.

\end{proposition}
\begin{definition}\cite{bass,isely:akt}
%Denition 3.4
 Let $n \in \N$: A matrix in $GL_n\left(R \right) $ is called {\it elementary} if
it coincides with the identity matrix except for a single off-diagonal entry.
We note $E_n\left(A \right)$  the subgroup of $GL_n\left(A \right) $ generated by all the elementary
matrices.
\end{definition}
\begin{remark}\cite{bass,isely:akt}
Since $i_n : E_n\left( A\right) \subset E_{n+1}\left(R\right)$ we can embed $E_{n}\left( R\right)$ in $E_{n+1}\left( R\right)$ for all $n \in \N$.

\end{remark}

\begin{definition}\cite{bass,isely:akt}
%Denition 3.5 
We define $E(R) \bydef \bigcup_{n \in \N} E_n\left(R\right)$ 

\end{definition}
\begin{definition}\cite{bass,isely:akt}
%Denition 3.8 
(Whitehead) We define $K_1\left(A \right)$  by the quotient
\bean
K_1\left( R\right)\bydef GL\left(R \right)/E\left(R \right)
\eean
\end{definition}
\begin{remark}\label{ho_top_rem}
If $A$ is a $C^*$-algebra  then $K_0\left(A \right)\cong \pi_1\left(GL^{\mathrm{top}}\left(A \right)  \right)$. Using this fact one can prove that the functor $A \mapsto K_0\left(A \otimes \K \right)$ is homotopy invariant.  
\end{remark}
\begin{empt}\label{excision_empt}
	 Here I follow to \cite{suslin:exc}. The algebraic  $K$-theory groups $K_j\left(R \right)$ of a ring with unit $R$ are defined as a homotopy groups of $BGL\left(R \right)^+$, the Quillen plus construction applied to $BGL\left(R \right)$:
	 \be\label{kr_pi_eqn}
	 \begin{split}
	 K_j\left(R \right) \cong \pi_j\left( BGL\left(R \right)^+\right) \quad j \ge 1
	 \end{split}
	 \ee
	 If $A\subset R$ is a two-sided ideal, let $F\left(R, A \right)$ denote the homotopy of the map $ BGL\left(R \right)^+\to \overline{BGL}\left(R/A \right)^+$, where $ \overline {GL}\left(R \right)\bydef \im \left(GL\left(R \right)\to GL\left(R /A\right) \right)$. The relative $K$-groups $K_j\left(R, A \right), \quad j \ge 1$, are defined as homotopy groups of $F\left(R, A \right)$ so that one gets, with respect to a pair $\left(R. A \right)$, a functorial long exact sequence  
	 \be\label{kr_spi_eqn}
	 \begin{split}
	 	K_j\left(R \right)\to 	K_j\left(R, A \right)\to 	K_j\left(R/A \right)\to 	K_{j-1}\left(R, A \right) \to ...\\
	 	\to K_1\left(R \right)\to 	K_1\left(R, A \right)\to 	K_1\left(R/A \right).
	 \end{split}
	 \ee
	 Algebraic $K$-theory is extended to the category of rings by setting
	 \be\label{kr_npi_eqn}
	 \begin{split}
	 K_j\left( A\right) = K_j\left(\widetilde A, A \right) \quad j \ge 1,
	 \end{split}
	 \ee
where $	 \widetilde A= \Z \rtimes A$ denotes the ring obtained from $A$ by adjoining the unit.

A ring $A$ is said to satisfy {\it excision}  in algebraic $K$-theory if for every ring with unit $R$, which contains $A$ as a two-sided ideal, the canonical map $K_*\left( A\right) \to K_*\left(R, A\right)$ is an isomorphism.
\end{empt}

	\begin{theorem}\label{fin_gen_thm}\cite{kasch:mr}
	% 2.3.13 THEOREM. T
	The module $M_R$ over an (unital) ring $R$ is finitely generated if and only if for every set $\left\{A_\la\right\}_{\la\in\La}$ of submodules $A_\la\hookto  M$ with 
	$$
	M=\sum_{\la \in \La} A_\la
	$$
	there is a finite subset $\La_0\subset \La$  such that 
	$$
	M = \sum_{\la \in \La_0} A_\la.
	$$
\end{theorem}

\section{Topology}
			\subsection{General topology}
	
	\begin{definition}\label{top_base_defn}\cite{munkres:topology}
		If $\sX$ is a set, a \textit{basis} for a topology on $\sX$ is a collection $\mathscr B$ of subsets of $\sX$
		(called \textit{basis elements}) such that:
		\begin{enumerate}
			\item [(a)]  for each $x\in \sX$, there is at least one basis element $B$ containing $x$,
			\item [(b)]  if $x$ belongs to the intersection of two basis elements $B_1$ and $B_2$, then there is a
			basis element $B_3$ containing $x$ such that $B_3 \subset B_1\cap B_2$.
		\end{enumerate}
		
		If  $\mathscr B$ satisfies these two conditions, then we define the \textit{topology}  $\mathscr T$ \textit{generated by}  $\mathscr B$ as
		follows: A subset $\sU$ of $\sX$ is said to be \textit{open} in $\sX$ (that is, to be an element of $\mathscr T$) if for
		each $x \in \sU$, there is a basis element $B \in \mathscr B$ such that $x \in B$ and $B \subset \sU$.
	\end{definition}
	\begin{remark}\label{top_base_rem}It is proven that given by the Definition \ref{top_base_defn} collection $\mathscr T$ is a topology  (cf. \cite{munkres:topology}).
	\end{remark}
	\begin{definition}\label{top_subbasis_defn}\cite{munkres:topology}	Let $\sX$ be a topological space. A {\it subbasis } $\mathscr S$ for a topology on $\sX$ is a collection of open subsets of $\sX$ such that
		\begin{itemize}
			\item $$\sX = \bigcup_{\sU \in \mathscr S}\sU,$$
			\item the  collection $\mathscr T$ of all unions of finite intersections of elements of $\mathscr S$ is a basis of the topology og $\sX$.
		\end{itemize}
		The {\it topology  generated by the subbasis} $\mathscr S$ is defined to be the  collection $\mathscr T$ of all unions of finite intersections of elements of $\mathscr S$.
	\end{definition}
	
	\begin{theorem}\label{top_continuous_thm}\cite{kelley:gt}
		If $\sX$ and $\sY$ are topological space and $f$ is a function 
		on $\sX$ to $\sY$, then the following statements are equivalent. 
		\begin{enumerate}
			\item[ (a)] The function $f$ is continuous. 
			\item[ (b)] The inverse of each closed set is closed. 
			\item[ (c)] The inverse of each member of a subbasis for the topology for 
			$\sY$ is open. 
			\item[ (d)] For each $x$ in $\sX$ the inverse of every neighborhood of $f(x)$ is 
			a neighborhood of $x$. 
			\item[ (e)] For each $x$ in $\sX$ and each neighborhood $\sU$ of $f(x)$  there is a 
			neighborhood $\sV$ of $x$ such that $f(\sV)\subset \sU$. 
			\item[ (f)]For each net $S$  in $\sX$ which converges to a 
			point $s$, the composition $f \circ S$ converges to 
			f(s)). 
			\item[ (g)] For each subset $A$ of X the image of the closure is a subset of 
			the closure of the image; that is $f\left(\overline A \right)  \subset \overline {f\left( A\right)}$ 
			\item[ (h)] For each subset $B$ of $\sY$, $\overline{f^{-1}\left( B\right) }\subset f^{-1}\left( \overline B\right)$.
		\end{enumerate}
	\end{theorem}
	\begin{definition}\label{top_continuous_defn}
	A satisfying to the Theorem \ref{top_continuous_thm} map is {\it continuous}.
	\end{definition}
		
\begin{proposition}\label{top_final_prop}\cite{bourbaki_sp:gt}
		Let $\sX$ be a set, let $\left\{\sY_\iota\right\}_{\iota\in I}$ be a family of topological spaces, and for each $\iota\in I$ let $f_\iota$ be a mapping of $\sY_\iota$ into $\sX$. Let $\mathfrak{D}$ be a set of subsets $\sU$ of $\sX$ such that $f_\iota^{-1} \left(\sU \right)$ is open in $\sY_\iota$ for each $\iota \in I$; then  $\mathfrak{D}$ is a set of open sets in a topology $\mathfrak{T}$. In particular $\mathfrak{T}$ is the finest topology for which the mappings $f_\iota$ are continuous. In other words, if $g$ is mapping on $\sX$ into a topological space $\sZ$, then $g$ is continuous ($\sX$ carrying the topology $\mathfrak{T}$) if and only if each of the mappings $g\circ f_\iota$ is continuous.
	\end{proposition}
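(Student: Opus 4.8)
The plan is to verify the three assertions in turn: that $\mathfrak{D}$ is closed under the operations required of a topology, that the resulting topology $\mathfrak{T}$ is the finest one for which every $f_\iota$ is continuous, and finally the universal property for maps out of $\sX$. All three rest on the single elementary fact that taking preimages under a fixed map commutes with arbitrary unions and with intersections, namely $f_\iota^{-1}\left( \bigcup_\alpha \sU_\alpha\right) = \bigcup_\alpha f_\iota^{-1}\left( \sU_\alpha\right)$ and $f_\iota^{-1}\left( \bigcap_\alpha \sU_\alpha\right) = \bigcap_\alpha f_\iota^{-1}\left( \sU_\alpha\right)$.

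First I would check that $\mathfrak{D}$ is a topology. Since $f_\iota^{-1}\left( \emptyset\right) = \emptyset$ and $f_\iota^{-1}\left( \sX\right) = \sY_\iota$ are open in each $\sY_\iota$, both $\emptyset$ and $\sX$ lie in $\mathfrak{D}$. If $\left\{\sU_\alpha\right\}$ is any family in $\mathfrak{D}$, then $f_\iota^{-1}\left( \bigcup_\alpha \sU_\alpha\right) = \bigcup_\alpha f_\iota^{-1}\left( \sU_\alpha\right)$ is a union of open sets, hence open, so $\bigcup_\alpha \sU_\alpha \in \mathfrak{D}$; the same computation with a \emph{finite} family and intersections shows $\mathfrak{D}$ is stable under finite intersection. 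Thus $\mathfrak{D}$ is a topology, which I denote $\mathfrak{T}$.

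Next I would establish the extremal characterization. By the very definition of $\mathfrak{T}$, every $\mathfrak{T}$-open set has open preimage under each $f_\iota$, so each $f_\iota$ is continuous for $\mathfrak{T}$. Conversely, let $\mathfrak{T}'$ be any topology on $\sX$ for which all the $f_\iota$ are continuous. Then for every $\mathfrak{T}'$-open set $\sU$ the preimage $f_\iota^{-1}\left(\sU \right)$ is open in $\sY_\iota$ for every $\iota$, which is exactly the condition $\sU \in \mathfrak{T}$; hence $\mathfrak{T}' \subseteq \mathfrak{T}$, and $\mathfrak{T}$ is the finest such topology.

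Finally, for the universal property, let $g: \sX \to \sZ$ be a map. If $g$ is $\mathfrak{T}$-continuous then each $g \circ f_\iota$ is continuous as a composite of continuous maps. Conversely, assume every $g \circ f_\iota$ is continuous and let $\sW \subseteq \sZ$ be open. Then $f_\iota^{-1}\left( g^{-1}\left( \sW\right)\right) = \left( g \circ f_\iota\right)^{-1}\left( \sW\right)$ is open in $\sY_\iota$ for each $\iota$, so $g^{-1}\left( \sW\right) \in \mathfrak{T}$ by definition; thus $g$ is continuous. There is no genuine obstacle here, since the entire argument is bookkeeping with preimages; the only point demanding a little care is to keep the distinction between arbitrary unions and merely finite intersections when verifying the topology axioms in the first step.
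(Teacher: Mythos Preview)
Your argument is correct and is the standard verification of the final-topology construction. The paper does not supply its own proof of this proposition: it is quoted verbatim from Bourbaki with the citation \cite{bourbaki_sp:gt} and no proof environment follows, so there is nothing to compare against beyond noting that your proof is exactly the routine one Bourbaki gives.
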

	%\begin{definition}\label{top_paracompact_defn}\cite{munkres:topology}
	%	A space $\sX$ is \textit{paracompact} if every open covering $\sX = \cup~\sU_{   \a}$ has a locally finite open refinement $\sX = \cup~\sV_{   \bt}$.
	%\end{definition}	
	%	\begin{lemma}\label{cw_p_lem}\cite{miyazaki_cw}
		%	Any $CW$-complex is paracompact.
		%\end{lemma}
		
		\begin{definition}\label{top_final_defn}\cite{bourbaki_sp:gt}
			Under the hypotheses of Proposition \ref{top_final_prop} we say that the topology $\mathfrak{T}$ is \textit{final} (\textit{with respect to the family of maps} $\left\{f_\iota:\sY_\iota \to \sX\right\}_{\iota\in I}$).
		\end{definition}
		
		\begin{proposition}\label{top_init_prop}\cite{bourbaki_sp:gt}
			Let $\sX$ be a set, let $\left\{\sY_\iota\right\}_{\iota \in I}$ be a family of topological spaces, and for each $\iota \in I$ let $f_\iota$ be a mapping of $\sX$ into $\sY_\iota$.  Let $\mathfrak{S}$ be the set of subsets of $\sX$ of the form $f^{-1}_\iota\left(\sU_\iota \right)$ where $\iota \in I$, $\sU_\iota$ is open in $\sY_\iota$. Let $\mathfrak B$ be the set of finite intersections of sets of $\mathfrak S$. Then $\mathfrak B$ is a base of a topology $\mathfrak T$ and in particular is the coarsest topology on $\sX$ for which the mappings $f_\iota$ are continuous. More precisely of $g$ is a mapping of topological space into $\sY$, then is continuous at a point  $z \in \sZ$ ($\sX$ carrying the topology $\mathfrak T$) if and only if for each of the functions $f_\iota\circ g$ is continuous at $z$.
		\end{proposition}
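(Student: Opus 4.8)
The plan is to verify the three assertions of the statement in turn, all of which reduce to elementary point-set topology once the base structure of $\mathfrak T$ is made explicit. First I would check that $\mathfrak B$ is genuinely a base for a topology. Two conditions must hold: that $\mathfrak B$ covers $\sX$, and that the intersection of two members of $\mathfrak B$ is a union of members of $\mathfrak B$. The covering condition is immediate, since $\sY_\iota$ is open in itself and therefore $f_\iota^{-1}\left(\sY_\iota\right)=\sX$ lies in $\mathfrak S\subseteq\mathfrak B$. The intersection condition is here almost trivial: a finite intersection of finite intersections of members of $\mathfrak S$ is again a finite intersection of members of $\mathfrak S$, so $\mathfrak B$ is itself closed under finite intersection and the base axiom is satisfied outright. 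This lets me define $\mathfrak T$ as the family of arbitrary unions of members of $\mathfrak B$.

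Next I would establish the coarseness claim in two halves. On one side, each $f_\iota$ is $\mathfrak T$-continuous, because for any open $\sU_\iota\subseteq\sY_\iota$ the preimage $f_\iota^{-1}\left(\sU_\iota\right)$ belongs to $\mathfrak S\subseteq\mathfrak T$. On the other side, if $\mathfrak T'$ is any topology on $\sX$ for which every $f_\iota$ is continuous, then each $f_\iota^{-1}\left(\sU_\iota\right)$ must be $\mathfrak T'$-open, whence $\mathfrak S\subseteq\mathfrak T'$; since a topology is closed under finite intersection and arbitrary union, this forces $\mathfrak B\subseteq\mathfrak T'$ and hence $\mathfrak T\subseteq\mathfrak T'$. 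Thus $\mathfrak T$ is the \emph{coarsest} topology rendering all the $f_\iota$ continuous.

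Finally I would prove the pointwise universal property for a mapping $g$ of a topological space $\sZ$ into $\sX$. The forward implication is routine: if $g$ is continuous at $z$, then each $f_\iota\circ g$ is the composite of $g$ with the globally continuous map $f_\iota$, so it is continuous at $z$. For the converse I would work at the level of neighbourhood bases. Given a $\mathfrak T$-neighbourhood $W$ of $g\left(z\right)$, I choose a basic open set $B=\bigcap_{k=1}^{n} f_{\iota_k}^{-1}\left(\sU_{\iota_k}\right)$ with $g\left(z\right)\in B\subseteq W$. By hypothesis each $f_{\iota_k}\circ g$ is continuous at $z$, so there is a neighbourhood $V_k$ of $z$ with $\left(f_{\iota_k}\circ g\right)\left(V_k\right)\subseteq\sU_{\iota_k}$; intersecting the finitely many $V_k$ yields a neighbourhood $V=\bigcap_{k=1}^{n} V_k$ of $z$ with $g\left(V\right)\subseteq B\subseteq W$, which is precisely continuity of $g$ at $z$.

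The one step with genuine content is the converse in the last paragraph: it is essential that $B$ be a \emph{finite} intersection, for only then is $V=\bigcap_k V_k$ still a neighbourhood of $z$. This is exactly why $\mathfrak B$ is built from finite intersections of $\mathfrak S$ rather than arbitrary ones, and it is the sole place where the pointwise (rather than global) formulation of continuity matters. Everything else is bookkeeping with the base $\mathfrak B$, so I anticipate no real obstacle beyond keeping that finiteness in view.
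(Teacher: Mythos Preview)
Your argument is correct and is essentially the standard proof found in Bourbaki; the paper itself does not supply a proof of this proposition but simply quotes it from \cite{bourbaki_sp:gt}. There is nothing to compare against, and your handling of the base axioms, the coarseness claim, and the pointwise universal property (including the crucial use of finiteness in the intersection $V=\bigcap_{k=1}^n V_k$) is complete and accurate.
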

		\begin{definition}\label{top_init_defn}\cite{bourbaki_sp:gt}
			Under the hypotheses of Proposition \ref{top_init_prop} we say that the topology $\mathfrak{T}$ is \textit{initial} (\textit{with respect to the family of maps} $\left\{f_\iota:\sX \to \sY_\iota\right\}_{\iota\in I}$).
		\end{definition}

\begin{remark}
	The Definition \ref{top_init_defn} is a specialization of initial object (cf. Definition \ref{initial_ob_defn}).
\end{remark}
		
	%	\begin{definition}\label{inverse_limit_defn}\cite{spanier:at}
	%		An \textit{inverse system of sets} $\left\{A_\a, f^\bt_\a\right\}$ consist of a collection of sets $\left\{A_\a\right\}$ indexed by by a directed set $\La = \left\{\a\right\}$ and a collection of functions $f^\bt_\a: A_\bt \to A_\a$ such that
	%		\begin{enumerate}
	%			\item [(a)] $f^\a_\a = 1_{A_\a}: A_\a\cong A_\a$ for $\a\in\La$
	%			\item[(b)] $f^\ga_\a = f^\bt_\a \circ f^\ga_\bt : A_\ga \to A_\a$ for $\a \le \bt \le \ga$ in $\La$
	%		\end{enumerate}
	%		The \textit{inverse limit} $\varprojlim A_\a$ is the subset of $\prod A_\a$ consisting of all points $\left\{a_\a\right\}$ such that if $\a\le\bt$, then $a_\a = f^\bt_\a\left( a_\bt\right) $. For each $\a$ there is a map $p_\a: \varprojlim A_\a\to A_\a$, and if $\a \le \bt$, then $p_\a = f^\bt_\a \circ p_\bt$ . 
	%	\end{definition}

	\begin{definition}\label{top_pointed_defn}
		If $\sX$ is a set (resp. topological space) and $x_0 \in \sX$ is a point then we say that the pair $\left(\sX, x_0 \right)$ is a \textit{pointed  set} (resp. \textit{pointed space}) (cf. \cite{spanier:at,switzer:at}). If both $\left(\sX, x_0 \right)$ and $\left(\sY, y_0 \right)$ are  pointed  spaces and $\varphi: \sX \to \sY$ is such that $\varphi\left(x_0\right)= y_0$ then we say that $\varphi$ is a \textit{pointed map}. We write
		\be\label{top_pointed_eqn}
		\varphi: \left(\sX, x_0 \right)\to\left(\sY, y_0 \right).
		\ee
		We say that $x_0$ is the \textit{base}-\textit{point}.
	\end{definition}
	\begin{definition}\label{top_hausdorff_defn}\cite{munkres:topology}
		A topological space $\sX$ is called a Hausdorff  space if for each pair $x_1, x_2$
		of distinct points of $\sX$, there exist neighbourhoods $\sU_1$, and $\sU_2$ of $x_1$ and , respectively,
		that are disjoint.
	\end{definition}	
	\begin{definition}\label{top_connected_defn}\cite{munkres:topology}
		Let $\sX$ be a topological space. A \textit{separation} of $\sX$ is a pair $\sU$, $\sV$ of disjoint
		nonempty open subsets of $\sX$ whose union is $\sX$. The space  $\sX$ is said to be \textit{connected}
		if there does not exist a separation of  $\sX$.
	\end{definition}
	\begin{remark}\label{top_connected_rem}\cite{munkres:topology}
		A space $\sX$ is connected if and only if the only subsets of $\sX$ that are both
		open and closed in $\sX$ are the empty set and $\sX$ itself.
	\end{remark}
	\begin{theorem}\label{top_connected_closure_thm}\cite{munkres:topology}
		%Theorem 23.4. 
		Let $A$ be a connected subspace of $\sX$, and let $\overline A$ be the closure of $A$, i.e. intersection of containing $A$ closed sets. If $A \subset B \subset \overline A$, then $B$ is also
		connected.
	\end{theorem}
	\begin{definition}\label{top_path_connected_defn}\cite{munkres:topology}
		%Definition. 
		Given points $x$ and $y$ of the space $\sX$, a \textit{path} in $\sX$ from $x$ to $y$ is a continuous map $f: \left[a, b\right]\to \sX$ of some closed interval in the real line into $\sX$, such
		that $f(a) = x$ and $f(b) = y$. A space $\sX$ is said to be \textit{path connected} if every pair of
		points of $\sX$ can be joined by a path in $\sX$.
	\end{definition}
	
	\begin{definition}\label{top_connected_component_defn}\cite{munkres:topology}
		Given $\sX$, define an equivalence relation on $\sX$ (cf. Definition \ref{equivalence_relation_defn} by setting $x\sim y$ if there
		is a connected subspace of $\sX$ containing both $x$ and $y$. The equivalence classes are
		called the \textit{components} (or the \textit{connected components}) of $\sX$.
	\end{definition}

	\begin{theorem}\label{top_conn_comp_thm}\cite{munkres:topology}
		%Theorem 25.1. 
		The components of $\sX$ are connected disjoint subspaces of $\sX$ whose
		union is  $\sX$, such that each nonempty connected subspace of  $\sX$ intersects only one of
		them.
	\end{theorem}
	\begin{definition}\label{top_locally_connected_defn}\cite{munkres:topology}.
		A space $\mathcal X$ is said to be \textit{locally connected at} $x$ if for every neighbourhood $\mathcal U$ of $x$ there is a connected neighbourhood $\mathcal V$ of $x$  contained in $\mathcal U$. If $\mathcal X$ is locally connected at each of its points, it is said simply to be \textit{locally connected}.
	\end{definition}
	\begin{theorem}\label{top_loc_conn_thm}\cite{munkres:topology}
		A space $\mathcal X$ is locally connected if and only if for every open set $\mathcal U$, each component of $\mathcal U$ is open in $\mathcal X$.
	\end{theorem}
	\begin{proposition}\label{top_quasi_component_prop}\cite{bredon:topology_geometry}
		%	4.11. 	Proposition. 
		The statement "$d(p) = d(q)$" for every discrete valued map $d$
		on $\sX$" is an equivalence relation (cf. Definition \ref{equivalence_relation_defn}. 
	\end{proposition}
	\begin{definition}\label{top_quasi_component_defn}\cite{bredon:topology_geometry}
		%4.12. Definition. 
		The equivalence classes of the relation in Proposition \ref{top_quasi_component_prop}
		are called the \textit{quasi}-\textit{components} of $\sX$.
	\end{definition}
	\begin{proposition}\label{top_quasi_component_closed_prop}\cite{bredon:topology_geometry}
		%4.13. Proposition. 
		Quasi-components of a space $\sX$ are closed. Each connected
		set is contained in a quasi-component. (In particular, each connected component is contained
		in a quasi-component.) Quasi-components are either equal or disjoint,
		and fill out $\sX$.
	\end{proposition}
	
	\begin{exercise}\label{top_loc_conn_exer}\cite{bredon:topology_geometry}
		Prove that If $\sX$ is locally connected, show that its components are open and equal its
		quasi-components.
	\end{exercise}

	\begin{definition}\label{top_compact_defn}\cite{munkres:topology}
		A space $\sX$ is said to be \textit{compact} if every open covering $\A$ of $\sX$ contains
		a finite subcollection that also covers $\sX$.
	\end{definition}	
	\begin{theorem}\label{top_compact_img_thm}\cite{munkres:topology}
		%	Theorem 26.5. 
		The image of a compact space under a continuous map is compact.
	\end{theorem}	
	
	\begin{definition}\label{top_locally_compact_defn}
		A space $\sX$ is said to be \textit{locally compact} at $x$ if there is some compact
		subspace $\sV$  of $\sX$ that contains a neighbourhood of $x$. If $\sX$ is locally compact at each of
		its points, $\sX$ is said simply to be \textit{locally compact}.
	\end{definition}
	\begin{empt}\cite{engelking:general_topology}
		Every set of cardinal numbers being well-ordered by $<$, the set of all cardinal numbers
		of the form $\left|B\right|$, where $B$ is a base for a topological space $\sX$, has a smallest element; this cardinal number is called the \textit{weight of the topological space} $\sX$ and is denoted by $w\left(\sX\right)$.
		
	\end{empt}
	
	%\begin{theorem}\label{top_weght_thm}\cite{engelking:general_topology}
		%3.4.16. THEOREM. 
%		If the weight of both $\sX$ and $\sX$ is not larger than $\mathfrak{m} \ge \aleph_0$ and $\sY$ is a locally
%		compact space, then the weight of the space $\sY^\sX$ with the compact-open topology (cf. \ref{top_comp_open_empt})  is not larger
%		than $\mathfrak{m}$.
%	\end{theorem}	
	
	\begin{definition}\label{top_net_defn}\cite{engelking:general_topology}
		A \textit{net in topological space} $\mathcal{ X}$ is an arbitrary function from a non-empty directed set $\La$ (cf. Definition \ref{directed_set_defn}) to the space $\mathcal{ X}$. Nets will be denoted by $S = \left\{x_\la \in \mathcal{ X}\right\}_{\la \in \La}$. 
	\end{definition}
	\begin{definition}\label{top_net_lim_defn}\cite{engelking:general_topology}
		A point $ x \in \mathcal{ X}$ is called a \textit{limit of a net} $S = \left\{x_\la \in \mathcal{ X}\right\}_{\la \in \La}$ if for every neighbourhood $\mathcal U$ of $x$  there exists $\la_0 \in \La$ such that $x_\la \in \mathcal U$ for every $\la \ge \la_0$; we say that $\left\{x_\la \right\}$ \textit{converges} to $x$. The limit will be denoted by $x = \lim S$, or $x = \lim_{\la \in \La} x_\la$, or $\lim x_\la$.
	\end{definition}
	\begin{remark}
		A net can converge to many points, however if $\mathcal{ X}$ is Hausdorff  then the net can have the unique limit.
	\end{remark}
	\begin{definition}\label{haudorff_defn}\cite{munkres:topology}
		A topological space $\sX$ is called a \textit{Hausdorff  space} if for each pair $x_1$, $x_2$ of distinct points of $\sX$, there exist neighbourhoods $\sU_1$, and $\sU_2$ of $x_1$  and $x_2$, respectively, that are disjoint.
	\end{definition}
	% \begin{definition}\cite{munkres:topology}
		%If a space $\sX$ has a countable basis for its topology, then $\sX$ is said to satisfy the \textit{second countability axiom}, or to be \textit{second-countable }.
		% \end{definition}
	\begin{definition}\label{top_normal_defn}\cite{munkres:topology}
		Suppose that one-point sets are closed in $\sX$. Then $\sX$ is said to be  
		\textit{regular} if for each pair consisting of a point $x$ and a closed set $B$ disjoint from $x$, there
		exist disjoint open sets containing $x$ and $B$ respectively. The space $\sX$ is said to be
		\textit{normal} if for each pair $A$, $B$ of disjoint closed sets of $\sX$ there exist disjoint open sets containing $A$ and $B$ respectively.
	\end{definition}
	\begin{definition}\label{top_completely_regular_defn}\cite{munkres:topology}
		A topological space $\mathcal X$ is \textit{completely regular} if one-point sets are closed in  $\mathcal X$ and for each point $x_0$ and each closed $\mathcal \sY \subset \mathcal X$ not containing $x_0$, there is a continuous function $f: \mathcal X \to \left[0,1 \right]$ such that $f\left(x_0 \right)= 1$ and $f\left(\mathcal Y \right)= \left\{0\right\} $.  
	\end{definition}
	\begin{theorem}\label{top_sc_regular_normal_thm}\cite{munkres:topology}
		%Theorem 32.1. 
		Every regular space with a countable basis is normal.
	\end{theorem}
	\begin{exercise}\label{top_completely_regular_exer}\cite{munkres:topology}
		Show that every locally compact, Hausdorff  space is completely regular.
	\end{exercise}
	%	\begin{theorem}\label{top_cr_thm}\cite{munkres:topology}
		% 		A subspace of a completely regular space is completely regular. A product of completely regular spaces is completely regular. 
		% 	\end{theorem}
	%	\begin{thm}\label{metr_normal}\cite{munkres:topology}
		% 		Every metrizable space is normal.
		% 	\end{thm}
	
	\begin{thm}\label{comp_normal_thm}\cite{munkres:topology}
		Every compact Hausdorff  space is normal.
	\end{thm}
	
	%\begin{thm}\cite{munkres:topology}\label{urysohn_metr_lem}\textbf{ Urysohn metrization theorem.}
	%	Every regular space with a countable  basis is metrizable.
	%\end{thm}
	
	\begin{thm}\label{urysohn_lem}\cite{munkres:topology}\textbf{ Urysohn lemma.}
		Let $\mathcal X$ be a normal space, let $\mathcal A$, $\mathcal B$ be disconnected closed subsets of $\mathcal X$. Let $\left[a,b\right]$ be a closed interval in the real line. Then there exist a continuous map $f: \mathcal X \to \left[a, b\right]$ such that $f(\mathcal A)=\{a\}$ and $f(\mathcal B)=\{b\}$.
	\end{thm}
	\begin{theorem}\label{tietze_ext_thm}\cite{munkres:topology} \textbf{Tietze extension theorem.} Let  $\mathcal X$ be a normal space; let  $\mathcal A$ be a closed subspace of  $\mathcal X$.
		\begin{enumerate}
			\item [(a)] Any continuous map of  $\mathcal A$ into the closed interval $[a,b]$ of $\R$ may be extended to a continuous map of all  $\mathcal X$ into $[a,b]$
			\item [(b)] Any continuous map of  $\mathcal A$ into $\R$ may be extended to a continuous map of all  $\mathcal X$ into $\R$.
		\end{enumerate}
		
	\end{theorem}
	\begin{definition}\label{top_onen_map_defn}\cite{munkres:topology}
		A map $f: \sX \to \sY$ is said to be an \textit{open map} if for every open set $\sU$ of $\sX$, the
		set $f\left(\sU \right)$  is open in $\sY$.
	\end{definition}	
	
	\begin{definition}\label{top_separable_defn}\cite{munkres:topology}
		A space having a countable  dense subset is said to be \textit{separable}.
	\end{definition}	
	\begin{defn}\label{top_support_defn}\cite{munkres:topology}
		If $\phi: \mathcal X \to \mathbb{C}$  is continuous then the \textit{support} of $\phi$ is defined to be the closure of the set $\phi^{-1}\left(\mathbb{\C}\setminus \{0\}\right)$ Thus if $x$ lies outside the support, there is some neighbourhood of $x$ on which $\phi$ vanishes. Denote by $\supp \phi$ the support of $\phi$.
	\end{defn}
	%	 	\begin{thm}\label{comm_sep_thm}\cite{chun-yen:separability}
		%	 		For a locally compact Hausdorff  space $\mathcal X$, the following are 		equivalent:
		%	 		\begin{enumerate}
			%				\item[(a)] The Abelian $C^*$-algebra $C_0\left(\mathcal X \right)$  is separable;
			%	 			\item[(b)]$\mathcal X$ is $\sigma$-compact and metrizable;
			%				\item[(c)]$\mathcal X$ is second-countable.
			%			\end{enumerate}
		%		\end{thm}
	%	\begin{thm}\label{top_sec_cov_thm}\cite{munkres:topology}
		%		Suppose that $\mathcal X$ is second-countable space. Then:
		%		\begin{enumerate}
			%			\item[(a)]  Every open covering contains a countable  collection covering $\mathcal X$.
			%			\item[(b)] There exists a countable  subset of $\mathcal X$  which is dense in $\mathcal X$.
			%		\end{enumerate}
		%	\end{thm}
	%\subsection{Locally compact spaces}

	There are two equivalent definitions of $C_0\left(\mathcal{X}\right)$ and both of them are used in this book.
\begin{definition}\label{top_ind_lim_defn}\cite{candel:foliII}
		Let $\sX$ be a locally compact Hausdorff  space, and let $C_c\left(\sX\right)$ denote the	space of continuous, compactly supported functions on $\sX$. The natural	topology on $C_c\left(\sX\right)$ is the \textit{inductive limit topology}, defined as follows. A net	$\left\{f_\a\right\}$ in $C_c\left(\sX\right)$ converges to $f$ if there is a compact set $K\subset\sX$ containing	the supports of all $f_\a$ and $f$, and such that $f_\a$ converges uniformly to $f$ on$K$.
	\end{definition}
	\begin{defn}\label{top_loc_fin_defn}\cite{munkres:topology}
		An indexed family of sets $\left\{A_\al\right\}$ of topological space $\sX$ is said to be \textit{locally finite} if each point $x$ in $\sX$  has a neighbourhood that intersects for only finite many values of $\a$.
	\end{defn}
	\begin{defn}\label{top_part_of_unity_defn}\cite{munkres:topology}
		Let $\left\{\mathcal U_\alpha\in \mathcal X\right\}_{\alpha \in \mathscr A}$ be an indexed open covering of $\mathcal{X}$. An indexed family of functions 
		\begin{equation*}
			\phi_\alpha : \mathcal X \to \left[0,1\right]
		\end{equation*}
		is said to be a {\it partition of unity }, dominated by $\left\{\mathcal{U}_\alpha \right\}$, if:
		\begin{enumerate}
			\item $\phi_\alpha\left(\mathcal X \setminus \mathcal U_\alpha\right)= \{0\}$
			\item The family $\supp\phi_\alpha$ is locally finite.
			\item $\sum_{\alpha \in \mathscr A}\phi_\alpha\left(x\right)=1$ for any $x \in \mathcal X$.
		\end{enumerate}
	\end{defn}
	\begin{definition}\label{top_paracompact_defn}\cite{munkres:topology}
		A space $\sX$ is \textit{paracompact} if every open covering $\sX = \cup~\sU_{   \a}$ has a locally finite open refinement $\sX = \cup~\sV_{   \bt}$.
	\end{definition}	
	\begin{thm}\label{top_part_u_thm}\cite{munkres:topology}
		Let $\mathcal X$ be a paracompact Hausdorff  space; let $\left\{\mathcal U_\alpha\in \mathcal X\right\}_{\alpha \in \mathscr A}$ be an indexed open covering of $\mathcal{X}$. Then there exists a partition of unity, dominated by $\left\{\mathcal{U}_\alpha \right\}$.  
	\end{thm}
	
	%\begin{theorem}\label{top_paracompact_norm_thm}
	%	Theorem 41.1. 
	%Every paracompact Hausdorff  space $\sX$ is normal
	%\end{theorem}
	%\begin{defn}
	%	A space for which every open covering contains a countable  subcovering is called a \textit{Lindel\"{o}f space}.
	%	\end{defn}
	\begin{definition}\label{top_compactification_defn}\cite{munkres:topology}
	If $\sY$ is a compact Hausdorff  space and $\sX$ is a proper subspace of $\sY$ whose
	closure equals $\sY$, then $\sY$ is said to be a \textit{compactification} of $\sX$. If $\sY\setminus\sX$ equals a single
	point, then $\sY$ is called the \textit{one-point compactification} of $\sX$. The following notation will be used
	\be\label{top_compactification_eqn}
	\sX^{+}\bydef \sY.
	\ee
	\end{definition}
	\begin{remark}
	It is shown  (cf. \cite{munkres:topology}) that $\sX$ has a one-point compactification $\sY$ if and only if $\sX$ is
	a locally compact Hausdorff  space that is not itself compact. We speak of $\sY$ as "the"
	one-point compactification because $\sY$ is uniquely determined up to a homeomorphism.
	\end{remark}
	\begin{theorem}\label{sc_comp_thm}\cite{munkres:topology}
	%Theorem 38.2. 
	Let $\sX$ be a completely regular space. There exists a  
	compactification $\sY$ of $\sX$ having the property that every bounded continuous map $f : \sX\to\R$
	extends uniquely to a continuous map of $\sY$ into $\R$.
	\end{theorem}
	
	\begin{definition}\label{top_stone_cech_defn}\cite{munkres:topology}
	For each completely regular space $\sX$, let us choose, once and for all,
	a compactification of $\sX$ satisfying the extension condition of Theorem \ref{sc_comp_thm}. We will
	denote this compactification of  by $\bt\sX$ and call it the \textit{Stone-\v{C}ech compactification}  
	of $\sX$. It is characterized by the fact that any continuous map $\sX\to\sY$ of $\sX$ into a
	compact Hausdorff  space $\sY$ extends uniquely to a continuous map $\bt\sX\to\sY$.
	\end{definition}
	\begin{theorem}\label{top_sc_to_compact_thm}\cite{munkres:topology}
	%Theorem 38.4. 
	Let $\sX$ be a completely regular space; let $\bt\sX$ be the  {Stone-\v{C}ech compactification} of $\sX$. Given any continuous map $f: \sX \to \sY$ into a compact Hausdorff  space $\sY$, the map $f$ extends uniquely to a
	continuous map $\bt f: \bt \sX\to\sY$.
	\end{theorem}	
	
	\begin{definition}\label{f_topology_defn}\cite{rudin:fa}
	Suppose next that $\sX$ is a set and $\mathscr F$ is a nonempty family of mappings $f: \sX \to \sY_f$, where each $\sY_f$ is a topological space. (In many important cases, $\sY_f$ is the same for all $f \in \mathscr F$.) Let $\tau$ be the collection of all unions of finite intersections of sets $f^{-1}\left(\sV  \right)$ with $f\in \mathscr F$ and $\sV$ open in $\sY_f$. Then $\tau$ is a topology on $\sX$, and it is in fact the weakest topology on $\sX$ that makes every 
	$f\in \mathscr F$ continuous: If $\tau'$ is any other topology with that property, then $\tau\subset\tau'$. This $\tau$ is called the \textit{weak topology on} $\sX$ \textit{induced by} $\mathscr F$, or, more 
	shortly, the $\mathscr F$-\textit{topology of} $\sX$.
	\end{definition}
	\begin{definition}\label{top_locally_contractible_defn}\cite{spanier:at,switzer:at}
	A topological space $\sX$ is said to be 
	\textit{contractible} if the identity map of $\sX$ is homotopic to some constant map of $\sX$ 
	to itself. $\sX$ is \textit{locally contractible} if
	every neighbourhood $\sU$ of a point $x$ contains a neighbourhood $\sV$ of $x$ deformable to $x$ in $\sV$). 
	
	\end{definition} 
	\begin{definition}\label{top_simply_conn_defn}\cite{spanier:at}
	Let $E^{k+1}, S^k\subset\R^{k+1}$ be a disk ans a sphere, i.e.
	\bean
	E^{k+1} \bydef \left\{\left.\left(x_1, ..., x_{k+1}\right) \in \R^{k+1}~\right| x_1^2+...+x^2_{k+1}\le 1\right\},\\
	S^k \bydef \left\{\left.\left(x_1, ..., x_{k+1}\right) \in \R^{k+1}~\right| x_1^2+...+x^2_{k+1}= 1\right\}
	\eean
	with the natural inclusion $S^k\subset E^{k+1}$. A space $\sX$ is said to be $n$-\textit{connected} if every continuous map $f: S^k \to \sX$ for $k\le n$ has a continuous extension over $E^{k+1}$. A 1-connected space is also said to be \textit{simply connected}.
	\end{definition}
	
	%\subsection{Moore-Smith convergence}
	%Let $\sX$ is a set.
	%If $\mathcal C$ is a class consisting of pairs $(S,s)$, where $S$ is a net in $\sX$ and $s$ a point, when is there a topology $\mathcal T$ for X such that (6  e C iff S  converges to s relative to the topology 3
	\begin{definition}\label{top_lower_semi_defn}\cite{bourbaki_sp:gt}
	A real-valued function $f$, defined on a topological space $\sX$, is said to be \textit{lower semi-continuous} (resp. \textit{upper semi-continuous} at point $a \in \sX$ if for each $h < f\left(a\right)$ (resp. each $k > f\left(a\right))$ there is an open neighbourhood $\sV$ of $a$ such that $h < f\left(x\right)$ (resp. each $k > f\left(x\right))$ for each $x\in\sV$.\\
	A real-valued function $f$, is said to be \textit{lower semi-continuous} (resp. \textit{upper semi-continuous} on $\sX$ if it is  {lower semi-continuous} (resp. {upper semi-continuous} at every point of $\sX$.
	\end{definition}
	\begin{theorem}\label{top_lower_semi_thm}\cite{bourbaki_sp:gt}
	Let $f$ be a lower semi-continuous function on a non-empty quasi-compact space $\sX$. Then there is at least one point $a \in \sX$ such that $f\left(a \right) = \inf_{x \in \sX}f\left( x\right) $ (in other words, $f$ attains its greatest lower bound in $\sX$).
	\end{theorem}
	\begin{remark}\label{top_lower_semi_rem}
	There is inverted result about upper semi-continuous functions and supremums.
	\end{remark}

	\begin{empt}\label{top_cauchy_empt}\cite{engelking:general_topology}
	If $\left( \sX, \rho\right) $ is a metric space then wa say that $\left\{x_j\right\}_{j \in \N}\subset \sX$ is a \textit{Cauchy sequence} if for every $\eps > 0$ there exist $k > 0$ such that $\rho\left(x_j, x_k \right) < \eps$ whenever $j> k$. If $\left( \sX, \rho\right)$ is complete then any Cauchy sequence is convergent.
	\end{empt}
	\begin{remark}\label{top_cauchy_rem}
	Above statements can be generalized to nets (cf. Definition \ref{top_net_lim_defn}) the word \textit{Cauchy net} is also used in this book.
	\end{remark}
	%\begin{defn}\label{top_dfn:regular_neighbourhood_defn}\cite{pavlov_troisky:cov}
	%2.1
	%	Let $\sX, \sY$ be Hausdorff  spaces, and let $p: \sY \to \sX$ be a continuous map.If 		$x\in \sX$, which has a finite number of pre-images $y_1,		\dots,y_m$. Then a neighbourhood $\sU$ of $x$ is said to be		\emph{regular} if
	%		\begin{gather}\label{eq:neiborhoods}
	%			p^{-1}(\sU)= \sV_1\sqcup\dots\sqcup \sV_m,
	%		\end{gather}
	%		where $\sV_i$ are some neighbourhoods of $y_j$, $j=1,\dots, m$.
	%\end{defn}
	
	%\begin{lem}\label{top_lem:neiborhoods_lem}\cite{pavlov_troisky:cov}
	%	2.2.
	%	Let $p : \sY\rightarrow \sX$ be a continuous closed map of Hausdorff 	spaces. Then any point $x$ of $\sX$ with a finite number of	pre-images has a regular neighbourhood.
	%\end{lem}
	\begin{definition}\label{metric_space_defn}\cite{rudin:pa}
		%2.15 Definition
		A set $X$, whose elements we shall call \textit{points}, is said to be a 
		\textit{metric space} if with any two points $p$ and $p$ of $X$ there is associated a real 
		number $(p, q)$, called the \textit{distance} from $p$ to $q$, such that
		\begin{enumerate}
			\item [(a)]
			\bean
			p \neq q \quad \Leftrightarrow \quad d\left(p, q\right)> 0;\\
			d(p,p)=0;
			\eean
			\item[(b)] $d(p,q)=d(q,p);$
			\item[(c)] $\forall r \in X \quad d(p, q) < d(p, r) + d(r, q)$.
		\end{enumerate} 
		Any function with these three properties is called a \textit{distance function}, or a \textit{metric}. 
	\end{definition}

	\begin{remark}\label{triangle_ineq_rem}
	The condition (b) of the Definition \ref{metric_space_defn} is said to be the \textit{triangle inequality}. If $Y$ is a normed vector space then there is a distance function on $Y$ such that
	$$
	d(x, y)\bydef \left\|x - y \right\| \quad \forall x, y \in Y.
	$$
	The triangle identity is given by
	\be\label{triangle_ineq_vec_eqn}
	\forall x, y, z \in Y \quad  \left\|x - y \right\|\le  \left\|x - z \right\|+ \left\|y - z \right\|.
	\ee
	\end{remark}

%	Example 1.1.2. A1 is irreducible, because its only proper closed subsets are 	finite, yet it is infinite (because k is algebraically closed, hence infinite). 
%	Example 1.1.3. Any nonempty open subset of an irreducible space is irre- 	ducible and dense. 
%	Example 1.1.4. If Y is an irreducible subset of X, then its closure Y in X is 	also irreducible. 

	\begin{definition}\label{top_susp_defn}\cite{blackadar:ko}
	%Definition 1.6.1. 
	If $\sX$ is locally compact, the {\it (reduced) suspension} $S\sX$ of $\sX$
	is defined to be $\sX \times \R$.
	\end{definition}
	
\begin{theorem}\label{locally_compact_hausdorff_thm}\cite{munkres:topology}
%Theorem 29.1.
 Let $\sX$ be a space. Then $\sX$ is locally compact Hausdorff if and only
if there exists a space $\sY$ satisfying the following conditions:
\begin{enumerate}
	\item[(a)] $\sX$ is a subspace of $\sY$.
		\item[(b)] The set $\sY \setminus\sX$ consists of a single point.
		\item[(a)] $\sY$ is a compact Hausdorff space.
	Jf $\sY$ and $\sY'$ are two spaces satisfying these conditions, then there is a homeomorphism of $\sY$ with $\sY'$ that equals the identity map on $\sX$
\end{enumerate}
\end{theorem}

\begin{definition}\label{one_point_compactification_defn}\cite{munkres:topology}
If $\sY$ is a compact Hausdorff space and $\sX$ is a proper subspace of $\sY$  whose
closure equals $\sY$, then $\sY$ is said to be a {\it compactification} of $\sX$. If $\sY \setminus\sX$ equals a single
point, then $\sY$ is called the {\it one-point compactification} of X.
%We have shown that X has a one-point compactification Y if and only if X isa locally compact Hausdorff space that is not itself compact. We speak of Y as "the"one-point compactification because Y is uniquely determined up to a homeomorphism.
\end{definition} 

	\subsection{Algebraic topology}
\subsubsection{Coverings and fundamental groups}
\paragraph*{}
Results of this section are copied from \cite{spanier:at}. The \textit{covering projection} word is replaced with \textit{covering} outside this section.

\begin{defn}\label{top_covering_defn}\cite{spanier:at}
	Let $\widetilde{\pi}: \widetilde{\mathcal{X}} \to \mathcal{X}$ be a continuous map. An open subset $\mathcal{U} \subset \mathcal{X}$ is said to be {\it evenly covered } by $\widetilde{\pi}$ if $\widetilde{\pi}^{-1}(\mathcal U)$ is the disjoint union of open subsets of $\widetilde{\mathcal{X}}$ each of which is mapped homeomorphically onto $\mathcal{U}$ by $\widetilde{\pi}$. A continuous map $\widetilde{\pi}: \widetilde{\mathcal{X}} \to \mathcal{X}$ is called a {\it covering projection} if each point $x \in \mathcal{X}$ has an open neighborhood evenly covered by $\widetilde{\pi}$. $\widetilde{\mathcal{X}}$ is called the {
		\it covering space} and $\mathcal{X}$ the {\it base space} of the covering.
\end{defn}
%\begin{defn}\label{one_one_cov_defn}
%	Let $\widetilde{\pi}: \widetilde{\mathcal{X}} \to \mathcal{X}$ be a covering. A connected open subset $\widetilde{\mathcal{U}} \subset \widetilde{\mathcal{X}}$ is said to be a {\it one-to-one subset} with respect to $\widetilde{\pi}$ if the restriction $\widetilde{\pi}_{\widetilde{\mathcal{U}}}: \widetilde{\mathcal{U}} \to \widetilde{\pi}\left(\widetilde{\mathcal{U}}\right)$ is a homeomorphism. The family $\left\{\widetilde{\mathcal{U}}_\iota\right\}_{\iota \in I}$ of one-to-one subsets with respect to $\widetilde{\pi}$ such that $\widetilde{\mathcal{X}} = \bigcap_{\iota \in I}\widetilde{\mathcal{U}}_\iota$ is said to be a {\it one-to-one covering}  with respect to $\widetilde{\pi}$.
%\end{defn}
\begin{defn}\cite{spanier:at}
	A topological space $\mathcal X$ is said to be \textit{locally path-connected} if the path components of open sets are open.
\end{defn}
%\paragraph{} It follows from Lemma \ref{lem_3_spanier_lem} that if $p: \widetilde{\mathcal X} \to \mathcal X$ be a fibration with unique path lifting then for any two objects $\widetilde{x}_1$ and $\widetilde{x}_2$ in the fundamental groupoid of $\widetilde{X}$
\
\begin{lem}\label{top_uni_exist_spa_lem}\cite{spanier:at}
A simply connected covering space of a connected locally path-connected space $\sX$ is an universal covering space of $\sX$.
\end{lem}

		\begin{definition}\label{top_properly_disc_group_defn}\cite{spanier:at} 
	A group $G$ of homeomorphisms of a topological space $\sY$  is said to be  
	\textit{discontinuous} if the orbits of $G$ in $\sY$ are discrete subsets of $\sY$. $G$ is \textit{properly discontinuous} if for $y \in \sY$ there is an open neighbourhood $\sU$ of $y$ in $\sY$ such 
	that if $g, g' \in  G$ and $g\sU$ meets $g'\sU$, then $g = g'$. $G$ \textit{acts without fixed points} if the only element of $g\in G$ having fixed points is the identity element.	
\end{definition}
\begin{remark}\label{top_properly_disc_rem}\cite{spanier:at}
	A finite group of homeomorphisms acting without fixed points on a 
	Hausdorff space is properly discontinuous.
\end{remark}
\begin{theorem}\label{top_group_of_covering_transformations_thm}\cite{spanier:at}
	Let $G$ a properly discontinuous group of homeomorphisms 
	of a space $\sY$. Then the projection of $\sY$ to the orbit space $\sY/G$ is a covering projection. If $\sY$ is connected, this covering projection is regular and $G$  is its	group of covering transformations. 
\end{theorem}

\subsubsection{Elements of $K$-theory}\label{top_k_sec}
\paragraph*{}
Let $k$ be the field of real or complex numbers, and let $\sX$ be a topological space.
\begin{definition}\label{top_vb_fiber_defn}\cite{karoubi:k}
	A \textit{quasi-vector bundle with base} $\mathcal X$ is given by:
	\begin{enumerate}
		\item [(a)] A finite dimensional $k$-vector space $E_x$ for every point $x$ of $\mathcal X$.
		\item[(b)] A topology on the disjoint union $E = \bigsqcup E_x$ which induces the natural topology on each $E_x$, such that the obvious projection $\pi: E \to  \mathcal X$ is continuous.
	\end{enumerate}
	The quasi-vector bundle with base will be denoted by $\xi = \left( E, \pi, \mathcal X\right)$. The space $E$ is the \textit{total space} of $\xi$ and $E_x$ is the \textit{fiber} of $\xi$ at the point $x$.
\end{definition}
\begin{empt}\label{trivial_vb_empt}\cite{karoubi:k}	Let $V$ be a finite dimensional vector space over $k$,  $E_x = V$ and the total space may be identified with $\sX \times V$ with the product topology then the quasi-vector bundle  $\left(\sX \times V, \pi, \mathcal X\right)$ is called a \textit{trivial vector bundle}.
\end{empt}
\begin{empt}\cite{karoubi:k}	Let $\xi = \left( E, \pi, \mathcal X\right)$ be a quasi-vector bundle, and let let $\sX' \subset \sX$ be a subspace of $\sX$. The triple $\xi' = \left(\pi^{-1}\left( \sX'\right) , \pi|_{\pi^{-1}\left( \sX'\right)}, \mathcal X'\right)$ is called the \textit{restriction} of $\xi$ to $\sX'$. The fibers of $\xi'$ are just fibers of $\xi$ over the subspace $\xi$. One has
	\be
	\sX'' \subset \sX' \subset \sX \quad \Rightarrow\quad \left.\left(\xi|_{\sX'} \right) \right|_{\sX''}= \xi|_{\sX''}.
	\ee
\end{empt}
\begin{definition}\label{top_vb_defn}\cite{karoubi:k}
	Let $\xi = \left( E, \pi, \mathcal X\right)$ be quasi-vector bundle. Then $\xi$ is said to be \textit{locally trivial} or a \textit{vector bundle} if for every point $x$ in $\sX$, there exists a neighbourhood of $x$ such that $\xi|_{\sU}$ is isomorphic to a trivial bundle.
\end{definition}
\begin{definition}\label{top_vb_cs_defn}\cite{karoubi:k}	
	%	5.1. Definition.
	Let $\xi \bydef \left( E, \pi, \mathcal X\right)$ be a vector bundle. Then a \textit{section} of $\xi$ is a map $s: \sX \to E$ such that $\pi \times s = \Id_{\sX}$. A section $s$ is called \textit{continuous} if $s$ is a continuous. The space $\Ga\left({\mathcal X}, {E}\right)$, of continuous sections can be regarded as both left  and right $C_b\left(\mathcal{X} \right)$-module.
\end{definition}
\begin{notation}\label{top_sec_notn}
	We denote by $\Ga\left( M, E\right)$ the $\C$-space of continuous sections. Both  $\Ga_0\left( M, E\right)$ and $\Ga_c\left( M, E\right)$ are subspaces of  sections tending to zero at infinity and having compact support.
\end{notation}
\begin{remark}
	In \cite{karoubi:k} the vector bundles with base  fields $\R$ and $\C$ are considered. Here we consider complex vector bundles only.
\end{remark}
% 	We refer to \cite{karoubi:k} for a notion of {\it (locally trivial) vector bundle}. 	Any {\it (locally trivial) vector bundle} with base $\mathcal X$ is a special case of \textit{quasi-vector bundle} which is a triple $\left(E, \pi, \mathcal X \right)$. 
\begin{definition}\label{vb_inv_img_funct_defn}\cite{karoubi:k}
	Let $f: \mathcal X' \to \mathcal X$ be a continuous map. For every point $x'$ of $\mathcal X'$, let $E'_{x'}= E_{f\left(x' \right) }$. Then the set $E' = \bigsqcup_{x' \in \mathcal X'}E'_{x'}$ may be identified with the \textit{fiber product} $\mathcal X' \times_{\mathcal X} E$ formed by the pairs $\left(x',e \right)$ such that $f\left(x' \right) = \pi\left( e\right)$. If  $\pi': E' \to \mathcal X'$ is defined by $\pi'\left(x',e \right) = x'$, it is clear that the triple $\xi = \left( E', \pi', \mathcal X'\right)$ defines a quasi-vector bundle  with base  $ \mathcal X'$, when we provide $E'$ with the topology induced by $ \mathcal X' \times E$. We write $\xi' = f^*\left(\xi \right)$ or $f^*\left(E \right)$: this is the \textit{inverse image} of $\xi$ by $f$.  
\end{definition}
\begin{theorem}\label{serre_swan_thm}\cite{karoubi:k}.
% 6.18	Theorem 
(Serre, Swan). Let $A = C_k(\sX)$ be the ring of continuous 
	functions on a compact space $\sX$ with values in $k$. Then the section functor $\Ga$ (cf. Notation \ref{top_sec_notn}) induces an 
	equivalence of categories of vector bundles  with base  $\sX$ and finitely generated projective $A$-modules.
\end{theorem}

\begin{definition}\label{top_herm_bundle_form_defn}\cite{karoubi:k}
	%8.1. Definition. 
	Let $E$ be a vector bundle over $\C$. A \textit{sesquilinear form} on $E$ is a 
	continuous map $\varphi: E \times_\sX E \to \C$ which has the following property. The map 
	$\varphi_x : E_x \times E_x$ induced on each fiber is "sesquilinear" with respect to the $\C$-vector space structure of $E_x$. In other words, ($\varphi_x$ is $\R$-bilinear and $\varphi_x\left(\la e, e' \right)  = \varphi_x\left( e, \overline \la e' \right) = \la \varphi_x\left( e, e' \right)$ f
	for $\la\in \C$, $e \in E_x$, and $e' \in E_x$. 
\end{definition}

%Following fact is an implication of definitions.
%\begin{fact}\label{inv_image_fact}
%	A map $\varphi^*$ is an inverse image of $\varphi$ if and only if the above diagram is commutative for any $\mathcal U$ such that $\pi |_{\mathcal U}: \mathcal U \to \pi ({\mathcal U})$ is a homeomorphism.
%\end{fact}

\paragraph{The Chern characters}
\begin{proposition}\cite{karoubi:k}\label{chern_class_prop}
	%2.17. Proposition. 
	For each vector bundle $\sV$ of rank $n$, with compact base $\sX$, we 
	define unique functions $c^K_j\left(\sV \right)\in K\left( \sX\right) $, for $j=0, .. ., n$ such that $c^K_0\left(\sV \right)=1$ . 
	These characteristic classes satisfy the following axioms:
	\begin{enumerate}
		\item [(i)]
		The $c^K_j\left( \sV\right)$  are "natural", i.e. 	$c^K_j\left( \sV\right)  = f^*\left(c^K_j\left( \sV\right)  \right)$  for any general morphism $\sV \xrightarrow{g}\sV'$ , which induces $f: \sX \to \sX"$ on the bases, $\sX$ and $\sX'$, of $\sV$ and $\sV'$ respectively; 
		and which induces an isomorphism on each fiber. 
		\bean
		\begin{tikzcd}
			\sV \arrow[d]\arrow[r, "g"] & \sV'\arrow[d]\\
			\sX \arrow[r, "f"] & \sX'\\
		\end{tikzcd}
		\eean
		\item[(ii)] If $\sV_1$ and $\sV_2$ are vector bundles on $\sX$, then 
		\bean
		c^K_k\left(\sV_1 \oplus \sV_2 \right) = \sum_{i + j = k} c_i\left( \sV_1\right) c^K_j\left( \sV_2\right) 
		\eean
		\item[(iii)] If the rank of $\sV$ is one, then $c^K_1\left(\sV \right)= \chi\left(\sV \right) \bydef  1 -\left[\sV\right]$, and $c^K_i\left( \sV\right)  = 0$ for $i > 1$.
	\end{enumerate}
	Moreover, the characteristic classes $c^K_j\left( \sV\right)$  are uniquely determined by these axioms. 
\end{proposition}

\begin{theorem}\label{chern_class_thm}\cite{hatcher:kt}
%Theorem 3.2. 
There is a unique sequence of functions $c^H_1, c^H_2,...$ assigning to each
complex vector bundle $\sV \to \sX$ a class $c_j\left( \sV\right)$  depending only on the isomorphism
type of $\sV$ , such that
\begin{enumerate}
	\item [(i)] $c^H_j\left( f^*\left(
	\sV  \right)\right)   = H^{2j}\left(\bullet, \Z \right)\left( f\right)  \left(
c^H_j\left( 	\sV \right)  \right)$ where $f^*$ is the inverse image (cf. Definition \ref{vb_inv_img_funct_defn}).
\item[(ii)] $c\left( \sV_1 \oplus \sV_2\right) = c\left(\sV_1 \right) \smile  c\left(\sV_1 \right)$ for $c \bydef 1 + c^H_1 + c^H_2 + ... \in  H^*\left(\sX, \Z \right)$.
\item[(iii)] $c^H_j(\sV) = 0$ if $j > \dim \sV$.
\item[(iv)]For the canonical line bundle $\sV \to \C P^\infty$  is a generator of $H^2\left(\C P^\infty, \Z \right)$.
\end{enumerate}
\end{theorem}
\begin{definition}\label{chern_class_defn}
The given by the  Proposition \ref{chern_class_prop} (resp. Theorem \ref{chern_class_thm}) functions $c_j^K$ (resp. $c^H_j$) are \textit{($K$-theoretical}) (resp.\textit{cohomological}) \textit{Chern classes}.
\end{definition}
\begin{remark}\label{full_c_rem}\cite{karoubi:k}
 Let us write 
  $c^H(V)= \sum_{j= 0}^n  c^H_j\left( V\right) \in H^{\mathrm{even}}\left(\sX \right)$ . This is called the "total 	Chern class" of $V$. One has
  \bean
  c^H\left(V_1 \oplus V_3 \right) = c^H\left( V_1\right) \cdot c^H\left( V_1\right)
  \eean
  Similarly there is "total 	Chern class" $c^K\left(V \right) \in K\left( \sX\right)$  of $V$ in $K$-theory.
	\end{remark}
	\begin{empt}
We let $Q_k$, for $k>1$, denote the "Newton polynomials": they express the 
$\sum_{j=1}^k u^k_j$ symmetric functions as unique polynomials of the elementary symmetric 
functions,
$$
\sigma_r \bydef \sum_{j_1 < j_2 < ... < j_r} u_{j_1}... u_{j_r}\quad 1 \le r \le k.
$$
For instance
\be\label{newton_p_eqn}
\begin{split}
Q_1\left(\sigma_1 \right) = \sigma_1,\\
Q_2\left(\sigma_1, \sigma_2 \right) = \left( \sigma_1\right)^2-2\sigma_2,\\
Q_1\left(\sigma_1, \sigma_2, \sigma_3 \right) = \left( \sigma_1\right)^3 - 3 \sigma_1\sigma_2 + 2\sigma_3,\\
...
\end{split}
\ee
There are classes $s_x\left(V \right)\in H^{2k}$ , such that $s_k\left( V_1 \oplus V_2\right) = s_k\left( V_1\right)+ s_k\left( V_1\right)$  by putting $s_k\left( V\right)  = Q_k\left(c_1,..., c_n \right)$  where $cj \bydef c_j\left( V\right)$ , and $Q_k$ is the Newton polynomial. We 
	make the convention that $s_0\left(V \right) \bydef \rank\left(V \right)$. As well known $H^p\left( \sX, \Q\right) \cong H^p\left(\sX, \Z \right)\otimes \Q$. Thus we may define
	\be 
	\begin{split}
Ch_k\left( V\right) \bydef \frac{1}{k!}	s_k\left(V \right) = \frac{1}{k!}	Q_k\left(c_1\left(V \right), ..., c_n\left(V \right) \right)\in H^{2k} \left(\sX, \Q\right),\\
Ch\left( V\right) \bydef \sum_{k=0}^\infty Ch_k\left( V\right) \in H^{\mathrm{even}} \left(\sX, \Q\right),\\
	\end{split} 
	\ee
\end{empt}
\begin{theorem}\cite{karoubi:k}
%3.23. Theorem. 
Let $V_1$ and$V_2$ be vector bundles, with compact base $\sX$. Then we 
have the formulas 
	\be 
\begin{split}
	Ch\left( V_1 + V_2 \right) =Ch\left( V_1 \right)+Ch\left( V_2 \right),\\
	Ch\left( V_1 \otimes  V_2 \right) =Ch\left( V_1 \right)\cdot Ch\left( V_2 \right),\\
\end{split} 
\ee
	Moreover, if $L$ is a line bundle, then $Ch\left(L \right) = \exp\left(\chi\left(L \right)  \right)$. 
\end{theorem}

\begin{definition}\cite{karoubi:k}
%3.24. Definition. 
We call the ring homomorphism 
\be
Ch: K(X) \to H^{\mathrm{even}} \left(\sX, \Q\right)\bydef \bigoplus_{j=0}^\infty H^{2j}\left(\sX, \Q \right) 
\ee 
defined by $Ch\left(\left[E\right]-\left[F\right]  \right) \bydef  Ch\left(\left[E\right]  \right)-Ch\left(\left[F\right]  \right)$, the {\it Chern character}. 

\end{definition}
\begin{theorem}\label{chern_ch_thm}\cite{karoubi:k}
%Theorem 1.6.6 (Chern Character). 
Let $\sX$ be compact. Then there are isomorphisms
	\be 
\begin{split}
\chi^0 :K^0\left( \sX\right) \otimes \Q \xrightarrow{\cong } \bigoplus_{n ~\mathrm{ even}} H^n\left(\sX, \Q \right),\\ 
\chi^1 :K^1\left( \sX\right) \otimes \Q \xrightarrow{\cong } \bigoplus_{n ~\mathrm{ odd}} H^n\left(\sX, \Q \right) 
\end{split} 
\ee
where $H_n\left(\sX, \Q \right)$  denotes the n-th ordinary (Alexander or  \v{C}ech) cohomology
group of $\sX$ with coefficients  in $\Q$.
\end{theorem}

\subsubsection{Multiplicative structures}\label{mult_k_sec}
\paragraph{} Here I follow to \cite{karoubi:k}. 
	\begin{empt}\label{etp_empt}
	%4.9. 
	Let $E$ and $F$ be vector bundles with bases $\sX$ and $\sY$ respectively. We define the "{\it external Whitney sum}" of $E$ and $F$ as the vector bundle $E\boxplus F$ on $\sX\times \sY$, where $E\boxplus F\bydef \pi_1^*\left(E \right)\oplus \pi_2^*\left(F \right)$  with $\pi_1\left( \sX \times \sY\right)\onto \sX$  and $\pi_2\left( \sX \times \sY\right)\onto \sY$. Obviously we 	have $E\boxplus F=E\times  F $ and $\left( E\boxplus F\right)_{\left(x,y \right) }= E_x \oplus E_y$, In the same way, the "external 	tensor product" of $F$ and $E$ is the vector bundle $E\boxtimes F\bydef \pi_1^*\left(E \right)\otimes \pi_2^*\left(F \right)$  on $\sX \times \sY$, where $\left( E\boxtimes F\right)_{\left(x,y \right) }= E_x \otimes E_y$
\end{empt}
\begin{empt}\label{k_cup_empt}
%5.1. 
Let $\sX$ and $\sY$ be compact spaces, and let $E$ and $F$ be vector bundles with bases 
$\sX$ and $\sY$ respectively. Then their "external tensor product" $E\boxtimes F$ (\ref{etp_empt}) is a vector bundle over $\sX \times \sY$. The correspondence $\left(E , F \right)\mapsto E\boxtimes F$  induces a functor $\varphi$ from 
the category $\E\left( \sX\right)\times \E\left( \sX\right)$  to  $\E\left( \sX\times \sY\right)$, such that $\varphi\left( E \oplus E', F\right)\cong  \varphi\left( E , F\right)\oplus \varphi\left(  E', F\right)$ and $\varphi\left( E, F \oplus F\right)\cong  \varphi\left( E , F'\right)\oplus \varphi\left(  E, F'\right)$, with the analogous property for morphisms. From this functor we obtain a bilinear map 
\be 
\begin{split}
\varphi_*:K\left(\sX \right) \times K\left(\sY \right) \to :K\left(\sX \times \sY\right),\\
\varphi_* \left(\left[E\right]-\left[E'\right], \left[F\right]-\left[F'\right] \right) \bydef \left[\varphi\left(E, F \right) \right]+\left[\varphi\left(E', F' \right) \right]-\left[\varphi\left(E, F' \right) \right]-\left[\varphi\left(E', F \right)\right].
\end{split}
\ee
Also we denote the \emph{cup}-\emph{product} $\varphi_*\left(x, y \right)$ for $x \in K\left(\sX \right)$  and $y \in K\left(\sY \right)$, by $x \smile y$.  

If both $\sX$ and $\sY$ are locally compact space then the cup-product
\be\label{kf_prod_eqn}
\begin{split}
	K\left(\sX, \R^n \right) \times 	K\left(\sY, \R^p \right) \to 	K\left(\sX\times \R^n \times \sY, \R^p \right)\cong 	K\left(\sX\times\sY\times \R^{n + p} \right)
\end{split}
\ee
may be interpreted as a bilinear map
 \be\label{mult_k_eqn} 
 \begin{split}
 K^n\left( \sX \right)\times  K^p\left( \sY \right) \to K^{n+p}\left( \sX \times \sY\right)
 \end{split}
 \ee
 The diagonal map $\sX \hookto\sX \times \sX$ yields  homomorphisms $K^{n+p}\left(\sX \times \sX \right)  \to K^{n+p}\left(\sX  \right)$ and from \eqref{kf_prod_eqn} one has a product
   \be \label{kfn_prod_eqn}
  \begin{split}
  	K^n\left( \sX \right)\times  K^p\left( \sX \right) \to K^{n+p~\mathrm{mod} ~2}\left( \sX \right)
  \end{split}
  \ee
  
\end{empt}
\section{Lattices and filters}
			\begin{definition}\label{upper_defn}\cite{johnstone:stone_spaces}
		Let $\A$ be a partially ordered set, $S$ a subset of $\A$. We say an element $a \in A$
		is a \textit{ meet} (or \textit{ greatest lower bound}) for $S$, and write $a = \wedge S$, if 
		\begin{enumerate}
			\item [(a)] $a$ is an lower bound for $S$, i.e. $a \le s$ for all $s \in S$ and, 
			\item [(b)] if $\forall s \in S\quad  b \le s$, then $b \le a$. 
		\end{enumerate}
		\paragraph{}
		The antisymmetry axiom of the  partially ordered set $A$ ensures that the join of $S$, if it exists, is 
		unique. If $S$ is a two-element set $\{s, t\}$, we write $s \wedge t$ for $\wedge \{s, t\}$ and if $S$	is the empty set $\emptyset$, we write $0$ for  $\wedge \emptyset$.
	\end{definition}
	
	\begin{definition}\label{lattice_defn}\cite{johnstone:stone_spaces}
		A \textit{meet-semilattice} is a partially ordered set which supports for any finite set the  greatest lower bound.
	\end{definition}
	\begin{definition}\label{filter_defn}\cite{johnstone:stone_spaces} 
		A subset $\mathfrak F$ of a meet-semilattice $A$ is said to be an {\it filter} if
		\begin{enumerate}
			\item [(a)] $\mathfrak F$ is a sub-meet-semilattice of $A$; i.e. $1\notin \mathfrak F$, and $a, b \in \mathfrak F$  imply  	$a \wedge b \in \mathfrak F$; and
			\item [(b)] $\mathfrak F$ is a lower set; i.e. $a \in \mathfrak F$ and $ a\le b$ imply $b \in \mathfrak F$.  
		\end{enumerate}
	\end{definition}
	\begin{empt}\label{filter_empt}
		Similarly to \cite{bourbaki_sp:gt} one has
		\begin{enumerate}
			\item [(c)] $0 \notin \mathfrak F$.
			
		\end{enumerate}	
		\begin{definition}
			A filter  $\mathfrak F$ is \textit{principal} if there is $b \in A$ such that 
			$$
			\mathfrak F = \left\{a \in A \left|~ b \le a\right.\right\}.
			$$
			
		\end{definition}
	\end{empt}
	\begin{remark}\label{fil_hom_rem}
		Any homomorphism $\phi : L' \to L''$ of {semi}-{lattices} yields a map
		of filers
		\be\label{filtermap_eqn}
		\left\{I_\la\right\}_{\la \in \La}	\mapsto \text{ minimal filter containing } \left\{\phi\left( I_\la\right) \right\}_{\la \in \La}
		\ee
	\end{remark}
	
	\begin{example}\label{space_semi_exm}
		If $\sX$ is a topological space then the $\sX$-\textit{semi}-\textit{lattice} is  a  meet-semilattice $\mathfrak{Lattice}\left( \sX\right)$ such that elements of $\mathfrak{Lattice}\left( \sX\right)$ are open subsets of $\sX$ and one has
		\be\label{x_lat_eqn}
		\begin{split}
			\sU' \wedge \sU''\bydef \sU' \cap \sU'',\\
			\sU' \le \sU'' \quad \Leftrightarrow\quad \sU'' \subset \sU',\\
			0 \bydef \emptyset.
		\end{split}
		\ee
	\end{example}
	
	\begin{definition}\label{ultra_filter_defn}\cite{johnstone:stone_spaces} 
		A maximal filter is an \textit{ultrafilter}.
	\end{definition}
	\begin{remark}\label{ultra_filter_rem}
		From the Theorem \ref{zorn_thm} it follows that any filter is a subset of an	ultrafilter. (cf. \cite{johnstone:stone_spaces}).
	\end{remark}
	\begin{proposition}\label{top_ultra_prop}\cite{johnstone:stone_spaces}
		One has:
		\begin{enumerate}
			\item [(a)] 	A topological space $\sX$ is Hausdorff  if and only if every ultrafilter on $\sX$ has at most one limit.
			\item[(b)]   	A topological space $\sX$ is compact if and only if every ultrafilter has at least one limit.
		\end{enumerate} 
	\end{proposition}
	\begin{remark}
		From (b) of the Proposition \ref{top_ultra_prop} it follows that the notion of ultrafilter is not adequate analog of the point of Hausdorff , locally compact space. One needs the finer notion.
	\end{remark} 
	
	\section{Sheaves and cohomology}
	\subsection{Basic definitions}
	\begin{definition}\label{presheaf_defn}\cite{hartshorne:ag}
		Let $\sX$ be a topological space. A \textit{presheaf} $\mathscr F$ of sets or Abelian groups on  $\sX$ consists of the data
		\begin{itemize}
			\item[(a)] for every open subset $\sU \subseteq \sX$, an Abelian group $\mathscr F\left(\sU\right)$, and 
			\item[(b)] for every inclusion $\sV \subseteq \sU$ of open subsets of $\sX$, a morphism of Abelian groups $\rho_{\sU \sV}:\mathscr F\left(\sU\right) \to \mathscr F\left(\sV\right)$,\\
			subject to conditions
			\begin{itemize}
				\item [(0)] $\mathscr F\left(\sV\right)= 0$, where $\emptyset$ is the empty set,
				\item[(1)] $\rho_{\sU \sU}$ is the identity map, and
				\item[(2)] if $\mathcal W \subseteq \sV \subseteq \sU$ are three open sets, then $\rho_{\sU \mathcal W} = \rho_{\sV \mathcal W }\circ \rho_{\sU \sV}$.
			\end{itemize}
		\end{itemize}
	\end{definition}
	
	\begin{definition}\label{sheaf_defn}\cite{hartshorne:ag}
		A \textit{presheaf} $\mathscr F$ on  
		a topological space $\sX$ is a \textit{sheaf}  if it satisfies the following supplementary conditions:
		\begin{itemize}
			\item[(3)] If $\sU$ is an open set, if $\left\{\sV_{\a}\right\}$ is an open covering of $\sU$, and if $s \in \mathscr F\left(\sU\right)$ is an element such that $\left.s\right|_{\sV_{\a}}= 0$ for all $\a$, then $s = 0$;
			\item[(4)] If $\sU$ is an open set, if $\left\{\sV_{\a}\right\}$ is an open covering of $\sU$ (i.e. $\sU = \cup\sV_\a$), and we have elements $s_\a$ for each $\a$, with property that for each $\al, \bt, \left.s_\a\right|_{\sV_{\a}\cap \sV_{\bt}}= \left.s_\bt\right|_{\sV_{\a}\cap \sV_{\bt}}$, then there is an element $s \in \mathscr F\left(\sU\right)$ such that $\left.s\right|_{\sV_\a} = s_\a$ for each $\a$.
		\end{itemize}
		(Note condition (3) implies that $s$ is unique.)
	\end{definition}
	\begin{definition}
		A presheaf \ref{presheaf_defn} satisfying (4) of the Definition \ref{sheaf_defn} is called \textit{conjunctive} (for $\sU$). (cf. \cite{bredon:sheaf})
	\end{definition}
	\begin{definition}\label{sheaf_stalk_defn}\cite{hartshorne:ag}
		If $\mathscr F$ is a {presheaf} on $\sX$, and if $x$ is a point of $\sX$ we define the \textit{stalk} or the \textit{germ} $\mathscr F_x$ 
		\textit{of} $\mathscr F$ \textit{at} $x$ to be the direct limit of groups $\mathscr F\left(\sU\right)$ for all open sets $\sU$ containing $x$, via restriction maps $\rho$.
	\end{definition}
	
	\begin{definition}\label{sheaf_mor_defn}\cite{hartshorne:ag}
		If $\mathscr F$  and $\mathscr G$ are presheaves on $\sX$, a \textit{morphism} $\varphi:\mathscr F\to\mathscr G$  consists 
		of a morphism of Abelian groups $\varphi_\sU:\mathscr F\left( \sU\right) \to\mathscr F\left( \sU\right)$ for each open set 
		$\sU$, such that whenever $\sV\subset\sU$ is an inclusion, the diagram 
		\newline
		\begin{tikzpicture}
			\matrix (m) [matrix of math nodes,row sep=3em,column sep=4em,minimum width=2em]
			{
				\mathscr F\left( \sU\right)   & \mathscr G\left( \sU\right)\\
				\mathscr F\left( \sV\right)    & \mathscr G\left( \sV\right)\\
			};
			\path[-stealth]
			(m-1-1) edge node [above] {$\varphi_\sU$} (m-1-2)
			(m-1-1) edge node [right] {$\rho_{\sU\sV}$} (m-2-1)
			(m-1-2) edge node [right] {$\rho'_{\sU\sV}$} (m-2-2)
			(m-2-1) edge node [above] {$\varphi_\sV$} (m-2-2);
		\end{tikzpicture}
		\\
		is commutative, where $\rho_{\sU\sV}$ and $\rho'_{\sU\sV}$ are the restriction maps in $\mathscr F$  and $\mathscr G$. If $\mathscr F$  and $\mathscr G$ are sheaves on $\sX$, we use the same definition for a morphism 
		of sheaves. An isomorphism is a morphism  which has a two-sided inverse. 
	\end{definition}
	
	\begin{prdf}\label{sheaf_prdf}\cite{hartshorne:ag}
		Given a presheaf $\mathscr F$, there is a sheaf  $\mathscr F^+$ and a morphism $\th: \mathscr F \to \mathscr F^+$, with the property that for any sheaf  $\mathscr G$, and any morphism $\varphi: \mathscr F \to \mathscr G$, there is a unique morphism $\psi:\mathscr F^+\to \mathscr G$ such that $\varphi = \psi \circ \th$. Furthermore the pair $\left(\mathscr F^+, \th\right)$ is unique up to unique isomorphism. $\mathscr F^+$ is called the $\mathrm{sheaf~associated}$ to the presheaf $\mathscr F$. 
	\end{prdf}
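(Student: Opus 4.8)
The plan is to construct $\mathscr F^+$ explicitly from the stalks $\mathscr F_x$ (Definition \ref{sheaf_stalk_defn}) and then verify the universal property by hand. First I would set, for each open $\sU \subseteq \sX$, the Abelian group $\mathscr F^+(\sU)$ to be the set of functions $s : \sU \to \coprod_{x \in \sU}\mathscr F_x$ with $s(x) \in \mathscr F_x$ for every $x$, subject to the local-consistency requirement that each $x \in \sU$ admit an open neighbourhood $\sV \subseteq \sU$ and a section $t \in \mathscr F(\sV)$ whose germ $t_y$ equals $s(y)$ for all $y \in \sV$. The restriction maps are the evident restrictions of such functions, and the stalkwise group operations make each $\mathscr F^+(\sU)$ an Abelian group; the conditions of Definition \ref{presheaf_defn} are then immediate.

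Next I would check that $\mathscr F^+$ is a sheaf (Definition \ref{sheaf_defn}). Axiom (3) is automatic, since a function vanishing on each member of an open cover vanishes pointwise and is therefore $0$. For axiom (4), given a cover $\{\sV_\alpha\}$ of $\sU$ and sections $s_\alpha \in \mathscr F^+(\sV_\alpha)$ agreeing on overlaps, the $s_\alpha$ assemble into a single function $s$ on $\sU$, and the local-consistency condition for $s$ at each point is inherited from the relevant $s_\alpha$; hence $s \in \mathscr F^+(\sU)$ restricts to the $s_\alpha$. The morphism $\th : \mathscr F \to \mathscr F^+$ is then defined by sending $t \in \mathscr F(\sU)$ to the germ-valued function $x \mapsto t_x$, which commutes with restriction and so is a morphism in the sense of Definition \ref{sheaf_mor_defn}.

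For the universal property, let $\varphi : \mathscr F \to \mathscr G$ with $\mathscr G$ a sheaf, inducing stalk maps $\varphi_x : \mathscr F_x \to \mathscr G_x$. Given $s \in \mathscr F^+(\sU)$, choose a local presentation by sections $t_\alpha \in \mathscr F(\sV_\alpha)$ as above and form $\varphi_{\sV_\alpha}(t_\alpha) \in \mathscr G(\sV_\alpha)$. On each overlap the two images have the same germ at every point, namely $\varphi_x(s(x))$, so axiom (3) for $\mathscr G$ forces them to coincide there; axiom (4) for $\mathscr G$ then glues them to a unique section $\psi_\sU(s) \in \mathscr G(\sU)$. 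This yields $\psi : \mathscr F^+ \to \mathscr G$ with $\varphi = \psi \circ \th$, and $\psi$ is unique because $\th$ induces isomorphisms on all stalks, so any competing factorization agrees with $\psi$ germwise and hence, by axiom (3) for $\mathscr G$, as sections. The uniqueness of $(\mathscr F^+, \th)$ up to unique isomorphism is then the formal consequence of this universal property.

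The step I expect to be the main obstacle is the well-definedness of $\psi_\sU$: one must show both that the local images $\varphi_{\sV_\alpha}(t_\alpha)$ genuinely glue — which rests on checking germwise agreement and invoking axiom (3) for $\mathscr G$ — and that the glued section is independent of the chosen local presentation of $s$. Refining two presentations to a common cover reduces the independence claim to the gluing claim, so once the latter is established the construction goes through.
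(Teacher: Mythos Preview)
Your proposal is correct and follows exactly the approach the paper records: the paper simply quotes from \cite{hartshorne:ag} the construction of $\mathscr F^+(\sU)$ as locally coherent functions into the stalks (conditions (1) and (2) in \ref{sheaf_empt}), which is precisely your definition, and leaves the remaining verifications to the reference. Your additional checks of the sheaf axioms, the definition of $\th$, and the factorization through $\psi$ are the standard completion of that sketch and present no difficulties.
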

	
	\begin{empt}\label{sheaf_empt}
		Following text is the citation of the proof of \ref{sheaf_prdf} (cf. \cite{hartshorne:ag}). For any open set $\sU$, let $\mathscr F^+\left(\sU\right)$ be set of functions $s$ from $\sU$ to the union $\bigcup_{x \in \sU}  \mathscr F_x$ of stalks of $\mathscr F$ over points of $\sU$, such that
		\begin{itemize}
			\item [(1)] for each $x \in \sU$, $s\left(x\right)\in \mathscr F_x$, and
			\item[(2)] for each $x \in \sU$, there is a neighbourhood $\sV$ of $x$ contained in $\sU$ and an element $t \in \mathscr F\left(\sV\right)$, such that for all $y \in \sV$ the stalk (germ) $t_y$ of $t$ at $y$ is equal to $s\left(y\right)$.
		\end{itemize}
	\end{empt}
	\begin{definition}\label{constant_presheaf_defn}\cite{johnstone:topos}
		If $F$ is a set and $\sX$ is a topological space then the $F$-\textit{constant presheaf} is given by
		\bean
		\sU \mapsto F.
		\eean
		The $F$-\textit{locally constant sheaf} is the sheaf associated with the $F$-{constant presheaf}. We denote it by
		\be\label{constant_presheaf_eqn}
		{F}_\sX \bydef \text{ the sheaf associated with } F-\text{locally constant sheaf}.
		\ee
	\end{definition}

	\begin{definition}\label{sheaf_inv_im_defn}\cite{hartshorne:ag}
		Let $f: \sX\to \sY$ be a continuous map of topological spaces. For any sheaf  $\mathscr F$ on $\sX$, we define the \textit{direct image} sheaf  $f_*\mathscr F$ on $\sY$ by $\left(f_*\mathscr F\right)\left(\sV\right)= \mathscr F\left(f^{-1}\left(\sV\right)\right)$ for any open set $\sV \subseteq \sY$. For any sheaf  $\mathscr G$ on $\sY$, we define the \textit{inverse image} sheaf  $f*\mathscr G$ on $\sX$ be the sheaf  associated to the presheaf  $\sU \mapsto \lim_{\sV \supseteq f\left(\sU\right)} \mathscr G\left(\sV\right)$, where $\sU$ is any open set in $\sX$, and the limit is taken over  all open sets $\sV$ of $\sV$ containing $f\left(\sU\right)$.
	\end{definition}
	\begin{definition}\label{flabby_sheaf_defn}\label{bredon:sheaf}
	%5.1. Definition. 
	A sheaf $\mathscr A$ on $\sX$ is \textit{flabby} or \textit{flasque} if $\mathscr A\left( \sX\right) \to \mathscr A\left( \sU\right)$ is surjective for every open set  $\sU \subset \sX$
	\end{definition}
	\begin{definition}\label{sheaf_support_defn}\cite{hartshorne:ag}
		For $s \in \mathscr{F}\left(\sX \right)$, 
		\be\label{sheaf_supp_eqn}
		\left|  s\right| = \left\{\left.x \in \sX \right|s\left(x \right)\neq 0  \right\}
		\ee
		denotes the \textit{support} of the section $s$.
	\end{definition}
	
	\begin{exercise}\label{sheaf_etale_exer}\cite{hartshorne:ag}
		%1.13. 
		\textit{
			\'Espace Etal\'e of a presheaf}. %(This exercise is included only to establish the connection between our definition of a sheaf and another definition often found in the literature. See for example Godement [1, Ch. II, §1.2].)
		Given a presheaf $\mathscr F$ on $\sX$, we define a topological space $\mathrm{Sp\acute{e}}\left(\mathscr F \right)$ , called the \textit{
			\'espace etal\'e} of a presheaf of $\mathscr F$ as 
		follows. As a set it is the union $\mathrm{Sp\acute{e}}\left(\mathscr F \right)= \bigcup_{x\in\sX} \mathscr F_x$ of sets of germs (cf. Definition \ref{sheaf_stalk_defn}). We define a projection map 
		\be\label{pf_eqn}
		p_{\mathrm{Sp\acute{e}}\left(\mathscr F \right)}: \mathrm{Sp\acute{e}}\left(\mathscr F \right)\to \sX
		\ee
		by sending $s_x\in \mathscr F_x$ to $x$. Consider the initial with respect to the map $	p_{\mathrm{Sp\acute{e}}\left(\mathscr F \right)}$ topology (cf. Definition \ref{top_init_defn} and denote by $\mathrm{Sp\acute{e}}\left(\mathscr F \right)_{\mathfrak{Etale}}$ the corresponding topological space.
	\end{exercise}
	\begin{definition}\label{sheaf_hom_defn}\cite{hartshorne:ag}
		Let $\mathscr F$, $\mathscr G$ be sheaves of Abelian  groups  on $\sX$. For any open set $\sU \subseteq \sX$ the set of morphisms
		$\Hom\left(\left.\mathscr F\right|_{\sU}, \left.\mathscr G\right|_{\sU}\right)$ has the natural structure of Abelian group. 
		It is a sheaf  (cf. \cite{hartshorne:ag}). It is called the \textit{sheaf of local morphisms} of $\mathscr F\to \mathscr G$, "sheaf hom" for short, and is denoted by $\mathscr Hom \left(\mathscr F, \mathscr G\right)$.
	\end{definition}

\subsection{Families of supports}
	
	\begin{proposition}\label{sheaf_flasque_prop}\cite{hartshorne:ag}
		%Proposition 2.5. // 
		If $\mathscr F$ is a flasque sheaf on a topological space $\sX$, then  $H^p\left(\sX, \mathscr F \right)  = 0$ for all $p > 0$.
	\end{proposition}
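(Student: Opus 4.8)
The plan is to prove this by the standard \emph{dimension-shifting} argument, taking $H^p(\sX,-)$ to be the right derived functors of the global-sections functor $\mathscr F \mapsto \mathscr F(\sX)$. Recall that a sheaf is \emph{flasque} when all of its restriction maps $\rho_{\sU\sV}$ (cf. Definition \ref{presheaf_defn}) are surjective. Since the category of sheaves of Abelian groups on $\sX$ has enough injectives, I would first embed $\mathscr F$ into an injective sheaf $\mathscr I$ and form the short exact sequence
\be\label{flasque_ses_eqn}
0 \to \mathscr F \to \mathscr I \to \mathscr G \to 0,
\ee
where $\mathscr G$ is the quotient (the sheaf associated to the cokernel presheaf, cf. \ref{sheaf_prdf}).

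The argument rests on three auxiliary facts, which I would dispatch first. Fact (i): every injective sheaf is flasque. This follows by applying injectivity of $\mathscr I$ to the natural inclusion of the extension-by-zero of the constant sheaf $\overline{\ZZ}$ on $\sV$ into that on $\sU$, for $\sV \subseteq \sU$; under the adjunction this inclusion represents exactly the restriction map, which injectivity then forces to be surjective. Fact (ii): if $0 \to \mathscr F' \to \mathscr F \to \mathscr F'' \to 0$ is exact and $\mathscr F'$ is flasque, then for each open $\sU$ the sequence $0 \to \mathscr F'(\sU) \to \mathscr F(\sU) \to \mathscr F''(\sU) \to 0$ is again exact. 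Fact (iii): if in such a sequence both $\mathscr F'$ and $\mathscr F$ are flasque, then so is $\mathscr F''$. Granting these, the sequence \eqref{flasque_ses_eqn} has $\mathscr F$ flasque by hypothesis and $\mathscr I$ flasque by (i), so $\mathscr G$ is flasque by (iii).

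With this in hand I would run the long exact cohomology sequence attached to \eqref{flasque_ses_eqn}, where injectivity gives $H^p(\sX,\mathscr I)=0$ for all $p>0$. In degree one the sequence reads
\be\label{flasque_deg1_eqn}
\mathscr I(\sX) \to \mathscr G(\sX) \to H^1(\sX,\mathscr F) \to H^1(\sX,\mathscr I) = 0,
\ee
and the first arrow is surjective by (ii) with $\sU = \sX$, forcing $H^1(\sX,\mathscr F)=0$. For $p>1$, the vanishing of $H^{p-1}(\sX,\mathscr I)$ and $H^p(\sX,\mathscr I)$ yields the isomorphism $H^p(\sX,\mathscr F)\cong H^{p-1}(\sX,\mathscr G)$. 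Since $\mathscr G$ is flasque, an induction on $p$ with \eqref{flasque_deg1_eqn} as base case then gives $H^p(\sX,\mathscr F)=0$ for every $p>0$.

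I expect the main obstacle to be Fact (ii), namely surjectivity of $\mathscr F(\sU)\to\mathscr F''(\sU)$ on sections. Exactness of the sheaf sequence supplies only \emph{local} lifts of a given $s''\in\mathscr F''(\sU)$, and on overlaps these differ by sections of $\mathscr F'$, so they must be glued into a single global lift. I would carry this out by a Zorn's-lemma argument on the partially ordered set of partial lifts defined over open subsets of $\sU$ (cf. Theorem \ref{zorn_thm}): a maximal element is shown to be defined on all of $\sU$ precisely because surjectivity of the restriction maps of the flasque sheaf $\mathscr F'$ lets one adjust and extend any partial lift across a further open set. This gluing step, rather than the formal homological bookkeeping, is where the flasqueness hypothesis is genuinely used.
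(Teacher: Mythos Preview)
Your argument is correct and is exactly the standard proof from Hartshorne (Chapter~III, Proposition~2.5), which is precisely what the paper cites; the paper does not supply an independent proof of this proposition but simply records it as a quoted result from \cite{hartshorne:ag}. Hence your dimension-shifting argument via the short exact sequence $0\to\mathscr F\to\mathscr I\to\mathscr G\to 0$, together with the three auxiliary facts about flasque sheaves and the Zorn's-lemma gluing for Fact~(ii), matches the cited source verbatim in strategy and detail.
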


		\begin{definition}\label{phi_supp_defn}\cite{bredon:sheaf,godement:sheaf}.
		Let $\sX$ be a topological space. A \textit{family of supports} on $\sX$ is a family $\Phi$ of closed
		subsets of $\sX$  such that:
		\begin{enumerate}
			\item a closed subset of a member of $\Phi$ is a member of $\Phi$;
			\item $\Phi$ is closed under finite unions.
		\end{enumerate}
		
		$\Phi$  is said to be a  \textit{paracompactifying} family of supports if in addition:
		\begin{enumerate}
			\item  each element of $\Phi$ is paracompact;
			\item each element of $\Phi$ has a (closed) neighborhood which is in $\Phi$.
		\end{enumerate}
	\end{definition}\label{sheaf_soft_defn}
	The family of all compact subsets of $\sX$ is denoted by $c$.
	\begin{definition}\label{soft_sheaf_defn}\cite{bredon:sheaf}
		A sheaf  $\mathscr{F}$ on $\sX$ is called $\Phi$-\textit{soft} if the restriction map	$\mathscr{F}\left(\sX \right) \to  \mathscr{F}\left(K \right)$  is surjective for all $K \in \Phi$. If $\Phi$ is a set of all closed sets  then $\Phi$ is simply
		called \textit{soft}. If $\Phi= c$ is a set of all compact  sets  then $\mathscr{F}$ is called $c$-\textit{soft}.
	\end{definition}
	\
Now  $\mathscr P$ is a presheaf  on $\sX$ such and $s \in \mathscr P\left(\sX \right)$ we put
	$\left|s\right| = \left|\th\left( s\right) \right|$, where $\th: \mathscr{P}\left(\sX \right)\to \mathscr{P}^+\left(\sX \right) $ is the canonical map, $\mathscr{P}^+$ being the sheaf  generated by $\mathscr{P}$.
	Note that for $s \in \mathscr{P}$ one has $x \notin \left|s\right|\Leftrightarrow\left.s \right|_{\sU}=0$ for some neighborhood $\sU$ of $x$. If $\mathscr{F}$ is a sheaf, we put
	$$
	\Ga_\Phi = \left\{\left.s\in \mathscr{F}\left( \sX\right)\right| \left|s\right| \in \Phi \right\}.
	$$

	For a presheaf $\mathscr{P}$  on $\sX$ we put  $\mathscr{P}_\Phi\left(\sX \right) = \left\{\left.s\in \mathscr{P}\left( \sX\right)\right| \left|s\right| \in \Phi \right\}$.

	%\begin{theorem}\cite{bredon:sheaf}
	%	6.2.\\
	%	Theorem. Let $\mathscr P$ be a presheaf  DEFN \ref{top_x_sheaf_defn} on $\sX$ that is conjunctive for coverings	of $\sX$ and $\mathscr P^+$ be the sheaf \ref{top_x_sheaf_defn} generated by $\mathscr P$. Then for any paracompactifying 	family $\Phi$ of supports on $\sX$, the sequence
	%	$$
	%	0 \to \mathscr P_0\left( \right) \to  \mathscr{P}_\Phi\left(\sX \right) \xrightarrow{\th}\Ga_\Phi\left(\mathscr{P}^+ \right) 
	%	$$
	%	is exact.
	%\end{theorem}
\begin{definition}\cite{bredon:sheaf}
		%10.5. Definition. 
		Let $\Phi$ be a family of supports on $\sX$. Then a subspace
		$A \subset \sX$ is said to be $\Phi$-\textit{taut} if for every flabby sheaf $\mathscr F$ on $\sX$, the restriction
		$\Ga_\Phi \to \Ga_{\Phi\cap A}$ is surjective and $\mathscr F|A$ is $\Phi\cap A$-acyclic.
		The following five cases are examples of -b-taut subspaces A of X:
		\begin{enumerate}
			\item[(a)] $\Phi$ arbitrary, $A$ open.
			\item[(b)] $\Phi$ paracompactifying for the pair $\left(\sX, A \right)$ .
			\item[(c)] $\Phi$ paracompactifying, $\sX$ hereditarily paracompact (e.g., metric), $A$
			arbitrary.
			\item[(d)] $\Phi$ paracompactifying, $A$ closed.
			\item[(a)]($\Phi - cld$, $A$ compact and relatively Hausdorff  in $\sX$, e.g., a point.
		\end{enumerate}
		
		\begin{definition}
			Let $\Phi$ be a family of supports on $\sX$. A continuous map $\sX \to \sY$ is $\Phi$-\textit{closed} meaning $f\left( K\right)$ is closed for any $K \in \Phi$.
		\end{definition}
		
	\end{definition}
	\begin{empt}
		Let $\mathscr L^*$ be a differential sheaf on $\sX$. We are concerned with two cases:
		\begin{enumerate}
			\item [(a)] $\mathscr L^*=0$ for $q < q_0$,
			\item[(b)] $\dim_{\Psi, L}\sX < \infty$ \textit{for a given ground ring $L$ and family $\Psi$ of supports}.
		\end{enumerate}
		If we are in case (b) with $\dim_{\Psi, L}\sX < \infty$ , then
		\bean
		0 \to \mathscr A \to \mathscr C^0\left(\sX; \mathscr A \right) \to ... \to C^{m-1}\left(\sX,\; \mathscr A \right)\to \mathscr Z^m\left(\sX; \mathscr A \right)
		\eean
		is a resolution of $\Psi$ by $\Psi$-acyclic sheaves and is an exact functor of $\mathscr A$. In
		the discussion below, $C^*_\Phi\left(\sX, \mathscr A \right)$  should be replaced by $\Ga_\Phi$ of this resolution
		when we are in case (b).
	\end{empt}
	
	\begin{defn}
		Let $f : \sX \to \sY$ with $\Psi$ and $\Phi$ being families of supports on $\sX$ and $\sY$
		respectively.
		%5.1. Definition.
		Let the \textit{extension $\Phi\left( \Psi\right)$  of $\Psi$ by $\Psi$} be the family of
		supports on $\sX$ defined by
		\bean
		\Phi\left( \Psi\right) \bydef \left\{K \in \Phi\left( \sY\right)\left| \overline {f\left(K \right)}\in \Phi \right. \right\} = \Psi\left( \sY\right) \cap f^{-1}\left(\Phi \right).
		\eean
		
		Intuitively, $\Phi\left( \Psi\right)$ is the result of "spreading $\Psi$ out over $\Phi$" The reason
		for considering this construction is the following:
	\end{defn}
	\begin{proposition}
		%5.2. Proposition. 
		For any sheaf $\mathscr A$ on $\sX$ we have
		\bean
		\Ga_\Phi\left(f_\Phi \mathscr A\right) = \Ga_{\Psi\left(\sY \right) }\left(\mathscr A \right) \subset \mathscr A\left(\sX \right). 
		\eean
		under the defining equality $\left( f_\Psi \mathscr A\right) = \Ga_{\Phi\left(\Psi \right) }\left(\mathscr A\right)$.
	\end{proposition}

	\begin{proposition}\label{flabby_soft_prop}\cite{bredon:sheaf}
		%	9.22. Proposition. \\
		If $\sU$ is a compact relatively Hausdorff  subspace of $\sX$,
		then $\left.\mathscr{F}\right|_{\sU} $ is soft for any flabby sheaf  $\mathscr{F}$ on $\sX$.
	\end{proposition}
	
		\subsection{Canonical resolution and cohomology}\label{canonical_resolution_empt}
	Here I follow to \cite{bredon:sheaf, godement:sheaf}.					
	For any sheaf $\mathscr F$ of Abelian groups on $\sX$ and open set $\sU\subset \sX$  we let $C^0(\left(\sU, \mathscr F \right)$ be the
	collection of all functions (not necessarily continuous) $f: \sU\to \mathrm{Sp\acute{e}}\left(\mathscr F \right)$ such 	that $p \circ f$ is the identity on $\sU$, $~~p : \mathrm{Sp\acute{e}}\left(\mathscr F \right)\to \sX$ being the canonical projection (cf. Exercise \ref{sheaf_etale_exer}).
	Such possibly discontinuous sections are called \textit{serrations}. That is
	\be
	C^0(\left(\sU, \mathscr F \right)\bydef \prod_{x \in \sX} \mathscr F_x
	\ee
	$\mathscr F_x$ is the space of stalks (cf. Definition \ref{sheaf_stalk_defn})
	Under point-wise operations, this is an Abelian group, and the functor $\sU \mapsto 	C^0(\left(\sU, \mathscr F \right)$, $\sX$. Indeed it is a sheaf (cf. \cite{bredon:sheaf}) which will be denoted by $\mathscr C^0(\left(\sU, \mathscr F \right)$. Note that if $\sX_d$ denotes the point set of $\sX$ 
	with the discrete topology and if $\phi : \sX_d \to \sX$ is the canonical map, then $\mathscr C^0(\left(\sU, \mathscr F \right)\cong f_*f^* \mathscr F$.
	Inclusion of the collection of sections of $\mathscr F$ in the collection of all serrations
	gives an inclusion $\mathscr F\left(\sU \right)\mapsto   C^0(\left(\sU, \mathscr F \right)= \mathscr C^0(\left(\sU, \mathscr F \right)\left(\sU \right)$  and hence
	provides a natural monomorphism
	\be\label{sheaf_can_res}
	\eps : \mathscr F \hookto \mathscr C^0\left(\sX, \mathscr F \right)
	\ee
	(For $\phi : \sX_d \to \sX$ as above, this inclusion coincides with the monomorphism
	$\bt :\mathscr F \hookto f_*f^*\mathscr F$). 
	Let $\mathscr{Z}^{1}\left(\sX, \mathscr F \right) \bydef \coker\left\{\mathscr F \hookto \mathscr C^0\left(\sX, \mathscr F \right)\right\}$. so that the sequence
	$$
	\{0\}\to \mathscr F \to \mathscr{C}^{0}\left(\sX, \mathscr F \right) \to \mathscr{Z}^{1}\left(\sX, \mathscr F \right)\to {0}
	$$
	is exact. We also define, inductively,
	\bean
	\mathscr{C}^{n}\left(\sX, \mathscr F \right) \bydef \mathscr{C}^{0}\left( \mathscr{Z}^{n}\left(\sX, \mathscr F \right)\right),\\
	\mathscr{Z}^{n+1}\left(\sX, \mathscr F \right) \bydef \mathscr{Z}^{1}\left( \mathscr{Z}^{n}\left(\sX, \mathscr F \right)\right)  
	\eean
	so that
	\bean
	\mathscr{Z}^{n}\left(\sX, \mathscr F \right) \xrightarrow{\eps} \mathscr{C}^{n}\left(\sX, \mathscr F \right)\xrightarrow{\partial}
	\mathscr{Z}^{n+1}\left(\sX, \mathscr F \right)
	\eean
	
	is exact. Let $d \bydef \eps \circ \partial$ be the composition
	$$
	\mathscr{C}^{n}\left(\sX; \mathscr F \right) \xrightarrow{\partial }  \mathscr{Z}^{n + 1}\left(\sX; \mathscr F \right)\xrightarrow{\eps}\mathscr{C}^{n+1}\left(\sX; \mathscr F \right) 
	$$
	
	Then the sequence 
	\be\label{canonical_resolution_eqn}
	0 \to \mathscr F \xrightarrow{\eps} 	\mathscr{C}^{0}\left(\sX, \mathscr F \right) \xrightarrow{d } \mathscr{C}^{1}\left(\sX, \mathscr F \right)\xrightarrow{d } \mathscr{C}^{2}\left(\sX, \mathscr F \right)\xrightarrow{d}\dots
	\ee
	is exact. That is, $\mathscr{C}^{\bullet}\left(\sX, \mathscr F \right)$ is a resolution of $\mathscr F$. It is called the \textit{canonical
		resolution} of $\mathscr F$.

	Since  $\mathscr C^0\left(\sX; \mathscr A \right)$ is an exact functor of $\mathscr A$, so is $\mathscr Z^1\left(\sX; \mathscr A \right)$. By induction
	it follows that $\mathscr C^0\left(\sX; \mathscr A \right)$ and $\mathscr Z^1\left(\sX; \mathscr A \right)$ are all exact functors of $\mathscr A$.
	
	For a family $\Phi$ of supports on $\sX$ we put
	\be
	\begin{split}
	C^n_\Phi\left( \sX, \mathcal A\right) \bydef \Ga_\Phi\left( \mathscr C^n\left(\sX; \mathscr A \right)\right) \cong C^0_\Phi\left( \mathscr Z^n\left(\sX; \mathscr A \right)\right)
	\end{split}
	\ee

	Since $C^0_\Phi\left( \bullet; \bullet \right)$ and $\mathscr Z^1\left( \bullet ; \bullet \right)$ are exact functors, it follows that 
	$C^n_\Phi\left( \sX; \bullet \right)$ and $\mathscr Z^n\left( \sX ; \bullet \right)$ are all exact functors it follows that
	\be
\begin{split}
	C^n_\Phi\left( \sX, \bullet\right) \quad \text{is an exact functor}.
\end{split}
\ee
	
		\begin{definition}\label{cohomol_defn}\cite{bredon:sheaf}
%2.2. Definition. 
For a family $\Phi$ of supports on $\Phi$ and for a sheaf $\mathscr A$ on
$\sX$ we define
	\be
\begin{split}
	H^n_\Phi\left( \sX, \mathscr A\right) \bydef H^n\left( C^*_\Phi\left( \sX, \mathscr A\right)\right).
\end{split}
\ee
\end{definition}

		\begin{definition}\label{cohomomor_defn}\cite{bredon:sheaf}
			%4.1. Definition.
			If $A$ and $B$ are presheaves on $\sX$ and $\sY$ respectively, then
			an $f$ -{\it cohomomorphism} $k: A  \leadsto B$ is a collection of homomorphisms
			$k_\sU : B(\sU) \to A\left( f^{-1}\left(\sU \right) \right)$ , for $\sU$ open in $\sY$, compatible with restrictions.	
		\end{definition}
			\begin{remark}
		 Let $f: \sX \to\sY$ is a continuous map, 
		let $\Phi$ and $\Psi$ be families of supports
		on $\sX$ and $\sY$ respectively such that $f^{-1}\Psi  \subset \Phi$.
		It is proven in \cite{bredon:sheaf} that if   $\mathscr B$ is a sheaf of Abelian groups on $\sY$  then 
			there is the natural homomorphism
			\be\label{cohom_mor_eqn}
			f^*: H^*_\Psi\left(\sY;\mathscr B  \right) \to  H^*_\Phi\left(\sX;f^*\mathscr B  \right).
			\ee
			If $\{0\}\hookto \mathscr A' \hookto \mathscr A \onto \mathscr A''\onto \{0\}$ is an exact sequence of sheaves then there is a long exact sequence
			\be\label{cohom_seq_eqn}
			... \to H^p_\Phi\left( \sX; \mathscr A'\right) \to H^p_\Phi\left( \sX; \mathscr A\right) \to H^p_\Phi\left( \sX; \mathscr A''\right) \xrightarrow{\dl} H^{p+1}_\Phi\left( \sX; \mathscr A'\right)\to ... 
			\ee
			If a sheaf $ \mathscr F$ is $c$-soft then
			\be\label{cohom_soft_eqn}
			\forall p \ge 1 \quad H^p_c\left( \sX, \mathscr F\right)= 0  
			\ee
			
		\end{remark}

		\begin{corollary}\label{cech_cor}\cite{bredon:sheaf}
			%4.5 Corollary. 
			For $\Phi$ paracompactifying and $A$ is a presheaf  on $\sX$ and $\mathscr A$ is the associated to $A$ sheaf then there is
			a natural isomorphism
			$$
			\check{H}_\Phi^\bullet\left(\sX; \mathscr F \right) \cong {H}_\Phi^\bullet\left(\sX; \mathscr F \right). 
			$$
			where $\check{H}$ are \v{C}ech cohomology.
		\end{corollary}
		\subsection{Cup product}
		\begin{theorem}\label{kunnet_sheaf_thm}\cite{bredon:sheaf}
			%15.2. Theorem. 
			(\text{K\"{u}nneth.}) If $\sX$ and $\sY$ are locally compact Hausdorff 
			spaces and if $\mathscr A$ and $\mathscr B$ are sheaves on $\sX$ and $\sY$ respectively with $\mathrm{Tor}_L\left(\mathscr A, \mathscr B \right) = \{0\}$
			then there is a natural exact sequence (over the principal ideal domain $L$
			as base ring)
			\be\label{kunnet_sheaf_eqn}
			\begin{split}
				\bigoplus_{p+q = n} H^p_c\left(\sX; \mathscr A \right) \otimes H^q_c\left(\sY; \mathscr A \right)\hookto H^n_c\left(\sX\times \sY; \mathscr A\otimes_L\mathscr B \right)\onto\\ \onto  \bigoplus_{p+q = n+1} \mathrm{Tor}\left( H^p_c\left(\sX; \mathscr A \right),  H^p_c\left(\sX; \mathscr B \right) \right). 
			\end{split}
			\ee
			that splits nonnaturally.	
		\end{theorem}
		
		\begin{theorem}\label{cup_sheaf_thm}\cite{bredon:sheaf}
			%7.1. Theorem. 
			Let $\Phi$ and $\Psi$ be families of supports on $\sX$. Then there exists
			a unique natural transformation of functors (on the category of sheaves on
			X to Abelian groups)
			\be\label{cup_p_eqn}
			\begin{split}
				\smile : H^p_\Phi\left(\sX, \mathscr A \right) \otimes  H^p_\Psi\left(\sX, \mathscr B \right)\to H^p_{\Phi\cap\Psi}\left(\sX, \mathscr A \otimes \mathscr B\right)
			\end{split}
			\ee
			called the {\it cup product} and satisfying the following three properties:
			\begin{enumerate}
				\item [(a)] For $p = q = 0, ~ \smile \Ga_\Phi\left( \mathscr A\right) \Ga_\Psi\left( \mathscr B\right)\to \Ga_{\Phi\cap \Psi}\left(\mathscr A \otimes \mathscr B \right)$  is the transformation
				induced by the canonical map $\mathscr A\left(\sX\right)\otimes \mathscr B\left(\sX\right)\to \left(\mathscr A \otimes \mathscr A \right)\left(\sX \right)$.   % of I-Section 5.
				\item [(b)] If $0 \hookto \mathscr A' \hookto \mathscr A \onto \mathscr A'' \onto 0$ and $0 \hookto \mathscr A' \otimes \mathscr B\hookto \mathscr A \otimes \mathscr B\onto \mathscr A'' \otimes \mathscr B \onto 0$  , are exact then for $\a \in H^p_\Phi\left(\sX, \mathscr A \right)$ and $\bt \in H^q_\Psi\left(\sX, \mathscr B \right)$ we have $\dl\left( \a \smile \bt\right)= \dl \a  \smile \bt$.
				\item [(c)] If $0 \hookto \mathscr B' \hookto \mathscr B \onto \mathscr B'' \onto 0$ and $0 \hookto \mathscr A \otimes\mathscr B' \hookto\mathscr A \otimes \mathscr B \onto \mathscr A \otimes\mathscr B'' \onto 0$, are exact then for $\a \in H^p_\Phi\left(\sX, \mathscr A \right)$ and $\bt \in H^q_\Psi\left(\sX, \mathscr B \right)$ we have $\dl\left( \a \smile \bt\right)= \left(-1 \right)^p  \a  \smile \dl\bt$.
				
			\end{enumerate}
		\end{theorem}
		\begin{proposition}\label{cup_ass_prop}\cite{bredon:sheaf}
		The given by the Theorem \ref{cup_sheaf_thm} cup product is associative.
		\end{proposition}
			\begin{remark}\label{pres_cup_rem}
			It is proven in \cite{bredon:sheaf} that the map \ref{cohom_mor_eqn} preserves the cup product (cf. Theorem \ref{cup_sheaf_thm})
		\end{remark}
\begin{empt}\cite{bredon:sheaf}
% 2.6.
 Let $A$ be a locally closed subspace of $\sX$ and let $\mathscr B$ be a sheaf on $A$. It is
easily seen (since $A$ is locally closed) that there is a unique topology on the
point set $\mathscr B\cup \left(\sX \times \{0\} \right) $ such that $\mathscr B$ is a subspace and the projection onto $\sX$
is a local homeomorphism (we identify $A \times \{0\}$ with the zero section of $\mathscr B$).
With this topology and the canonical algebraic structure, $\mathscr B\cup \left(\sX \times \{0\} \right) $) is
a sheaf on X denoted by
\be
\mathscr B^\sX \bydef \mathscr B\cup \left(\sX \times \{0\} \right) 
\ee
Thus $\mathscr B^\sX$ is the unique sheaf on $\sX$ inducing $\mathscr B$ on $A$ and $0$ on $\sX \setminus A$. Clearly,
$\mathscr B\mapsto \mathscr B^\sX$ is an exact functor. The sheaf $\mathscr B^\sX$ is called the {\it extension of $\mathscr B$
by zero}.\end{empt}		
		\begin{theorem}\cite{bredon:sheaf}
%10.1. Theorem. 
Suppose either that $\Phi$ is a paracompactifying family of
supports on $\sX$ and that $A \subset \sX$ is locally closed, or that $\Phi$ is arbitrary and $A \subset \sX$ is closed. Then there is a natural isomorphism
\be
H^*_\Phi\left(\sX, \mathscr B^\sX \right) \cong H^*_{\Phi| A}\left(A, \mathscr B \right)
\ee
of functors of sheaves on $A$, which preserves cup products.	
\end{theorem}	
\subsection{Homotopy invariance}\label{ho_inv_chap} Here I follow to \cite{bredon:sheaf}. If $\Phi$ and $\Psi$ are given families of supports on $\sX$ and $\sY$ respectively, we
shall say that a map $f: \sX \to \sY$ is {\it proper} (with respect to $\Phi$ and $\Psi$) if
$f^{-1}\Psi\subset \Phi$). If that is the case, then $f^*: H^*_\Psi\left(\sY \right)\to  H^*_\Phi\left(\sX \right)$  is defined. A homotopy $\sX \times \left[0,1\right]$ is {\it proper} if it is so with respect to the families $\Phi\times \left[0,1\right]$
and $\Psi$. For locally compact spaces, "proper" means proper with respect to
compact supports unless otherwise indicated	
\begin{theorem}\label{ho_inv_thm}
%11.12. Theorem. 
Any two properly homotopic maps (with respect to $\Phi$
and $\Psi$ of a space $\sX$ into a space $\sY$ induce identical homomorphisms
\be\label{ho_inv_eqn}
 H^*_\Psi\left(\sY; G \right)\to  H^*_\Phi\left(\sX; G \right)
\ee
where $G$ is any constant coefficient group.
\end{theorem}
\begin{rem}\label{ho_inv_rem}\cite{bredon:sheaf}
If both $\sX$, $\sY$ a locally compact and Hausdorff  and $\Phi = c = \Psi$ then the conditions of the Theorem \ref{ho_inv_thm} hold.
\end{rem}
\subsection{Relative cohomology}
\paragraph*{} Here I follow to \cite{bredon:sheaf}
Let $j : \hookto \sX$ and let $\Phi$ be a family of supports on $\sX$. For any sheaf $\mathscr A$ on $\sX$ we have the natural $j$-cohomomorphism %(see Section 8)
\bean
\mathscr C^*\left(\sX; \mathscr A\right) \to C^*\left(A; \mathscr A|A\right)
\eean
Equivalently, we have the homomorphism
\bean
i^*: \mathscr C^*\left(\sX; \mathscr A\right) \to iC^*\left(A; \mathscr A|A\right)
\eean
of sheaves on $\sX$.
In order to define relative cohomology, we shall show that $j^*$ is surjective
and has a flabby kernel. We introduce the notation
\bean
\ker j^* \bydef  \mathscr C^*\left(\sX, A ;\mathscr A\right),\\
C^*_\Phi\left(\sX, A; \mathscr A\right)\bydef \Ga_\Phi\left( C^*\left(\sX; A; \mathscr A\right)\right) 
\eean
and
\bean
H^*_\Phi \left(\sX, A; \mathscr A\right) \bydef H^*\left( C^*_\Phi\left(\sX, A; \mathscr A\right)\right) 
\eean 
Since $\ker j^*$ is flabby and since $\Ga_\Phi\left(j \mathscr B \right) = \Ga_{\Phi \cap A}\left(\mathscr B \right)$  % by I-Exercise 8, we will
obtain the induced short exact sequence
\be\label{longx_pair_eqn} 
0 \hookto C^*_\Phi\left(\sX, A;\mathscr A  \right) \hookto C^*_\Phi\left(\sX;\mathscr A  \right)\onto C^*_\Phi\left(A;\mathscr A | A \right)\onto 0
\ee
and hence the long exact cohomology sequence
\be\label{long_pair_eqn} 
... \to  H^p_\Phi\left(\sX, A;\mathscr A  \right) \to H^p_\Phi\left(\sX;\mathscr A  \right) \to H^p_{\Phi\cap A}\left(A;\mathscr A | A \right)\to  H^{p+1}_\Phi\left(\sX ;\mathscr A \right)\to ...
\ee
If  $0 \hookto \mathscr A' \hookto \mathscr A \onto \mathscr A'' \onto 0$ is an exact sequence of sheaves then one has
\be
\to  H^p_\Phi\left(\sX, A;\mathscr A'  \right) \to  H^p_\Phi\left(\sX, A;\mathscr A  \right)\to  H^p_\Phi\left(\sX, A;\mathscr A''  \right) \to H^{p+1}_\Phi\left(\sX, A;\mathscr A'  \right)\to ...
\ee
For any closet and $\Phi$-taut subset $F \subset \sX$ there is the natural isomorphism
\be\label{setmunus_h_eqn}
H^*_\Phi\left(\sX, F;\mathscr A  \right)\cong H^*_\Phi\left(\sX\setminus F;\mathscr A  \right)
\ee
\subsection{Reduced cohomology}
\begin{definition}\label{red_defn}\cite{bredon:sheaf}
If $L$ is a principal ideal domain then there {\it reduced cohomology} $\hat H^*\left(*, L \right) $ such that
\bean
\hat H^p\left(\sX, L \right)\bydef \begin{cases}
H^0\left(\sX, L \right)/L & p = 0\\
H^p\left(\sX, L \right)/L & p > 0
\end{cases}
\eean
\end{definition}
\begin{remark}\cite{bredon:sheaf}
If $L$ is a principal ideal domain and a $\sX$ space is locally compact then 
	\be\label{red_eqn}
	H^0_c\left( \sX, L\right) \cong \hat H^0\left(\sX^+, L \right) 
	\ee
\end{remark}

\subsection{Sheaf cohomology related to $K^1$-group}\label{sheaf_k1_sec}

\paragraph{} Here I follow to \cite{rordam:kc}. Let $\sX$ be a locally compact Hausdorff   space. Let us consider there sheaves $\sZ$, $\sC$ and $\sC\setminus\{0\}$ on $\sX$ which corresponds to sheaves of continuous maps from $\sX$ to $\C$, $\C \setminus\{0\}$ and $\Z$  respectively.  There is an exact sequence of sheaves

\bean
\begin{split}
	\{0\}  \hookto \sZ \hookto \sC \xrightarrow{\exp}\sC\setminus\{0\} \onto \{0\}
\end{split}
\eean
which yield the following long exact sequence
\bean
\begin{split}
	\{0\}  \to  H^0_c\left(\sX; \Z\right)  \to  H^0_c\left(\sX; \sC \right)\to  H^0_c\left(\sX; \sC \setminus \{0\} \right)\xrightarrow{\dl_\SS }  H^1_c\left(\sX;\Z \right)\to  H^1_c\left(\sX; \sC \right)\to ... 
\end{split}
\eean
(cf. equation \eqref{cohom_seq_eqn}). Taking into account that $\sC$ is $c$-soft one has  $H^1_c\left(\sX, \sR \right)= 0$ (cf. \eqref{cohom_soft_eqn}) the homomorphism  
\be\label{dl_shh_eqn}
\dl_\SS	: H^0_c\left(\sX; \sC\setminus\{0\} \right)\onto H^1_c\left(\sX; \Z \right).
\ee
is surjective. The above equations are explained in \cite{rordam:kc}.
Similarly consider there sheaves $\sR$, $\sU\left( 1\right) $ and $\sZ$ on $\sX$ which corresponds to sheaves of continuous maps from $\sX$ to $\R$, $\sU\left( 1\right) $ and $\Z$  respectively.  There is an exact sequence of sheaves

\bean
\begin{split}
	\{0\}  \hookto \sZ \hookto \sR \xrightarrow{x \mapsto e^{2\pi i x}}\sU\left( 1\right)  \onto \{1\}
\end{split}
\eean
which yield the following long exact sequence
\be\label{sxx_shh_eqn}
\begin{split}
	\{0\}  \to  H^0_c\left(\sX; \Z\right)  \to  H^0_c\left(\sX; \sR \right)\to  H^0_c\left(\sX; \sU\left( 1\right)  \right)\xrightarrow{\dl_\SS }  H^1_c\left(\sX; \Z \right)\to\\ \to  H^1_c\left(\sX; \sR \right)\to ... 
\end{split}
\ee
(cf. equation \eqref{cohom_seq_eqn}). Taking into account that $\sR$ is $c$-soft one has  $H^1_c\left(\sX, \sR \right)= 0$ (cf. \eqref{cohom_soft_eqn}) the homomorphism  
\be\label{dl_sh_eqn}
\dl_\SS	: H^0_c\left(\sX, \sU\left(1 \right)  \right)\onto H^1_c\left(\sX, \Z \right).
\ee

\subsection{Mayer-Vietoris theorems}
\paragraph{}
First, let $\mathscr A$ be a sheaf on $\sX$ and $\Phi$ a family of supports on $\sX$. Let $\sX_1$
and $\sX_2$ be closed subspaces of $\sX$ with $\sX = \sX_1 \cup \sX_2$ and put $A = \sX_1 \cap \sX_2$
We have the surjections
\bean
r_i : \mathscr A \onto \mathscr A_{\sX_i},\\
s_i : \mathscr A_{\sX_i} \onto \mathscr A_{A}
\eean
Considering stalks, we see that the sequence
\bean
0\to \mathscr A \xrightarrow{\a} \mathscr A_{\sX_1} \oplus \mathscr A_{\sX_2}\xrightarrow{\bt} \mathscr A_{A}
\eean
is exact, where $\a \bydef (r_l,r_2)$ and $\bt \bydef s_1 - s_2$. Thus, there is  the
exact Mayer-Vietoris sequence (for $A$, closed and arbitrary $\Phi$)
\be\label{mv_eqn}
\begin{split}
...\to H^p_\Psi\left(\sX; \mathscr A \right)\to  H^p_{\Psi\cap \sX_1}\left(\sX_1; \mathscr A_{\sX_1} \right)\oplus  H^p_{\Psi\cap \sX_2}\left(\sX_2; \mathscr A_{\sX_2} \right)\to  H^p_{\Psi\cap A}\left(A; \mathscr A \right)\to \\\to H^{p+1}_\Psi\left(\sX; \mathscr A \right)\to ...
\end{split}
\ee
\begin{example}
%13.1. Example. 
For an arbitrary space $\sX$, the cone $C\sX$ on $\sX$ is the
quotient space $\sX \times \left[0,1\right]/\sX \times \{0\}$. Since this is contractible, it is acyclic for
any constant coefficients and closed supports. The (unreduced) {\it suspension}
of a space $\sX$ is $\Sigma\sX \bydef C\sX /\sX \times \{1\}$. This is the union along $\sX \times \left\{\frac{1}{2}\right\}$ of two
cones. The Mayer-Vietoris sequence \eqref{mv_eqn} applies to show that $\tilde H^k\left(\Sigma\sX \right) \cong \tilde H^{k-1}\left(\sX \right)$ for all $k$. If $*\in \sX$  is a base point, then $\Sigma \sX$ contains the arc $I$,
which is the image of $*\times \left[0,1\right]$ and is a closed subspace of $\Sigma\sX$. The {\it reduced
suspension} of $\sX$ is $S\sX \bydef \Sigma \sX /I$. Now the collapsing map $\Sigma \sX\onto S\sX$ 
is closed and $I$ is connected, acyclic, and taut (since it is compact and
relatively Hausdorff  in $\Sigma \sX$), so the Vietoris mapping theorem  applies
to show that 
\be\label{red_c_eqn}
\tilde H^k (\R\times \sX)\cong \tilde H^k (S\sX) \cong \tilde H^k\left( \Sigma\sX\right) \cong \tilde H^{k-1}(\sX)
\ee
 for all $k> 0$. %Note that this doesnot hold in this generality for singular theory where one must assume thespace to be "well pointed," that is, that the base point is nondegenerate.
\end{example}

\section{$C^*$-algebras}

\subsection{Basic definitions}

	\begin{defn}
	\label{strong_topology_defn}\cite{pedersen:ca_aut} Let $\H$ be a Hilbert space. The {\it strong} topology on $B\left(\H\right)$ is the locally convex vector space topology associated with the family of seminorms of the form $x \mapsto \|x\xi\|$, $x \in B(\H)$, $\xi \in \H$.
\end{defn}
\begin{defn}\label{weak_topology_defn}\cite{pedersen:ca_aut} Let $\H$ be a Hilbert space. The {\it weak} topology on $B\left(\H\right)$ is the locally convex vector space topology associated with the family of seminorms of the form $x \mapsto \left|\left(x\xi, \eta\right)\right|$, $x \in B(\H)$, $\xi, \eta \in \H$.
\end{defn} 
	\begin{lemma}\label{increasing_convergent_w_lem}\cite{pedersen:ca_aut} Let $\Lambda$ be an increasing in the partial ordering.  Let $\left\{x_\lambda \right\}_{\la \in \La}$ be an increasing of self-adjoint operators in $B\left(\H\right)$, i.e. $\la \le \mu$ implies $x_\la \le x_\mu$. If $\left\|x_\la\right\| \le \ga$ for some $\ga \in \mathbb{R}$ and all $\la$ then $\left\{x_\lambda \right\}$ is strongly convergent to a self-adjoint element $x \in B\left(\H\right)$ with $\left\|x_\la\right\| \le \ga$.
\end{lemma} 
\begin{thm}\label{gelfand-naimark_thm}\cite{arveson:c_alg_invt} (Commutative Gelfand-Na\u{\i}mark theorem). 
	Let $A$ be a commutative $C^*$-algebra and let $\mathcal{X}$ be the spectrum of A. There is the natural $*$-isomorphism $\gamma:A \xrightarrow{\cong} C_0(\mathcal{X})$.
\end{thm}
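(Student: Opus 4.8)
The plan is to realize the spectrum $\mathcal{X}$ concretely as the space of characters of $A$ and to identify $\gamma$ with the Gelfand transform. First I would set $\mathcal{X}$ to be the set of nonzero multiplicative linear functionals $\chi \colon A \to \C$, equipped with the weak-$*$ topology inherited from the dual $A^*$. Since every such $\chi$ is automatically contractive and the zero functional is the only weak-$*$ limit point that can be adjoined, $\mathcal{X}$ is a locally compact Hausdorff space, compact precisely when $A$ is unital. The map $\gamma$ is then defined by $\gamma(a)(\chi) \defeq \chi(a)$; that $\gamma(a)$ is continuous is exactly the defining property of the weak-$*$ topology, and $\gamma(a)$ vanishes at infinity because adding $\chi = 0$ to the one-point compactification sends its value to $0$. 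It is immediate that $\gamma$ is linear and multiplicative, and the $*$-property $\gamma(a^*) = \overline{\gamma(a)}$ follows once one knows every character is a $*$-homomorphism, which in turn rests on the fact that characters take self-adjoint elements to real values.

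The heart of the argument is that $\gamma$ is isometric, and I would prove this through the spectral radius formula. For any $a \in A$ the range of $\gamma(a)$ is exactly the spectrum $\spec(a)$ (this uses the Gelfand--Mazur theorem: in a commutative unital Banach algebra the points of $\spec(a)$ correspond bijectively to characters via maximal ideals). Hence $\|\gamma(a)\|_\infty = r(a)$, the spectral radius. Now the $C^*$-identity $\|a^*a\| = \|a\|^2$ together with commutativity forces $r(a) = \|a\|$: for self-adjoint $a$ one iterates $\|a^2\| = \|a\|^2$ to obtain $r(a)=\|a\|$, and for general $a$ one applies this to $a^*a$ and uses $r(a)^2 = r(a^*a) = \|a^*a\| = \|a\|^2$. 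Therefore $\|\gamma(a)\|_\infty = \|a\|$, so $\gamma$ is an isometric, hence injective, $*$-homomorphism with closed range.

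Finally I would establish surjectivity. The image $\gamma(A)$ is a closed (by isometry) $*$-subalgebra of $C_0(\mathcal{X})$. By construction it separates the points of $\mathcal{X}$, since two distinct characters differ on some $a$ and then $\gamma(a)$ distinguishes them, and it vanishes nowhere, since a nonzero character is nonzero on some element. The locally compact Stone--Weierstrass theorem then shows that $\gamma(A)$ is dense in $C_0(\mathcal{X})$, and being closed it must equal $C_0(\mathcal{X})$.

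The step I expect to be the main obstacle is the isometry, specifically the clean identification of $\mathrm{range}\,\gamma(a)$ with $\spec(a)$ and the passage from the spectral radius to the norm; this is where the full strength of the $C^*$-identity enters and where the non-unital case needs care. For non-unital $A$ I would first adjoin a unit to form $\tilde A$, apply the unital result to identify $\tilde A \cong C(\tilde{\mathcal X})$ with $\tilde{\mathcal X}$ the one-point compactification of $\mathcal{X}$, and then restrict to the kernel of the character at infinity to recover $A \cong C_0(\mathcal{X})$. Naturality of $\gamma$ with respect to $*$-homomorphisms is routine and follows from the functoriality of the induced maps on character spaces.
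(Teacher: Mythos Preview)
Your proof sketch is correct and follows the standard route to the commutative Gelfand--Na\u{\i}mark theorem: realize $\mathcal{X}$ as the character space with the weak-$*$ topology, define the Gelfand transform, verify it is a $*$-homomorphism, establish isometry through the spectral radius formula combined with the $C^*$-identity, and conclude surjectivity via Stone--Weierstrass. The handling of the non-unital case by unitization is also the usual manoeuvre. One small wording issue: in the isometry step you write $r(a)^2 = r(a^*a)$ as though it were an independent algebraic fact, but this equality is really a consequence of the multiplicativity of $\gamma$ together with $r(b)=\|\gamma(b)\|_\infty$; it is cleaner to argue directly that $\|a\|^2 = \|a^*a\| = r(a^*a) = \|\gamma(a^*a)\|_\infty = \|\gamma(a)\|_\infty^2$.

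As for the comparison: the paper does not supply its own proof of this theorem. It is stated with a citation to Arveson's textbook and used as a black box (for instance in Lemma~\ref{top_lattices_iso_lem} and Lemma~\ref{lem_g_lem}). So there is nothing in the paper to compare your argument against; you have simply filled in a result the author took as known.
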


	\begin{thm}\label{pedersen_ideal_thm}  \cite{pedersen:ca_aut} 
	% THEOREM 5.6.1
	For each $C^*$-algebra $A$ there is a dense hereditary ideal $K(A)$,
	which is minimal among dense ideals.
	
\end{thm}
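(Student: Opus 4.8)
The plan is to reproduce Pedersen's construction of the minimal dense ideal and then check its three defining properties (density, heredity, minimality). I would build $K(A)$ from the positive cone upward. First set
$K_0^+ := \{a\in A^+ : ea=a \text{ for some } e\in A^+\}$, the positive elements admitting a local unit, then put $K(A)^+ := \{a\in A^+ : a\le b \text{ for some } b\in K_0^+\}$, and finally $K(A) := \operatorname{span}_{\C} K(A)^+$. The goal is to show that this $K(A)$ is a dense hereditary two-sided $*$-ideal contained in every dense ideal.

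The first block of work is a package of functional-calculus lemmas. The crucial observation is that for every $a\in A^+$ and every $\eps>0$ the truncation $(a-\eps)_+$ lies in $K_0^+$: choosing a continuous $g$ with $g(0)=0$ and $g\equiv 1$ on $[\eps,\norm a]$ gives $g(a)\,(a-\eps)_+=(a-\eps)_+$. Since $\norm{(a-\eps)_+-a}\to 0$ as $\eps\to 0$, density of $K(A)$ is immediate. Next I would show $K_0^+$ is hereditary: if $0\le b\le a$ and $ea=a$, then $e$ fixes the range of $a$ and hence the smaller range of $b$, so $eb=b$. Together with the (standard but genuinely technical) additivity of $K_0^+$ and its stability under $a\mapsto xax^{*}$, this makes $K(A)^+$ a hereditary additive cone. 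The ideal property then reduces, via the polarization identity $4\,xay^{*}=\sum_{k=0}^{3} i^{k}(x+i^{k}y)\,a\,(x+i^{k}y)^{*}$ (which expresses a product as a complex combination of terms $zaz^{*}$), to the single invariance statement $zaz^{*}\in K(A)^+$; self-adjointness is built into the $\C$-span. This yields a dense hereditary two-sided $*$-ideal.

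For minimality I must show that an arbitrary dense ideal $I$ contains $K(A)$, and the heart of this is the claim $K_0^+\subseteq I$. Given $a\in K_0^+$ with $ea=a$ and $0\le e\le 1$, the relation $(1-e)a=0$ upgrades, using invertibility of $1+e^{1/2}$, to $f(e)a=f(1)a$ for all continuous $f$; choosing $f$ vanishing on $[0,\delta]$ and equal to $1$ near the top of the spectrum lets me factor $f(e)=(e-\delta)_+\,k(e)$ with $f(e)a=a$. Hence it suffices that $(e-\delta)_+\in I$, which follows from a standard factorization lemma: approximating $e$ by a positive $c\in I$ with $\norm{e-c}<\delta$ gives $(e-\delta)_+=d\,c\,d^{*}$ for some $d\in A$, so $(e-\delta)_+\in I$ because $I$ is an ideal. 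Then $a=f(e)a\in I$. The domination step uses the same mechanism: if $0\le a\le b$ with $b\in K_0^+$ and local unit $e$, then $eb=b$ forces $ea^{1/2}=a^{1/2}$, so the absorbing element $f(e)\in I$ satisfies $f(e)\,a^{1/2}=a^{1/2}$ and thus $a=a^{1/2}a^{1/2}\in I$.

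I expect the main obstacle to be exactly this minimality argument, that is, proving $K_0^+\subseteq I$ for every dense ideal $I$. It is the only step that cannot be argued purely formally: it combines the local-unit absorption trick with the factorization $(e-\delta)_+=d c d^{*}$, and it is essential that these yield an \emph{exact} membership in $I$ rather than an approximation, since a bare density estimate only delivers $a\in\cl I=A$. The additivity of $K_0^+$ (equivalently, that $K(A)^+$ is a cone) is the other genuinely technical point; the remaining verifications are routine functional calculus and bookkeeping.
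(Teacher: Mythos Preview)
Your proposal is correct and is essentially Pedersen's argument, which the paper only sketches (it records the construction and defers all verifications to \cite{pedersen:ca_aut}).  The one real difference is the choice of seed set.  The paper, following Pedersen, starts from
\[
K(A)_0=\{\,f(x):x\in A_+,\ f\in C_c(\,]0,\infty[\,)\,\}
\]
and then \emph{defines} $K(A)_+$ as the smallest hereditary cone containing $K(A)_0$, namely $\{x\in A_+:x\le\sum_j x_j,\ x_j\in K(A)_0\}$; you instead start from $K_0^+=\{a\in A_+:\exists\,e\in A_+,\ ea=a\}$.  These generate the same cone: every $f(x)\in K(A)_0$ has the local unit $g(x)$ for any $g\in C_c(\,]0,\infty[\,)$ with $g\equiv 1$ on $\supp f$, and conversely if $ea=a$ with $0\le e\le 1$ then in $A^{**}$ one has $a\le\|a\|\,\chi_{\{1\}}(e)\le\|a\|\,g(e)$ for any $g\in C_c(\,]0,\infty[\,)$ with $g(1)=1$, so $a\in K(A)_+$.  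The practical payoff of Pedersen's formulation is that additivity of $K(A)_+$ is automatic from the definition, whereas in your set-up it becomes the ``genuinely technical'' point you flag: showing directly that a sum of two positives with local units again has a local unit amounts to proving that finite joins of compact projections in $A^{**}$ are compact, which is a genuine theorem rather than bookkeeping.  Your density and minimality arguments are the standard ones; note only that once you have established that $K_0^+$ is hereditary, your separate ``domination step'' is redundant, since then $K(A)^+=K_0^+$ already.
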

\begin{proof}
	Let $K(]0, \infty [)$ denote the set of continuous functions on $]0, \infty [$ with 
	compact support and define 
	\be\label{pedersen_k0_eqn}
	K\left( A \right)_0 \bydef \left\{f\left(x\right) \left|x \in A_+, \quad f \in K(]0, \infty [) \right.\right\}.
	\ee
	Let 
	\be\label{pedersen_k_plus_eqn}
	K\left( A \right)_+ \bydef \left\{x \in A_+ \left|x \le \sum_{j = 1}^nx_j, \quad x_j \in  	K\left( A \right)_0\right.\right\}, 	
	\ee
	so that $	K\left( A \right)_+$ is the smallest hereditary cone  containing $K\left( A \right)_0$. If $K(A)$ 
	is  the algebraic  $\C$-linear span of $K(A)_+$ then $K(A)$,
	which is minimal among dense ideals. The full  proof is  described in \cite{pedersen:ca_aut}.
\end{proof}

\begin{defn}\label{pedersen_ideal_defn}\cite{blackadar:ko}
	The ideal $K\left( A\right) $ from the theorem \ref{pedersen_ideal_thm} is said to be the {\it Pedersen's ideal of $A$}. %Henceforth Pedersen's ideal shall be denoted by $K(A)$.
\end{defn}

\begin{remark}\cite{pedersen:ca_aut} 
	One has
	\bea\label{peder_k_eqn}
	K\left( \K\right) = \left\{\left. a \in \K\right| a  \text{ is a finite rank operator}\right\},\\
	\label{peder_c0_eqn}
	K\left(C_0\left(\sX \right)  \right) = C_c\left(\sX \right).
	\eea
	
\end{remark}	
\begin{theorem}\label{left_ideal_thm}\cite{murphy}
	%3.1.2. Theorem.
	If $L$ is a closed left ideal in a $C^*$-algebra $A$, then there
	is an increasing net $\left\{u_\la\right\}_{\la\in\La}$ of positive elements in the closed unit ball of
	$L$ such that $a = \lim_{\la\in \La}au_\la $ for all $a\in L$.
\end{theorem}
\begin{proof}
	Set $B \bydef L\cap L^*$. Since $B$ is a $C^*$-algebra, it admits an approximate
	unit $\left\{u_\la\right\}_{\la\in \La}\subset B$. If $a \in L$, then $a^*a\in B$, so $0 =
	\lim_{\la\in \La} a^*a\left(1_{A^\sim }- u_\la \right)$ . Hence,\\ $\lim_{\la\in \La}\left\|  a - au_\la \right\|^2= \lim_{\la\in \La}\left\| \left(1-u_\la \right)a^*a \left(1-u_\la \right) \right\|
	\le \lim_{\la\in \La}\left\|a^*a \left(1-u_\la \right) \right\|=0$, and therefore $\lim_{\la\in \La}\left\|a - au_\la \right\|=0 $.
	In the preceding proof we worked in the unitization $A^\sim$ of $A$. %We shall
	frequently do this tacitly.
\end{proof} 
	\begin{lemma}\label{hered_ideal_lem}\cite{murphy}
	Let $A$ be a $C^*$-algebra.
	\begin{enumerate}
		\item[(i)] If $L$ is a closed left ideal in $A$ then $L\cap L^*$ is a hereditary $C^*$-subalgebra of $A$. The map $L \mapsto L\cap L^*$ is the bijection from the set of closed left deals of $A$ onto the the set of hereditary $C^*$-subalgebras of $A$.
		\item[(ii)] If $L_1, L_2$ are closed left ideals, then $L_1 \subseteq L_2$ is and only if $L_1\cap L_1^* \subset L_2\cap L_2^*$.
		\item[(iii)] If $B$ is a hereditary $C^*$-subalgebra of $A$, then the set 
		$$
		L\left(B \right) = \left\{\left.a \in A~\right| a^*a \in B \right\}
		$$
		is the unique closed left ideal of $A$ corresponding to $B$.
	\end{enumerate}
\end{lemma}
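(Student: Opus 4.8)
The plan is to extract the single identity that powers all three parts: for any closed left ideal $L$, writing $B = L \cap L^*$, one has $L = \{a \in A : a^*a \in B\}$. Once this is available, parts (ii) and (iii) together with the bijectivity asserted in (i) reduce to formal bookkeeping, and the only genuinely analytic content lies in (a) proving $L \cap L^*$ is hereditary and (b) two approximate-unit limits.

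First I would dispatch the structural facts. The set $L \cap L^*$ is closed (an intersection of two closed subspaces, $L^*$ being the image of $L$ under the isometric involution) and self-adjoint by construction; it is a subalgebra because for $a,b \in L\cap L^*$ one has $ab \in L$ (since $b \in L$ and $L$ is a left ideal) and $(ab)^* = b^*a^* \in L$ (since $a^* \in L$), so $ab \in L \cap L^*$. Hence $L \cap L^*$ is a $C^*$-subalgebra. Dually, for (iii), $L(B) = \{a : a^*a \in B\}$ is closed by continuity of $a \mapsto a^*a$, and it is a left ideal: $(xa)^*(xa) = a^*x^*xa \le \|x\|^2 a^*a \in B$ forces $a^*x^*xa \in B$ by heredity of $B$, while $(a+b)^*(a+b) \le 2(a^*a + b^*b) \in B$ gives closure under addition, again by heredity.

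The core analytic step, and the main obstacle, is heredity of $B = L \cap L^*$: given $0 \le a \le b$ with $b \in B$ and $a \in A$, one must show $a \in B$. I would prove $a^{1/2} \in L$ and conclude $a = a^{1/2}a^{1/2} \in L$, whence $a = a^* \in L^*$ and $a \in B$. Since $B$ is a $C^*$-subalgebra, $b^{1/2} \in B \subseteq L$. Working in the unitization $\tilde A$, put $u_n = a^{1/2}(1/n + b)^{-1/2}b^{1/2}$; each $u_n$ lies in $L$ because $L$ is stable under left multiplication by $\tilde A$ and $b^{1/2} \in L$. The claim is $u_n \to a^{1/2}$. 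Writing $g_n(t) = 1 - \sqrt{t/(t+1/n)}$ and using the $C^*$-identity, $\|a^{1/2} - u_n\|^2 = \|g_n(b)\,a\,g_n(b)\| \le \|g_n(b)\,b\,g_n(b)\|$, where the inequality uses that conjugation by the self-adjoint element $g_n(b)$ preserves the order $a \le b$; the right-hand side equals $\sup_{t\ge 0} t\,g_n(t)^2 \le 1/(4n) \to 0$ by an elementary scalar estimate. Closedness of $L$ then yields $a^{1/2} \in L$. This functional-calculus comparison is the delicate point: everything hinges on $a \le b$ surviving conjugation.

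Finally I would prove the key identity and harvest the rest. The inclusion $L \subseteq \{a : a^*a \in B\}$ is immediate, since $a \in L$ gives $a^*a \in L \cap L^* = B$. For the reverse, take an approximate unit $(u_\lambda)$ of the $C^*$-algebra $B$; from $a^*a \in B$ one gets $\|a - au_\lambda\|^2 = \|(1-u_\lambda)a^*a(1-u_\lambda)\| \le \|a^*a - u_\lambda a^*a\| \to 0$, and since $u_\lambda \in B \subseteq L$ forces $au_\lambda \in L$, closedness gives $a \in L$. With the identity established: (ii) follows because $a \in L_1 \Rightarrow a^*a \in L_1 \cap L_1^* \subseteq L_2 \cap L_2^* \Rightarrow a \in L_2$; injectivity of $L \mapsto L \cap L^*$ follows because $L$ is recovered as $\{a : a^*a \in B\}$; and for surjectivity and (iii) I would verify $L(B) \cap L(B)^* = B$ for hereditary $B$, where $\supseteq$ is routine and $\subseteq$ uses that $a \in L(B)\cap L(B)^*$ gives $a^*a, aa^* \in B$, so the approximate unit satisfies $u_\lambda a u_\lambda \to a$ with each $u_\lambda a u_\lambda \in B$ by heredity, forcing $a \in B$.
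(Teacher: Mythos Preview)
The paper does not supply its own proof of this lemma: it is quoted verbatim from Murphy's textbook with the citation \cite{murphy} and no argument is given. So there is nothing in the paper to compare your proposal against.

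That said, your proof is correct and is essentially the standard argument one finds in Murphy (Theorem~3.2.1). The organizing identity $L=\{a:a^*a\in L\cap L^*\}$ is exactly the right fulcrum, and your two analytic ingredients are the expected ones: the functional-calculus approximation $u_n=a^{1/2}(1/n+b)^{-1/2}b^{1/2}\to a^{1/2}$ for heredity, and the approximate-unit limit $au_\lambda\to a$ for the reverse inclusion. Two small remarks. First, your constant $1/(4n)$ in the estimate for $\sup_{t\ge0} t\,g_n(t)^2$ is not quite right, but the cruder bound $t\,g_n(t)^2\le 1/n$ (from $(\sqrt{t+1/n}-\sqrt t)^2\le 1/n$) is immediate and suffices. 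Second, the step ``$u_\lambda a u_\lambda\in B$ by heredity'' deserves one line: for self-adjoint $a$ one has $-\|a\|u_\lambda^2\le u_\lambda a u_\lambda\le \|a\|u_\lambda^2$, so $0\le u_\lambda a u_\lambda+\|a\|u_\lambda^2\le 2\|a\|u_\lambda^2\in B$ forces $u_\lambda a u_\lambda\in B$; the general case follows by splitting into real and imaginary parts.
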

\begin{definition}\label{normal_defn}\cite{murphy}
If $A$ is a $C^*$algebra then $a\in A$ is {\it normal} if $a^*a = aa^*$. In this case the $*$-algebra it generates is commutative and is in fact the $\C$-linear span of all $a^ma^{*n}$, where $m, n \in \N$ and $m + n > 0$.
\end{definition}
\begin{theorem}\label{spectral_thm}\cite{murphy}
%2.5.5. Theorem. 
Let  $\Om$ be a compact Hausdorff  space and $\sH$ a Hilbert
space, and suppose that $\varphi: \Om \to B\left(\sH \right)$  is a unital $*$-homomorphisms.
Then there is a unique spectral measure $E$ relative to $\left(\Om, \sH \right)$  such that
\bean
\begin{split}
\varphi \left(f \right) = \int f ~dE
\end{split}
\eean
Moreover, if $u \in  B\left(\sH \right)$ , then $u$ commutes with $\varphi \left(f \right)$  for all f E C(n) if and
only if $u$ commutes with $E(S)$ for all Borel sets $S$ of $\Om$.
\end{theorem}

	\begin{defn}\label{approximate_unit_defn} \cite{pedersen:ca_aut}
	Let $A$ be a $C^*$-algebra. A net $\left\{u_\la \right\}_{\la \in \La}$ in $A_+$ with $\left\|u_\la \right\| \le 1$ for all $\la \in \La$ is called an \textit{approximate unit} for $A$ if $\la < \mu$ implies $u_\la < u_\mu$ and if $\lim \left\|x- xu_\la \right\| = 0$ for each $x$ in $A$. Then, of course, $\lim \left\|x- u_\la x \right\| = 0$ as well.
\end{defn}
	\begin{defn}\label{strict_topology_defn}\cite{pedersen:ca_aut}
	Let $A$ be a $C^*$-algebra.  The {\it strict topology} on the multiplier algebra $M(A)$ is the topology generated by seminorms 
	\be\label{strict_topology_norm_eqn}
	\vertiii{x}_a\bydef \|ax\| + \|xa\|,\quad a\in A.
	\ee
	If $\La$ is a directed set and $\left\{a_\la\in M\left( A\right) \right\}_{\la\in \La}$ is a net the we denote by $\bt\text{-}\lim_{\la\in\La }a_\la$ the limit of $\left\{a_\la \right\}$ with respect to the strict topology.
	If $x \in M(A)$  and a sequence of partial sums $\sum_{i=1}^{n}a_i$ ($n = 1,2, ...$), ($a_i \in A$) tends to $x$ in the strict topology then we shall write
	\begin{equation}\label{strict_topology_eqn}
		x = \bt\text{-}\sum_{i=1}^{\infty}a_i.
	\end{equation}
\end{defn}

\subsection{Representations}
\begin{theorem}\label{irred_thm}\cite{pedersen:ca_aut}
	Let $\pi: A \to B\left(\H \right)$ be a nonzero representation of $C^*$-algebra $A$. The following conditions are equivalent:
	\begin{enumerate}
		\item [(i)] there are no non-trivial $A$-subspaces for $\pi$,
		\item[(ii)] the commutant of $\pi\left(A \right)$ is the scalar multipliers of 1,
		\item[(iii)] $\pi\left(A \right)$ is strongly dense in   $B\left(\H \right)$,
		\item[(iv)] for any two vectors $\xi, \eta \in \H$ with $\xi \neq 0$ there is $a \in A$ such that $\pi\left(a \right)\xi = \eta$,
		\item[(v)] each nonzero vector in $\H$ is cyclic for  $\pi\left(A \right)$,
		\item[(vi)]  $A \to B\left(\H \right)$ is spatially equivalent to a cyclic representation associated with a pure state of $A$.
	\end{enumerate} 
\end{theorem}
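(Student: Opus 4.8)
The plan is to establish the cycle of implications (i) $\Leftrightarrow$ (ii) $\Rightarrow$ (iii) $\Rightarrow$ (iv) $\Rightarrow$ (v) $\Rightarrow$ (i), and then to attach the spatial pure-state condition (vi) to this chain separately through the GNS construction. First I would treat (i) $\Leftrightarrow$ (ii) by a Schur-type argument. The commutant $\pi(A)'$ is a von Neumann algebra, being a self-adjoint subalgebra of $B(\H)$ closed in the weak operator topology, and is therefore the norm-closed span of its projections. A closed subspace $M \subseteq \H$ is $\pi(A)$-invariant precisely when its orthogonal projection $p_M$ lies in $\pi(A)'$; hence a nontrivial invariant subspace exists iff $\pi(A)'$ contains a projection other than $0$ and $1$. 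Since $\pi(A)'$ is generated by its projections, the absence of such a projection is equivalent to $\pi(A)' = \C \cdot 1$, which is exactly the equivalence of (i) and (ii).

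Next, (ii) $\Rightarrow$ (iii) follows from the von Neumann double commutant theorem. Under (ii), the projection onto the essential subspace $\overline{\pi(A)\H}$ lies in $\pi(A)' = \C \cdot 1$ and is nonzero since $\pi \neq 0$, so it equals $1$; thus $\pi$ is nondegenerate and the double commutant theorem identifies $\pi(A)''$ with the strong closure of $\pi(A)$. Then $\pi(A)'' = (\C \cdot 1)' = B(\H)$, so $\pi(A)$ is strongly dense in $B(\H)$, which is (iii).

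The hard part will be (iii) $\Rightarrow$ (iv): this is the Kadison transitivity theorem, and it does not follow from strong density by soft arguments, since strong density only yields approximate equality $\norm{\pi(a)\xi - \eta}$ small rather than exact equality. The approach is an iterative refinement. Given $\xi \neq 0$ and $\eta$, strong density applied to the single vector $\xi$ produces $a_1$ with $\norm{\eta - \pi(a_1)\xi} \le \half \norm{\eta}$ and with $\norm{a_1}$ controlled by the size of $\eta$; applying the same procedure to the residual vector $\eta - \pi(a_1)\xi$ yields $a_2$ whose norm is controlled by $\half \norm{\eta}$, and iterating gives a sequence $a_n$ with norms decaying geometrically. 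The series $a = \sum_n a_n$ then converges in norm, so $a \in A$ since $A$ is complete, and $\pi(a)\xi = \sum_n \pi(a_n)\xi = \eta$ exactly. The delicate point, and the true content of the theorem, is the norm bookkeeping: one must arrange that each approximant can be chosen with norm controlled by the current residual, so that the partial sums genuinely converge in $A$ and not merely weakly in $B(\H)$.

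Finally, (iv) $\Rightarrow$ (v) is immediate, since (iv) asserts that the orbit $\pi(A)\xi$ of any nonzero $\xi$ is already all of $\H$, hence a fortiori dense; and (v) $\Rightarrow$ (i) is the observation that a nontrivial closed invariant subspace $M$ would contain a nonzero vector $\xi$ with $\overline{\pi(A)\xi} \subseteq M \subsetneq \H$, contradicting the cyclicity of $\xi$. To incorporate (vi), I would use the cyclic unit vector $\xi_0$ supplied by (v) to form the vector state $\omega(a) = \langle \pi(a)\xi_0, \xi_0 \rangle$, whose GNS representation is spatially equivalent to $\pi$. The remaining content is the standard order-isomorphism between positive functionals dominated by $\omega$ and positive operators in the commutant $\pi(A)'$: purity of $\omega$, meaning no nontrivial domination, corresponds precisely to $\pi(A)' = \C \cdot 1$, that is, to condition (ii). This links (vi) to the cycle and closes the equivalence of all six conditions.
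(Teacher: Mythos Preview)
The paper does not supply its own proof of this theorem; it is stated with a citation to Pedersen's book and used as background. Your outline is correct and is essentially the standard argument one finds in Pedersen: the Schur-type equivalence of (i) and (ii), the double commutant theorem for (ii) $\Rightarrow$ (iii), Kadison transitivity for (iii) $\Rightarrow$ (iv), the trivial implications closing the cycle, and the GNS correspondence between purity of a vector state and triviality of the commutant for (vi).

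One point worth sharpening in your sketch of (iii) $\Rightarrow$ (iv): you correctly identify the norm bookkeeping as the crux, but you do not say where the control comes from. Strong density of $\pi(A)$ in $B(\H)$ alone does not let you approximate a bounded operator by elements of $\pi(A)$ of comparable norm; for that you invoke Kaplansky's density theorem, which says the unit ball of $\pi(A)$ is strongly dense in the unit ball of $\pi(A)'' = B(\H)$. With Kaplansky in hand, at each stage you choose $T_n \in B(\H)$ of norm at most $\norm{\eta_n}/\norm{\xi}$ sending $\xi$ to the current residual $\eta_n$, approximate it strongly by $\pi(a_n)$ of the same norm bound, and the geometric decay of $\norm{a_n}$ follows. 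This is the missing ingredient that makes the iterative scheme converge in $A$ rather than merely in $B(\H)$.
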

\begin{definition}\label{irred_defn}\cite{pedersen:ca_aut}
	Let $A \to B\left(\H \right)$ be a nonzero representation of $C^*$-algebra $A$. The representation is said to be \textit{irreducible} if it satisfies to the Theorem \ref{irred_thm}.
\end{definition}
\begin{definition}\label{spectrum_prime_primtive_defn}\cite{pedersen:ca_aut}
	An ideal $I$ in a $C^*$-algebra $A$ is \textit{prime}  if $xAy \subset I$  implies $x\in I$  or $y\in I$
	for all $x$, $y$ in $A$.  Equivalently, $I$ is prime if $I_1I_2\subset I$ implies  $I_1\subset I$ or 
	$I_2\subset I$ for any two (left, right, or two-sided) ideals $I_1$ and $I_2$ of $A$. 
	We say that $I$ is a \textit{primitive} ideal if $I= \ker\pi$ for some irreducible 
	representation $\pi: A \to B\left(\H \right)$. The set of prime ideals will be denoted by $\check{A}$ or $\mathrm{Prim}\left(A \right)$  and the set of primitive ideals will be denoted by $\hat A$. We say that $\check{A}$ is a \textit{prime spectrum} of $A$. The set  $\hat A$ is said to by a \textit{primitive  spectrum} or simply a \textit{spectrum} of $A$. For any $x \in \hat A$ denote  by $\rep_x: A \to B\left(\H_x\right)$ a corresponding irreducible representation. 
\end{definition}
\begin{empt}\label{jack_empt}\cite{pedersen:ca_aut}
	For each set $F$ in $\check A$ define a closed ideal $\ker (F)$ in $A$ by
	\be\label{ker_eqn}
	\ker (F)\bydef \bigcap_{\substack{t \in \check A \\ t\in F}} t
	\ee
	For each subset $I$ of $A$ define a set 
\be\label{hull_eqn}
	\mathrm{hull}\left( I\right)\bydef \left\{t\in A | I\subset t\right\}
\ee 	
	
% in A by hull(/) = {teA \I cz t}. 	Using the canonical maps Irr(v4) -  A and A -  A we define the hull of Jin A 	and in Irr(v4) as the counter-images of hull(J) in A. 
	
\end{empt}
\begin{theorem}\cite{pedersen:ca_aut}\label{jack_defn}
%4.1.3. THEOREM. 
The class $\left\{\mathrm{hull}\left( I\right)| I \subset A\right\}$ form the closed sets for a topology on $\check A$ There is a bijective, order preserving isomorphism between the open sets in 
this topology and the closed ideals in $A$. 
\end{theorem}
\begin{definition}\label{jackobseon_defn}
%4.1.4. 
The topology on $A$ defined in \ref{jack_empt} is called the {\it Jacobson topology}. Note that a point $t \in \check A$ is closed if and only if t is a maximal ideal. % We can therefore not in general expect A to be a Ti -space. It is, however, always a T0-space. For if t1^t2 then either t1${t2}~ ox t2i{t1}~. 

\end{definition}
\begin{proposition}\label{hered_spectrum_prop}\cite{pedersen:ca_aut}
	If $B$ is a hereditary $C^*$-subalgebra of $A$ then the map $t \mapsto t\cap B$ is a homeomorphism between $\check A\setminus \mathrm{hull}\left( B\right)$ and $\check B$, where 
	$$
	\mathrm{hull}\left( B\right) = \left\{\left. x \in \hat A~\right|~ \rep_x\left(B \right)= \{0\} \right\} .
	$$ 
	Moreover  we have a commutative diagram:
	\\
	\begin{tikzcd}
		\hat A \setminus \mathrm{hull}\left(B \right)\arrow[d]\arrow[r, "\approx"] & \hat B\arrow[d]\\
		\check A \setminus \mathrm{hull}\left(B \right)\arrow[r, "\approx"] & \check B
	\end{tikzcd}
	\\ 	
\end{proposition}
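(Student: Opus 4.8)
The plan is to factor the map $t \mapsto t \cap B$ through the closed two-sided ideal $I_B \bydef \overline{ABA}$ generated by $B$, splitting the statement into two standard correspondences: first the identification of an open piece of the spectrum of $A$ with the spectrum of the ideal $I_B$, and then the Rieffel correspondence between the full hereditary subalgebra $B$ and the ideal $I_B$ it generates. Throughout I would use the hereditary identity $\overline{BAB}=B$, obtained from an approximate unit of $B$, together with the $C^*$-algebraic fact that for closed two-sided ideals $\overline{I_1 I_2}=I_1\cap I_2$, which makes primeness a purely lattice-theoretic (meet-irreducibility) condition on the ideal lattice.

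First step: the ideal $I_B=\overline{ABA}$ is the smallest closed ideal of $A$ containing $B$, and $B$ is a full hereditary $C^*$-subalgebra of $I_B$. A prime (resp. primitive) ideal $t$ of $A$ satisfies $B\not\subseteq t$ exactly when $I_B\not\subseteq t$, and this is exactly the condition $t\notin\mathrm{hull}(B)$, since $\mathrm{hull}(B)=\{x\in\hat A \mid \rep_x(B)=\{0\}\}$ is the set of primitive ideals containing $B$. I would then invoke the standard decomposition of a spectrum along an ideal: $t\mapsto t\cap I_B$ is a homeomorphism from the open subset $\check A\setminus\mathrm{hull}(B)=\{t\in\check A\mid t\not\supseteq I_B\}$ onto $\check{I_B}$, and likewise on the primitive level $\hat A\setminus\mathrm{hull}(B)\cong\widehat{I_B}$, with inverse the ideal-generated-by map.

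Second step: the Rieffel correspondence for the full hereditary pair $B\subseteq I_B$. I would show that $J\mapsto\overline{I_B J I_B}$ and $K\mapsto K\cap B$ are mutually inverse, inclusion-preserving bijections between the closed ideals of $I_B$ and those of $B$; here fullness is what makes $K\cap B$ recover the whole ideal. Because these maps preserve finite intersections, and primeness equals meet-irreducibility, they carry prime ideals to prime ideals, giving $\check{I_B}\cong\check B$. For the primitive level I would use that an irreducible representation $\rep_x$ of $I_B$ with $\rep_x(B)\neq\{0\}$ restricts to an irreducible representation of $B$ on the essential subspace $\overline{\rep_x(B)\H_x}$, with kernel $\ker\rep_x\cap B$, and that conversely every irreducible representation of $B$ extends; this matches primitive ideals with primitive ideals and yields $\widehat{I_B}\cong\hat B$. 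Composing the two steps, the combined map on both levels is exactly $t\mapsto t\cap B$, and the square commutes because the vertical arrows are the canonical inclusions of primitive ideals among prime ideals while both horizontal arrows are this single formula.

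The main obstacle I expect is the primitive-ideal half of the second step: verifying that restriction to a full hereditary subalgebra sends irreducible representations to irreducible representations with the correct kernel, and that this assignment is a bijection up to equivalence. Conceptually this is the statement that $B$ and $I_B$ are Morita equivalent, so their representation theories agree; making it precise requires the transitivity theorem (part (iv) of Theorem \ref{irred_thm}) to see that $\overline{\rep_x(B)\H_x}$ is $\rep_x(B)$-cyclic and that no nonzero vector of that subspace is annihilated by all of $\rep_x(B)$. Once the bijections are established, checking that they are homeomorphisms for the hull-kernel topology is routine, since closures are computed by intersecting the ideals in a family and both correspondences preserve those intersections.
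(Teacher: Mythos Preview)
The paper does not supply its own proof of this proposition: it is stated as a citation from Pedersen \cite{pedersen:ca_aut} and no proof environment follows. There is therefore nothing in the paper to compare your argument against.

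That said, your outline is correct and is essentially the standard route one finds in Pedersen. Factoring through the closed ideal $I_B=\overline{ABA}$ splits the problem cleanly: the homeomorphism $\check A\setminus\mathrm{hull}(I_B)\cong\check{I_B}$ (and its primitive analogue) is the standard open-subset/ideal correspondence, and the second half is exactly the Rieffel correspondence for the full hereditary inclusion $B\subseteq I_B$. Your identification of the one nontrivial point---that restriction of an irreducible representation to a full hereditary subalgebra, cut down to the essential subspace $\overline{\rep_x(B)\H_x}$, remains irreducible with kernel $\ker\rep_x\cap B$, and that every irreducible of $B$ arises this way---is accurate, and the transitivity theorem (Theorem~\ref{irred_thm}(iv)) is indeed the tool that makes it work. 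The hull-kernel continuity of the resulting bijection then follows, as you say, from the fact that both correspondences preserve arbitrary intersections of ideals.
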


\begin{thm}(Dauns Hofmann)\label{dauns_hofmann_thm}\cite{pedersen:ca_aut}
	For each $C^*$-algebra $A$ there is the natural isomorphism 
	\be\label{dauns_hofmann_eqn1}
	\mathrm{Center}\left(M\left( A\right) \right) \cong C_b\left( \check{A}\right) 
	\ee
	from the center of $M\left( A\right)$ onto the class of bounded continuous  functions on $\check{A}$. 
\end{thm}
\begin{empt}\label{low_up_empt}
	According to \cite{ped_semi} for any $C^*$-algebra $A$ the class $A^m$ 
is a class of elements in $A''$ which can be approximated
weakly from below with self-adjoint operators of the form $x + \a$ a with $x$ in $A$
and $\a$ in $\R$. Define $A_m \bydef -A^m$,
\end{empt}
\begin{theorem}\label{low_up_thm} \cite{ped_semi}
%THEOREM 2.5. 
The set $A^m \cap A_m$, is equal to the self-adjoint part of $M\left( A\right)$. 
\end{theorem}
\begin{definition}\label{ctr_homo_defn}\cite{fell:operator_fields} 
	A $C^*$-algebra is \textit{homogeneous of order} $n$ if every irreducible $*$-representation is of the same finite dimension $n$.
\end{definition}
	\begin{proposition}\label{less_n_pi_prop}\cite{pedersen:ca_aut}
	%4.4.10
	The subset $_n\check{A}$ of $\check{A}$ corresponding to irreducible representations of $A$ with finite dimension less or equal to $n$ is closed. The set $_n\check{A}\setminus_{n-1}\check{A}$ of $n$-dimensional representations is a Hausdorff  space in its relative topology.
\end{proposition}

\begin{definition}\label{faithful_repr_defn}\cite{murphy}
	A representation $\rho : A\to B\left( \H\right)$ is called \textit{faithful} if the *-homomorphism $\rho$ is injective.
\end{definition}

\begin{definition}\label{nondegenerate_repr_defn}\cite{matro:hcm}
	A representation $\rho : A\to B\left( \H\right)$ is called \textit{nondegenerate} if for any $\xi \in \H$  there exists an element $a \in A$ such that $\rho\left(a \right)\xi \neq 0$. 
\end{definition}
\begin{lemma}\label{nondegenerate_repr_lem}\cite{matro:hcm}
	A representation $A\to B\left( \H\right)$ is {nondegenerate} if $\rho\left(A\right)\H$ is dense in $\sH$. 
\end{lemma}

\begin{definition}\label{atomic_repr_defn}\cite{pedersen:ca_aut}
	Let $A$ be a $C^*$-algebra with the spectrum $\hat A$. We choose for any $t \in \hat A$ a pure state $\phi_t$ and  associated representation $\pi_t: A \to B\left(\H_t\right)$.	The representation 
	\be
	\pi_a = \bigoplus_{t \in \hat A} \pi_t \quad \text{on the closure } \H_a \text{ of an algebraic direct sum}\quad  \bigoplus_{t \in \hat A} \H_t
	\ee
	is called the (reduced) \textit{atomic representation} of $A$. Any two atomic representations are unitary equivalent and any atomic representation of $A$ is faithful and nondegenerate  (cf.  Definitions \ref{faithful_repr_defn}, \ref{nondegenerate_repr_defn} and \cite{pedersen:ca_aut}).
\end{definition}
\begin{definition}\label{multiplier_el_defn}\cite{matro:hcm}
	Let $\rho: A\hookto B\left( \H\right)$ be a faithful {nondegenerate} (cf. Definitions \ref{faithful_repr_defn}, \ref{nondegenerate_repr_defn}) representation, so we assume $A \subset B\left( \H\right)$. An operator $x \in B\left(\H\right)$ is called (two-sided) \textit{multiplier} if 
	\be\label{multiplier_el_eqn}
	xa \in A, \quad ax\in A
	\ee
	for each $a\in A$. Denote by $M\left(A\right)$ the set of all multipliers. It is easy to see that $M\left(A\right)$ is an involutive unital algebra.
\end{definition}
\begin{definition}\label{lrm_defn}\cite{matro:hcm}
	Let $A\to B\left(\H \right)$ be a faithful nondegenerate representation. An operator $x\in  B\left(\H \right)$ is said to be a \textit{left multiplier} of $A$ if
	$$
	xa\in A
	$$
	for any $a \in A$. Denote by $\mathbf{LM}(A)$ the set of all left multipliers. Similarly one can define \textit{right multipliers} $\mathbf{RM}(A)$.
\end{definition}

\begin{definition}\label{lrc_defn}\cite{matro:hcm}
	If $A$ is a $C^*$-algebra then a linear map $\la: A\to A$ is said to be a \textit{left centralizer} if
	\be
	\la\left(ab\right)= 	\la\left(a\right) b \quad \forall a, b \in A.
	\ee
	Similarly one defines a \textit{right} centralizer. Denote the spaces of left and right centralizers by $\mathbf{LC}(A)$ and  $\mathbf{RC}(A)$.
\end{definition}
\begin{lem}
	If $\rho\in  \mathbf{RC}(A)$ then $\rho^*\in  \mathbf{LC}(A)$ where $\rho^*\left(a \right)\bydef \left(\rho\left( a^* \right) \right)^*$. 
\end{lem}

\begin{lem}\label{lrc_lem}\cite{matro:hcm}
	Each left centralizer, and each right centralizer is bounded.
\end{lem}
\begin{proposition}\label{lrc_prop}\cite{matro:hcm}
	Let $A\to B\left(\H \right)$ be a faithful nondegenerate representation. Then there exists a one-to-one isometric correspondence between left, right multipliers and  left, right centralizers respectively.
\end{proposition}
	\begin{defn}\label{unitization_defn}\cite{rae:ctr_morita}
	A \textit{unitization} of a $C^*$-algebra $A$ is  a $C^*$-algebra $B$ with identity and an injective $*$-homomorphism $\iota: A \hookto B$ such that $\iota\left(A\right)$ is an essential ideal of $B$. 
\end{defn}

\begin{example}\label{unitization_exm}
	Suppose $A$ is  $C^*$-algebra which has no identity. Then $A^+ = A \oplus \C$
	is a $*$-algebra with 
	\be\label{a_plus_eqn}
	\left( a \oplus \la\right)\left( b \oplus \mu\right) = \left(ab + \la b + \mu a\right)  \oplus \la \mu, \quad \left(a \oplus \la \right)^* = a^*\oplus \overline{\la}. 
	\ee
	It is proven in \cite{rae:ctr_morita} that there is the natural unique $C^*$-norm $\left\| \cdot\right\|_{A^+}$  on $A^+$ such that 
	$$
	\left\|a \oplus 0 \right\|_{A^+}=\left\|a \oplus 0 \right\|_{A}
	$$
	where $\left\|\cdot \right\|_{A}$ is the $C^*$-norm on $A$. Thus $A^+$ is an unital $C^*$-algebra, and the natural map $A\hookto A^+$ is a unitization.
\end{example}
\begin{definition}\label{multiplier_min_defn}
	Let $A$ be a $C^*$-algebra. The described in the Example \ref{unitization_exm} unitization   $\iota: A \hookto B$  is called \textit{minimal}.
\end{definition}
\begin{definition}\label{multiplier_max_defn}\cite{rae:ctr_morita}
	A unitization   $\iota: A \hookto B$  is called \textit{maximal} if for every embedding $j: A\hookto C$ of $A$ as an essential ideal of a $C^*$-algebra there is a $*$-homomorphism $\phi: C\to B$ such that $\phi \circ j = \iota$. 
\end{definition}

\begin{rem}\label{multiplier_rem}
	It is proven in \cite{rae:ctr_morita} that for any $C^*$-algebra $A$ there unique maximal unitization.
\end{rem}

\begin{definition}\label{multiplier_defn}\cite{murphy,pedersen:ca_aut}
	We say that the maximal unitization of $A$ is the \textit{multiplier algebra} of $A$ and denote it by $M\left( A\right)$. 
\end{definition}

\begin{definition}\label{double_centralizer_defn}\cite{matro:hcm}
	A pair $\left(L, R\right)$ of maps
	\be\label{double_centralizer_eqn}
	L: A \to A, \quad R: A \to A\quad R\left(a \right) b = a L\left( b\right) \quad \forall a, b \in A
	\ee
	is called a \textit{double centralizer}. Let us denote $\mathbf{DC}\left(A \right) $ the space of all double  centralizers of $A$.
\end{definition}

\begin{proposition}\label{dc_prop}\cite{matro:hcm}
	If $\left(L, R\right)\in \mathbf{DC}\left(A \right)$ then
	\begin{enumerate}
		\item [(i)] $L\left( ab\right) = L\left( a\right)b$ and $R\left( ab\right) = aR\left( b\right)$;
		\item [(ii)] $L$ and $R$ are linear;
		\item [(iii)] $L$ and $R$ are bounded and $\left\|L \right\|= \left\|R \right\|$;
	\end{enumerate}
	The set $\mathbf{DC}\left(A \right) $ with operations
	\bean
	\left(L_1, R_1\right) + \left(L_2, R_2\right)\bydef \left(L_1 + L_2, R_1+R_2\right),\quad z\left(L,R\right)\bydef \left(zL, zR\right) \quad \forall z \in \C,\\
	\left(L_1, R_1\right)  \left(L_2, R_2\right)\bydef \left(L_1  L_2, R_2R_2\right),\\
	\left(L, R\right)^*\bydef \left(R^*, L^*\right),\\
	L^*\left( a\right)\bydef \left( L\left( a^*\right)\right)^*, \quad R^*\left( a\right)\bydef \left( L\left( a^*\right)\right)^*\quad \forall a \in A.
	\eean 
	is a normed involutive algebra with respect to the norm
	$$
	\left\|\left(L, R\right) \right\|\bydef \left\|L \right\|= \left\|R \right\|.
	$$
\end{proposition}
\begin{remark}\label{double_centralizer_rem}
	In \cite{matro:hcm} it is proven  there is a natural $*$-isomorphism $\mathbf{DC}\left(A \right)\cong M\left(A\right)$. 
\end{remark}

	\subsection{GNS construction}\label{gns_constr_sec}
\paragraph*{}

Any state $\tau$ of  $C^*$-algebra $A$  induces a GNS representation  \cite{murphy}. There is a $\mathbb{C}$-valued product on $A$ given by
\begin{equation}\label{tau_prod_eqn}
	\left(a, b\right)\bydef\tau\left(a^*b\right).
\end{equation}
This product induces a product on $A/\mathcal{I}_\tau$ where 
\be\label{tau_ideal_eqn}
\mathcal{I}_\tau\bydef\left\{\left.a \in A \ \right| \ \tau(a^*a)=0\right\}
\ee 
So $A/\mathcal{I}_\tau$ is a pre-Hilbert space. Denote by $L^2\left(A, \tau\right)$ the Hilbert  completion of $A/\mathcal{I}_\tau$.  The Hilbert space  $L^2\left(A, \tau\right)$ is a space of a  GNS representation  $A\to B\left(L^2\left(A, \tau\right) \right)$ (or equivalently $A\times L^2\left(A, \tau\right)\to L^2\left(A, \tau\right) $) which comes from the Hilbert norm completion of the natural action $A \times A/\mathcal{I}_\tau \to A/\mathcal{I}_\tau$. The natural map  $A \to A/\mathcal{I}_\tau$ induces the homomorphism of left  $A$-modules
\be\label{from_a_to_l2_eqn}
\begin{split}
	f_\tau : A \to L^2\left(A, \tau\right),\\
	a \mapsto a + \mathcal{I}_\tau
\end{split}
\ee
such that $f_\tau\left(A \right)$ is a dense subspace of $L^2\left(A, \tau\right)$.
\begin{theorem}\label{state_repr_thm}\cite{pedersen:ca_aut} %3.3.3
	For each positive functional $\tau$ on a $C^*$-algebra $A$ there is a cyclic representation $\pi_{\tau}: A \to B\left( L^2\left(A, \tau\right)\right) $ with a cyclic vector $\xi_{\tau}\in L^2\left(A, \tau\right)$ such that $\left( \pi_{\tau}\left( x\right) \xi_{\tau}, \xi_{\tau} \right)= \tau\left(a \right)$ for all $x \in A$.   
\end{theorem}
\begin{defn}\label{gns_defn}\cite{murphy,pedersen:ca_aut}
	The given by the Theorem \ref{state_repr_thm} representation is said to be a \textit{GNS representation}. We say that the representation 
	$\pi_{\tau}$ given by the Theorem \ref{state_repr_thm} is the cyclic representation \textit{associated with} $\tau$.
\end{defn}
\begin{proposition}\label{state_repr_prop}\cite{pedersen:ca_aut}
	%3.3.5. Proposition. 
	Let $\phi$ be a positive functional on a $C^*$-algebra A and let 
	$\pi_\phi$ its associated representation. For each positive functional $\psi \le \phi$, 
	there is a unique element $a \in \pi\left(A \right)'$  with $0\le a \le 1$ such that 
	$$
	\pi_\psi\left( x\right) = \left(\pi_\phi\left(x \right)  a \xi_\phi, \xi_\phi \right).	
	$$
	for all $x \in A$.
\end{proposition}

\begin{definition}\label{orth_func_defn}(cf. \cite{pedersen:ca_aut})
	%3.2.4
	Let $A$ be  $C^*$-algebra. Two positive functionals $\phi: A \to \C$ and  $\psi: A \to \C$ are \textit{orthogonal} if
	$$
	\left\|\phi - \psi \right\| = \left\|\phi \right\| +  \left\|\psi \right\|. 
	$$
	We write
	\bean
	\phi \perp \psi.
	\eean
	If $\pi_\phi : A \to B\left( \H_\phi\right)$ and $\pi_\psi : A \to B\left( \H_\psi\right)$ are representations which correspond to $\phi$ and $\psi$ respectively then we write
	\be\label{orth_func_eqn}
	\pi_\phi \perp \pi_\psi.
	\ee
	
\end{definition}

\begin{corollary}\label{domin_rep_cor}\cite{pedersen:ca_aut}
	%3.3.8. COROLLARY. 
	If $\varphi$ and $\psi$ are positive functionals on a $C^*$-algebra $A$ and $\psi$  
	is dominated by a multiple of $\varphi$ then the representation $\pi_\psi: A \to B\left( \H_\psi\right)$  is spatially 
	equivalent to a subrepresentation of $\pi_\varphi: A \to B\left( \H_\varphi\right)$. 
\end{corollary}

		\subsection{Hilbert modules and compact operators}\label{hilbert_modules_chap}
%\paragraph{} We refer to \cite{blackadar:ko} for definition of Hilbert $C^*$-modules, or simply Hilbert modules.
\begin{definition}\label{banach_non_defn}\cite{rae:ctr_morita}
	A left  $A$-module $X$ \textit{Banach} $A$-\textit{module} if $X$ is a Banach space and $\left\| a \cdot x\right\| \le \left\|a \right\| \left\| x\right\|$ for all $a\in A$ and $x\in\sX$.
	A  \textit{Banach} $A$-{module} is \textit{nondegenerate}  
	is nondegenerate if $\text{span}\left\{\left.a \cdot x\right|a\in A\quad x\in \sX\right\}$
	is dense in $X$. We then have \\$a_\la\cdot x \to x$
	whenever $x\in\sX$ and $\left\{a_\la\right\}$
	is a bounded approximate identity for $A$. 
\end{definition}

\begin{proposition}\label{banach_non_prop}\cite{rae:ctr_morita}
	%Proposition 2.33. 
	Suppose that $X$ 
	is a nondegenerate Banach $A$-module. Then 
	every element of $X$ 
	is of the form $a\cdot x$
	for some $a\in A$ and $x\in X$.
\end{proposition}

\begin{definition}\label{hilbert_module_defn} Rieffel \cite{Rieffel:74a}
	Let~$B$ be a $C^*$-algebra.  A \emph{pre-Hilbert $B$-module} is a right
	$B$-module~$X$ (with a compatible $\C$-vector space structure),
	equipped with a conjugate-bilinear map (linear in the second variable)
	$\left\langle{\blank},{\blank}\right\rangle_B\colon X\times X\to B$ satisfying
	\begin{enumerate}
		\item[(a)] $\left\langle{x},{x}\right\rangle_B\ge0$ for all $x\in X$;
		\item[(b)] $\left\langle{x},{x}\right\rangle_B=0$ only if $x=0$;
		\item[(c)] $\left\langle{x},{y}\right\rangle_B=\left\langle{y},{x}\right\rangle_B^\ast$ for all $x,y\in X$;
		\item[(d)] $\left\langle{x},{y\cdot a}\right\rangle_B=\left\langle{x},{y}\right\rangle_B\cdot a$ for all $x,y\in X$, $a\in B$.
	\end{enumerate}
	The map $\left\langle{\blank},{\blank}\right\rangle_B$ is called a \emph{$B$-valued inner product
		on~$X$}.
	Following equation
	\be\label{hilbert_module_norm_eqn}
	\|x\|=\|\left\langle{x},{x}\right\rangle_B\|^{\nicefrac{1}{2}}
	\ee defines a norm on~$X$.
	If~$X$ is complete with respect to this norm, it is called a $C^*$-\emph{Hilbert
		$B$-module} or simply a \emph{Hilbert
		$B$-module}.  
\end{definition}
\begin{definition}\label{full_hilb_defn}\cite{rae:ctr_morita}
	%Definition 2.8. 
	A Hilbert  $A$-module $X_A$  is a \textit{full} Hilbert $A$-module if the ideal 
	$$
	I\bydef \text{span}\left\{\left.\left\langle\xi, \eta \right\rangle_A\right| \xi, \eta\in X_A \right\} 
	$$
	is dense in $A$. 
\end{definition}
\begin{remark}\label{full_hilb_rem}
	Suppose that $A$ is a $C^*$-algebra and that $p$
	is a projection in $A$ 
	(or $M(A)$ ). Following facts are proven in the Example 2.12 of \cite{rae:ctr_morita}.
	Then $Ap \bydef \left\{\left.ap\right| a\in A\right\}$
	is a Hilbert $pAp$ module 
	with inner product 
	\be\label{full_hilb_eqn}
	\left\langle ap, bp \right\rangle_{pAp} \bydef pa^*bp
	\ee
	This Hilbert module is full. Similarly, $pA$ 
	is a Hilbert $A$-module which is full over  the ideal $\overline{ApA}\bydef \overline{\text{span}}\left\{\left.a p b\right| a,b \in A\right\}$
	generated by $p$, and $Ap$ 
	is itself a full left  Hilbert $\overline{ApA}$-module. 
\end{remark}

\begin{remark}\label{polarization_equality_rem}
	For any $C^*$-pre-Hilbert $X$ module, or more  precisely, for any sesquilinear form $\left\langle \cdot , \cdot \right\rangle$ the \textit{polarization equality}
	\be\label{polarization_equality_eqn}
	\begin{split}
		4 \left\langle \xi , \eta \right\rangle=\sum_{k = 0}^3i^k\left\langle \xi + i^k\eta, \xi + i^k\eta \right\rangle
	\end{split}
	\ee 
	is obviously satisfied for all $\xi, \eta \in X$.  If $\H$ is a Hilbert space then there is the following analog of the identity \eqref{polarization_equality_eqn}
	\be\label{polarization_hilb_equality_eqn}
	\\
	\left( \xi , \eta \right)_{\H} = \frac{\sum_{k = 0}^3i^k\left\| \xi + i^k\eta \right\|}{4}, \quad\forall \xi, \eta \in X
	\ee
\end{remark}
\begin{definition}\label{adjointable_operator_defn}\cite{matro:hcm}
	Let $X$, $Y$ be Hilbert modules over $C^*$-algebra $A$. A bounded $\C$-linear $A$-homomorphism from $X$ to $Y$ is called an \textit{operator} from $X$ to $Y$. Let $\Hom_A\left(X, Y \right)$ denote the set of all {operators} from $X$ to $Y$. If $Y = X$ then $\End_A\left(X\right)\bydef \Hom_A\left(X, X \right)$ is a Banach algebra. We say that $L\in \Hom_A\left(X, Y \right)$ 
	\textit{adjointable} if there is $L^*\in\Hom_A\left(Y, X\right)$ such that 
	$$
	\left\langle \eta, L \xi\right\rangle_A=  \left\langle L^*\eta,  \xi\right\rangle_A \quad  \forall \xi\in X,~ \eta \in Y.
	$$
	Denote by $\Hom^*_A\left(X, Y \right)\subset \Hom_A\left(X, Y \right)$ of all  {adjointable} operators. The set $\End^*_A\left(X \right)\bydef \Hom^*_A\left(X, X \right)$ is a  $C^*$-algebra.
\end{definition}
\begin{defn}\label{compact_a_operator_defn}\cite{pedersen:ca_aut}
	If $X$ is a $C^*$ Hilbert $A$-rigged module then
	denote by $\theta_{\xi, \zeta} \in \End^*_A\left(X \right)$   such that
	\begin{equation}\label{rank_one_eqn}
		\forall  \xi, \eta, \zeta \in X\quad		\theta_{\xi, \zeta} (\eta) = \zeta \langle\xi, \eta \rangle_X.
	\end{equation}
	The $C^*$-norm closure of  a generated by such endomorphisms ideal is said to be the {\it algebra of compact operators} which we denote by $\mathcal{K}(X)$. The $\mathcal{K}(X)$ is an ideal of  $\End^*_A(X)$.
\end{defn}
\begin{remark}
	Also we shall use a following notation 
	\be\label{rank_one_notation_eqn}
	\begin{split}
		\xi\rangle \langle \zeta \stackrel{\text{def}}{=} \theta_{\xi, \zeta}: X_A \to X_A,\\
		\eta \mapsto \xi \langle\zeta, \eta \rangle_X.
	\end{split}
	\ee
\end{remark}
\begin{theorem}\label{comp_mult_thm}\cite{blackadar:ko}
	Let $X_A$ is a Hilbert $A$-module.
	The $C^*$-algebra of adjointable maps  $\End^*_A\left( X_A\right)\bydef \Hom^*_A\left( X_A, X_A\right)$ is naturally isomorphic to the algebra $M\left(\K\left( X_A\right) \right)$ of multiplies of compact operators $\K\left( X_A\right)$. 
\end{theorem}

\begin{remark}
	The text of the Theorem \ref{comp_mult_thm} differs from the text of the Theorem 13.4.1 \cite{blackadar:ko}. It is made for a compatibility with this article.
\end{remark}
\begin{defn}\label{standard_h_m_defn}(cf. \cite{matro:hcm}) 
	The direct sum of countable  number of copies of a Hilbert module $X$ is denoted by $\ell^2\left(X \right)$. If $A$ is a $C^*$-algebra then the Hilbert module  
	$\ell^2\left( A\right)$ is said to be the \textit{standard Hilbert $A$-module} over $A$. If $A$ is a unital then $\ell^2\left( A\right)$ possesses the standard basis $\left\{\xi_j\right\}_{j \in \N}$.
\end{defn}
\begin{thm}\label{kasparov_stab_thm}\textbf{Kasparov Stabilization or Absorption Theorem.}\cite{blackadar:ko}
	%Theorem 13.6.2. 
	If $X_A$ is a countably generated Hilbert $A$-module, then $X_A \oplus \ell^2\left( A\right) \cong \ell^2\left( A\right)$.
\end{thm}

\begin{empt}\label{hm_dual_empt}\cite{matro:hcm}
	For a Hilbert $C^*$-module $X$ over $C^*$-algebra $A$ let us denote by $X'$ the set of  all bounded $A$-linear maps  from  $X$ to $A$. 
	The formula
	\be
	\left( f \cdot a\right)\left( \xi\right) \bydef a^*f\left( x\right); \quad a \in A, \quad f \in X' , \quad \xi \in X
	\ee
	introduced the structure if right $A$-module on $X'$.
	The elements of $X'$ are called \textit{functionals} on a Hilbert module $X$. Note that there is an obvious isometric inclusion
	\be\label{to_dual_eqn}
	\begin{split}
		X \subset X',\\
		x \mapsto\left\langle x, \cdot \right\rangle.
	\end{split}
	\ee
	The space $X'$ is called the \textit{dual Banach module} of $X$.
\end{empt}	
\begin{definition}\label{hm_selfdual_defn}\cite{matro:hcm}
	A Hilbert module $X$ is called  \textit{self-dual} if $X\cong X'$.
\end{definition}
\begin{definition}\label{hm_MI_defn}\cite{matro:hcm}
	A $C^*$-algebra is said to be \textit{module}-\textit{infinite} (MI) if each countably generated Hilbert $A$-module is projective and finitely generated if and only if it is self-dual.
\end{definition}
\begin{example}\label{fol_tor_exm}\emph{Linear foliation on torus}. Here I follow to \cite{candel:foliI,connes:ncg94}.
	Consider a vector field $\tilde{X}$ on $\R^2$ given by
	\[
	\tilde{X}=\alpha\frac{\partial}{\partial
		x}+\beta\frac{\partial}{\partial y}
	\]
	with constant $\alpha$ and $\beta $. Since $\tilde{X}$ is
	invariant under all translations, it determines a vector field $X$
	on the two-dimensional torus ${\T}^2={\R}^2/{\Z}^2$. The vector
	field $X$ determines a foliation $\mathcal{F}$ on ${\T}^2$. The leaves of
	$\mathcal{F}$ are the images of the parallel lines
	$\tilde{L}=\{(x_0+t\alpha, y_0+t\beta): t\in\R\}$ with the slope
	$\theta=\beta/\alpha $ under the projection $\R^2\to \T^2$.
	In the case when $\theta$ is rational, all leaves of $\mathcal{F}$ are
	closed and are circles, and the foliation $\mathcal{F}$ is determined by
	the fibers of a fibration $\T^2\to S^1$. In the case when $\theta$
	is irrational, all leaves of $\mathcal{F}$ are everywhere dense in $\T^2$. We say that  $\left(\T^2, \mathcal{F}_\th \right)$ is  the \textit{Kronecker foliation} $dy = \th dx$ of the 2-torus $ \T^2 \bydef \R^2/\Z^2$ 
	with natural coordinates $\left((x, y \right)\in \R^2$. Here $\th\in (0,1)$ is an irrational number.
	The graph $\G$ of this foliation is the manifold $\G \cong \T\times \R$ with range and source maps  $\G \to \T^2$ given by
	\bean
	r((x, y), t) = (x + t, y + \th t).\\
	s((x, y), t) = (x, y)
	\eean
	and with composition given by $((x, y), t)((x_0, y_0), t_0) = ((x_0, y_0), t + t_0)$ for any pair of	composable elements.
	Every closed geodesic of the at torus $\T^2$ yields a compact transversal. More precisely,
	for each pair $(p, q)$ of relatively prime integers we let $N_{p,q}$ be the submanifold of $\T^2$
	given by
	\be\label{foli_npq_eqn}
	N_{p,q} = \left\{(ps, qs) | s\in \R/\Z\right\}
	\ee
	The graph $\G$ reduced by $N = N_{p,q}$, i.e. $\G^N_N \bydef \left\{\left.\ga \in \G \right| r(\ga)\in N, \quad s(\ga)\in N\right\}$
	the manifold $\G^N_N=\T\times \Z$ with range and source maps given by:
	\bean
	r(x, n) = x + n\th' ;\\
	s(x, n) = x
	\eean
	where $\th'$ is determined uniquely by any pair $(p_0, q_0)$ of integers such that $pq_0 - p_0q= 1$, $\quad \th' = \frac{p'\th - q'}{p\th - q}$
	%	The $C^*$-algebra $C^*_{r,N}\left( \T^2, F\right)$ 
	%	is the crossed product of $C(\T)$ by the rotation of angle $\th'$,
	%	i.e. it is the irrational rotation $C^*$-algebra $C\left(\T_{\th'}\right)$, generated by two unitaries $U$
	%	and $V$ such that:
	%	$$
	%	VU = e^{2\pi i\th' } UV,
	%	$$
	%	Since $N = N_{p,q}$ meets every leaf 
	%	of the foliation, it follows that $C^*_r (V, F)$ is strongly
	%	Morita equivalent to $C\left(\T_{\th'}\right)$ for any relatively prime pair $(p, q)$. In particular for $p = 0,~
	%	q = 1$ we get $C\left(\T_{\th}\right)$, and, by transitivity of strong Morita equivalence
	If $N = N_{0,1}$ then 
	\bean
	r(x, n) = x + n\th ;\\
	s(x, n) = x
	\eean
	Let $U$ be an element of $C\left(\T\right)$ which comes from
	\bean
	U:\R \to \C,\\
	t \mapsto e^{2\pi i t}.
	\eean
	If both  $u, v\in C^\infty_c\left(\G^N_N \right)$ are such that
	\be\label{foli_uv_eqn}
	\begin{split}
		u\left(x, n \right) = \begin{cases}
			U\left(x \right) & n = 0\\
			0 & n\neq 0
		\end{cases};\\
		v\left(x, n \right) = \begin{cases}
			1 & n = 1\\
			0 & n\neq 1
		\end{cases},\\
	\end{split}
	\ee
	then from  the equations \eqref{foli_pseudo_eqn}
	one has
	\bean
	u^*=u^{-1},\\
	v^*=v^{-1},\\
	vu = e^{2\pi i\th } uv.
	\eean
	Indeed above equations describe a noncommutative 2-torus $C\left(\T^2_\th\right)$ (cf Definition \ref{nt_defn}). Using this fact one deduce that 
	\be\label{foli_nt_eqn}
	C^*_r\left( \G^N_N\right) \cong C\left(\T^2_\th\right).
	\ee
\end{example}

%\begin{empt}\cite{matro:hcm}
%	For a Hilbert $C^*$-module $X$ over $C^*$-algebra $A$ we define the \textit{bidual} Banach right $A$ module $X''$ as a set of all bounded $A$-homomorphism from the dual module $X'$ into $A$. 
%\end{empt}	

%\begin{definition}\label{hm_reflexive_defn}\cite{matro:hcm}
%A Hilbert $C^*$-module $X$ is called  \textit{reflexive} if $X''\cong X$.
%\end{definition}
		\subsection{Strong Morita equivalence for $C^*$-algebras}\label{strong_morita_sec}

\paragraph*{}
The notion of the strong Morita
equivalence was introduced by Rieffel.
\begin{definition}\label{strong_morita_defn}[Rieffel \cite{Rieffel:74a,rieffel_morita}]
	Let $A$ and~$B$ be $C^*$-algebras.  By an \emph{$A$-$B$-equivalence
		bimodule} (or \emph{$A$-$B$-imprimitivity
		bimodule}) we mean an $\left(B,A\right)$-bimodule which is equipped with $A$- and
	$B$-valued inner products with respect to which~$X$ is a right Hilbert
	$A$-module and a left  Hilbert $B$-module such that
	\begin{enumerate}
		\item[(a)] $\left\langle{x},{y}\right\rangle_B z = x\left\langle{y},{z}\right\rangle_A$ for all $x,y,z\in X$;
		\item[(b)] $\left\langle{X},{X}\right\rangle_A$ spans a dense subset of~$A$ and $\left\langle{X},{X}\right\rangle_B$ spans a dense
		subset of~$B$, i.e. $X$ is a full Hilbert $A$-module and a full Hilbert $B$-module.
	\end{enumerate}
	We call $A$ and~$B$ \emph{strongly Morita equivalent} if there is an
	$A$-$B$-equivalence bimodule.
\end{definition}

\begin{example}\label{imp_p_exm}\cite{rae:ctr_morita}
	%Example 3.6.
	Let $p$ be a projection in $M 
	(A)$. We saw in the Remark \ref{full_hilb_rem} that $Ap$
	is a full right Hilbert $pAp$-module and a full left  Hilbert $\overline{ApA}$-module. (Recall that 
	$\overline{ApA}$
	denotes the ideal generated by $p$, which is the closed span of the set $ApA$.) 
	%The 	actions and inner products are the restrictions of those in the previous example, so 	all the algebraic properties are trivially true, and 
	Thus $Ap$ is an $\overline{ApA}$-$pAp$-imprimitivity 
	bimodule. 
\end{example}

\begin{rem}
	If~$_AX_B$ be an $A$-$B$ imprimitivity bimodule then there is a \textit{dual} $B$-$A$ bimodule $_B\widetilde X_A$ which is a $B$ $A$-imprimitivity bimodule (cf. \cite{rae:ctr_morita}).
\end{rem}
%\begin{theorem}\label{morita_herm_thm} \cite{rae:ctr_morita,Rieffel:74a}
%	Let~$X$ be an $A$-$B$ equivalence bimodule.  Then given by \ref{induced_representation_defn} functor $X\otimes_B\blank$	induces an equivalence (cf. Definition \ref{category_equivalence_definition}) between the category of Hermitian  $B$ modules and
%	the category of Hermitian  $A$-modules, the inverse being given by
%	$\tilde{X} \otimes_A\blank$.

% This functor preserves weak containment and 	direct integrals.
%\end{theorem}
\begin{theorem}\label{rieffel_equiv_thm}\cite{rae:ctr_morita}
	%Theorem 3.29. 
	Suppose that $X$ is an $A$ $B$-imprimitivity bimodule, and $\pi$, $\rho$
	are nondegenerate representations of $B$ 
	and $A$, respectively. Then $\widetilde X-\Ind\left(  X-\Ind \pi \right)$ is 
	naturally unitarily equivalent to $\pi$, and $X-\Ind\left(\widetilde  X-\Ind \rho \right)$ is 
	naturally unitarily equivalent to $\rho$.
	
\end{theorem}
\begin{corollary}\cite{rae:ctr_morita}
	%Corollary 3.312.
	If $X$ is an $A$ $B$-imprimitivity bimodule, then the inverse of the Rieffel correspondence $X-\Ind$ is  $\widetilde X-\Ind$.
\end{corollary}
\begin{corollary}\label{morita_irred}\cite{rae:ctr_morita}
	%Corollary 3.32. 
	Suppose that $X$ is an $A$ $B$-imprimitivity bimodule, and that $\pi$
	is 
	a nondegenerate representation of $B$. Then $\widetilde  X-\Ind \pi$
	is irreducible if and only if $\pi$ 
	is irreducible. 
\end{corollary}
\begin{corollary}\label{rieffel_homeo_cor}\cite{rae:ctr_morita}
	%Corollary 3.32. NOT FULL
	If $X$ is $A$-$B$-imprimitivity bimodule then the Rieffel correspondence restricts to a homeomorphism $h_X: \mathrm{Prim}~B \xrightarrow{\approx}\mathrm{Prim}~A$.
\end{corollary}
\begin{definition}\label{rieffel_homeo_defn}\cite{rae:ctr_morita}
	Under the hypotheses of the Corollary \ref{rieffel_homeo_cor} the homeomorphism $h_X: \mathrm{Prim}~B \xrightarrow{\approx}\mathrm{Prim}~A$
	is said to be the \textit{Rieffel homeomorphism}.
\end{definition}
\begin{definition}\label{stable_ca_defn}\cite{wegge_olsen}
	A $C^*$-algebra A is said to be \textit{stable} when $A\otimes \K \cong  A$. %\cite{blackadar:ko}
	%Definition 1.10.1. 
	The \textit{stabilization} of $A$ is $A^s\bydef A\otimes \K$. 
	$C^*$-algebras $A$ and $B$ are said to be \textit{stably equivalent} when $A\otimes \K\cong B\otimes \K$.
	%Definition 5.1.1.$\mathbb{M}_\infty\left(A \right)$ is the algebraic direct limit of $\mathbb{M}_n\left(A \right)$ under theembeddings $a \mapsto \text{diag}\left(a, 0\right)$.$M (A) may be thought of as the algebra of all infinite matrices over A with7only finitely many nonzero entries. Whenever it is convenient, we will identifyMn(A) with its image in the upper left-hand corner of Mn+k(A) or M (A).When it is necessary to topologize M (A), any topology inducing the naturaltopologies on Mn(A) will do. For example, M (A) may be given the inductivelimit topology if desired. Better yet, one may choose the norms on Mn(A) sothat the embeddings are isometries; M (A) is a local Banach algebra with theinduced norm.
	%If $A$ is a (local) $C^*$-algebra, the embedding are isometries, so $\mathbb{M}_\infty\left(A \right)$ has anatural norm. The completion is called the \textit{stable algebra} of A, denoted $A\otimes \K$ (it is the $C^*$-tensor product of $A$ and $\K$).
\end{definition}
\begin{theorem}\label{stable_morita_thm}\cite{brown:stable}
	%	THEOREM 1.2. 
	Let $B$ and $E$ be $C^*$-algebras. If $B$ and $E$ are 
	stably isomorphic, then they are strongly Morita equivalent. Conversely, 
	if $B$ and $E$ are strongly Morita equivalent and if they both 
	possess strictly positive elements, then they are stably isomorphic (i.e they are stably equivalent).	
\end{theorem}

\begin{proposition}\label{top_admits_morita_prop}\cite{connes:ncg94}
	%	Proposition 1. [473]  P 117 
	Let a group $\Ga$  act freely and properly on a topological space  $\widetilde \sX$ and let $\sX\bydef \widetilde \sX/\Ga$. Then
	the $C^*$-algebra $C_0\left(\sX \right)$  is strongly Morita equivalent to the  product $C^*$-algebra
	$C_0\left(\widetilde \sX\right)\rtimes \Ga$.
\end{proposition}
\begin{remark}\label{top_admits_morita_rem}\cite{connes:ncg94}
	The equivalence
	$C^*$-bimodule $E$ is easy to describe; it is given by the bundle of Hilbert spaces $\left\{\H_x\right\}_{x\in \sX}$
	with base  $\sX$  whose fiber at $x\in\sX$ is the $\ell^2$-space of the orbit $x\in\sX$. This yields
	the required $\left(C_0\left(\widetilde \sX \right)\rtimes \Ga, C_0\left(\sX \right) \right)$  $C^*$-bimodule.
\end{remark}

	\subsection{$K$-theory of $C^*$-algebras}\label{ca_k_sec}

\paragraph*{} The notions of $K$-theory is explained in \cite{blackadar:ko,wegge_olsen}.
\begin{definition}\label{projection_equivalences_defn}\cite{blackadar:ko}
	%Definition 4.2.1. 
	Let $e$ and $f$ be idempotents in $A$.
	\begin{enumerate}
		\item [(a)] $e\sim f$ if there are $x, y \in A$ with $xy = e$ and $yx = f$ ({\it algebraic equivalence}),
		\item[(b)] $e \sim_s f$ if there is an invertible $z \in A^+$ with $zez^{-1}= f$ ({\it similarity}),
		\item[(c)] $\sim_h f$ if there is a norm-continuous path of idempotents in $A$ from $a$ to $f$ ({\it homotopy}).
	\end{enumerate}
\end{definition}

\begin{definition}\label{m_inf_defn}\cite{blackadar:ko}
	%Definition 5.1.1. 
	If $A$ is an algebra then $\mathbb{M}_\infty\left(A \right)$  is the algebraic direct limit of $\mathbb{M}_n\left( A\right)$  under the
	embeddings $a \mapsto \mathrm{diag}\left(a, 0 \right)$.
\end{definition}
\begin{remark}\label{idempotent_equivalences_rem} In \cite{blackadar:ko,wegge_olsen}
 it is proven that the conditions (a) - (c) of the Definition \ref{projection_equivalences_defn} are equivalent  $\mathbb{M}_\infty\left(A \right)$.
\end{remark}
\begin{definition}\label{proj_defn}\cite{blackadar:ko}
%Definition 4.6.1. 
A projection is a self-adjoint idempotent. A {\it partial isometry}
is an element $u$ with $u*u$ a projection. If $A$ has a unit, then $U$ is an {\it isometry} if $u^*u = 1$, and $u$ is unitary if $uu^* = u^*u = uu^*=1$.
\end{definition}
\begin{proposition}
%Proposition 4.6.2.
 Every idempotent in $A$ is similar to a projection. In fact,
any idempotent is homotopic to a projection.
\end{proposition}
\begin{remark}\label{projection_equivalences_rem} In \cite{blackadar:ko,wegge_olsen}
	it is proven that for any two projections the conditions (a) - (c) of the Definition \ref{projection_equivalences_defn} are equivalent  $\mathbb{M}_\infty\left(A \right)$.
\end{remark}

\begin{definition}\label{proj_v_defn}\cite{blackadar:ko}
	%Definition 5.1.2. 
	$V(A)$ is the set of algebraic equivalence classes of idempotents
	in $\mathbb{M}_\infty\left(A \right)$ (cf. (a) Definition \ref{projection_equivalences_defn}). 
\end{definition}
\begin{definition}\label{k00_defn}\cite{blackadar:ko}
	%Definition 5.3.1. 
	$K_{00}(A)$ is the Grothendieck group of $V(A)$.
\end{definition}
\begin{remark}\label{proj_v_rem}\cite{blackadar:ko}
$V (A)$ can also be described as the set of isomorphism classes of finitely generated
projective (left or right) A-modules. The binary operation on
$V (A)$ corresponds to direct sum of modules
\end{remark}
\begin{empt}\cite{blackadar:ko}\label{k0_empt}
	Let $A$  be unital algebra, $J$ a closed two-sided ideal in $A$, and $\pi: A \to A/J$ the quotient map. A group $K_0\left(A, J \right)$  has generators $\left(e, f, z \right)$  , where $e, f$ are idempotents in $\mathbb{M}_n\left( A\right)\subset \mathbb{M}_\infty\left( A\right)$, and $z$ is an invertible element of $\mathbb{M}_n\left( A/J\right)$
	with $z \pi\left(e\right)z^{-1} = \pi\left(f\right)$ ; and relations
	\be\label{k0_1_eqn}
	\begin{split}
		(e_1, f_1, z_1 ) + (e_2, f_2, z_2 ) \bydef  \left(\mathrm{diag}\left( e_1, e_2\right), \mathrm{diag}\left( f_1, f_2\right), \mathrm{diag}\left( z_1, z_2\right) \right) \end{split}
	\ee
	and $(e_1, f_1, z_1 ) = (e_2, f_2, z_2 )$ if there are idempotents $g_1, g_2\in \mathbb{M}_n\left( A\right)$ and invertible
	elements $u, v \in \mathbb{M}_{n+k}\left( A\right)$ with
	\be\label{k0_2_eqn}
	\begin{split}
		u\mathrm{diag}\left( e_1, g_1\right)u^{-1}= \mathrm{diag}\left( e_2, g_2\right),\\
		v\mathrm{diag}\left( f_1, g_1\right)v^{-1}= \mathrm{diag}\left( f_2, g_2\right),\\
		\pi\left(v\right)\mathrm{diag}\left( z_1, 1\right)\pi\left(u\right)^{-1}= \mathrm{diag}\left( z_2, 1\right).
	\end{split}
	\ee
\end{empt}
\begin{definition}\label{k0_defn}\cite{blackadar:ko}
	%Definition 5.5.1. 
	$K_0\left( A\right) \bydef K_0\left(A^+, A \right)$. 
\end{definition}
\begin{remark}\label{k0_rem}\cite{blackadar:ko}
	There is a homomorphism from $K_{00}(A^+)$ to $K_0(A)$, given by $\left[e\right] \mapsto \left[e\right]- \left[p_n\right]$, where $\pi\left(e \right)$ is a rank $n$ projection in $\mathbb{M}_\infty\left(A \right)$. Composing this with the canonical map from $K_{00}(A)$ to $K_{00}\left(A^+ \right) $ yields a homomorphism
	\be\label{k0_eqn} 
	\om_A : K_{00}(A) \to 
	K_0\left(A \right).
	\ee
\end{remark}

\begin{theorem}\label{k_0_iso_thm}\cite{blackadar:ko}
	%Theorem 5.4.2.
	For any $A$ and $J$, the natural homomorphism from $K_0\left(J^+, J \right)$ 
	to $K_0\left(A, J \right)$ is an isomorphism.
\end{theorem}

\begin{rem}
	Theorem \ref{k_0_iso_thm} is called the Strong Excision Theorem of $K$-theory. It is really
	the result which allows $K$-theory to be developed without reference to relative
	$K$-groups.
\end{rem}
\begin{definition}\label{stably_unital_defn}\cite{blackadar:ko}
%Definition 5.5.4. 
A local Banach algebra $A$ is {\it stably unital} if $\mathbb{M}_\infty\left(A \right)$  has an
approximate identity of idempotents.
\end{definition}
If $A$ is unital, or more generally if $A$ has an approximate identity of idempotents,
then $A$ is stably unital. If $A$ is a local $C^*$-algebra, then $A$ is stably unital if and
only if $A \otimes \K$ has an approximate identity of projections.
\begin{proposition}\label{stably_unital_prop}\cite{blackadar:ko}
%Proposition 5.5.5.
 If $A$ is stably unital, then the map $K_{00}\left( A\right)\to K_0\left( A\right)$   is an
isomorphism.
\end{proposition}

\begin{proposition}\cite{rordam:kc}\label{k_0_stab_prop}
	% 4.3.8
Let $A$ be a $C^*$-algebra and let $n$ be a natural number Then $K_0\left( A\right)$ is isomorphic to $K_0\left(\mathbb{M}_n\left( A\right)  \right)$.

More specially, the homomorphism 
\bean
\begin{split}
\la_{n, A}:  A \hookto \mathbb{M}_n\left( A\right),\\
a \mapsto \begin{pmatrix}
	a & 0\\ 0 & 0
\end{pmatrix}
\end{split}
\eean
induces an isomorphism $K_0\left(\la_{n, A}  \right) : K_0\left( A\right)\cong K_0\left(\mathbb{M}_n\left( A\right)  \right)$.
\end{proposition}
\begin{remark}\label{standard_picture_rem}\cite{blackadar:ko}
The group $K_0\left(A \right)$  may be viewed as formal differences $\left[e\right]-\left[f\right]$, where $e, f \in \mathbb{M}_\infty\left( A^+\right)$  with
$e \equiv f \mod  \mathbb{M}_\infty\left( A\right)$, with the usual notion of equivalence of formal differences in $K_{00}\left(A^+ \right)$ . In fact, any element of $K_0\left( A\right)$  may be written $\left[e\right]-\left[p_n\right]$, where
$p_n = \mathrm{diag}\left(1,...,1,0.... \right)$  (with $n$ ones on the diagonal) and $e \equiv p_n  \mod  \mathbb{M}_\infty\left( A\right)$:
if $n$ is large enough, $f \le p_n$, and $\left[e\right]-\left[f\right] = \left[e' +\left(p_n=f \right) \right]-\left[p_n\right]$, where $e'\sim e$
and $e' \perp p_n$. This is the {\it Standard Picture} of $K_0\left( A\right)$  for general $A$.
\end{remark}
\begin{corollary}\label{stable_k_cor}\cite{blackadar:ko}
	%Corollary 12.2.3. 
	If $A$ is any $C^*$-algebra, then $K_n\left(A \right) \cong K_{n-1}\left(Q^s\left(A \right)  \right)$  for
	$n = 0; 1$.
\end{corollary}
\begin{notation}\cite{blackadar:ko}
We use the following definition
\be\label{gl_eqn}
\forall n \in \N\quad \mathrm{GL}_n\left(A \right) \bydef \left\{\left.x \in \mathrm{GL}_n\left(A^+ \right)\right| x\equiv 1_n \mod  \mathbb{M}_n\left( A\right) \right\}
\ee
\end{notation}
\begin{definition}
	\label{k_1_defn}\cite{blackadar:ko}%$Definition 8.1.1.
\bean
K_1\left(A \right) \bydef GL_\infty(A)/GL_\infty(A)_0 = \lim_{n \to \infty}  GL_n(A)/GL_nA)_0
\eean
 \end{definition}

\begin{corollary}\label{stab_k_0_cor}\cite{wegge_olsen}
	%			Corollary 6.2.11. Stability of Ko 
	The morphism $A \to A\otimes \K$ sending $a \mapsto a\otimes p_1$, where $p_1$ is a rank 1 
	projection in $\K$, induces an isomorphism $K_0\left(A\right)\cong K_0\left(A\otimes\K\right)$. 
	In particular, if $A$ and $B$ are stably isomorphic%, i.e. A <g> K ~ B <g> K, 
	then $K_0\left(A\right)\cong K_0\left(B\right)$. 
\end{corollary}
\begin{cor}\label{stab_k_1_cor}\cite{wegge_olsen}
	%Corollary 7.1.9. Stability of K\ 
	$K_1\left(A\right)\cong K_1\left(A\otimes\K\right)$) for all $C^*$-algebras $A$. 
	In particular, $K_1(A)\cong K_1(B)$ when $A$ and $B$ are stably isomorphic. 
\end{cor}
\begin{remark}\cite{blackadar:ko}
	From the Corollary \ref{stab_k_1_cor} it follows that 
	is A is a $C^*$-algebra then
	\be\label{stab_k1_eqn}
	K_1\left(A \right) \cong U_1\left( \left( A \otimes \K\right)^+ \right) / U_1\left( \left( A \otimes \K\right)^+ \right)_0.
	\ee	
\end{remark}
\begin{proof}
		%Proof.
	As $A \otimes \K$ is the $C^*$-inductive limit of  $\mathbb{M}_n\left( A\right)$ , $K_1\left(A \otimes \K \right) = \varinjlim K_1\left( \mathbb{M}_n\left( A\right)\right) $. 
	%	n(A)) = Ki{A). %according to 7.1.7 and 7.1.8. 
		
		Consequently, if $A\otimes \K \cong B\otimes \K$,  $\quad K_1\left( A\right) = K_1\left(A \otimes \K \right) =  K_1\left(B \otimes \K \right)= K_1\left(B \right)$. 
	\end{proof}
\begin{definition}\label{suspension_defn}\cite{blackadar:ko}
	%Definition 8.2.1.
	The \textit{suspension} of $A$, denoted $SA$, is
	\be\label{suspension_eqn}
	\left\{f : \R \to A\left|f \quad \mathrm{continuous}, \quad \lim_{x \to \infty }\left\|f\left( x\right)= 0  \right\| \right.\right\}
	\ee
	with pointwise  operations and sup norm, $SA$ is a local Banach algebra which is
	complete if $A$ is; if $A$ is a $C^*$-algebra, then so is $SA \cong C_0\left( \R\right) \otimes A$. We also say that 	$SA$ is the \textit{algebraic suspension}.
	We have $S\left( \mathbb{M}_n\left( A\right)\right) \cong  S\left( \mathbb{M}_n\left( A\right)\right) $ , and the map $\phi : A \to B$ induces $S\phi: SA \to SB$.
	Moreover,
	\bean
	\left(SA \right)^+ \cong \left\{\left.f:\left[0,1\right|\to A^+\right| f \text{ continuous},~ f(0)=f(1)=\la1, f(t)=\la 1+x_t \text{ for } x_t \in A\right\}
	\eean
	\be\label{suspension_s_eqn}
	\left(SA \right)^+ \cong \left\{\left.f:S^1\to A^+\right| f \text{ continuous}, ~f(z)=\la1 + x_z, ~x_z \in A, ~x_1 = 0 \in A\right\}.
	\ee
\end{definition}
\begin{remark}\cite{blackadar:ko}
If $\sX$ is a locally compact Hausdorff  space then one has
\be
SC_0\left(\sX \right) = C_0\left( S\sX \right) 
\ee
where $SC_0\left(\sX \right)$ and $S\sX$ are given by the Definitions \ref{suspension_defn} and \ref{top_susp_defn}
\end{remark}
\begin{proposition}\label{k_spit_prop}\cite{blackadar:ko}
%Proposition 8.3.6. 
If $\{0\} \hookto J \hookto A \onto A/J \onto \{0\}$ is a split exact sequence of
local Banach algebras, then $\{0\} \hookto K_n\left( J\right)  \hookto K_n\left( A\right)  \onto K_n\left(  A/J\right)  \onto \{0\}$ is a split
exact sequence for all $n$.
\end{proposition}

\begin{empt}\label{diag_empt} If $A$ is a $C^*$-algebra, $x, y \in A$ and
	\be\label{diag_eqn}
	\begin{split}
		w: \left[0, 1\right] \to \mathbb{M}_2\left(A \right) \\
		t \mapsto  w\left( t\right)  \bydef \begin{pmatrix}
			x & 0\\
			0 & 1
		\end{pmatrix} 
		\begin{pmatrix}
			\cos \frac{\pi}{2} t &  -\sin \frac{\pi}{2} t\\
			\sin \frac{\pi}{2} t &  \cos \frac{\pi}{2} t.
		\end{pmatrix}
		\begin{pmatrix}
			y & 0\\
			0 & 1
		\end{pmatrix}	\begin{pmatrix}
			\cos \frac{\pi}{2} t &  -\sin \frac{\pi}{2} t\\
			\sin \frac{\pi}{2} t &  \cos \frac{\pi}{2} t.
		\end{pmatrix}
	\end{split}
	\ee
	then
	\bean
	w\left(0 \right) = \begin{pmatrix}
		x & 0\\
		0 & y
	\end{pmatrix}, \\
	w\left(1 \right) = \begin{pmatrix}
		xy & 0\\
		0 & 1\end{pmatrix}
	\eean
	(cf \cite{blackadar:ko}).
\end{empt} 

\begin{proposition}\label{inv_path_prop}\cite{blackadar:ko}
	%Proposition 3.4.1. 
	Let $A$ be a unital local Banach algebra. If $x$ and $x$ are invertible in $A$, then there is a path of invertible elements in $\mathbb{M}_2\left( A\right)$  from $\mathrm{diag}(xy; 1)$
	to $\mathrm{diag}(x; y)$. If $A$ is a local $C^*$-algebra and $x$ and $y$ are unitary, the path may
	be chosen to consist of unitaries. So if $z$ is invertible in $A$, then there is a path
	of invertibles in $\mathbb{M}_2\left( A\right)$ ) from 1 to $\mathrm{diag}(z; z^{-1})$.
\end{proposition}
\begin{remark}
	The proof of the Proposition is based on \ref{diag_empt}.
\end{remark}
	
	\begin{theorem}\label{suspension_thm}\cite{blackadar:ko}
		%Theorem 8.2.2. 
		$K_0(A)$ is naturally isomorphic to $K_1(SA)$, i .e. there is an isomorphism $\th_A$ : $K_0(A)\to  K_1\left(SA \right)$ such that, whenever $\phi: A\to B$, the
		following diagram commutes:
		\bean
		\begin{tikzcd}
			K_1\left(A \right) \arrow[r, "K_1\left(\phi \right)" ]  \arrow[d, "\th_A" ]& K_1\left(B \right)\arrow[d, "\th_B" ]\\ 
			K_0\left(SA \right) \arrow[r, "K_0\left(\phi \right)" ] & K_0\left(SB \right)
		\end{tikzcd}
		\eean
		(In the language of category theory, $\th$ gives an invertible natural transformation
		from $K_1$ to $K_0\circ S$).
	\end{theorem}
	\begin{proof} %(Fragment)
		Let $u \in \mathrm{ GL}_n(A)$. Take a path $z_t$ from $1_{2n}$ to $\mathrm{diag}(u; u^{-1})$ in $\mathrm{GL}_{2n}\left(A \right)$ 	(note that the path given in the Proposition \ref{inv_path_prop} lies in $\mathrm{GL}_{2n}\left(A \right)$. Set $e_t \bydef z_t p_n z^{-1}_t$.
		Then $e \bydef (e_t)$ is an idempotent in $\mathbb{M}_{2n}\left(\left( SA\right)^+  \right)$.  Set $\th_a([u])\bydef [e]-[p_n]$ (where $p_n$
		also denotes the corresponding element of $\mathbb{M}_{\infty}\left(\left( SA\right)^+  \right)$ i.e. the constant function $p_n$).
	\end{proof}
\begin{empt}\label{bott_map_empt}\cite{blackadar:ko}
	We have a split exact sequence 
	\bean
	\begin{split}
		\{0\}\hookto S A \hookto \Om A \onto A \onto \{0\},
	\end{split}
	\eean
	where $\Om A\bydef \left[1, A \right)$ 
	which induces a split exact sequence 	\bean
	\begin{split}
		\{0\}\hookto K_1\left( S A \right) \hookto K_1\left( \Om A \right) \onto K_1\left( A \right) \onto \{0\},
	\end{split}
	\eean
	so $K_1\left( SA\right) = \ker \eta_*$  where $\eta : \Om A \to A$ is evaluation at 1. This will be our {\it standard picture} of $K_1\left( SA \right)$. So $K_1\left( SA \right)$ may be viewed as the
	group of homotopy equivalence classes of loops in $GL_n(A)$ with base point 1.
	The group operation is pointwise multiplication, but may alternately be taken
	as the ordinary concatenation multiplication of loops %[Spanier 1966, 1.6.10]
	, i.e.
	$K_1\left( SA\right) \cong \pi_1\left( GL_\infty\left( A\right) \right)$. 
	
	If $e$ is an idempotent in $\mathbb{M}_n\left( A^+\right)$, write 
	\be\label{idem_map_eqn} 
	\begin{split}
		f_e\left( z\right) = - ze+(1-e)  \in GL_n\left(\Om\left( A^+\right)\right)\cong C\left( S^1, GL_n\left( A^+\right) \right) 
	\end{split}
	\ee
	A loop of this sort is called an {\it idempotent loop}. If $e_1 \pmod e_2$ mod $\mathbb{M}_n\left(A \right)$ , then $f_{e_1}f^{-1}_{e_2}\in GL_n\left( \Om A\right)$ 
	taking the value 1 at $z = 1$. If $e_1 \sim_h e_2$,
	then $f_{e_1}$ is homotopic to $f_{e_2}$ as elements of $GL_n\left(\Om\left( A^+\right)  \right)$  taking the value 1 at 1, i.e. as loops in $GL_n\left(A^+ \right)$  with base point 1. We will write $f_{e_1}\sim_h f_{e_2}$ to denote
	this type of homotopy of elements.
\end{empt}
\begin{definition}\label{bott_map_defn}\cite{blackadar:ko}
%Definition 9.1.1. 
The homomorphism $\bt_A: K_0\left(A \right)\to K_1\left(SA \right) $  defined by
$\bt_A \left( \left[e\right]-\left[p_n\right]\right)\bydef \left[f_e f^{-1}_{p_n}\right]$  is called the {\it Bott map} for $A$.
\end{definition}
\begin{empt}
 It is proven in \cite{blackadar:ko} that
	 there is a commutative diagram 
\be\label{ba_eqn}
\begin{tikzcd}
\{{0}\}\arrow[r]& K_0\left(A \right) \arrow[r] \arrow[d, "\bt_A"]& K_0\left(A^+ \right) \arrow[r] \arrow[d, "\bt_A^+"] & K_0\left(\C \right) \arrow[r]\arrow[d, "\bt_\C"]& \{{0}\}\\
\{{0}\}\arrow[r]& K_1\left(SB \right) \arrow[r] & K_1\left(SB^+ \right) \arrow[r] & K_1\left(\C \right) \arrow[r]& \{{0}\}
\end{tikzcd}
\ee
such that vertical arrows are isomorphisms.
\end{empt}

\begin{theorem}\label{bott_map_thm}\cite{blackadar:ko} $\bt_A$ (cf. \eqref{ba_eqn}) is an isomorphism.
	%Theorem 9.2.1 (Bott Periodicity). 
\end{theorem}
If $A$ is a $C^*$-algebra and $J \subset A$ is a closed two-sided ideal then there there a $*$-homomorphisms $\iota: J \hookto A$, $~\pi: A \to A / J$ and a long exact sequence
\be\label{long_exact_seq_eqn}
K_1\left(J \right) \xrightarrow{\iota_*}K_1\left(A \right) \xrightarrow{\pi} K_1\left(A/J \right) \xrightarrow{\partial}K_0\left(J \right) \xrightarrow{\iota_*}K_0\left(A \right) \xrightarrow{\pi} K_0\left(A/J \right)
\ee
(cf. \cite{blackadar:ko})
\begin{definition}\label{index_map_defn}\cite{blackadar:ko}%	Definition 8.3.1.
	$~$Let $u \in  GL_n(A/J)$, and let $w \in GL_{2n}(A)$ be a lift of $\mathrm{diag}(u,u^{-1})$. Define $\partial([u]) = \left[wp_nw^{-1}\right]- \left[p_n\right]\in K_0(J)$. The map $\partial$ is called the \textit{index map}.
\end{definition}
\begin{theorem}\label{standard_exact_sequence_thm} \cite{blackadar:ko} (Standard Exact Sequence) 
%Theorem 9.3.1 (Standard Exact Sequence). 
Let $0 \hookto J \xrightarrow{\iota} A \xrightarrow{\pi}  A / J \onto 0$ be an exact sequence of local Banach algebras. Then the following
six-term cyclic sequence is exact:
\be 
\begin{tikzcd}
K_0\left( J\right)  \arrow[r, "\iota_*"] & K_0\left( A\right) \arrow[r, "\pi_*"]& K_0\left( A/J\right) \arrow[d, "\partial"]  \\
K_1\left(A/J \right) \arrow[u, "\partial"] & \arrow[l, "\pi_*"]K_1\left( A\right) &\arrow[l, "\iota_*"] K_1\left(J \right) 
\end{tikzcd}
\ee
The map $\partial :K_0\left( A/J \right) \to K_1\left(J \right)$  is the composition of the suspended index map $\partial: K_1\left(A /J \right) \to K_0\left( J\right)$ with the Bott map.
\end{theorem}
\begin{definition}\cite{blackadar:ko}\label{exp_map_defn}
%9.3.2. 
The connecting map $\partial :K_0\left( A/J \right) \to K_1\left(J \right)$ is called the {\it exponential map}.
An explicit formula for this map is given by
\be\label{exp_map_eqn}
\partial\left(\left[r\right]-\left[p_n\right] \right) \bydef \left[e^{2\pi ix}\right]
\ee
where $e$ is an idempotent in $\mathbb{M}_\infty \left(\left( A/J \right)^+\right)$   with $e \equiv p_n \text{ mod }\mathbb{M}_\infty \left( A/J \right)$ and $x \in \mathbb{M}_\infty \left( A+ \right)$ 
with $\pi\left( x\right) = e$. 
\end{definition}
\begin{exercise}\label{cs1_exer}\cite{blackadar:ko}
	%9.4.1. 
	Let $A$ be a $C^*$-algebra. Then there is a split exact sequence
	\be\label{cs1_1_eqn} 
	0 \hookto SA \hookto A \otimes C\left(S^1 \right) \onto A \onto 0
	\ee
	Use \ref{k_spit_prop} and Bott periodicity to compute that
	\be\label{cs1_2_eqn} 
	K_0\left( A \otimes C\left(S^1 \right) \right) \cong K_1\left( A \otimes C\left(S^1 \right) \right)\cong K_0\left( A\right) \oplus K_1\left(A \right).
	\ee
\end{exercise}

\begin{empt}\label{commutative_k_empt}
%9.4.3.
Here a follow to  \cite{blackadar:ko}.
Let $A$ be a commutative Banach algebra with maximal ideal space $\sX$.
Then there is a natural homomorphism  $\ga : A \to C_0\left(\sX \right)$, the Gelfand transform induces homomorphisms 
\be\label{top_alg_eqn}
\ga_*: K_j\left( A\right)\to K_j \left(C_0\left( \sX\right)  \right) \cong K^{-j}\left(\sX \right)\cong K^{j}\left(\sX \right)
\ee 
These homomorphisms are actually isomorphisms.
The other isomorphism can then be obtained
using Bott periodicity. The Shilov Idempotent Theorem, which states that every idempotent in $C_0(\sX)$ is in the image of $\ga$, and hence that the group of integer
linear combinations of idempotents in $A$ is isomorphic to $H^0_c\left(\sX; \Z \right) \cong C_c\left( \sX, \Z\right)$. 
The Arens-Royden Theorem (cf. \cite{royden:fa}), which states that  induces an isomorphism of
	$GL_1\left( A\right) / GL_1\left( A\right)_0$ with 
	\be\label{royden_eqn}
GL_1\left(  C_0\left(\sX \right)\right) / GL_1\left( C_0\left(\sX \right)\right)_0\cong H^1_c\left( \sX, \Z\right)	
	\ee
%	A description of these results and some applications, along with a further discussion	of $K$-theory for commutative Banach algebras, may be found in [Taylor	1975].
\end{empt}
%\begin{theorem}\cite{blackadar:ko}
%Theorem 23.1.3 (Kunneth Theorem for Tensor Products (KTP)).[Schochet 1982] 
%Let $A$ and $B$ be $C^*$-algebras, with $A, B \in N$ Then there is a short
%exact sequence
%\bean
%\{0\}\hookto K_*\left(A \right) \otimes K_*\left(B \right) \xrightarrow{\a} K_*\left(A \otimes B \right) \xrightarrow{\bt} \mathrm{Tor}^\Z_1\left(\hookto K_*\left(A \right), K_*\left(B \right) \right) \onto \{0\}.
%\eean 
%The map $\a$ has degree 0 and $\bt$ has degree 1. The sequence is natural in each variable, and splits unnaturally. So if $K_*\left(A \right)$ or $K_*\left(A \right)$ is torsion-free, $\a$ is an isomorphism.
%\end{theorem} 
\begin{theorem}\label{c_t_theorem}\cite{blackadar:ko} (Connes' Thom Isomorphism)
	%Theorem 10.2.2 (Connes' Thom Isomorphism).
	If $\a : \R \to \Aut\left( A\right)$ then	$K_i\left(A\rtimes_\a \R\right) \cong K_{1-i}\left(A \right)$, $\left(i = 0,1 \right)$. 
\end{theorem}
\begin{empt}\cite{blackadar:ko}
	Let $C \bydef C_0\left( \R\cup \{+\infty\}\right)$, and let $\tau$ be the action of $\R$ on $C$ obtained by fixing $+\infty$ and translating $\R$.  Let $CA \bydef C \otimes A$,  $\ga$ the diagonal action $\tau \otimes \a$ $CA$ has an invariant ideal $SA = C_0\left( \R\right) \otimes A$; we also denote the action of $\R$ on $SA$ by $\ga$.
\end{empt}
\begin{lemma}\cite{blackadar:ko}
	$SA \rtimes_\ga \R \cong SA \rtimes_{\tau \otimes 1} \R \cong A\otimes \K$
\end{lemma}
\begin{remark}\label{c_t_rem}\cite{blackadar:ko}
The six-term exact sequence for the extension
\bean
\{0\}\hookto S A \rtimes_\ga \R\hookto CA \rtimes_\ga \R \onto A \rtimes_\a \R \onto \{0\}
\eean
becomes
\be\label{tho_xex_eqn}
\begin{tikzcd}
K_1\left( A\right) \arrow[r] &K_1\left( \R\hookto CA \rtimes_\ga \R\right)\arrow[r] & K_1\left( A \rtimes_\a \R\right) \arrow[d, "index"] \\
K_0\left(  A \rtimes_\a \R\right) \arrow[u, "exp"] &K_0\left( \R\hookto CA \rtimes_\ga \R\right)\arrow[l] & K_0\left( A \right)\arrow[l]
\end{tikzcd}
\ee
The Thom isomorphism amounts to the statement that exp and index are isomorphisms.
\end{remark}
\begin{proof}
	The first isomorphism is given by sending $g \in C_c\left( \R, C_c\left( \right) \R, A\right)$  to $\tilde g$, where $\left[\tilde g \left(s \right) \right]\left( t\right)= \a_{-t}\left(g\left[s\right] \left( t\right) \right)$ . The second isomorphism comes from Takai duality.
\end{proof}

\begin{empt}\cite{blackadar:ko}
Let $A$ be a $C^*$-algebra, $G$ a locally compact group, and $\a$ continuous homomorphism
from $A$ into $\Aut\left(A \right)$ , the group of $*$-automorphisms of $A$ with the
topology of pointwise norm-convergence. A {\it covariant representation }of the covariant
system $\left( A, G, \a\right)$  is a pair of representations $\left(\pi, \rho \right)$  of $A$ and $G$ on the
same Hilbert space such that $\rho(g)\pi(a)\rho(g)^*$ for all $a \in A$, $g \in G$.
Each covariant representation of $\left( A, G, \a\right)$ gives a representation of the twisted
convolution algebra $C_c\left(G, A \right)$  by integration, and hence a pre-$C^*$-norm on this
$*$-algebra. The supremum of all these norms is a $C^*$-norm, and the completion
of $C_c\left(G, A \right)$  with respect to this norm is called the {\it crossed product} of $A$ by $G$
under the action $\a$, denoted $A \rtimes_\a G$, or sometimes $C^*\left( A, G\right)$ or $C^*\left( A, G, \a\right)$.
The $*$-representations of $A \rtimes_\a G$ are in natural one-one correspondence with the
covariant representations of the system $\left( A, G, \a\right)$. 
If$\a$ is a single automorphism of $A$, we may regard $\a$ as giving an action of $A$ on $A$; the crossed product will be called the crossed product of $A$ by $\a$
\end{empt}

\begin{theorem}\label{takai_theorem}\cite{blackadar:ko}
%Theorem 10.1.2 (Takai Duality). 
Let $\left( A, G, \a\right)$ be a covariant system with
$G$ Abelian. Then $\left(A\rtimes_\a G \right)\rtimes_{\hat\a} \hat G \cong K\left( L^2\left(G \right) \right)$. 
So the second crossed product
is stably isomorphic to $A$, and is stable if $G$ is infinite (if $G$ has $n$ elements, it is
$\mathbb{M}_n\left(A \right)$. %Under this isomorphism, the second dual action %$\hat{\hat{\a}}$ of $G$ on the second crossed product $\a\otimes \la$, where $\la$ is the action of $G$ on $K\left(L^2\left(  \right)\right) $  coming from the regular representation of $G$.
\end{theorem}

\begin{theorem}\label{p_v_theorem}\cite{blackadar:ko}
%	Theorem 10.2.1 (Pimsner{Voiculescu Exact Sequence). 
	Let A be a $C^*$-algebra and $\a \in \Aut\left( A\right)$. Then there is a cyclic six-term exact sequence
\be\label{p_v_eqn}
\begin{tikzcd}
	K_0\left( A\right)\arrow[r, "1-\a_*"] & K_0\left(A \right)\arrow[r, "\iota_*"] & K_0\left(A \rtimes_\a \Z \right) \arrow[d]\\
		K_1\left( A\rtimes \Z\right)\arrow[u] & \arrow[l, "\iota_*"] K_1\left(A \right) & \arrow[l, "1-\a_*"]K_1\left(A \right)
\end{tikzcd}
\ee
\end{theorem}
\begin{definition}\label{mapping_torus_defn}\cite{blackadar:ko}
%Definition 10.3.1. 
Let $\a\in \Aut\left( A\right)$ . The {\it mapping torus} of $\a$ is
\bean
M_\a \bydef \left\{f : \R \to A : f\left( x+1\right) = \a f(x)\right\}\cong \left\{f: \left[0,1\right]\to A | f(1)= \a\left( f(0)\right) \right\}
\eean
\end{definition}
\begin{empt}\cite{blackadar:ko}
There is an exact sequence
\be 
\{0\}\hookto S A \hookto M_\a \onto A \onto \{0\}
\ee

We need the following structure theorem for crossed products. If $\th$ is an
action of $\R$ on a $C^*$-algebra $B$, and $\th$ is trivial on $\Z$, then $\bt$ drops to an action
of $\T \cong \R/\Z$, also denoted $\bt$. $\bt$ induces an action $\hat \bt$ of $\hat \T\cong \Z$ on $B \times_\bt\T$ , i.e. an automorphism of $B \times_\bt\T$.
\end{empt}
\begin{proposition}\cite{blackadar:ko}
%Proposition 10.3.2. 
$B \times_\bt \R$ is isomorphic to the mapping torus of $\hat \bt$ on $B \times_\bt \T$.
\end{proposition}
\begin{empt}\label{p_v_empt}\cite{blackadar:ko}
Suppose $\a \in \Aut\left( A\right)$, and let $B \bydef A\rtimes \Z$. There is an action $\bt \bydef \hat \a$ of $T$ on $B$; we may regard $\bt$  as an action of $\R$ on $B$ with $\Z$ acting trivially. We have $B \rtimes_\bt \T \cong A \otimes \K$ by Takai duality, and $B \rtimes_\bt \R \cong M_{\hat \bt}$. Thus there is a short exact sequence
\be\label{p_v_sec_eqn}
\{0\}\hookto S\left( A \otimes \K\right) \hookto B\rtimes_\bt \R \onto A \otimes \K\onto \{0\}.
\ee
From the Thom isomorphism, we have $K_i\left(B \rtimes \bt \R \right) \cong K_{1 =i}\left( B\right)$; thus there is 
exact sequence is just the six-term exact sequence associated to this extension.
It only remains to show that the connecting maps in this exact sequence are
of the form $1 - \a_*$, which comes from the following two propositions.
\end{empt}
\begin{proposition}\cite{blackadar:ko}
%Proposition 10.4.1. 
Let $\a \in \Aut\left( A\right)$, and let $M_\a$ be the
 mapping torus. If
$K_i\left(SA \right)$  is identified  with $K_{1-i}\left( A\right)$  via the Bott map, then the connecting maps
in the six-term exact sequence
\be 
\begin{tikzcd}
	K_1\left( A\right)\arrow[r]&K_0\left(M_\a\right)\arrow[r]& K_0\left( A\right)\arrow[d, "\partial"] \\
		K_1\left( A\right)\arrow[u, "\partial"]&\arrow[l]K_1\left(M_\a\right)& \arrow[l]K_0\left( A\right)
\end{tikzcd}
\ee
are of the form $1-\a_*$.
\end{proposition}
\begin{proposition}\label{k_aut_prop}\cite{blackadar:ko}
%Proposition 10.4.2. 
Let $\a \in \Aut\left(A \right), ~\ga \in \Aut\left(\K \right)$. If $K_i\left( A\right)$  is identified  with
$K_i\left(A \otimes  \K\right)$  in the standard way, then $\a_* = \left(\a\otimes\ga \right)_*$.
\end{proposition}
\begin{proposition}\cite{blackadar:ko}
%$Proposition 10.4.3.$ Let  be an automorphism of a C-algebra A. 
Then $K_i\left(A \rtimes_\a \Z \right)\cong K_{1-i}\left( M_\a\right)$  for $i = 0, 1$. The isomorphism is natural with respect to covariant homomorphisms.
\end{proposition}

	\section{Karoubi conjecture}
		\subsection{Karoubi conjecture}
	
	\begin{theorem}\label{rosen_ak1_thm}\cite{rosen:algebraicK}
		%	Theorem 2.1. 
		Let $A$ be a Banach algebra (over $\mathbb{F}=\R$ or $\C$). There is a
		functorial "comparison map" of spectra $c : K(A) \to  K^{\mathrm{top}}(A)$ induced by the
		"change of topology" map  $GL\left( A\right)^\dl$ . The induced map $c_* : K_0(A) \to  K_0^{\mathrm{top}}(A)$ is the identity, and the induced map  $c_* : K_0(A) \to  K_0^{\mathrm{top}}(A)$
		is the	quotient map $GL(A/E(A) \to GL(A)/GL(A)_0$. (Here $E(A)$ is the group generated by the elementary matrices, and $GL(A)_0 \supseteq E(A)$ is the identity component of $GL(A)$.)
		Recall also that $K(A)$ is a $K(\mathbb{F})$-module spectrum and that $\mathbb{F}(A)$ is a
		$K_0^{\mathrm{top}}(\mathbb{F})$-module spectrum. The map c is compatible with the product structures,
		in that the diagram
		\bean
		\begin{tikzcd}
			K(\mathbb{F})\times K(A)\arrow[r, "\mu"]\arrow[d, " c_{\mathbb{F}}\times c_A"] & K(A)\arrow[d,"c"]\\
			K^{\mathrm{top}}(\mathbb{F})\times K^{\mathrm{top}}(A)\arrow[r, "\mu_{\mathrm{top}}"]& K^{\mathrm{top}}(A)
		\end{tikzcd}
		\eean
		$\mu$ denoting the multiplication maps, is homotopy commutative.
	\end{theorem}	
	
	\begin{theorem}\label{rosen_ak2_thm}\cite{rosen:algebraicK}
		%Theorem 3.3. 
		The Karoubi Conjecture is true. In other words, if $A \cong A \otimes \K$  is a stable $C^*$-algebra, then the comparison map $c : K(A)\to K^{\mathrm{top}}(A)$ of
		Theorem \ref{rosen_ak1_thm} is an equivalence.
	\end{theorem}

	\section{Continuous trace $C^*$-algebras}

\begin{definition}\label{abelian_element_defn}\cite{pedersen:ca_aut}
	A positive element in $C^*$ - algebra $A$ is {\it Abelian} if subalgebra $xAx \subset A$ is commutative.
\end{definition}
\begin{definition}\label{continuous_trace_c_alt_defn}\cite{rae:ctr_morita}
	%Definition 5.13. 
	A \textit{continuous-trace} $C^*$-\textit{algebra} is a $C^*$-algebra $A$ with Hausdorff
	spectrum $\sX$ such that, for each $x_0\in\sX$ there are a neighbourhood $\sU$ of $x_0$ and $a\in A$ such that $\rep_{ x}\left( a\right) $ is a rank-one projection for all $x \in \sU$.
\end{definition}
\begin{lemma}\label{hausdorff_spectrum_lem}\cite{rae:ctr_morita}
	Suppose $A$ is a $C^*$-algebra with Hausdorff spectrum $\mathcal{X}$.
	\begin{itemize}
		\item [(a)] If $a, b \in A$ and $\mathfrak{rep}_x\left(a \right)=  \mathfrak{rep}_x\left(b \right)$ for every $x \in  \mathcal{X}$, then $a = b$.
		\item[(b)] For each $a \in A$ the function $x \mapsto \left\|\mathfrak{rep}_x\left(a \right) \right\|$ is continuous on  $\mathcal{X}$, vanishes at infinity and has sup-norm equal to $\left\| a\right\|$. 
	\end{itemize}
\end{lemma}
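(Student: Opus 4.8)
The plan is to handle the two parts separately, drawing on standard structure theory of the spectrum that is valid for an arbitrary $C^*$-algebra, and to bring in the Hausdorff hypothesis only at the final step of (b).

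For (a) I would show that the family $\{\mathfrak{rep}_x\}_{x \in \mathcal{X}}$ jointly separates the points of $A$. Put $c = a - b$ and suppose $c \neq 0$, so that $\|c^*c\| = \|c\|^2 > 0$. By the Gelfand--Na\u{\i}mark theory there is a pure state $\varphi$ on $A$ with $\varphi(c^*c) = \|c\|^2$, and its GNS representation is irreducible by the equivalence (vi) of Theorem \ref{irred_thm}. This representation is unitarily equivalent to $\mathfrak{rep}_x$ for some $x \in \mathcal{X}$, and $\|\mathfrak{rep}_x(c)\|^2 \ge \varphi(c^*c) > 0$ forces $\mathfrak{rep}_x(a) \neq \mathfrak{rep}_x(b)$, a contradiction. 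Hence $c = 0$. Note that this step uses only the existence of enough pure states, not the Hausdorff hypothesis.

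For (b) I would first record two facts about $N_a \colon x \mapsto \|\mathfrak{rep}_x(a)\|$ that hold for every $C^*$-algebra, since $N_a$ factors through the hull--kernel space $\mathrm{Prim}(A)$ via $\|\mathfrak{rep}_x(a)\| = \|a + \ker \mathfrak{rep}_x\|_{A/\ker \mathfrak{rep}_x}$: (i) $N_a$ is lower semicontinuous, i.e. each set $\{x : N_a(x) > \alpha\}$ is open; and (ii) for every $\alpha > 0$ the superlevel set $\{x : N_a(x) \ge \alpha\}$ is quasi-compact. The sup-norm identity is immediate: by the joint faithfulness established in (a) the representation $\bigoplus_{x} \mathfrak{rep}_x$ is isometric, and the norm in a direct sum is the supremum of the summand norms, so $\sup_{x} N_a(x) = \|a\|$.

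The Hausdorff hypothesis enters exactly once, to promote quasi-compactness to closedness: in a Hausdorff space every quasi-compact subset is compact, hence closed. Consequently each $\{x : N_a(x) \ge \alpha\}$ with $\alpha > 0$ is closed, which is precisely upper semicontinuity of $N_a$; together with (i) this yields continuity. Compactness of every superlevel set is, by definition, the statement that $N_a$ vanishes at infinity, so all three assertions of (b) follow. The main obstacle is fact (ii): lower semicontinuity and the sup-norm identity are soft, and it is the genuinely $C^*$-algebraic input that the superlevel sets are quasi-compact which, once separation is available, converts into the compactness and closedness that drive continuity and vanishing at infinity. I would therefore concentrate the effort on justifying (ii), noting that the conclusion can fail when $\mathcal{X}$ is non-Hausdorff and that upper semicontinuity is the one place where the topological hypothesis is indispensable.
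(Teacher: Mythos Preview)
The paper does not supply its own proof of this lemma: it is stated with a citation to \cite{rae:ctr_morita} and no proof environment follows. Your argument is correct and is essentially the standard proof one finds in that reference (cf.\ Raeburn--Williams, Lemma~A.30 and the surrounding discussion): part~(a) via the existence of a pure state achieving the norm of $c^*c$, and part~(b) via the general lower semicontinuity of $x\mapsto\|\mathfrak{rep}_x(a)\|$ on the spectrum together with quasi-compactness of the superlevel sets, with the Hausdorff hypothesis entering only to turn quasi-compact into closed and hence yield upper semicontinuity and vanishing at infinity. Your identification of fact~(ii) as the substantive $C^*$-algebraic input is exactly right; nothing is missing.
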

\begin{defn}\label{type_I_defn}\cite{pedersen:ca_aut}
	We say that a $C^*$-algebra $A$ is \textit{of type} $I$ if each non-zero quotient of $A$ contains a non-zero
	Abelian element. If $A$ is even generated (as $C^*$-algebra) by its Abelian elements we say
	that it is \textit{of type} $I_0$.
\end{defn}
\begin{prop}\label{continuous_trace_c_a_proposition}\cite{pedersen:ca_aut}
	Let $A$ be a $C^*$ - algebra with continuous trace . Then
	\begin{enumerate}
		\item[(i)] $A$ is of type $I_0$;
		\item[(ii)] $\hat A$ is a locally compact Hausdorff space;
		\item[(iii)] For each $t \in \hat A$ there is an Abelian element $x \in A$ such that $\hat x \in K(\hat A)$ and $\hat x(t) = 1$.
	\end{enumerate}
	The last condition is sufficient for $A$ to have continuous trace .
\end{prop}

%\begin{definition}\label{projective_bundle_defn}\cite{bryl:loop}
%	The {\it projective space 
%		bundle} is a trivial fiber bundle $p : P \onto M$ with 
%	fiber the {projective space} (cf. Appendix \ref{projective_bundle_empt})
%	$\mathbb{P} \left( E\right) \bydef  (E\setminus \{0\})/\C^*$ of lines in $E$.
%\end{definition}
%	\begin{proposition}\label{ctr_lt_prop}\cite{cuntz_meyer_ros:bivariant}
	%Proposition 9.3. 
	%		Let $\H$ be a Hilbert space, and let $\K \bydef \K\left(\H \right)$  be the algebra of		compact operators on $\H$. Then every irreducible *-representation of $\K$ is unitary		equivalent to the standard representation of $\K$ on $\sH$, and every $*$-automorphism		of $\K$ is given by conjugation by a unitary operator on $\H$. The *-automorphism		group of $\K$ can be identified with the topological group $PU\bydef U/\T$ the		projective unitary group of $\H$, with the quotient topology from the strong operator		topology on $U\left(\H\right)$.
	%	\end{proposition}

\begin{proposition}\label{ctr_bundle_prop}\cite{cuntz_meyer_ros:bivariant}
	%Proposition 5.59. 
	%Theorem 9.9 
	(Dixmier–Douady). 
	Any stable separable algebra $A$ of continuous
	trace over a second-countable locally compact Hausdorff space $\sX$ is isomorphic to
	$\Ga_0\left( \sX, \A\right)$ , the sections vanishing at infinity of a locally trivial bundle of algebras
	over $\sX$, with fibres $\K$ and structure group $\Aut(\K) = PU = U/\T$. Classes of
	such bundles are in natural bijection with the \v{C}ech cohomology group $\check{H}^3\left(\sX, \Z \right)$.
	The 3-cohomology class $\dl\left( A\right)$  attached to (the stabilization of) a continuous-trace
	algebra A is called its Dixmier–Douady class.
\end{proposition}

\	
\section{$C^*$-algebras of groupoids}
	\subsection{Groupoids}
	\paragraph*{}
	A groupoid is a small category with inverses, or more explicitly:
	\begin{definition}\label{groupoid_defn}\cite{connes:ncg94}
		% 104
		A \textit{groupoid} consists of a set $\G$, a distinguished subset $\G^0\subset\G$, two maps
		$r, s : \G\to \G^0$ and a law of composition
		$$
		\circ: \G^2\bydef\left\{\left.\left(\ga_1,\ga_2 \right) \in \G\times\G~\right| s\left(\ga_1\right)= r\left(\ga_2\right)\right\}\to \G
		$$
		such that
		\begin{enumerate}
			\item $s\left(\ga_1\circ\ga_2\right)=s\left(\ga_2\right), \quad r\left(\ga_1\circ\ga_2\right)=r\left(\ga_1\right)\quad \forall\left(\ga_1, \ga_2 \right) \in \G$
			\item $s\left(x\right)=r\left(x\right)=x \quad\forall x\in\G^0$
			\item $\ga\circ s\left(\ga\right)= r\left(\ga\right)\circ\ga = \ga\quad \forall\ga\in\G$
			\item $\left( \ga_1\circ\ga_2\right) \circ\ga_3=\ga_1\circ\left( \ga_2\circ\ga_3\right) $
			\item Each $\ga \in\G$ has a two-sided inverse $\ga^{-1}$, with $\ga\circ\ga^{-1}=r\left(\ga\right)$, $\ga^{-1}\circ\ga=r\left(\ga\right)$.
		\end{enumerate}
		The maps $r$, $s$ are called the \textit{range} and \textit{source} maps.
	\end{definition}
	\begin{definition}\label{groupoid_sets_defn}
		If $A$ and $B$ are subsets of $\G$, one may form the following subsets o f $\G$ :
		\bean
		A^{-1} \bydef \left\{x \in \G \left| x^{-1}\in A\right.\right\},\\
		AB \bydef \left\{z \in \G | x \in A, ~y \in B \quad  z = x y \right\}.
		\eean
		A groupoid $\G$ is said to be \textit{principal} if the map $( r , s )$ from $\G$ into $\G^0\times \G^0$ is one-to-one.%, it is said to be \textit{principal  \ref{top_principal_defn} }  the map ( r , d ) is onto.
		For $u, v, \in \G^0$ , $G^u \bydef r^{-1}(u),\quad G_v \bydef s^{-1}(v), \quad G^u_v \bydef G^u\cap G_v $ and
		$G(u) = G^u_u$ which is a group, is called  the \textit{isotropy group} at $u$.
		The relation  $u \sim v$ if and only if $G^u\cap G_v \neq \emptyset$ is an equivalence relation on the unit space $\G^0$. its equivalence classes are called \textit{orbits} and the \textit{orbit} of  $u$ is denoted $[ u ]$ . 
		$G^0/G$
		denotes the \textit{orbit space}.% A groupoid is principal  \ref{top_principal_defn}  if and only if it has a single orbit.	
	\end{definition} 
	\begin{example}
		The set $\G^2$ of composable elements may be given the following groupoid structure:
		$( x , y )$ and $( y ' , z )$ are composable if and only if  $y' = xy, ~ ( x , y ) ( x y , z ) = ( x , y z )$, and $( x , y )^{-1} =( x y , y ^{-1} )$.
		Then $r^2 ( x , y ) = ( x , r ( y ) ) = ( x , d ( x ) )$ and $d^2 ( x , y ) = ( x y , d ( x y ) )$. The map
		$x\mapsto ( x , d ( x ) )$ identified the unit space of $G^2$ with $G$. The groupoid $G^2$ is principal.
		One may notice that it  comes from the action of $\G$ G on itself. % It is principal  \ref{top_principal_defn}  if and only if $\G$  is a group.
	\end{example}
	
	\begin{definition}\label{groupoid_hom_defn}\cite{renault:gropoid_ca}
		Let $\G$ and $\mathcal H$ be groupoids a map $\phi: \G \to \H$ is \textit{homomorphism} if one has:
		\begin{itemize}
			\item
			\bean
			\left(x, y \right) \in \G^2 \quad \Rightarrow \quad \left(\phi\left( x\right), \phi\left(y\right)  \right) \in \H^2,\\
			\phi\left(\G^0 \right) \subset \H^0, 
			\eean
			\item the  map
			\bean
			\phi^2 : \G^2 \to \H^2,\\
			\left(x, y\right)\mapsto \left(\phi(x), \phi(y) \right) 
			\eean 
			is a homeomorphism. 
		\end{itemize}
		Two homomorphism are \textit{similar} (write $\phi\sim \psi$) if there exists a function $\th: \G^0 \to \H$ such that $\left(\th\circ r\right)(x)\phi\left(x\right)= \psi\left(x\right)\left( \th\circ s\right)(x)$. Groupoids $\G$ and $\H$ are called \textit{similar} (write $\G\sim \H$) if there exists homomorphisms $\phi: \G \to \H$ and $\psi : \H\to \G$  such that $\phi \circ \psi$ and $\psi \circ \phi$ are similar to identity isomorphisms. 
	\end{definition}
	If $\G$ is groupoid then for any $A \subset 
	G^0$ and $B 
	\in G^0$ denote by
	\be\label{gab_eqn} 
	\G^A_B \bydef \left\{\left.\ga \in \G \right| r\left(\ga  \right)\in A \quad s\left(\ga \right)  \in B\right\}
	\ee
	\begin{definition}\label{groupoid_reduction_defn}\cite{renault:gropoid_ca}
		Let $\G$ be a groupoid, and let $E$ be a subset of $\G^0$.
		A subgroupoid 
		$$
		\G^E_E \bydef \left\{x \in G | r(x), s(x)\in E\right\}
		$$
		with unit space $E$ is said to be  the \textit{reduction} 
		of $\G$ by $E$.
	\end{definition}
	\begin{defn}\label{groupoid_isotropy_defn}\cite{renault:gropoid_ca}
		For $u, v\in \G^0$, $~\G^u\bydef r^{-1}\left( u\right)$,  $~\G_v\bydef s^{-1}\left( v\right)$  $~\G^u_v\bydef \G^u\cap \G_v$ and
		$\G(u) = G^u_u$ which is a group, is called  the \textit{isotropy  group} at $u$.
	\end{defn}
	\begin{prop}\label{groupoid_reduction_prop}\cite{renault:gropoid_ca}
		Let $\G$ be a groupoid, $E$ a subset of $\G^0$ which meets each orbit in $\G$; then  	$\G^E_E$ and $\G$ are similar (cf. Definition \ref{groupoid_hom_defn}).
	\end{prop}
	\begin{definition}\cite{renault:gropoid_ca}
		%1.6. D e f i n i t i o n : 
		Let $\G$ be a groupoid, $A$ a group and $c: \G\to A$ a homomorphism, the
		\textit{skew-product} $\G(c)$ is the groupoid $\G\times A$ where : $( x , a )$ and $(y,b)$ are composable if and only if  $x$ and $y$ are composable and $b = a c ( x )$, $( x , a ) ( y , a c ( x ) )\bydef ( x y , a )$, and $( x , a )^{-1} =
		\left(  x^{-1}  , a c ( x )\right)$; $\quad ( x , a )  \bydef( r ( x ) , a )$ , $\quad s ( x , a )\bydef ( d ( x ) , a c ( x ) )$ . Its unit space is $\G^0\times A$.
	\end{definition}
	
	\begin{definition}\cite{renault:gropoid_ca}
		%1.7. D e f i n i t i o n :
		Let $\G$ be a groupoid, let $A$ be a group and let $\a: A \to \Aut\left(\G\right)$ be a
		homomorphism. We write $x*a \bydef \left[\a\left(a^{-1}\right)\right]$ for $a\in A$ and $x\in \G$. The \textit{semi-direct product} $G\rtimes_\a A$ is the groupoid $G\times A$ where $( x , a )$ and $( z , b )$ are composable if and only if $z = y*a$ with  $x$ and $y$ composable,$ ( x , a ) ( y * a,b) = ( x y , a b )$ , and $(x,a)^{ -1}\bydef \left( x ^{-1} * a, a^{ - l} \right)$ .
		Then, $r ( x , a ) = ( r ( x ) , e )$ and $s ( x , a ) = (d(x)   a , e )$ . The unit  space may be identified 
		with $\G^0$.
	\end{definition}
	
	\begin{proposition}
		%1.8. P r o p o s i t i o n : 
		With above notation,
		\begin{enumerate}
			\item[(i)] $\G(c) \rtimes_\a A$  is similar to $\G$ and
			\item[(ii)] $\left( \G\rtimes_a A\right) (c)$ is similar to $\G$.
		\end{enumerate}
		
		.\end{proposition}
	\begin{definition}
		%1.9. D e f i n i t i o n :
		An \textit{inverse semi}-\textit{group} is a set $\mathscr G$  endowed with an associative binary operation , noted multiplication, and an inverse map
		\bean
		\mathscr G\to  \mathscr G,\\
		s \mapsto s^{-1}
		\eean
		such that the following relations  are satisfied  $ss^{-1}s = s$ and $s^{-1}ss^{-1}=s^{-1}$ .
	\end{definition}
	\begin{definition}
		%1 . 1 0 . D e f i n i t i o n : 
		Let $\G$ be a groupoid. A subset $s$ of $\G$ will be called a $\G$-\textit{set} if 
		the restriction of $r$ and $s$ to it are one-to-one . Equivalently, $s$ is a $\G$-set if and only is $s^{-1}s$
		and $ss^{-1}$  are contained in $\G^0$.
		
	\end{definition}

	\begin{definition}
		% PAGE 11 (15)
		Suppose that $\mathscr C$ is some category. A map $p$ from a set $A$ onto a set $A^0$ such that
		each fiber $p^{-1} ( u )$ is an object of  $\mathscr C$ will be called a  $\mathscr C$-\textit{bundle} map and $A$ will be called  $\mathscr C$-bundle.  Let $A$ be
		a  $\mathscr C$ - bundle with the bundle map $p: A \to A_0$. Write $A_u \bydef p^{-1} ( u )$.
		$$
		\text{Iso}(A) = \left\{\text{isomorphisms }\phi_{u,v}| A_u\to A_v\quad u,v \in A^0\right\} 
		$$	
		has a natural structure of groupoid. %: #u,v and ~ v ' , ware composable i f f v' = v - then t h e i r product is ~u,v  #v,w' and 4 -1 is the iso-' U~Vmorphism inverse of ~u,v" The b i j e c t i o n idu, u ~ u i d e n t i f i e s the u n i t space of Iso(A)and A O. Iso(A) is c a l l e d the isomorphism groupoid of the C-bundle A.
	\end{definition}
	\begin{definition}
		% Definition 1.11.
		Let $\G$ be a groupoid. A $\G$-\textit{bundle} $(A,L)$ is a $\mathscr C$ - bundle $A$ together
		with a homomorphism $L : G \to \text{Iso} (A)$ such that $L^0: \G^0\to A^0$ is a bijection . (We will often identify $\G^0$ and $A^0$). When  $\mathscr C$  is the category of Abelian groups, one speaks of a
		$\G$-\textit{module bundle}.
	\end{definition}
	\begin{empt}\label{groupoig_gn_empt}
		Given a $\G$-module bundle $( A , L )$, one can form the following cochain complex. Let
		us first define $\G^n$ for any $n\in\N$. The sets $\H^0$ , $\G^1\bydef \G$ and $\G^2$ have already been defined. For $n> 2$, $\quad \G^n$ is the set of $n$-tuples $(x_0 . . . . . x_{n-1}) \in \G\times...\times \G$ such that for $
		j = 1 , . . . , n - 1$ , $\quad x_j$ is composable with its left  neighbor. A $n$-cochain is a function from $G^n$ to $A$ which satisfies the conditions
		\begin{enumerate}
			\item[(i)] $p\circ f(x_0 . . . . . x_{n-1}) = d(x_0)$ and
			\item[(ii)] if $n > 0$ and for some $j = 0, . . . , n - 1$ , $\quad x_0 \in \G^0$, then $f ( x_0, ..., x_j , ..., x_{n-1})\in A^0$.
		\end{enumerate}
		The set $C^n\left(\G, A\right)$ of $n$-cochains is an Abelian group under point-wise addition. The
		sequence 
		\bean
		0 \to C^0\left( \G, A\right)\to C^0\left( \G, A\right)\to C^1\left( \G, A\right)\to...\to  C^n\left( \G, A\right) \xrightarrow{\delta^n} C^{n + 1}\left( \G, A\right)\to ...
		\eean
		where 
		\be\label{groupoid_b_eqn}
		\begin{split}
			\delta^0f(x)\bydef L(x)\quad  f\circ s(x) - f \circ r ( x ),\\
			\delta^n(f(x_0 ,..., x_n) = L ( x_0 ) f ( x_1,..., x_n) +\\+ \sum_{j=1}^n (-1)^j
			f ( x_0 ,..., x_{j-1}x_j . . . . . x_{n-1})+(-1)^{n-1} f ( x_0 , . . . , x_{n-1}) \quad  n > 0,
		\end{split}
		\ee
		
		is a cochain complex.
	\end{empt}	
	\begin{definition}\label{groupoid_cocycle_defn}
		The group of $n$-cocycles of this complex will be denoted by $Z^n(\G,A)$,
		the group of $n$-coboundaries will be denoted by $B^n(\G,A)$ and the $n$-th cohomology group
		$Z^n(\G,A)/B^n(\G,A)$ will be denoted by $H^n(\G,A)$.
	\end{definition}
	
	\begin{definition}\label{groupoid_topological_defn}\cite{renault:gropoid_ca}
		A \textit{topological groupoid} consists of a groupoid $\G$ and a topology compatible with the groupoid structure:
		\begin{enumerate}
			\item [(a)] $\G \to \G \quad x \mapsto x^{-1}$ is continuous,
			\item [(b)] $\G^2\to \G\quad \left(x,y\right)\mapsto xy$ is continuous where $\G^2$ has the induced topology from $\G \times \G$.
		\end{enumerate}
	\end{definition}
	
	\begin{remark}\cite{renault:gropoid_ca}
		One has:
		\begin{itemize}
			\item the map $x \mapsto x^{-1}$ is a homeomorphism,
			\item if $\G$ is Hausdorff  then $\G^0$ is closed in $\G$,
			\item if $\G^0$ is Hausdorff  then  $\G^2$ is closed in $\G \times \G$, $\G^0$ is both a subspace of $\G$ and a quotient of $\G$ (by the map $r$), the induced and the quotient topology coincide.
		\end{itemize}
	\end{remark}
	\begin{definition}\label{groupoid_haar_defn}\cite{renault:gropoid_ca}
		Let $\G$ be a locally compact groupoid. A \textit{left  Haar system} for $\G$
		consists of measures $\left\{\left.\la^u \right| u \in \G^0\right\}$ on $\G$ such that
		\begin{enumerate}
			\item [(a)] the support $\supp\la^u$ of the measure $\la^u$ is $\G^u$,
			\item [(b)]  (continuity) for any $f \in C_c\left(\G\right)$, $u \mapsto \la(f)(u) = \int f d\la^u$ is continuous, and
			\item [(c)]  (left invariance) for any $x\in \G$ and any $f \in  C_c(\G )$, $\int  f ( x y ) d\la^{s(x)}(y) =
			\int f(y)d\la^{r(x)}(y)$.
			
		\end{enumerate}
	\end{definition}
	\subsubsection{Covering groupoids}\label{covering_groupoid_sec}
	\paragraph{}
	The theory of covering groupoids is explained in \cite{zhi:cov_group}. Given $u \in \G^0$, a \textit{sieve} on $\G$ is a set $S\subset G^u$ (cf. Definition \ref{groupoid_sets_defn}) $\G$ such that
	\bean
	f \in S \text{ and the composite } fh \text{ is defined implies} fh \in S.
	\eean
	Since $x \in \G$ is invertible, then every nonempty sieve on $\G$ coincides with $\G^u$.
	%We call the maximal sieve t(G) as a neighbourhood of G in the groupoid G.
	\begin{definition}\label{groupoid_covering_defn}\cite{zhi:cov_group}
		Let $p : \widetilde{\G}\to G$ be a morphism of groupoids. An ordered pair
		$\left( \widetilde{\G}, p\right)$ is a \text{covering groupoid} if for each object $\widetilde u \in \widetilde{\G}^0$ the restriction of $p$
		$$
		\widetilde{\G}^{\widetilde{u}}\to {\G}^u
		$$
		is bijection where both $\widetilde{\G}^{\widetilde{u}}$ and ${\G}^u$ are given by the Definition \ref{groupoid_isotropy_defn}. The morphism $p$ is called the \textit{covering}. 
	\end{definition}
	
	\begin{theorem}\label{covering_groupoid_thm}\cite{zhi:cov_group}
		%Theorem 2.3 (Unique lifting theorem). 
		Let $\left(\widetilde \G, p \right) $ be a covering groupoid of $\G$
		with $p\left(\widetilde{u} \right) = u$  where $\widetilde u \in \widetilde \G$ and $u \in \G$, and $f:  \F \to \G$ a groupoid
		morphism with $f\left( v\right)= u$  such that$\F$ is connected. Then $f$ lifts
		to a morphism $\widetilde f:  \F \to \widetilde\G$ with $\widetilde f\left(v \right)  =\widetilde u$ if and only if $f^*\F^v_v \subset p^*\widetilde\G^{\widetilde u}_{\widetilde u}$
		and if this lifting exists, then it is unique.
		%For every subgroup ? of the fundamental group  (G,G) of G at G, Higgins [2]and Brown (Theorem 9.4.3 [1])also prove the following covering-groupoid existence
		%theorem:
	\end{theorem}
	%For every subgroup ? of the fundamental group  (G,G) of G at G, Higgins [2]
	%and Brown (Theorem 9.4.3 [1]) also prove the following covering-groupoid existence
	%theorem:
	%\begin{theorem}
	%Let $u \in \G^0$ with  the connected groupoid $\G$, and let $\Ga \subset \G^u_u$ 
	%then there is a covering groupoid
	%$\left(\widetilde \G, p\right)$ of $\G$ with $p\left(\widetilde \G \right)= \G$  where  $p^* = ?.
	%\end{theorem}
	%Theorem 2.4 (Existence). Let G be an object of the connected groupoid G, and let
	%be a subgroup of the fundamental group , then there is a covering groupoid
	%(  G, p) of G with p( G) = G where  G ? Ob(  G) and p? ( G,   G) = ?.
	
	There are following motivations this definition:
	\begin{itemize}
		\item analog of unique lifting theorem,
		\item  analog of fundamental group,
		\item analog of Galois theory.
	\end{itemize}

	\subsection{Groupoid $C^*$-algebras}
	\paragraph{}
	Let $\G$ be a locally compact Hausdorff  groupoid with left  Haar system $\left\{\la^u\right\}$ and let $\sigma$ be a continuous 2-cocycle in $Z^2\left(\G, \T\right)$. For $f ,g \in C_c(\G, \sigma )$, let us define
	\be\label{groupoid_*_c_eqn}
	\begin{split}
		f * g \left(x\right)\bydef 
		\int f ( x y ) g \left( y^{-1}\right)\sigma\left(xy, y^{-1} \right) d\la^{d(x)}(y),\\
		f^* ( x ) \bydef \overline{f ( x^{ -1})}~\overline{\sigma\left(x, x^{-1} \right)}, 	
	\end{split}
	\ee	
	In particular is $\sigma$ is trivial then one has a $*$-algebra
	\be\label{groupoid_*_eqn}
	\begin{split}
		f * g \left(x\right)\bydef 
		\int f ( x y ) g \left( y^{-1}\right) d\la^{d(x)}(y),\\
		f^* ( x ) \bydef\overline{f ( x^{ -1})} 	
	\end{split}
	\ee	
	which is a specialization of \eqref{groupoid_*_c_eqn}
	
	\begin{empt} % ??? DEFINE NORM ???
		Let $\G$ be a locally compact groupoid with left  Haar system $\left\{\la^u\right\}$ (cf. Definition \ref{groupoid_haar_defn}).  For $f$ and $g\in C_c\left(\G\right)$, let  us define
		\be\label{groupoid_*__defn}
		\begin{split}
			f * g \left(x\right)\bydef 
			\int  f ( x y ) g \left( y^{-1}\right) d\la^{d(x)}(y),\\
			f^* ( x ) \bydef\overline{f ( x^{ -1})} 	
		\end{split}
		\ee
		It is proven in \cite{renault:gropoid_ca} that the equations yield a $*$-algebra. 
	\end{empt}
	\begin{definition}\label{groupoid_representation_defn}\cite{renault:gropoid_ca}
		%1.3. D e f i n i t i o n : 
		A \textit{representation} of $C_c\left(\G, \sigma\right)$ on a Hilbert space $\H$ is a $*$-homomorphism $L : C_c\left(\G, \sigma\right) \to B\left(\H \right)$ which is continuous when $C_c\left(\G, \sigma\right)$ has the inductive limit 
		topology (cf. Definition \ref{top_ind_lim_defn}) and $B\left(\H \right)$  the weak operator topology (cf. Definition \ref{weak_topology_defn}), and is such that the linear  span of
		$$
		L\left(f \right) \xi , \quad f \in C_c\left(\G, \sigma\right), \quad \xi \in \H
		$$
		is dense in $\H$.
	\end{definition}
	\begin{lemma}\label{groupoid_mult_repr_lem}\cite{renault:gropoid_ca}
		%1.13. Lemma : 
		If $L$ is a representation of $C_c\left(\G, \sigma\right)$, there exists a unique representation
		$M$ of $C_c\left(\G^0\right)$ such that for every $h \in C_c\left(\G^0\right)$ and every $f\in C_c\left(\G, \sigma\right)$, $L\left(h f \right)= M\left(h \right)L\left(f \right)$  and
		$L\left(fh \right)= L\left(f \right)M\left(h \right)$. 
	\end{lemma}

	\begin{definition}\label{foli_groupoid_red_defn}\cite{renault:gropoid_ca}
		If $\Pi$ is the set of irreducible representations of $\C\left[\G \right]$ then the completion of $\C\left[\G \right]$ with respect to $C^*$-norm
		\be\label{groupoid_red_norm}
		\left\| a\right\|_r = \sup_{\pi\in \Pi}\left\|\pi\left( a\right)\right\| 
		\ee
		is said to be the \textit{reduced algebra} of $\G$. It will be denoted by $C^*_r\left(\G\right)$.
	\end{definition}
	\begin{empt}\label{groupoid_reg_empt}\cite{renault:gropoid_ca}
		Let $\sigma$ be a 2-cocycle and $\mu$ a quasi-invariant measure. Consider the measurable
		field of Hilbert space $\left\{L^2\left(\G, \la^u \right) \right\}_{u \in \G^0}$ with square integrable sections
		$\int^\oplus L^2\left(\G, \la^u \right) d\mu\left( u\right)= L^2\left(u \right)$. For $x\in \G$, define $L_u\left( x\right)$  mapping $L^2\left( \G, \la^{d\left( x\right) }\right)$  to
		$L^2\left( \G, \la^{r\left( x\right) }\right)$ by $L_u\left( x\right)\xi\left( y\right) \bydef \sigma\left( x, x^{-1}\right) \xi\left( x^{-1}y\right)$. This yields a given by 
		\be\label{groupoid_reg_eqn}
		L_u :  C_c\left(\G, \sigma\right)\to B\left(L^2\left(u \right)  \right) 
		\ee
		$\sigma$-representation of $\G$,
	\end{empt}
	\begin{definition}\label{groupoid_reg_defn}\cite{renault:gropoid_ca} % 1.8
		The above $\sigma$-representation of $\G$ will be called the $\sigma$-\textit{regular representation
			of $\G$ on $\mu$}. Its integrated form is the \textit{regular representation on $\mu$ of
			$C_c\left(\G, \sigma \right)$}. 
	\end{definition}
	\begin{proposition}\label{groupoid_reg_prop}\cite{renault:gropoid_ca}%1.11. Proposition :
		$C_c\left(\G, \sigma \right)$ has a faithful family of bounded representations, consisting
		of regular representations.
	\end{proposition}
	It results from Proposition \ref{groupoid_reg_prop} that the function defined by 
	\be\label{groupoid_red_norm_eqn}
	\begin{split}
		\left\|\cdot  \right\|_r : C_c\left(\G, \sigma \right)\to \R,\\
		f \mapsto \sup_{u \in \G^0} \left\|L_u\left( f\right)   \right\|,
	\end{split}
	\ee
	where $L_u$ ranges over all representations induced from the unit space, is
	a $C^*$ -norm on $ C_c\left(\G, \sigma \right)$ dominated by the $C^*$ -norm $\left\|f  \right\|$.
	\begin{definition}\label{groupoid_red_defn}	\cite{renault:gropoid_ca}%2.8. Definition : 
		The \textit{reduced $C^*$ -algebra} $C^*_r\left(\G, \sigma \right)$ of $\G$ is the completion of
		$C^*_r\left(\G, \sigma \right)$ for the reduced norm $\left\|\cdot  \right\|_r$.
	\end{definition}

	\subsection{$C^*$-algebras of non-Hausdorff \'etale groupoids}
	\paragraph{} Here I follow to \cite{neshv:non_haudorff}. 
	\begin{definition}
		Assume $\G$ is a locally compact, not necessarily Hausdorff , \'etale groupoid.
		By this, we mean that $\G$ is a groupoid endowed with a locally compact topology
		such that
		\begin{itemize}
			\item  the groupoid operations are continuous;
			\item the unit space $\G^0$ is a locally compact Hausdorff  space in the relative
			topology;
			\item the range map $r : \G \to \G^0$ and the source map $r : \G \to \G^0$ are local
			homeomorphisms.
		\end{itemize}
	\end{definition}

	For an open Hausdorff  subset $\sV \subset \G$, consider the usual space $C_c\left( \sV\right)$  of continuous
	compactly supported functions on $\sV$ . Every such function can be extended
	by zero to $\G$; in general, this extension is not a continuous function on $\G$.
	This way, we can view  $C_c\left( \sV\right)$ as a subspace of the space of functions $\text{Func}\left(\G \right)$ 
	on $\G$. For arbitrary open subsets $\sU\subset G$ we denote by $C_c\left( \sU\right)$  the
	linear span of the subspaces $C_c\left( \sV\right) \subset \text{Func}\left(\G \right)$ for all open Hausdorff  subsets
	$\sV \subset \sU$ Instead of all possible $\sV$ , it suffices to take a collection of open bisections
	covering $\sU$. 
		
	\subsection{Foliations and pseudogroups}
	%\paragraph{}
	% Here I follow to \cite{connes:ncg94}
	\begin{definition}\cite{connes:ncg94}		
		Let $M$ be a smooth manifold and $TM$ its tangent bundle, so that
		for each $x \in M$, $T_x M$ is the tangent space of $M$ at $x$. A
		smooth subbundle $\mathcal{F}$ of $TM$ is called {\it integrable} if and only if one of
		the following equivalent conditions is satisfied:
		
		\smallskip
		
		\begin{enumerate}
			
			\item[(a)] Every $x \in M$ is contained in a submanifold $W$ of $M$ such that
			$$
			T_y (W) = \mathcal{F}_y \qquad \forall \, y \in W \, ,
			$$
			
			\smallskip
			
			\item[(b)] Every $x \in M$ is in the domain $U \subset M$ of a
			submersion $p : U \to {\mathbb R}^q$ ($q = {\rm codim} \, \mathcal{F}$) with
			$$
			\mathcal{F}_y = {\rm Ker} (p_*)_y \qquad \forall \, y \in U \, ,
			$$
			
			\smallskip
			\item[(c)] $C^{\infty} \left( \mathcal{F}\right)  = \{ X \in C^{\infty} \left(TM\right) \, , \ X_x \in
			\mathcal{F}_x \quad \forall \, x \in M \}$ is a Lie algebra,
			
			\smallskip
			
			\item[(d)] The ideal $J\left( \mathcal{F}\right) $ of smooth exterior differential forms which
			vanish on $\mathcal{F}$ is stable by exterior differentiation.
		\end{enumerate}
		
	\end{definition}
	
	\begin{empt}\cite{connes:ncg94}
		A foliation of $M$ is given by an integrable subbundle $\mathcal{F}$ of $TM$.
		The leaves of the foliation $\left(M , \mathcal F\right)$ are the maximal connected
		submanifolds $L$ of $M$ with $T_x (L) = \mathcal{F}_x $, $\forall \, x \in L$,
		and the partition of $M$ in leaves $$M = \cup
		L_{\alpha}\,,\quad\alpha \in X$$ is characterized geometrically by
		its ``local triviality'': every point $x \in M$ has a neighborhood
		$\mathcal U$ and a system of local coordinates
		$(x^j)_{j = 1 , \ldots , \dim V}$ called
		{\it foliation charts}, so
		that the partition of $\mathcal U$ in connected components of
		leaves corresponds to the partition of 
		\begin{equation*}
			{\mathbb
				R}^{\dim M} = {\mathbb R}^{\dim \mathcal F} \times {\mathbb R}^{\text{codim}
				\, \mathcal F}
		\end{equation*}
		in the parallel affine subspaces 
		$
		{\mathbb R}^{\dim \mathcal F}
		\times {\rm pt}$.
		The corresponding foliation will be denoted by
		\begin{equation}\label{fol_chart_eqn}
			\left(\R^n, \mathcal{F}_p \right) 
		\end{equation}
		where $p = \dim   \mathcal{F}_p$.
		To each foliation $\left(M, \mathcal{F}\right)$ is canonically associated a $C^*$- algebra
		$C^*_r (M, ~\mathcal{F})$ which encodes the topology of the space of leaves.  To
		take this into account one first constructs a manifold $\mathcal G$, $\dim
		\, \mathcal G = \dim \,M + \dim \,\mathcal F$. 
	\end{empt}
	\begin{definition}\label{foli_trans_defn}\cite{candel:foliI}
		Let $N \subset M$ be a smooth submanifold. We say that $\sF$ is \textit{transverse} to $N$ (and write $\sF\pitchfork N$) if, for each leaf $L$ of $\sF$ and each point $x \in L\cap N$, $T_x\left(L \right)$ ans $T_x\left(N \right)$ together span $T_x\left( M\right)$. At the other extreme At the other extreme, we say that $\sF$ is tangent to $N$ if, for each leaf $L$ of $\sF$, either $L \cap N = \emptyset$ or $L \subset N$.
	\end{definition}
	The symbol $\mathbb{F}^p$ denotes either the full Euclidean space $\R^p$ or Euclidean half space $\mathbb{H}^p = \left\{\left.\left(x_1,..., x_n \right) \in \mathbb{R}^p\right| x_1 \le 0 \right\}$.
	\begin{definition}\label{foli_rect_defn}\cite{candel:foliI}
		A rectangular neighborhood in $\mathbb{F}^n$ is an open subset of the form $B = J_1\times...\times J_n$, where each $J_j$ is a (possibly unbounded) relatively open interval in the $j^{\text{th}}$ coordinate axis. If $J_1$ is of the form $\left( a,0\right]$, we say that $B$ has boundary $\partial B\left\{\left(0, x_2,..., x_n \right)\right\}\subset B$.	%In the following, we will consider coordinate charts that have values in F" x F", allowing the possibility of manifolds with boundary and (convex) COI'Ile1'S.
	\end{definition}
	\begin{definition}\label{foli_chart_defn}\cite{candel:foliI}
		% 	Definition 1.1.17 \\
		Let $M$ be an $n$-manifold. A \textit{foliated chart} on $M$ of codimension $q$ is a pair $\left(\sU, \varphi)\right)$, where $\sU\subset M$ is open and $\varphi : \sU \xrightarrow{\approx} B_\tau\times B_\pitchfork$ is a diffeomorphism, $B_\pitchfork$ being a rectangular neighborhood in $\mathbb{F}^q$ and $B_\tau$ a rectangular neighborhood in $\mathbb{F}^{n-q}$. The set $P_y = \varphi^{-1}\left(B_\tau \times \left\{y\right\} \right)$ , where $y \in B_\pitchfork$, is called a \textit{plaque} of this foliated chart. For each $x \in B_\tau$, the set  $S_x=\varphi^{-1}\left(\left\{x\right\} \times B_\pitchfork \right)$  is called a \textit{transversal} of the foliated chart. The set $\partial_{\tau}\sU = \varphi^{-1}\left(B_\tau \times \left(\partial B_\pitchfork \right)  \right)$ is called the \textit{tangential boundary} of $\sU$ and $\partial_{\pitchfork}\sU = \varphi^{-1}\left(\partial \left( B_\tau\right)  \times \partial B_\pitchfork \right)$ is called the \textit{transverse boundary} of $\sU$.
	\end{definition}
	
	\begin{definition}\label{foli_defn}
		Let $M$ be an $n$-manifold, possibly with boundary and corners, and let $\sF= \left\{L_\la\right\}_{\la \in \La}$ be a decomposition  of $M$ into connected, topologically immersed submanifolds of dimension $k=n-q$. Suppose that $M$ admits an atlas $\left\{\sU_\a \right\}_{\a \in \mathfrak A}$ of foliated charts of codimension $q$ such that, for each $\a \in \mathscr A$ and each $\la \in \La$, $L_\la \cap \sU_\a$ is a union of plaques. Then $\sF$ is said to be a \textit{foliation} of $M$ of codimension $q$ (and dimension $k$) and $\left\{\sU_\a \right\}_{\a \in \mathscr A}$ l is called a \textit{foliated atlas} associated to $\sF$. Each $L_x$ is called a leaf of the foliation and the pair $\left(M, \sF \right)$  is called a \textit{foliated manifold}. If the foliated atlas is of class $C^r$ ($0 \le r \le \infty$ or $r=\om$), then the foliation $\sF$ and the foliated manifold $\left(M, \sF \right)$. is said to be \textit{of class} $C^r$.
	\end{definition}
	\begin{definition}\label{fol_res_defn}
		If $\left(M,\mathcal F \right)$ is a foliation and $\mathcal{U} \subset M$ be an open subset $\mathcal F|_{\mathcal{U}}$ is the restriction of $\mathcal F$ on ${\mathcal{U}}$ then we say that  $\left(\mathcal{U},\mathcal F|_{\mathcal{U}}\right) $ is the \textit{restriction}   of $\left(M,\mathcal F \right)$ \textit{to}  $\mathcal{U}$. (cf. \cite{connes:ncg94})
	\end{definition}
	\begin{definition}\label{foli_atlas_defn}
		A \textit{foliated atlas} of codimension $q$ and class $C^r$ on the $n$-manifold $M$ is a $C^r$-atlas $\mathfrak{A}\bydef\left\{\sU_\a \right\}_{\a \in \mathscr A}$  of foliated charts of codimension $q$ which are \textit{coherently foliated} in the sense that, whenever $P$ and $Q$ are plaques in distinct charts of $\mathfrak{A}$, then $P\cap Q$ is open both in $P$ and $Q$. 
	\end{definition}
	\begin{definition}\label{foli_coh_atlas_defn}
		Two foliated atlases lt and $\mathfrak{A}$ on $\mathfrak{A}'$ of the same codimension and smoothness class $C^r$ are coherent ($\mathfrak{A}\approx\mathfrak{A}'$) if $\mathfrak{A}\cup\mathfrak{A}'$ is a foliated $C^*$-atlas.
	\end{definition}
	\begin{lemma}\label{foli_coh_atlas_eq_lem}
		Coherence of foliated atlases is an equivalence relation.
	\end{lemma}
	\begin{lemma}\label{foli_coh_atlas_ass_lem}
		Let  $\mathfrak{A}$ and $\mathfrak{A}'$ be foliated atlases on $M$ and suppose that $\mathfrak{A}$ is associated to a foliation $\sF$. Then $\mathfrak{A}$ and $\mathfrak{A}'$ are coherent if and only if $\mathfrak{A}'$ is also associated to $\sF$.
	\end{lemma}
	\begin{definition}\label{foli_reg_atlas_defn}
		A foliated atlas $\mathfrak{A}\bydef\left\{\sU_\a \right\}_{\a \in \mathscr A}$ of class $C^r$ is said to be \textit{regular} if
		\begin{enumerate}
			\item [(a)] For each $\al \in \mathscr A$, the closure $\overline{\sU}_\al$ of $\sU_\al$ is a compact subset of a foliated chart  $\left\{\sV_\a \right\}$ and $\varphi_\a = \psi|_{\sU_\a }$.
			\item[(b)] The cover $\left\{\sU_\a \right\}$ is locally finite.
			\item[(c)] if $\sU_\a$ and $\sU_\bt$ are elements of $\mathfrak{A}$, then the interior of each closed plaque $P \in \overline \sU_\a$ meets at most one plaque in $\overline \sU_\bt$.
		\end{enumerate}
	\end{definition}
	\begin{lemma}\label{foli_reg_atlas_ref_lem}
		Every foliated atlas has a coherent refinement that is regular.
	\end{lemma}
	\begin{thm}\label{foli_thm}
		The correspondence between foliations on $M$ and their associated foliated atlases induces a one-to-one correspondence between the set of foliations on $M$.
	\end{thm}
	
	We now have an alternative definition of the term "foliation". 
	\begin{defn}\label{foli_alt_defn}
		A \textit{foliation} $\sF$ of codimension $q$ and class $C^r$ on $M$ is a coherence class of foliated atlases of codimension q and class $C^r$ on $M$.
	\end{defn}
	By Zorn's lemma, it is obvious that every coherence class of foliated atlases contains a unique maximal foliated atlas. 
	\begin{defn}\label{foli_max_defn}
		A \textit{foliation of codimension} $q$ and class $C^r$ on $M$ is a maximal foliated $C^r$-atlas of codimension $q$ on $M$.
	\end{defn}
	
	\begin{empt}\label{foli_graph_empt}
		Let  $\Pi\left( M,\sF\right)$ be the space of paths on leaves, that is, maps $\a : [0,1] \to M$ that are continuous with respect to the leaf topology on $M$. For such a path  let $s\left(\a \right) = \a\left( 0\right)$  be its source or initial point and let  $r\left(\a \right) = \a\left( 1\right)$ be its range or terminal point. The space $\Pi\left( M,\sF\right)$ has a partially defined multiplication: the product $\a\cdot \bt$ of two elements $\a$ and $\bt$ is defined if the terminal point of $\bt$ is the initial point of $\a$, and the result is the path $\bt$ followed by the path $\a$. (Note that this is the opposite to the usual composition of paths  $\al\#\bt = \bt \cdot \a$ used in defining the fundamental group of a space.)
	\end{empt}
	\begin{definition}\label{foli_path_space_defn}
		In the situation of \ref{foli_graph_empt} we say that the topological space $\Pi\left( M,\sF\right)$ is the \textit{space of path on leaves}.
	\end{definition}
	\begin{definition}\label{foli_groupoid_defn}\cite{candel:foliI}
		%Definition 2.3.3.
		A groupoid $\G$ on a set $\sX$ is a category with inverses, having $\sX$ as its set of objects. For $y,z \in \sX$ the set of morphisms of $\G$ from $y$ to $z$ is denoted by $\G^z_y$.
	\end{definition}
	\begin{defn}\label{foli_graph_defn}\cite{candel:foliII}
		%	Definition 1.3.1
		The \textit{graph}, or \textit{my groupoid}, of the foliated space $\left( M,\sF\right)$  is the quotient space of $\Pi\left( M,\sF\right)$ by the equivalence relation that identifies two paths $\a$ and $\bt$ if they have the same initial and terminal points, and the loop $\a \cdot \bt$ has trivial germinal my.
		The graph of $\left( M,\sF\right)$ will be denoted by $\G\left(M, \sF\right)$, or simply by  $\G\left( M\right)$ or by $\G$ when all other variables are understood.
	\end{defn}
	\begin{remark}
		There is the natural surjective continuous map
		\be\label{foli_cov_map_eqn}
		\Phi : \Pi\left( M,\sF\right)\to \G\left( M,\sF\right)
		\ee
		from the space of path on leaves to the foliation graph.
	\end{remark}
	\begin{proposition}\label{foli_chart_prop}
		%	Proposition 1.3.14. \\
		Let $\mathfrak{A}= \left\{\sU_\iota\right\}$ be a regular foliated atlas of $M$. For each finite sequence of indices $\left\{\a_0 ,...,\a_k\right\}$, the product
		\be	\label{foli_chart_eqn}
		\sV_{   \a} =	 \G\left(\sU_{\iota_0} \right)~...~\G\left(\sU_{\iota_k} \right) \in \G\left(M, \sF\right) \quad \a = \left({\iota_0},...,{\iota_k}\right)
		\ee
		is either empty or a foliated chart for the graph $\G$. The collection of all such finite products is a covering of $\G$ by foliated charts. 
	\end{proposition}
	\begin{theorem}\label{foli_graph_thm}
		The graph $\G$ of $\left( M,\sF\right)$ is a groupoid with unit space $\G_0 = M$, and this algebraic structure is compatible with a foliated structure on $\G$ and $M$. Furthermore, the following properties hold.
		\begin{enumerate}
			\item [(i)] The range and source maps $r, s : \G \to M$ are topological submersions. 
			\item[(ii)] The inclusion of the unit space $M \to\G$ is a smooth map. 
			\item[(iii)] The product map $\G\times_M \G \to G$, given by $\left( \ga_1 , \ga_2\right) \mapsto\ga_1 \cdot \ga_2$, is smooth.
			\item[(iv)]  There is an involution $j: \G \to \G$, given by $j\left( \ga\right) = \ga^{-1}$, which is a diffeomorphism of $\G$, sends each leaf to itself, and exchanges the foliations given by the range. 
		\end{enumerate}
		
	\end{theorem}
	
	Above definitions refines the equivalence relation coming from
	the partition of $M$ in leaves $M = \cup L_{\alpha}$. 
	An element $\gamma$ of $\mathcal G$ is given by two points $x = s(\gamma)$,
	$y = r(\gamma)$ of $M$ together with an equivalence class of smooth
	paths: $\gamma (t)\in M$, $t \in [0,1]$; $\gamma (0) = x$, $\gamma
	(1) = y$, tangent to the bundle $\mathcal{F}$ ( i.e. with $\dot\gamma (t)
	\in \mathcal{F}_{\gamma (t)}$, $\forall \, t \in {\mathbb R}$) up to the
	following equivalence: $\gamma_1$ and $\gamma_2$ are equivalent if and only if
	the {\it my} of the path $\gamma_2 \circ \gamma_1^{-1}$ at the
	point $x$ is the {\it identity}. The graph $\mathcal G$ has an obvious
	composition law. For $\gamma , \gamma' \in G$, the composition
	$\gamma \circ \gamma'$ makes sense if $s(\gamma) = r(\gamma')$. If
	the leaf $L$ which contains both $x$ and $y$ has no my, then
	the class in $\mathcal G$ of the path $\gamma (t)$ only depends on the pair
	$(y,x)$. In general, if one fixes $x = s(\gamma)$, the map from $\mathcal G_x
	= \{ \gamma , s(\gamma) = x \}$ to the leaf $L$ through $x$, given
	by $\gamma \in \mathcal G_x \mapsto y = r(\gamma)$, is the my covering
	of $L$.
	Both maps $r$ and $s$ from the manifold $\mathcal G$ to $M$ are smooth
	submersions and the map $(r,s)$ to $M \times M$ is an immersion
	whose image in $M \times M$ is the (often singular) subset
	\begin{equation*}\label{subset}
		\{ (y,x)\in M \times M: \, \text{ $y$ and $x$ are on the same leaf}\}.
	\end{equation*}
	% We assume, for notational convenience, that the manifold $\mathcal G$ is Hausdorff , but as this fails to be the case in very interesting examples I shall refer to \cite{connes:foli_survey} for the removal of this hypothesis.  
	For
	$x\in M$ one lets $\Omega_x^{1/2}$ be the one dimensional complex
	vector space of maps from the exterior power $\wedge^k \,  \mathcal{F}_x$, $k =
	\dim F$, to ${\mathbb C}$ such that
	$$
	\rho \, (\lambda \, v) = \vert \lambda \vert^{1/2} \, \rho \, (v)
	\qquad \forall \, v \in \wedge^k \,  \mathcal{F}_x \, , \quad \forall \,
	\lambda \in {\mathbb R} \, .
	$$
	Then, for $\gamma \in\mathcal G$, one can identify $\Omega_{\gamma}^{1/2}$ with the one
	dimensional complex vector space $\Omega_y^{1/2} \otimes
	\Omega_x^{1/2}$, where $\gamma : x \to y$. In other words
	\be\label{foli_om_g_eqn}
	\Omega_{\mathcal G}^{1/2}=\, r^*(\Omega_M^{1/2})\otimes s^*(\Omega_M^{1/2})\,.
	\ee
	
	%\begin{empt}\cite{candel:foliI}
	%	Page 57.
	%The fact that the regular foliated atlas $\mathfrak A$ is locally finite, together with the 2nd countability of $M$, implies that $\mathfrak A$ is at most countably infinite. Thus, the disjoint union 
	%\be
	%S = \bigsqcup_{\iota \in \I} S_\iota
	%\ee	
	%is a q-manifold that imbeds as a submanifold $S \hookto M$ is  transverse to $\sF$.
	%\end{empt}
	%\begin{prdf}\label{foli_preudo_grp_prdf} \cite{candel:foliI}
	%	Proposition 2.2.6. I \\
	%	Let $\mathfrak{A}$ be a regular foliated atlas of class $C^r$ and let $\ga = \left\{\ga_{\a, \b}\right\}_{\a, \b \in \mathfrak{A}}$ be its my cocycle. Then the set $\Ga_{\mathfrak{A}}$ of my transformations is the $C^r$ pseudogroup on $S$ generated by $\ga$, called the $\mathrm{my~pseudogroup}$ of $\mathfrak{A}$.
	%\end{prdf}
	
	\begin{empt}\label{foli_sc_haus_empt}\cite{candel:foliII}
		The  groupoid of a foliated space all leaves of which are simply connected is Hausdorff .
	\end{empt}
	
	\begin{exercise}\label{foli_haus_exer}\cite{candel:foliII}
		%: Exercise 1.3.8. 
		Prove or decide the following.
		\begin{enumerate}
			\item The graph of a foliated space all leaves of which are simply connected is Hausdorff .
			\item The graph of a foliated space all leaves of which have trivial holo nomy is Hausdorff .
		\end{enumerate}	
		%	 (1) The graph of a foliated space all leaves of which are simply connected is Hausdorff . (2) The graph of a foliated space all leaves of which have trivial holo nomy is Hausdorff . (3) The graph of the Reeb foliation of the three-sphere is not Hausdorff . (4) True or false: The graph of a codimension one foliated three manifold containing a Reeb component is not Hausdorff .
	\end{exercise}

	%	\begin{definition}\label{foli_graph_defn1}
		%		The  manifold $\mathcal G$ called the \textit{graph} (or \textit{my groupoid})
		%		of the foliation  $\left(M, \mathcal{F}\right)$  Denote by $\mathcal G\left(M, \mathcal{F}\right)$ the graph of  $\left(M, \mathcal{F}\right)$.
		%	\end{definition}
	\begin{definition}\label{foli_pseudo_defn}\cite{candel:foliI}.
		%	Definition 2.2.4. I \\
		Let $N$ be a $q$-manifold. A $C^r$ pseudogroup $\Ga$ on $N$ is a collection of $C^r$ diffeomorphisms $h : D(h) \xrightarrow{\approx} R(h)$ between open subsets of $N$ satisfying the following axioms. 
		\begin{enumerate}
			\item  If $g, h \in \Ga$  and $R(h) \subset G(g)$, then $g \circ h \in \Ga$
			\item	 If $h \in \Ga$, then $h^{-1} \in \Ga$. 
			\item $\Id_N \in \Ga$. 
			\item If $h \in \Ga$ and $W \subset  D(h)$ is an open subset, then $\left.h\right|_W \in \Ga$. 
			\item If $h:D(h) \xrightarrow{\approx} R(h)$ is a $C^r$ diffeomorphism between open subsets of $N$ and if, for each $w \in  D(h)$, there is a neighborhood $W$ of $w\in  D(h)$ such that $\left.h\right|_W \in \Ga$, then $h \in \Ga$. 
		\end{enumerate}
		If $\Ga' \subset \Ga$ is also a pseudogroup, it is called a subpseudogroup of $\Ga$.
	\end{definition}
	
	\begin{remark}
		Any pseudogroup is a groupoid (cf. Definition \ref{groupoid_defn}).
	\end{remark}
	\begin{remark}\cite{candel:foliI}\label{foli_pseudo_rem}
		In the case of general foliations, the total my group of a foliated bun dle must be replaced by a local analogue called the \textit{my pseudogroup}.
	\end{remark}
	\begin{remark}\label{foli_groupoid_n_red_defn}
		If $\left(M, \sF\right)$ is a foliated manifold and $N$ is a tranversal then
		\be\label{foli_gnn_eqn}
		\G^N_N \bydef \left\{\left. \ga \in \G\left(M, \sF\right)\right| s\left(\ga\right), r\left(\ga\right)\in N\right\}
		\ee
		is	a pseudogroup.

	\end{remark}
	
	\subsection{Operator algebras of foliations}\label{foli_alg_subsec}
	\paragraph*{}
	Here I follow to \cite{candel:foliII,connes:ncg94}.  Since the bundle $\Om^{1/2}$ is trivial (because $\G\left(M, \sF\right)$ admits partitions of unity), a choice of an everywhere positive density $\nu$ allows us to identify $\Ga_c\left(\G\left(M, \sF\right),\Om^{1/2}  \right)$  with $\Coo_c\left( \G\left(M, \sF\right)\right)$. 
	The definition of foliated space makes sense even when the underlying topological space fails to satisfy the Hausdorff  separation axiom. Non-Hausdorff  spaces appear naturally in the theory of foliations. In a graph of a foliated space is not necessary Hausdorff . It will be necessary to use functions with compact support on such spaces. However, a non-Hausdorff  space may not have sufficiently many such functions, the basic reason being that compact subsets of a Hausdorff  space are not necessarily closed. The non-Hausdorff  spaces that will appear here have a particularly simple local structure, and even when it is possible to construct appropriate functions using this local structure, the standard operation of “extension by 0” of local objects to the full space does not pro duce continuous functions. M. Crainic and I. Moerdijk \cite{cra_moe:nhaus} proposed a very natural way of dealing with this problem, and this preliminary section de scribes it. (That paper develops an extended sheaf  theory for non-Hausdorff  manifolds.% (cf. Appendix \ref{sheaves_nh_sec})).
	Here $\sX$ will denote a separable topological space having the structure of a foliated space, but it is not required that the topology be Hausdorff . It is only required that $\sX$ can be covered by countably many open sets homeomorphic to a product $\D \times \mathcal Z$, where $D$ is an open ball in Euclidean $n$-space and $\mathcal Z$ is a separable locally compact Hausdorff  space. Let $\mathcal C\infty$ denote the structure sheaf  of the foliated space $\sX$, that is, the sheaf  of smooth functions on $\sX$. Let $\A$ be a sheaf of $\mathcal C\infty$-modules over $\sX$, for instance, the sheaf  of differential forms or other sheaves of smooth sections of foliated vector bundles. For such a sheaf  $\A$ over $\sX$, let $\A$ denote its Godement resolution: $A'\left(\sU\right)$ is the set of all sections (continuous or not) of $\A$ over $\sU\subset\sX$. For a Hausdorff  open subset $\mathcal W$ of $\sX$, let $\Ga_c\left( \mathcal W, \A\right) $ denote the set of continuous compactly supported sections of $\A$ over $\mathcal W$. If $\mathcal W\subset\sU$, then “extension by 0” induces a well defined homomorphism $\Ga_c\left( \mathcal W, \A\right)\to \A'\left(\sU \right)$. For an open subset $\sU$ of $\sX$, let $\Ga_c\left( \mathcal U, \A\right)$ denote the image of the homomorphism $\oplus \Ga_c\left( \mathcal W, \A\right)\to \A'\left(\sU\right)$ %(cf. Definition \ref{nh_csoft_gc_defn}). From the equation \eqref{sheaf_inc_eqn} it follows that there is the inclusion
	\be\label{foli_incc_eqn}
	\Ga_c\left(\mathcal W, \A \right) \hookto \Ga_c\left(\mathcal U, \A \right).
	\ee
	Let $\G\bydef \G\left(M, \sF \right)$ be a foliation chart. 	The bundle $\Omega_M^{1/2}$ is trivial on $M$, and we
	could choose once and for all  a trivialization $\nu$ turning
	elements of $\Ga_c \left(\mathcal G , \Omega_{\mathcal G}^{1/2}\right)$ into functions.
	Let us
	however stress that the usage of half densities makes all the
	construction completely canonical.
	For $f,g \in \Ga_c \left(\mathcal G , \Omega_{\mathcal G}^{1/2}\right)$, the convolution
	product $f * g$ is defined by the equality
	\be\label{foli_prod_eqn}
	(f * g) (\gamma) = \int_{\gamma_1 \circ \gamma_2 = \gamma}
	f(\gamma_1) \, g(\gamma_2) \, .
	\ee
	This makes sense because, for fixed $\gamma : x \to y$ and fixing $v_x
	\in \wedge^k \,  \mathcal{F}_x$ and $v_y \in \wedge^k \,  \mathcal{F}_y$, the product
	$f(\gamma_1) \, g(\gamma_1^{-1} \gamma)$ defines a $1$-density on
	$G^y = \{ \gamma_1 \in G , \, r (\gamma_1) = y \}$, which is smooth
	with compact support (it vanishes if $\gamma_1 \notin\supp f$),
	and hence can be integrated over $G^y$ to give a scalar, namely $(f * g)
	(\gamma)$ evaluated on $v_x , v_y$.
	The $*$ operation is defined by $f^* (\gamma) =
	\overline{f(\gamma^{-1})}$,  i.e. if $\gamma : x \to y$ and
	$v_x \in \wedge^k \, \mathcal{F}_x$, $v_y \in \wedge^k \, \mathcal{F}_y$ then $f^*
	(\gamma)$ evaluated on $v_x , v_y$ is equal to
	$\overline{f(\gamma^{-1})}$ evaluated on $v_y , v_x$. We thus get a
	$*$-algebra $\Ga_c \left(\mathcal G , \Omega_{\mathcal G}^{1/2}\right)$. 
	where $\xi$ is a square integrable half density on $\mathcal G_x$. 
	For each leaf $L$ of
	$\left(M, \mathcal{F}\right)$ one has a natural representation of this $*$-algebra on the
	$L^2$ space of the my covering $\tilde L$ of $L$. Fixing a
	base point $x \in L$, one identifies $\tilde L$ with $\mathcal G_x = \{
	\gamma , s(\gamma) = x \}$ and defines
	\begin{equation}\label{foli_repr_eqn}
		(\rho_x (f) \, \xi) \, (\gamma) = \int_{\gamma_1 \circ \gamma_2 =
			\gamma} f(\gamma_1) \, \xi (\gamma_2) \qquad \forall \, \xi \in L^2
		(\mathcal G_x),\
	\end{equation}

	Given
	$\gamma : x \to y$ one has a natural isometry of $L^2 (\mathcal G_x)$ on $L^2
	(G_y)$ which transforms the representation $\rho_x$ in $\rho_y$.
	\begin{lemma}
		%	Lemma 1.4.1. \\
		If $f_1 \in \Ga_c\left(\sU_{   \a_1},\Om^{1/2} \right)$ and $f_2 \in \Ga_c\left(\sU_{   \a_2},\Om^{1/2} \right)$ then their convolution is a well-defined element $f_1*f_2 \in \Ga_c\left(\sU_{   \a_1}\cdot\sU_{   \a_2},\Om^{1/2} \right)$
	\end{lemma}
	%page 24.
	%Let $\Ga_c\left( \G,\Om^{1/2}\right)$  be the space of compactly supported smooth sections of $\Om^{1/2}$ over $\G$; its elements are the compactly supported half-densities on G. When G is not Hausdorff , the meaning of ?c (G,D1/2) is that which was described in Section 1.2. This space will now be given the structure of an algebra with involution. This structure is first described when G is Hausdorff , and the details will then be given when G is arbitrary.
	
	\begin{proposition}\label{foli_repr_prop}
		%Proposition 1.4.5. \\
		If $\sV \subset \G$ is a foliated chart for the graph of $\left(M, \sF\right)$ and $f \in \Ga_c\left(\sV, \Om^{1/2}\right)$ , then $\rho_x\left( f\right)$, given by \eqref{foli_repr_eqn}, is a bounded integral operator on $L^2\left(\G_x \right)$.
	\end{proposition}
	\begin{empt}
		The space of compactly supported half-densities on $\G$ is taken as given by the exact sequence 
		\be\label{foli_ga_p_eqn}
		\bigoplus_{\a_0\a_1}\Ga_c\left(\sU_{   \a_0\a_1}, \Om^{1/2} \right) \to \bigoplus_{\a_0}\Ga_c\left(\sU_{   \a_0},\Om^{1/2} \right) \xrightarrow{\Ga_\oplus}  \Ga_c\left(\G,\Om^{1/2}\right) 
		\ee
		associated to a regular cover for $\left((M, \sF)\right)$ as above. The first step for defining a convolution is to do it at the level of $\bigoplus_{\a_0}\Ga_c\left(\sU_{   \a_0}\Om^{1/2} \right)$, as the following lemma indicates. 
	\end{empt}

	\begin{defn}\label{foli_red_defn}
		% 	Definition 1.4.7.\\
		The \textit{reduced} $C^*$-algebra of the foliated space $\left(M,\sF\right)$ is the completion of $\Ga_c\left( \G,\Om^{1/2}\right)$ with respect to the pseudonorm \be\label{foli_pseudo_norm_eqn}
		\left\|f \right\| =\sup_{x \in M}\left\|  \rho_x\left(f\right)\right\|
		\ee where $\rho_x$ is given by  \eqref{foli_repr_eqn}.
		This $C^*$-algebra is denoted by $C^*_r\left(M,\sF\right)$.
	\end{defn} 
	An obvious consequence of the construction of 
	$C^*_r\left(M,\sF\right)$ is the following. 
	
	\begin{cor}\label{foli_cov_alg_cor}
		%	Corollary 1.4.8.
		Let $M$ be a foliated space and let $\mathfrak{A}$ be a regular cover by foliated charts. Then the algebra generated by the convolution algebras $\Ga_c\left( \G\left(\sU \right), \Om^{1/2}\right)$, $~\sU\in\mathfrak{A}$ is dense in  $C^*_r\left(M,\sF\right)$.
	\end{cor}
	
	\begin{empt}\label{foli_res_inc_empt}
		%	page 29-30.
		Let $\left(M,\sF\right)$ be an arbitrary foliated space and let $\sU\subset  M$ be an open subset.  Then $\left(\sU,\sF|_\sU\right)$ is a foliated space and the inclusion  $\sU\hookto  M$ induces a homomorphism of groupoids $\G\left(\sU \right)\hookto \G$ , hence a mapping
		\be\label{foli_inc_gc_eqn}
		j_\sU : \Ga_c\left(  \G\left(\sU \right) ,\Om^{1/2}\right)  \hookto \Ga_c\left( \G\left( M\right) ,\Om^{1/2}\right)
		\ee
		that is an injective homomorphism of involutive algebras. 
		%	It is proven in \cite{candel:foliII,connes:foli_survey} that any restriction of foliation induces an injective *-homomorphism 
		%	\begin{equation}\label{fol_res_hom_eqn}
			%	C^*_r\left(\mathcal{U},\mathcal F|_{\mathcal{U}} \right)\hookto C^*_r\left(M,\mathcal F \right).
			%	\end{equation}
	\end{empt}
	\begin{prop}\label{foli_res_inc_prop} %	Proposition 1.5.5. 
		Let $\sU$ be an open subset of the foliated space $M$. Then the inclusion $\sU \hookto M$ induces an isometry of $C^*_r\left(\sU,\sF|_\sU\right)$  into $C^*_r\left(M,\sF\right)$.
	\end{prop}
	\begin{remark}
		Similarly  to \cite{hilsum_scandalis:stab} the following notation will be used 
		\be
		C^*_r\left(\G^\sU_\sU \right) \bydef C^*_r\left(\sU,\sF|_\sU\right)
		\ee 
		where $\G^\sU_\sU$ is the reduction of  $\G$ on $\sU$ (cf. Definition \ref{groupoid_reduction_defn}.
	\end{remark}
	\begin{lemma}\label{foli_leaf_lem}
		%Lemma 1.4.9. 
		Each element $\ga \in \G$ induces a unitary operator $\rho_\ga : L^2\left(s\left( \ga\right)\right)   \xrightarrow{\approx}  L^2\left(r\left( \ga\right)\right) $   that conjugates the operators $\rho_{s\left(\ga \right)} \left(f \right) $ and $\rho_{r\left(\ga \right)} \left(f \right)$. In particular, the norm of $\rho_x\left(f \right)$  is independent of the point in the leaf through $x$.
	\end{lemma}
	
	\begin{lem}\label{foli_point_lem}
		%Lemma 1.4.10. 
		If $f \in \Ga_c\left(\G, \Om^{1/2}\right)$ does not evaluate to zero at each $\ga \in \G$, then there exists a point $x$ in $M$ such that $\rho_x\left( f\right)  \neq  0$.
	\end{lem}
	%\begin{defn} \label{foli_full_defn} NECESSARY
	%  Definition 1.4.12. The full $C^*$-algebra of the foliated space $\left(M, \sF\right)$ denoted by $C^*_r\left(M, \sF\right)$, is the completion of %?c(G,D1/2) with respect to the pseudonorm f=f= supœ(f), where runs through all the involutive representations of ?c (G,D1/2) on a separable Hilbert space whose restrictions to the graph G(U) of each foliated chart U for (M,F) are weakly continuous for the inductive limit ... Because each Rx, x ? M, is an involutive representation of the convolution 
	%\end{defn}
	\begin{definition}\label{foli_fibration_defn}\cite{candel:foliI}
		A foliated space $\left(M, \sF\right)$ is a \textit{fibration} if for any $x$ there is an open transversal $N$ such that $x\in N$ and for every  leaf $L$ of $\left(M, \sF\right)$ the intersection $L\cap  N$ contains no more then one point.
	\end{definition}
	\begin{proposition}\label{foli_one_leaf_prop}
		% Proposition 1.5.1. 
		The reduced $C^*$-algebra of a foliated space $M$ consisting of exactly one leaf is the algebra $\K\left( L^2\left(M \right) \right)$  of compact operators on $L^2\left(M \right)$.
	\end{proposition}

	\begin{prop}\label{foli_tens_comp_prop}\cite{candel:foliII}
		% Proposition 1.5.4. \\
		The reduced  $C^*$-algebra $C^*_r\left(\mathcal N\times Z \right)$ of the trivial foliated space $\mathcal N \times \mathcal Z$ is the tensor product $\K\otimes C_0\left(\mathcal Z\right)$, where $\K$ is the algebra of compact operators on $L^2\left(\mathcal N \right)$  and $ C_0\left(\mathcal Z\right)$ is the space of continuous functions on $\mathcal Z$ that vanish at infinity.
	\end{prop}
	\begin{thm}\label{foli_tens_comp_thm}\cite{candel:foliII}
		%	Theorem 1.5.12. 
		Assume that $\left(M, \sF\right)$ is given by the fibers of a fibration $p : M \to B$ with fiber $F$. Then $\left(M, \sF\right)$ is isomorphic to $C_0\left(B\right)\otimes \K\left(L^2\left(N\right)\right)$.
	\end{thm}
	\begin{remark}\label{foli_x_state_rem}
		It is proven in \cite{candel:foliII} (See Claim 2, page 56) that for any $x \in M$ the given by \eqref{foli_repr_eqn} representation $\rho_x: C^*_r\left(M, \sF \right)\to B\left( L^2\left(\G_x \right)\right)$   corresponds to a state $\tau_x : C^*_r\left(M, \sF \right)\to \C$.
	\end{remark}

	%\begin{definition}\label{foli_pseudo_a_defn}\cite{candel:foliII}
	%Definition 1.3.6.\\ 
	%The my pseudogroup of a foliated space is \textit{pseudo analytic} if the following holds. If $h: Z\to Z'$ is a my transformation between two transversals $Z$ and $Z'$, with $Z\subset Z'$, and $W \subset Z$ is an open subset such that $\left.h\right|_W = \Id$, and if $x \in \overline{W}$ is such that $h(x) = x$, then $h = \Id$ on a neighborhood of x. 
	%\end{definition}
	
	%\begin{proposition}\label{foli_pseudo_a_prop}\cite{candel:foliII}
	%Proposition 1.3.7. \\
	%Let M be a foliated space. Then the graph of M is Hausdorff  if and only if the my pseudogroup of M is pseudo-analytic.
	%\end{proposition}
	\begin{theorem}\label{foli_irred_hol_thm}\cite{candel:foliII}
		%Theorem 1.4.11. 
		Let $(M,\sF)$ be a foliated space and let $x \in M$. Then the representation $\rho_x$ is irreducible if and only if the leaf through $x$ has no holonomy.
	\end{theorem}
	\begin{empt}\label{foli_pseudo_empt}
		Let $\G^N_N$ be a  given by \eqref{foli_gnn_eqn} pseudogroup
		\be\label{foli_pseudo_eqn}
		\begin{split}
			\left(a\cdot b\right)\left(\ga\right)= \sum_{\ga_1\circ\ga_2=\ga}a\left( \ga_1\right)a\left( \ga_1\right),\\
			a^*\left(\ga\right)= \overline{a\left(\ga^{-1}\right)} 
		\end{split}
		\ee
		Denote by $C^*_r\left(	\G^N_N \right)$ the reduced algebra of $\G^N_N$ (cf. Definition \ref{foli_groupoid_red_defn}).  
	\end{empt} 	
	
	\begin{theorem}\label{foli_mor_thm}\cite{candel:foliII}
		%Theorem 1.5.16. 
		Let $(M,\F)$ be a foliated space and let $(Z, \mathscr H)$ be the holonomy pseudogroup associated to a complete transversal $Z$. Then the foliation $C^*$-algebra is Morita equivalent to the pseudogroup $C^*$-algebra.
	\end{theorem}
	\begin{remark}\label{foli_comp_trans_rem}
		A transversal is \textit{complete} if it meets any leaf.
	\end{remark}
	\begin{remark}\label{foli_pseudo_alg_rem}
		A pseudogroup $C^*$-algebra is $C^*_r\left(	\G^N_N \right)$.
	\end{remark}

	%\begin{lemma}
	%	Il existe un recouvrement distingué  localement  	fini de $V$ par des ouverts trivialisants $\Om_j$, $j \in \left\{1, 2,. . . , n, . . .\right\}$ et  	des transversales $T_j$, dans $\Om_j$ tels que les adhérences $\overline T_j$, dans $V$ des  	$T_j$ soient deux à deux disjointes. 
	%	\end{lemma}
	The following  lemma is a more strong version of the Theorem \ref{foli_mor_thm}.
	\begin{lemma}\label{foli_stab_lem}\cite{hilsum_scandalis:stab}
	Si $N$ est une transversale fidèle de $M$
	on a: $C^*_r\left(M, \sF \right)\cong C^*_r\left(\G^N_N \right)\otimes \K$.
	\end{lemma}
	\begin{remark}\label{foli_stab_rem}
	The English translation of the Theorem \ref{foli_stab_lem} is "if $N$ is a complete transversal then $C^*_r\left(M, \sF \right)\cong C^*_r\left(\G^N_N \right)\otimes \K$".
	\end{remark}
	\begin{empt}\label{foli_stab_empt}
	Besides the Lemma \ref{foli_stab_lem} we need some details of its proof. If $N$ is a complete transversal and
	$$
	E^M_N \bydef \Ga_c\left(r^{-1}\left(M \right), s^*\left(\Om^{1/2} \right)   \right) 
	$$
	then there is a $\Ga_c\left(\G, \Om \right)$-valued product  on $E^M_N$ given by
	$$
	\left\langle \xi, \eta \right\rangle\left( \ga\right) = \sum_{\substack{ s\left(\ga\right)=s\left(\ga'\right)\\r\left(\ga'\right)\in N}}\overline\xi\left(\ga'\circ\ga^{-1}\right)\eta\left(\ga'\right), \quad %\forall \xi,\eta\in \Ga_c\left(r^{-1}\left(M \right), s^*\left(\Om^{1/2} \right).
	$$
	If $\E^M_N$ is a completion of $E^M_N$ with respect to the norm $\left\| \xi\right|  \bydef \sqrt{\left\langle \xi, \xi \right\rangle}$ then $\E^M_N$ is $C^*_r\left(M,\sF\right)$-$C^*_r\left(\G^N_N\right)$-imprimitivity bimodule. (cf. Definition \ref{strong_morita_defn}). It follows that
	\be\label{foli_k1_eqn}
	C^*_r\left(M,\sF\right)\cong \K\left(\E^M_N\right).
	\ee
	On the other hand in \cite{hilsum_scandalis:stab} it is proven that there is an isomorphism 
	\be\label{foli_k2_eqn}
	\E^M_N\cong\ell^2\left(C^*_r\left(\G^N_N\right) \right)\cong C^*_r\left(\G^N_N\right)\otimes \ell^2\left( \N\right).  
	\ee
	The combination of equations \eqref{foli_k1_eqn}, \eqref{foli_k2_eqn} yields the Lemma  \ref{foli_stab_lem}.
	Below we briefly remind  the idea described in \cite{hilsum_scandalis:stab} proof of the equation  \ref{foli_k2_eqn}. There is a tubular neighborhood $V_N$ of $N$ which is homeomorphic to $N \times [0,1]^q$. This neighborhood yields the isomorphism
	\be\label{foli_e_sum_eqn}
	\E^M_N\cong C^*_r\left(\G^N_N\right)\otimes L^2\left( [0,1]^q\right) \oplus  \E^W_N
	\ee
	where $\E^W_N$ is a countably generated Hilbert $C^*_r\left(\G^N_N\right)$-module.
	From $ L^2\left( [0,1]^q\right)\cong\ell^2\left(\N\right)$ and the Kasparov stabilization theorem \ref{kasparov_stab_thm}, it follows that\\ $\E^M_N\cong\ell^2\left(C^*_r\left(\G^N_N\right) \right)$. If we  select any
	\be\label{foli_vec_eqn}
	\xi \in L^2\left( [0,1]^q\right)
	\ee
	such that $\left\|\xi\right\|= 1$ then  is an orthogonal basis
	\be\label{foli_basis_eqn}
	\left\{\eta_j\right\}_{j\in \N}\subset \ell^2\left(\N \right), \quad\eta_1\bydef\xi
	\ee
	In this basis the element  $a\otimes \xi\left\rangle \right\langle \xi\in C^*_r\left(\G^N_N\right)\otimes \K$ is represented by the following matrix
	\be\label{foli_mat_eqn}
	a\otimes \xi\left\rangle \right\langle \xi  =  	\begin{pmatrix}
		a& 0 &\ldots \\
		0& 0 &\ldots \\
		\vdots& \vdots &\ddots\\
	\end{pmatrix}.
	\ee
	
	\end{empt}

	\subsection{Restriction of foliation}

	\begin{lemma}\label{fol_res_lem}\cite{connes:ncg94}
	If $\sU \subset M$ is an open set and  $\left(\mathcal{U},\mathcal F|_{\mathcal{U}}\right) $ is the {restriction}   of $\left(M,\mathcal F \right)$ {to}  $\mathcal{U}$ then the graph 
	$\G\left(\mathcal{U},\mathcal F|_{\mathcal{U}}\right)$ is an open set in the graph $\G\left(M,\mathcal F \right)$, and the inclusion
	$$
	\Coo\left(\mathcal{U},\mathcal F|_{\mathcal{U}}\right)\hookto\Coo\left(M,\mathcal F \right)
	$$
	extends to an isometric *-homomorphism of $C^*$-algebras
	$$
	C^*_r\left(\mathcal{U},\mathcal F|_{\mathcal{U}}\right)\hookto C^*_r\left(M,\mathcal F \right).
	$$
	\end{lemma}
	
	\begin{remark}\label{fol_res_rem}\cite{connes:ncg94}
	This lemma, which is still valid in the non-Hausdorff  case \cite{connes:foli_survey}, allows one to reflect
	algebraically the local triviality of the foliation.
	Thus one can cover the manifold $M$ by
	open sets $\sU_\la$ such that $\sF$ restricted to $\sU_\la$ has a Hausdorff  space of leaves, $\sV_{\la} = \sU_\la/\sF$. and hence such that the C*-algebras $C^*_r\left(\mathcal{U}_\la,\mathcal F|_{\mathcal{U}_\la}\right)$  are strongly Morita equivalent to the commutative $C^*$-algebras $C_0\left( B_\la\right)$. These subalgebras $C^*$-algebras $C^*_r\left(\mathcal{U}_\la,\mathcal F|_{\mathcal{U}_\la}\right)$ generate $ C^*_r\left(M,\mathcal F \right)$. 
	but of course they fit together in a very complicated way which is related to the global
	properties of the foliation.
	\end{remark}

	\section{Noncommutative torus $\mathbb{T}^n_{\Theta}$}\label{nt_descr_subsec}
	\begin{definition}\label{nt_qirr_defn}\cite{wagner:pb}
		Denote by "$\cdot$" the scalar product on $\R^n$.
		The matrix $\Theta$ is called \emph{quite irrational} if, for all $\lambda \in \Z^n$, the condition $\exp(2\pi i \, \lambda\cdot \Theta  \mu) = 1$ for all $
		\la,\mu \in \Z^n$ implies $\lambda = 0$.
	\end{definition}
	\begin{definition}\label{nt_defn}\cite{wagner:pb}
		Let $\Theta$ be an invertible, real  skew-symmetric quite irrational $n \times n$ matrix. A \textit{noncommutative torus} $C\left(\mathbb{T}^n_{\Theta}\right)$ is the universal $C^*$-algebra generated by the set $\left\{U_k\right\}_{k \in \Z^n}$ of unitary elements which satisfy to the following relations.
		\begin{equation}\label{nt_unitary_product_eqn}
			U_k U_p = e^{-\pi ik ~\cdot~ \Theta p} U_{k + p}; ~~~   \end{equation}
	\end{definition}
	Following condition holds
	\begin{equation}\label{nt_unitary_product_comm_eqn}
		U_k U_p = e^{-2\pi ik ~\cdot~ \Theta p}U_p U_k.
	\end{equation}
	An alternative description of $\C\left(\mathbb{T}^n_{\Theta}\right)$ is such that if
	\begin{equation}\label{nt_th_eqn}
		\Th = \begin{pmatrix}
			0& \th_{12} &\ldots & \th_{1n}\\
			\th_{21}& 0 &\ldots & \th_{2n}\\
			\vdots& \vdots &\ddots & \vdots\\
			\th_{n1}& \th_{n2} &\ldots & 0
		\end{pmatrix}=\begin{pmatrix}
			0& \th_{12} &\ldots & \th_{1n}\\
			-\th_{12}& 0 &\ldots & \th_{2n}\\
			\vdots& \vdots &\ddots & \vdots\\
			-\th_{1n}& -\th_{2n} &\ldots & 0
		\end{pmatrix}
	\end{equation}
	then $C\left(\mathbb{T}^n_{\Theta}\right)$ is the universal $C^*$-algebra generated by unitary elements   $u_1,..., u_n \in U\left( C\left(\mathbb{T}^n_{\Theta}\right)\right) $ such that following condition holds
	\begin{equation}\label{nt_com_eqn}
		\begin{split}
			u_j u_k = e^{-2\pi i \theta_{jk} }u_k u_j.
		\end{split}
	\end{equation}
	Unitary  operators $u_1,..., u_n$ correspond to the standard basis of $\mathbb{Z}^n$ and they are given by
	\be\label{nt_gen_eqn}
	u_j = U_{k_j },\quad \text{ where } k_j=\left(0,...,\underbrace{ 1}_{j^{\text{th}}-\text{place}},...,0 \right)
	\ee
	\begin{defn}\label{nt_uni_defn}
		The unitary elements 
		$u_1,..., u_n \in U\left(C\left(\mathbb{T}^n_{\theta}\right)\right)$ which satisfy the relations \eqref{nt_com_eqn}, \eqref{nt_gen_eqn}
		are said to be \textit{generators} of $C\left(\mathbb{T}^n_{\Theta}\right)$. The set $\left\{U_l\right\}_{l \in \Z^n}$ is said to be the \textit{basis} of $C\left(\mathbb{T}^n_{\Theta}\right)$.
	\end{defn}
		If $a \in C\left(\mathbb{T}^n_{\Th}\right)$ is presented by a series
	\be\label{nt_series_eqn}
	a = \sum_{l \in \mathbb{Z}^{n}}c_l U_l;~~ c_l \in \mathbb{C}
	\ee
	and the series $\sum_{l \in \mathbb{Z}^{n}}\left| c_l\right| $ is convergent then from the triangle inequality it follows that the series is $C^*$-norm convergent and the following condition holds.
	\begin{equation}\label{nt_norm_estimation_eqn}
		\left\|a \right\| \le \sum_{l \in \mathbb{Z}^{n}}\left| c_l\right|.
	\end{equation}
	In particular if $\mathrm{sup}_{l \in \mathbb{Z}^n}\left(1 + \|l\|\right)^s \left|c_l\right| < \infty, ~ \forall s \in \mathbb{N}$ then $\sum_{l \in \mathbb{Z}^{n}}\left| c_l\right|< \infty $ and  there is the natural inclusion $\phi_\infty: \sS\left(\mathbb{Z}^n\right) \subset C\left(\mathbb{T}^n_{\Theta}\right)$ of vector spaces. If
	\be\label{nt_coo_eqn}
	\Coo\left(\mathbb{T}^n_{\Theta}\right)\bydef \phi_\infty \left( \sS\left(\mathbb{Z}^n\right)\right) \subset C\left(\mathbb{T}^n_{\Theta}\right)
	\ee
	then $\Coo\left(\mathbb{T}^n_{\Theta}\right)$ is a pre-$C^*$-algebra, and the Fourier transformation  yields the $\C$-isomorphism
	\be\label{nt_coo_iso_eqn}
	\Coo\left(\mathbb{T}^n\right)\cong \Coo\left(\mathbb{T}^n_{\Theta}\right).
	\ee
		There is a  state 
		\be\label{nt_state_eqn}
		\begin{split}
			\tau: C\left(\mathbb{T}^n_{\Theta}\right) \to \C;\\
			\sum_{k \in \Z^n} a_k U_k \mapsto a_{\left(0,...,0\right) }; \quad \text{ where } a_k \in \C,
		\end{split}
		\ee
		which induces the faithful GNS representation. 
		The $C^*$-norm completion  $C\left(\mathbb{T}^n_{\Theta}\right)$ of $\Coo\left(\mathbb{T}^n_{\Theta}\right)$ is a $C^*$-algebra and there is a faithful representation
		\begin{equation}\label{nt_repr_eqn}
			C\left(\mathbb{T}^n_{\Theta}\right) \to B\left( L^2\left(C\left(\mathbb{T}^n_{\Theta}\right), \tau\right)\right) .
		\end{equation}
		(cf. Definition \ref{gns_defn}).  Similarly to the equation \eqref{from_a_to_l2_eqn} there is a $\C$-linear map 
		\begin{equation}\label{nt_to_hilbert_eqn}
			\Psi_\Th:C\left(\mathbb{T}^n_{\Theta}\right) \hookto L^2\left(C\left(\mathbb{T}^n_{\Theta}\right), \tau\right).
		\end{equation}
		If 
		\be\label{nt_xik_eqn}
		\xi_k \bydef \Psi_\Th\left(U_k \right)
		\ee 
		then from \eqref{nt_unitary_product_eqn}, \eqref{nt_state_eqn} it turns out
		\begin{equation}\label{nt_h_product_eqn}
			\tau\left(U^*_k  U_l \right) = \left(\xi_k, \xi_l \right)  = \delta_{kl},   
		\end{equation} 
		i.e. the subset $\left\{\xi_k\right\}_{k \in \mathbb{Z}^n}\subset L^2\left(C\left(\mathbb{T}^n_{\Theta}\right), \tau\right)$ is an orthogonal basis of  $L^2\left(C\left(\mathbb{T}^n_{\Theta}\right), \tau\right)$.
		Hence the Hilbert space  $L^2\left(C\left(\mathbb{T}^n_{\Theta}\right), \tau\right)$ is naturally isomorphic to the Hilbert space $\ell^2\left(\mathbb{Z}^n\right)$ given by
		\begin{equation*}
			\ell^2\left(\mathbb{Z}^n\right) = \left\{\xi = \left\{\xi_k \in \mathbb{C}\right\}_{k\in \mathbb{Z}^n} \in \mathbb{C}^{\mathbb{Z}^n}~|~ \sum_{k\in \mathbb{Z}^n} \left|\xi_k\right|^2 < \infty\right\}
		\end{equation*}
		and the $\C$-valued scalar product on $\ell^2\left(\mathbb{Z}^n\right)$ is given by
		\begin{equation}\label{nt_xi_eqn}
			\left(\xi,\eta\right)_{ \ell^2\left(\mathbb{Z}^n\right)}= \sum_{k\in \mathbb{Z}^n}    \overline{\xi}_k\eta_k.
		\end{equation}
		From \eqref{nt_unitary_product_eqn} and \eqref{nt_xik_eqn} it follows that the representation $C\left(\mathbb{T}^n_{\Theta}\right)\hookto B\left( L^2\left(C\left(\mathbb{T}^n_{\Theta}\right), \tau\right)\right)$ corresponds to the following action
		\be\label{nt_l2_eqn}
		\begin{split}
			C\left(\mathbb{T}^n_{\Theta}\right)\times  L^2\left(C\left(\mathbb{T}^n_{\Theta}\right), \tau\right)\to  L^2\left(C\left(\mathbb{T}^n_{\Theta}\right), \tau\right);\\
			U_k\xi_l = e^{-\pi ik ~\cdot~ \Theta l}\xi_{k + l} .
		\end{split}
		\ee

	% REPLACE WITH rieffel_homeo_cor and CHECK 
	%\begin{remark}\label{rieffel_homeo_rem}
	%	From the Theorem \ref{morita_herm_thm} it turns out that any $A$-$B$-equivalence bimodule $h_X$ induces a bijective map $\hat B \xrightarrow{\approx} \hat A$. Indeed $h_X$ is a homeomorphism (cf. Section 5.1 \cite{rae:ctr_morita}).
	%\end{remark}
	
	%\begin{proposition}\label{morita_k_prop}\cite{rae:ctr_morita}% 	Proposition 3.8. 
	%	Every full Hilbert $B$-module $X_B$ is a $\K\left(X \right)-B$-imprimitivity bi-	module with $\left\langle x,y \right\rangle_{\K\left(X\right)} \stackrel{\mathrm{def}}{=}\Th{x,y}= y\left\rangle \right\langle x $  Conversely, if $X$ is an $A$-$B$-imprimitivity bimodule, then there is an isomorphism $\phi$ of $A$ onto $\K\left(X\right)$ such that $\phi\left(\left\langle x, y \right\rangle_A\right)=  \left\langle x,y \right\rangle_{\K\left(X\right)}$ for all $x, y\in \K\left(X\right)$
	%\end{proposition}
	
	\subsection{$K$-theory and Poincar\'e duality}\label{rieffel_sec}\cite{varilly:nc_intro}
	The noncommutative torus provides an example of a pre-$C^*$-algebra,
	which is neither commutative nor approximately finite but has an
	interesting and computable $K$-theory. The group
	$K_1(C\left(\T^2_\th \right) )$ is fairly easy to find. There are two generating
	unitaries, $u$ and~$v$, and all the $u^m v^n$ are mutually
	non-homotopic in $U(A_\th)$. Indeed, since $\tau_0(1) = 1$ and
	$\tau_0(u^m v^n) = 0$ for $(m,n) \neq (0,0)$, there cannot be a
	continuous path in $U(A_\th)$ from~$1$ to $u^m v^n$. Passing to
	matrices in $M_k(A_\th)$ cannot remedy this, since the same argument
	works, with $\tau_0$ replaced by~$\tau_0 \ox \tr$. Thus
	$K_1(\A_\th) = K_1(A_\th) = \Z[u] \oplus \Z[v]$.
	
	To determine $K_0(\A_\th)$, we seek projectors in $M_k(\A_\th)$ not
	equivalent to $e := 1 \oplus 0_{k-1}$. In fact, due to the irrationality
	of~$\th$, such projectors may be found in $\A_\th$ itself: the
	\textit{Powers - Rieffel projector}
	\be\label{rp_eqn}
	\begin{split}
		p_\th \in \Coo\left( \T^2_\th\right) \subset C\left( \T^2_\th\right)
	\end{split} 
	\ee
	 has the
	characteristic property that $\tau_0(p) = \th$. Given this projector,
	the map $m[e] + n[p] \mapsto m\tau_0(e) + n\tau_0(p) = m + n\th$
	defines a map from $K_0(\A_\th)$ to $\Z + \Z\th$ which, by a theorem
	of Pimsner and Voiculescu~\cite{p-v:irr}, is an isomorphism of
	ordered groups.
	
	The projector $p$ is constructed as follows. Write elements of
	$\A_\th$ as $a = \sum_s f_s v^s$, where $f_s = \sum_r a_{rs} u^r$ is a
	Fourier series expansion of $f_s(t) = \sum_r a_{rs} e^{2\pi irt}$ in
	$\Coo(\T)$. Now we look for $p$ of the form
	$$
	p_\th = gv + f + hv^{-1}.
	$$
	Since $p^* = p$, the function $f$ is real and
	$\Bar{h(t)} = g(t + \th)$. Assuming $\thalf < \th < 1$, as we may, we
	choose $f$ to be a smooth increasing function on
	$[0, 1 - \th]$, define $f(t) := 1$ if $t \in [1 - \th, \th]$, and
	$f(t) := 1 - f(t - \th)$ if $t \in [\th,1]$; then let $g$ be the
	smooth bump function supported in~$[\th,1]$ given by
	$g(t) := \sqrt{f(t) - f(t)^2}$ for $\th \leq t \leq 1$. One checks
	that these conditions guarantee $p^2 = p$ (look at the coefficients
	of $v^2$, $v$ and~$1$ in the expansion of~$p^2$). Moreover,
	$$
	\tau_0(p_\th) = a_{00} = \int_0^1 f(t) \,dt
	= \int_0^{1-\th} f(t)\,dt + (2\th - 1) + \int_\th^1 f(t)\,dt = \th.
	$$
	The existence of these projectors (variation of $f$ on the interval
	$[0, 1 - \th]$ gives rise to many homotopic projectors) shows that
	the topology of the noncommutative torus is very disconnected, in
	contrast to the ordinary torus $\A_0$, whose only projectors are~$0$
	and $1$.
	\begin{theorem}\label{rp_nc_thm}\cite{Rieffel:irrat}
			%THEOREM 1. 
			For each $\bt \in \Z\th \cap \left[0,1\right]$ there is a projection
			$p_\th \in C\left(\T^2_\th \right)$  such that $\tau\left( p\right) =\bt$.
		\end{theorem}
	[The commutative torus $\Coo(\T^2)$ has the same $K$-groups:
	$K_j(\A_0) = \Z \oplus \Z$ for $j = 1,2$. However, this algebra has no
	Powers--Rieffel projector: the second generator of $K_0(\A_0)$ is
	obtained by pulling back the Bott projector from $K_0(\Coo(S^2))$.]
	
	Thus, $K_\8(\A_\th)$ has four generators: $[e]$, $[p]$, $[u]$
	and~$[v]$. The intersection form is antisymmetric (this is typical of
	dimension two) and the nonzero pairings of the generators
	are~\cite{varilly:nc_intro}:
	$$
	\bigl<[e], [p]\bigr> = - \bigl<[p], [e]\bigr> = 1,  \qquad
	\bigl<[u], [v]\bigr> = - \bigl<[v], [u]\bigr> = 1.
	$$
	The matrix of the form is a direct sum of two
	$\begin{pmatrix}
	0 & -1 \cr 1 & 0 \cr	\end{pmatrix}$ blocks, so it is nondegenerate, and
	Poincar\'e duality holds.
	
	\bigskip
	
	We have constructed a geometry $(\A_\th, \H, D_\tau, \Ga, J)$, which
	may be denoted by $\T^2_{\th,\tau}$. When $\th = 0$, $\tau$ plays the
	r\^ole of the modular parameter of an elliptic curve. Whether or not
	this provides an opening to a noncommutative theory of elliptic curves
	is a tantalizing speculation; it remains to be seen whether
	$\T^2_{\th,\tau}$ can yield useful arithmetic information.
	
	\begin{lemma}\label{nt_gen_lem}\cite{Rieffel:stable_irrat}
	%THEOREM 8. 
	Let $\th$ be an automorphisms of the unital $C^*$-algebra $A$
which is in the connected component of the identity automorphisms of
	$A$, and let $\a$. also denote the corresponding action of $Z$ on $A$. Suppose
	that
	\begin{enumerate}
		\item 	Every element of $K_1\left(A \right)$  is represented y an invertible element
		in $A$ itself.
		\item The projections in  $A$ generate $K_0\left(A \right)$.  
		
	\end{enumerate}
			Then every element in $K_0\left(A\otimes_\a \Z \right)$  is represented by an invertible
	element of $A\otimes_\a \Z$. 
	
	\end{lemma}
	\begin{rem}
	It is proven in \cite{Rieffel:stable_irrat} that
	\be
	\begin{split}
	K_0\left(C\left(\T^n_\th \right)  \right) \cong \Z^{2^{n-1}}
	\end{split}
	\ee
	
	\end{rem}
	\begin{theorem}\cite{Rieffel:stable_irrat}
	%THEOREM A.
	 If $\th$ is not rational, then the positive cone of 	$K_0\left(C\left(\T^n_\th \right)  \right)$
	consists of exactly the elements on which $\tau$ is positive.
	\end{theorem}

	\begin{theorem}\label{k_0_tor_thm}\cite{Rieffel:stable_irrat}
%THEOREM c- 
If $\th$ is not rational, then the projections in $C\left(\T^n_\th \right)$
generate $K_0\left(C\left(\T^n_\th \right)  \right)$.
\end{theorem}

	\begin{theorem}\label{k_1_tor_thm} \cite{Rieffel:stable_irrat}
%THEOREM D. 
If $\th$ is not rational, then every element of $K_1\left(C\left(\T^n_\th \right)  \right)$ is represented by an invertible element of $C\left(\T^n_\th \right)$
\end{theorem}

	\begin{theorem}\cite{Rieffel:stable_irrat}
%THEOREM E.
 If $\th$ is not rational, then the natural map from
$U_1\left( C\left(\T^n_\th \right)\right)/ U^0_1\left( C\left(\T^n_\th \right)\right)$  to $U_1\left( C\left(\T^n_\th \right)\right)$ is an isomorphism.
\end{theorem}

\begin{theorem}\cite{Rieffel:stable_irrat}
%THEOREM F. 
If $\th$ is not rational, then any two projections in
$\mathbb{M}_m\left(C\left(\T^n_\th \right) \right)$ which represent the same element of $K_1\left(C\left(\T^n_\th \right)  \right)$ are in the  same
connected component of the set of projections in $\mathbb{M}_m\left(C\left(\T^n_\th \right) \right)$.
\end{theorem}

\end{appendices}
\section*{Acknowledgment}

\paragraph*{}
Author would like to acknowledge members of the Moscow State University Seminars
"Noncommutative geometry and topology", "Algebras in analysis" leaded by professors A. S. Mishchenko and  A. Ya. Helemskii for a discussion
of this work.

	\end{document}